\documentclass[12pt]{article}
\usepackage[utf8]{inputenc}
\usepackage[spanish,es-lcroman]{babel}
\usepackage{graphicx,amsmath,amsfonts,amsthm}
\usepackage[T1]{fontenc}
\usepackage{listings}
\usepackage{mathrsfs}
\usepackage{thmtools}
\usepackage{amssymb,enumitem}
\usepackage{comment}
\usepackage[all]{xy}
\usepackage{tikz}
\usepackage{tikz-cd}
\usetikzlibrary{patterns,decorations.pathreplacing,babel,decorations.markings,shapes,positioning,intersections,quotes,arrows}
\usepackage{faktor}
\usepackage{xfrac}
\usepackage{afterpage}
\usepackage{hyperref}
\usepackage{xspace}

\numberwithin{equation}{section}

\newtheorem{theorem}{Teorema}[section]
\newtheorem{lemma}[theorem]{Lema}
\newtheorem{proposition}[theorem]{Proposición}
\newtheorem{corollary}[theorem]{Corolario}

\theoremstyle{definition}
\newtheorem{definition}[theorem]{Definición}
\newtheorem{definitions}[theorem]{Definiciones}
\newtheorem{example}[theorem]{Ejemplo}
\newtheorem{examples}[theorem]{Ejemplos}
\newtheorem{observation}[theorem]{Observación}
\newtheorem{Observation}[theorem]{Observación Importante}
\newtheorem{notation}[theorem]{Notación}
\newtheorem{remark}[theorem]{Aclaración}
\newtheorem{comentario}[theorem]{Comentario}

\swapnumbers
\newtheorem{id_vertical}[theorem]{Identidades para la composici\'on vertical}

\theoremstyle{remark}
\newtheorem{duality}[theorem]{Dualidad}

\newcommand \catH {\mathcal{H}o(\mathscr{C})}
\newcommand \Ho {\mathcal{H}o_{fc}(\mathscr{C})}

\newcommand*\widebar[1]{%
   \hbox{%
     \vbox{%
       \hrule height 0.5pt % 
       \kern0.5ex%         % 
       \hbox{%
         \kern-0.1em%      % 
         \ensuremath{#1}%
         \kern-0.1em%      
       }%
     }%
   }%
}

\newdir{ >}{{}*!/-6pt/@{>}}

\begin{document}

\begin{center}

\textbf{\Large La 2-localizaci\'on de una categor\'ia de modelos}

\vspace{2ex}

Tesis de Licenciatura\footnote{Director Eduardo J. Dubuc, Departamento de Matematica, F.C.E. y
  N., Universidad de Buenos Aires.} 

\vspace{1ex}

Jaqueline Girabel

\end{center}

\vspace{1ex}

\begin{center} \textbf{Abstract} \end{center}

\vspace{-1.5ex}

In [\emph{Homotopical Algebra}, Springer LNM 43] Quillen introduces the notion of a \emph{model category}: a category $\mathcal{C}$ provided with three distinguished classes of maps $\{\mathcal{W},\, \mathcal{F},\, co\mathcal{F}\}$ (weak equivalences, fibrations, cofibrations), and gives a construction of the localization 
$\mathcal{C}[\mathcal{W}^{-1}]$ as the quotient of $\mathcal{C}$ by the congruence relation determined by the homotopies on the sets of arrows $\mathcal{C}(X,\,Y)$.

We develop here the 2-categorical localization, in which the 2-cells of this 2-localization are given by homotopies, and one can get the Quillen's localization when applying the connected components functor $\pi_0$ on the hom-categories of the 2-localization. Our proof is not just a generalization of the well-known Quillen's one. We work with definitions of cylinders
and homotopies introduced in [M.E. Descotte, E.J. Dubuc, M. Szyld, \emph{Model bicategories and their homotopy bicategories}, 	arXiv:1805.07749 (2018)] considering only a single family of arrows $\Sigma$. When $\Sigma$ is the class $\mathcal{W}$ of weak equivalences of a model category, we get the Quillen's results.

\vspace{2ex}

\begin{center} \textbf{Resumen} \end{center}

\vspace{-1.5ex}

Quillen en [\emph{Homotopical Algebra}, Springer LNM 43] presenta el concepto de \emph{categor\'ia de modelos}: una categor\'ia $\mathcal{C}$ provista de tres clases de flechas $\{\mathcal{W},\, \mathcal{F},\, co\mathcal{F}\}$ (equivalencias debiles, fibraciones, cofibraciones), y construye la localizaci\'on $\mathcal{C}[\mathcal{W}^{-1}]$ como el cociente de $\mathcal{C}$ por la congruencia determinada por las homotop\'ias en los conjuntos de morfismos $\mathcal{C}(X,\,Y)$.

Aquí  desarrollamos la localizaci\'on 2-categ\'orica, en la cual las homotopías determinan las 2-celdas de esta 2-localizaci\'on, y es posible obtener la localizaci\'on de Quillen tomando el funtor $\pi_0$ de componentes conexas en las categor\'ias de morfismos de la 2-localizaci\'on. Nuestra demostraci\'on no es una simple generalizaci\'on de la conocida demostraci\'on de Quillen. Se utilizan nuevas definiciones de cilindro y de homotop\'ia introducidas en [M.E. Descotte, E.J. Dubuc, M. Szyld, \emph{Model bicategories and their homotopy bicategories}, 	arXiv:1805.07749 (2018)] considerando una \'unica familia de morfismos $\Sigma$. Cuando $\Sigma$ es la clase $\mathcal{W}$ de equivalencias d\'ebiles de una categor\'ia de modelos, se obtienen los resultados de Quillen.

\section*{Introducci\'on}

\vspace{3mm}

El concepto de \emph{categor\'ia de modelos} fue introducido por Quillen previendo una de sus consecuencias m\'as significativas, y particularmente atractiva: la teor\'ia de homotop\'ia abstracta asociada a una estructura de modelos dada. Se define la \emph{categor\'ia homot\'opica} como la localizaci\'on de la categor\'ia original con respecto a una clase distinguida de morfismos que no necesariamente son inversibles, pero `` se asemejan '' a los isomorfismos .
Con el desarrollo de esta teor\'ia, Quillen provee una maquinaria ampliamente utilizada en diversos contextos.

\bigskip
En nuestra definici\'on de localizaci\'on, la categor\'ia con dicha propiedad universal est\'a determinada salvo equivalencia de categor\'ias, a diferencia de las definiciones de Gabriel y Zisman, y de Quillen, para quienes la localizaci\'on queda determinada en un sentido m\'as fuerte: salvo isomorfismo de categor\'ias.

\bigskip
Desarrollamos este trabajo con el objetivo de construir una \emph{2-categor\'ia homot\'opica} para una categor\'ia de modelos, provista de un 2-funtor con la propiedad universal de la 2-localizaci\'on de dicha categor\'ia con respecto a la clase de \emph{equivalencias d\'ebiles}. Presentamos, as\'i, una versi\'on 2-dimensional an\'aloga a la teor\'ia de homotop\'ia elaborada por Quillen para una categor\'ia de modelos. Aplicando el funtor $\pi_o$ de componentes conexas en las categor\'ias de flechas de la 2-localizaci\'on se obtienen los resultados de Quillen.

\bigskip
A su vez, entendemos esta exposici\'on como un caso particular de la versi\'on 2-dimensional original \cite{e.d.}, en la que Descotte, Dubuc y Szyld introducen el concepto de \emph{bicategor\'ia de modelos} y construyen la \emph{bicategor\'ia homot\'opica} asociada. Consideramos una
categoría de modelos como una bicategor\'ia de modelos trivial en su
estructura 2-dimensional. En el desarrollo de este caso particular se producen demostraciones m\'as simples que no son una mera adaptaci\'on de aquellas demostraciones correspondientes al caso general.

\bigskip
Dada una categor\'ia de modelos $\mathscr{C}$, las homotop\'ias de Quillen pueden interpretarse como las 2-celdas de la 2-categor\'ia buscada, en la que los objetos y las flechas son como en $\mathscr{C}$. Pero, aunque las homotop\'ias de Quillen se componen bajo ciertas condiciones, no determinan directamente una estructura de 2-categor\'ia.

\bigskip
Generalizando el concepto de cilindro de Quillen y, luego, el concepto de homotop\'ia de Quillen, una apropiada relaci\'on de equivalencia entre estas homotop\'ias nos permite definir una 2-categor\'ia en la que, efectivamente, objetos y morfismos son como en $\mathscr{C}$, y las \emph{clases de equivalencia de homotop\'ias} se componen vertical y horizontalmente, verificando todos los axiomas requeridos. De hecho, estos cilindros que generalizan los cilindros de Quillen, y con los que trabajamos en todo el desarrollo de la tesis, se definen en una categor\'ia $\mathscr{C}$ con una \'unica clase distinguida de flechas, $\Sigma$, conteniendo a todas las identidades, y la construcci\'on de esta 2-categor\'ia es independiente del contexto de categor\'ias de modelos. Nos referimos a ella con el nombre de \emph{2-categor\'ia homot\'opica de} $\mathscr{C}$, respecto de la clase $\Sigma$, y es denotada $\catH$. El 2-funtor $i: \mathscr{C} \longrightarrow \catH$, dado por la inclusi\'on, tiene la propiedad universal correspondiente a la 2-localizaci\'on de $\mathscr{C}$, pero no es precisamente la 2-localizaci\'on debido a que los morfismos de la clase $\Sigma$ no necesariamente son equivalencias en $\catH$.

\bigskip
Cuando $\mathscr{C}$ es una categor\'ia de modelos y $\Sigma$ es la clase de equivalencias d\'ebiles, que denotamos $\mathcal{W}$, estudiamos las condiciones bajo las cuales el 2-funtor $i: \mathscr{C} \longrightarrow \catH$ manda equivalencias d\'ebiles en equivalencias. Restringiendo $i$ a la subcategor\'ia $\mathscr{C}_{fc}$ de objetos fibrantes-cofibrantes, obtenemos una sub-2-categor\'ia $\Ho \subseteq \catH$, y podemos demostrar que la inclusi\'on $i: \mathscr{C}_{fc} \hookrightarrow \Ho$ es la 2-localizaci\'on de $\mathscr{C}_{fc}$ respecto de $\mathcal{W}$. Adem\'as, como una aplicaci\'on de esta construcci\'on, tomando el 2-funtor $\pi_0: \Ho \longrightarrow \pi(\Ho)$ de componentes conexas se tiene una categor\'ia $\pi(\Ho)$ isomorfa a la categor\'ia homot\'opica de Quillen de $\mathscr{C}_{fc}$, denotada $\pi\mathscr{C}_{fc}$ en \cite{Qui}.

\bigskip
Las conclusiones ya obtenidas para la subcategor\'ia  $\mathscr{C}_{fc}$ nos permiten ahora producir resultados concernientes a la categor\'ia $\mathscr{C}$ mediante un reemplazo fibrante-cofibrante. Podemos percibir, as\'i como en la exposici\'on de Quillen, el rol fundamental que tienen las clases $\mathcal{F}$, de \emph{fibraciones}, y $co\mathcal{F}$ , de \emph{cofibraciones} que hacen posible la construcci\'on de la 2-localizaci\'on con respecto a una tercera clase, $\mathcal{W}$.
Al momento de definir un 2-funtor con la propiedad universal de la 2-localizaci\'on de $\mathscr{C},$ asumimos que las factorizaciones de las que disponemos en una categor\'ia de modelos son funtoriales. De esta forma, denotando $\mathscr{C}_f$ y $\mathscr{C}_c$ a las subcategor\'ias de objetos fibrantes y cofibrantes de $\mathscr{C}$, respectivamente, los reemplazos fibrante $R: \mathscr{C} \longrightarrow \mathscr{C}_f$ y cofibrante $Q:\mathscr{C} \longrightarrow \mathscr{C}_c$ resultan funtores, m\'as que simples asignaciones, como lo son en el caso tratado por Quillen, que no asume funtorialidad en las factorizaciones. Consideramos, entonces, el 2-funtor $\xymatrix{\mathscr{C} \ar@/^2pc/[rrr]^q \ar[r]^Q & \mathscr{C}_c \ar[r]^R & \mathscr{C}_{fc}\ar[r]^(.4)i & \Ho}$, y demostramos que este tiene la propiedad universal esperada.

\bigskip
Aplicando el funtor de componentes conexas a la 2-localizaci\'on de $\mathscr{C}$, lo que obtenemos en este caso es una categor\'ia equivalente a la categor\'ia homot\'opica de $\mathscr{C}$ de Quillen, pero no isomorfa a ella. De todas maneras, esto se condice con nuestra definici\'on de localizaci\'on de categor\'ias, determinada salvo equivalencias.

\bigskip
Destacamos, por \'ultimo, que las homotop\'ias con las que trabajamos son siempre homotop\'ias a izquierda, y no necesitamos de las homotop\'ias a derecha para generar los resultados esperados. Esto se debe a la funtorialidad de los reemplazos fibrante y cofibrante. Imitando la construcci\'on de $\Ho$ considerando s\'olo homotop\'ias a derecha se tiene, en principio, otra versi\'on de la 2-localizaci\'on de $\mathscr{C}$, pero demostramos que coincide con la 2-categor\'ia obtenida con las homotop\'ias a izquierda. Estas conclusiones son una consecuencia de un resultado esencial que nos permite asegurar, adem\'as, que las categor\'ias de flechas $Hom(X, Y)$ de la 2-localizaci\'on de una categor\'ia de modelos son localmente pequeñas.

\newpage

\tableofcontents

\newpage

\pagenumbering{arabic}

\section{Preliminares}
En esta secci\'on incluiremos las definiciones, las notaciones y los resultados b\'asicos que ultilizaremos a lo largo de este trabajo. Luego de recordar los conceptos de equivalencia de categor\'ias y de localizaci\'on, adaptaremos estas nociones al contexto m\'as general de 2-categor\'ias.

\subsection{Equivalencias de categor\'ias y localizaci\'on}

 \begin{definition} \label{equi_cat}
     Sean $\mathscr{X}$ e $\mathscr{Y}$ dos categor\'ias. Un funtor $F: \mathscr{X} \longrightarrow \mathscr{Y}$ es una \emph{equivalencia de categor\'ias} si existen un funtor $G: \mathscr{Y} \longrightarrow \mathscr{X}$ e isomorfismos naturales $FG \simeq Id$ y $GF \simeq Id$. En tal caso, $G$ tambi\'en es una equivalencia de categor\'ias y decimos que $\mathscr{X}$ e $\mathscr{Y}$ son categor\'ias equivalentes.
     
     Decimos que $F$ es un \emph{isomorfismo de categor\'ias} si las transformaciones naturales $FG \simeq Id$ y $GF \simeq Id$ son las identidades.
 \end{definition}

 \begin{comment}
\begin{example}
     El teorema de dualidad de Gelfand establece una equivalencia de categor\'ias 
     \begin{center}
         $\xymatrix{ \mathcal{T}op_{cmp} \ar@<6pt>[rr]^{C} \ar@<-6pt>@{<-}[rr]^{\Sigma} && C^*Alg_{Ab}^{op}}$
     \end{center}
     donde $\mathcal{T}op_{cmp}$ es la categor\'ia de espacios topol\'ogicos compactos y Hausdorff, y $C^*Alg_{Ab}$ es la categor\'ia de $C^*$-\'algebras abelianas con unidad. Para cada $X$ compacto y Hausdorff, $C(X)$ es la $C^*$-\'algebra de funciones continuas a valores complejos, y para cada $A \in C^*Alg_{Ab}$, $\Sigma(A)$ es el espacio de caracteres de $A$ dotado de la \emph{topolog\'ia d\'ebil}$*$.
\end{example}
\end{comment}

 \begin{definitions}
     Sea $F: \mathscr{X} \longrightarrow \mathscr{Y}$ un funtor. 
     
     \vspace{2mm}
     1) Decimos que $F$ es
     \renewcommand{\labelenumi}{\roman{enumi}.}
     \begin{enumerate}
        \item \emph{pleno} si para todo $X, Y$ en $\mathscr{X}$, $F: \mathscr{X}[X, Y] \longrightarrow \mathscr{Y}[FX, FY]$ es suryectivo,
         \item \emph{fiel} si para todo $X, Y$ en $\mathscr{X}$, $F: \mathscr{X}[X, Y] \longrightarrow \mathscr{Y}[FX, FY]$ es inyectivo,
         \item \emph{plenamente fiel} si a la vez pleno y fiel.
     \end{enumerate}
     
     \vspace{3mm}
     
     2) Decimos que $F$ es \emph{esencialmente suryectivo} si todo objeto de $\mathscr{Y}$ es isomorfo a uno de la forma $FX$ para $X$ en $\mathscr{X}$.
 \end{definitions}
 
Una caracterizaci\'on importante de las equivalencias de categor\'ias es la siguiente (\cite{Mac} Ch.IV $\S$4):
 
 \begin{proposition}
     Consideramos un funtor $F: \mathscr{X} \longrightarrow \mathscr{Y}$. Son equivalentes:
     \renewcommand{\labelenumi}{\roman{enumi}.}
     \begin{enumerate}
         \item $F$ es una equivalencia de categor\'ias;
         \item $F$ es plenamente fiel y esencialmente suryectivo.
     \end{enumerate}
 \end{proposition}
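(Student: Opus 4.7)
El plan es demostrar ambas implicaciones; la implicación (i) $\Rightarrow$ (ii) es la más directa, mientras que el trabajo principal reside en (ii) $\Rightarrow$ (i), donde hay que construir un cuasi-inverso a partir de las propiedades puntuales del funtor.

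Para (i) $\Rightarrow$ (ii), fijo un funtor $G:\mathscr{Y}\longrightarrow\mathscr{X}$ e isomorfismos naturales $\eta:Id_{\mathscr{X}}\simeq GF$ y $\varepsilon:FG\simeq Id_{\mathscr{Y}}$. La sobreyectividad esencial es inmediata: dado $Y\in\mathscr{Y}$, el objeto $GY$ satisface $F(GY)\simeq Y$ vía $\varepsilon_Y$. Para la fidelidad, si $Ff=Ff'$ para $f,f':X\to X'$, aplicando $G$ y usando la naturalidad de $\eta$ se obtiene $f=\eta_{X'}^{-1}\circ GFf\circ\eta_X=\eta_{X'}^{-1}\circ GFf'\circ\eta_X=f'$. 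Para la plenitud, dado $g:FX\to FX'$, defino $f=\eta_{X'}^{-1}\circ Gg\circ\eta_X$ y verifico, aplicando $F$ y utilizando las naturalidades de $\eta$ y $\varepsilon$ junto con que $F$ ya es fiel, que efectivamente $Ff=g$.

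Para (ii) $\Rightarrow$ (i), procedo a construir explícitamente el cuasi-inverso. Para cada $Y\in\mathscr{Y}$, usando la sobreyectividad esencial (y el axioma de elección) fijo un objeto $GY\in\mathscr{X}$ junto con un isomorfismo $\varepsilon_Y:FGY\to Y$. Dada una flecha $g:Y\to Y'$ en $\mathscr{Y}$, la flecha $\varepsilon_{Y'}^{-1}\circ g\circ\varepsilon_Y:FGY\to FGY'$ proviene, por la plenitud y fidelidad de $F$, de una única flecha $Gg:GY\to GY'$ tal que $F(Gg)=\varepsilon_{Y'}^{-1}\circ g\circ\varepsilon_Y$. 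La unicidad garantiza automáticamente que $G$ respeta composiciones e identidades, de modo que $G$ es un funtor; y por construcción $\varepsilon:FG\simeq Id_{\mathscr{Y}}$ es un isomorfismo natural. Finalmente, para cada $X\in\mathscr{X}$, la flecha $\varepsilon_{FX}^{-1}:FX\to FGFX$ proviene, nuevamente por plenitud y fidelidad, de una única $\eta_X:X\to GFX$ que resulta ser un isomorfismo (porque $F\eta_X$ lo es y $F$ refleja isomorfismos al ser plenamente fiel) y natural en $X$ por un argumento similar de unicidad.

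El punto más delicado será la verificación de que $G$ es efectivamente un funtor y que $\eta$ es natural: ambas afirmaciones se reducen a comparar dos flechas en $\mathscr{X}$ cuyas imágenes por $F$ coinciden (la fidelidad permite concluir la igualdad). Esta estrategia es la ventaja esencial de la hipótesis de ser plenamente fiel y sobreyectivo en lo esencial: traslada todas las ecuaciones funtoriales en $\mathscr{X}$ a ecuaciones en $\mathscr{Y}$, donde se verifican mediante las propiedades elementales de $\varepsilon$.
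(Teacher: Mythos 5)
Tu propuesta es correcta y sigue esencialmente el mismo camino que la demostraci\'on de la tesis (el argumento est\'andar de Mac Lane): para (ii) $\Rightarrow$ (i) construyes $G$ en los objetos por elecci\'on de $GY$ con $\varepsilon_Y$, en las flechas por plenitud y fidelidad, y obtienes $\eta$ a partir de $\varepsilon_{FX}$, reduciendo la funtorialidad y la naturalidad a igualdades que se verifican tras aplicar $F$. La \'unica diferencia menor est\'a en la plenitud de (i) $\Rightarrow$ (ii): el texto cancela $G$ usando que $G$ es fiel, mientras que tu verificaci\'on de $Ff=g$ puede cerrarse directamente con las naturalidades de $\eta$ y $\varepsilon$ (la fidelidad de $F$ que invocas no es la que hace el trabajo ah\'i), pero ambas variantes son v\'alidas.
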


\vspace{2mm}
Recordemos la construcci\'on de los anillos de fracciones: dado un anillo conmutativo con unidad $A$ y un subconjunto multiplicativo $S \subset A$ tal que $0 \notin S$, localizar $A$ con respecto a $S$ es construir de alg\'un modo un anillo $A_S$ que contenga a los inversos multiplicativos de los elementos en $S$: se tiene un morfismo de anillos $\lambda: A \longrightarrow A_S$ tal que $\lambda(s)$ es inversible para todo $s \in S$, y cualquier otro morfismo de anillos $\phi : A \longrightarrow B$ con esta propiedad se extiende a $A_S$ en forma \'unica.

\begin{center}
    $\xymatrix{ \vspace{2mm} \ar@{}[d]|{\exists \phi(a)^{-1} \hspace{1mm} \forall a \in S} & A \ar@{^{(}->}[r]^{\lambda} \ar[dr]_(.4){\phi} & A_S \ar@{-->}[d]^{\exists ! \bar{\phi} \text{ / } \bar{\phi}\lambda= \phi }\\
    && B \\
    }$
\end{center}

Notar que la misma definici\'on tiene sentido sin requerir que $S$ sea multiplicativo, y cuando $0 \in S$ se tiene que $A_S$ es el anillo trivial.

Como un caso particular disponemos del \'algebra de g\'ermenes de funciones $\mathscr{D}_p(M)$, donde $M$ es una variedad diferencial y $p \in M$, que guarda la informaci\'on local alrededor de $p$. Este es un anillo local cuyo \'unico ideal maximal $\widebar{m}_p$ consiste de los g\'ermenes de funciones que se anulan en $p$. Resulta que $\mathscr{D}_p(M)$ es la localizaci\'on del anillo $\mathscr{C}^\infty (M)$ con respecto al complemento del ideal maximal $m_p=\{ f \in \mathscr{C}^\infty (M) / f(p)=0 \}$. Notar que si $f(p)\neq 0$, entonces $f$ no se anula en todo un entorno de $p.$
\\

Dada una categor\'ia $\mathscr{C}$ y una clase $\Sigma$ de flechas en $\mathscr{C}$, la idea de la localizaci\'on consiste en encontrar una nueva categor\'ia $\mathscr{C}[\Sigma^{-1}]$ que aproxime o extienda a la categor\'ia original adjuntando los inversos de los morfismos en $\Sigma$, en el sentido de asegurar la existencia de un funtor $q: \mathscr{C} \longrightarrow \mathscr{C}[\Sigma^{-1}]$ que mande los elementos de $\Sigma$ en isomorfismos, de manera tal que el par $(\mathscr{C}[\Sigma^{-1}], q)$ sea universal con esta propiedad.

\vspace{1mm}
\begin{definition} \label{localization}
    Sean $\mathscr{C}$ una categor\'ia y $\Sigma$ una subclase de morfismos de $\mathscr{C}$. La \emph{localizaci\'on} de $\mathscr{C}$ con respecto a $\Sigma$ es una categor\'ia $\mathscr{C}[\Sigma^{-1}]$ junto con un funtor $q: \mathscr{C} \longrightarrow \mathscr{C}[\Sigma^{-1}]$ tales que:
    \begin{enumerate}
        \item para todo $s \in \Sigma$, $q(s)$ es un isomorfismo;
        \item para toda categor\'ia $\mathscr{D}$, $q$ induce una equivalencia en las categor\'ias de funtores dada por la precomposici\'on
        \begin{center}
            $\xymatrix{
            Hom(\mathscr{C}[\Sigma^{-1}], \mathscr{D}) \ar[r]^{q^*} & Hom_+(\mathscr{C}, \mathscr{D}),}$
        \end{center}
        donde los elementos de $Hom_+(\mathscr{C}, \mathscr{D})$ son aquellos funtores que mandan la clase $\Sigma$ en la clase $Isos(\mathscr{D})$.
    \end{enumerate}
\end{definition}

\begin{observation}
    Si $\mathscr{C}[\Sigma^{-1}]$ existe, es \'unica salvo equivalencia de categor\'ias.
     Gabriel y Zisman (\cite{GZ}), quienes introducen este concepto, y muchos autores despu\'es, requieren que $q^*$ en la definici\'on \ref{localization} sea un \emph{isomorfismo} de categor\'ias.
\end{observation}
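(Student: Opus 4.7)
El plan es aplicar dos veces, en forma sim\'etrica, la propiedad universal de la definici\'on \ref{localization}, y concluir que los funtores resultantes son mutuamente cuasi-inversos. Supongamos que $(\mathscr{L}_1, q_1)$ y $(\mathscr{L}_2, q_2)$ son dos localizaciones de $\mathscr{C}$ con respecto a $\Sigma$. Como $q_2$ env\'ia $\Sigma$ a isomorfismos, es un objeto de $Hom_+(\mathscr{C}, \mathscr{L}_2)$. Invocando la esencial suryectividad de la equivalencia $q_1^*: Hom(\mathscr{L}_1, \mathscr{L}_2) \longrightarrow Hom_+(\mathscr{C}, \mathscr{L}_2)$ producir\'ia un funtor $F: \mathscr{L}_1 \longrightarrow \mathscr{L}_2$ junto con un isomorfismo natural $Fq_1 \simeq q_2$. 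De manera an\'aloga, obtendr\'ia $G: \mathscr{L}_2 \longrightarrow \mathscr{L}_1$ con $Gq_2 \simeq q_1$.

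Para ver que $F$ y $G$ son mutuamente cuasi-inversos, compondr\'ia los dos isomorfismos naturales obteniendo $GFq_1 \simeq Gq_2 \simeq q_1 = Id_{\mathscr{L}_1}\circ q_1$, lo que dice que $GF$ e $Id_{\mathscr{L}_1}$, vistos como objetos de $Hom(\mathscr{L}_1, \mathscr{L}_1)$, tienen im\'agenes isomorfas bajo la equivalencia $q_1^*: Hom(\mathscr{L}_1, \mathscr{L}_1) \longrightarrow Hom_+(\mathscr{C}, \mathscr{L}_1)$. Dado que toda equivalencia de categor\'ias refleja isomorfismos, se sigue que $GF \simeq Id_{\mathscr{L}_1}$. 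Sim\'etricamente, $FG \simeq Id_{\mathscr{L}_2}$, de donde $F$ es una equivalencia de categor\'ias y, por lo tanto, $\mathscr{L}_1$ y $\mathscr{L}_2$ son equivalentes.

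M\'as que un obst\'aculo serio, el punto delicado es distinguir y utilizar correctamente los dos aspectos de la condici\'on 2 de la definici\'on: la esencial suryectividad de $q^*$ se usa para construir los funtores $F$ y $G$, mientras que su fidelidad plena, equivalente en este contexto a la reflexi\'on de isomorfismos, es la que garantiza el paso de un isomorfismo natural entre $q_1^*(GF)$ y $q_1^*(Id_{\mathscr{L}_1})$ a uno entre $GF$ e $Id_{\mathscr{L}_1}$. Si se omitiese la fidelidad plena, el argumento colapsar\'ia y la localizaci\'on no quedar\'ia determinada salvo equivalencia. Esto contrasta con la definici\'on de Gabriel-Zisman mencionada en la observaci\'on, donde $q^*$ es un isomorfismo de categor\'ias y la unicidad se obtiene en el sentido m\'as fuerte de isomorfismo, por un argumento an\'alogo pero formalmente m\'as simple.
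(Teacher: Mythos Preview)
Tu argumento es correcto y constituye la demostraci\'on est\'andar de la unicidad salvo equivalencia para objetos definidos por una propiedad universal ``d\'ebil''. En el art\'iculo, sin embargo, esta observaci\'on se enuncia sin demostraci\'on: simplemente se afirma la unicidad salvo equivalencia y se comenta la diferencia con la definici\'on de Gabriel--Zisman. No hay, por tanto, una prueba del paper con la cual comparar, pero tu desarrollo es el que cualquier lector esperar\'ia y est\'a bien ejecutado, incluido el \'enfasis en que la fidelidad plena de $q_1^*$ es lo que permite reflejar el isomorfismo $q_1^*(GF)\simeq q_1^*(Id_{\mathscr{L}_1})$ hacia $GF\simeq Id_{\mathscr{L}_1}$.
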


\begin{comentario}
    Una construcci\'on formal permite ver que $\mathscr{C}[\Sigma^{-1}]$ siempre existe, pero sus hom-sets no necesariamente son conjuntos, a\'un cuando la categor\'ia $\mathscr{C}$ s\'i es localmente pequeña. En el contexto de categor\'ias de modelos, la construcci\'on de Quillen (\cite{Qui}) muestra que la localizaci\'on de una categor\'ia de modelos localmente pequeña tambi\'en es localmente pequeña.
    
    Cuando la clase $\Sigma$ satisface ciertas condiciones, an\'alogas a las propiedades de un subconjunto multiplicativo en un anillo no conmutativo, $\mathscr{C}[\Sigma^{-1}]$ puede describirse en t\'erminos de ``fracciones''; es decir, se construye una \emph{categor\'ia de fracciones} que satisface la definici\'on de localizaci\'on. Dicha descripci\'on de la localizaci\'on $\mathscr{C}[\Sigma^{-1}]$ es \'util y necesaria en la pr\'actica para demostrar una serie de propiedades, pero el problema conjunt\'istico en los hom-sets que mencionamos anteriormente a\'un persiste con esta definici\'on expl\'icita.

\end{comentario}

\subsection{2-Categor\'ias}

El concepto de 2-categor\'ia se entiende como una generalizaci\'on de las categor\'ias, admitiendo, adem\'as de objetos y morfismos, una estructura adicional dada por las \emph{2-celdas} (o \emph{2-morfismos}), que pueden componerse de dos maneras distintas respetando cierta condicion de compatibilidad. Un ejemplo que clarifica esta noci\'on de 2-celdas es el de las transformaciones naturales, que proveen a la categor\'ia $\mathbf{Cat}$ de una estructura de 2-categor\'ia, cuyos objetos son las categor\'ias (pequeñas) y los morfismos son los funtores.

\begin{definition}
    Una 2-categor\'ia $\mathscr{C}$ consiste de:
    \begin{enumerate}
        \item Una familia de objetos $X$, $Y$, $Z$, ...
        \item Para cada par de objetos $X$, $Y$ en $\mathscr{C}$, una categor\'ia $\mathscr{C}[X, Y]$. Los objetos de esta categor\'ia son flechas $f: X \longrightarrow Y$ mientras que los morfismos son 2-celdas $\alpha: f \Longrightarrow g$ entre flechas de $X$ a $Y$.
        La composici\'on en esta categor\'ia es la \emph{composici\'on vertical}, denotada por $\circ$.
        La identidad de una flecha $f$ es la 2-celda denotada ``$Id_f $''.
        
        Para cada objeto $X$ se tiene un morfismo distinguido $id_X \in \mathscr{C}[X, X]$.
        \item Dados objetos $X$, $Y$, $Z$, un funtor $\mathscr{C}[Y, Z] \times \mathscr{C}[X, Y] \longrightarrow \mathscr{C}[X, Z]$ que define una \emph{composici\'on horizontal} asociativa (tanto en los morfismos como en las 2-celdas), y es denotada por $*$.
        Las identidades para esta composici\'on son las flechas $id_X$ y las 2-celdas $Id_{id_X}$, para todo $X$.
        Adem\'as, para cada par de morfismos $f:X \longrightarrow Y$ y $g: Y \longrightarrow Z$, dicha composici\'on horizontal satisface $Id_g*Id_f= Id_{g*f}$.
        \item Un axioma de compatibilidad entre las composiciones horizontal y vertical, conocido como ``Interchange law'':
        
        \vspace{3mm}
        Dada una configuraci\'on en $\mathscr{C}$
        \begin{center}
             \begin{tikzcd}
                  X \arrow[shift left=20pt, "\hspace{6mm}f"]{rr}[name=LUU, below]{}  \arrow["\hspace{6mm} g"']{rr}[name=LUD]{}
                  \arrow[swap]{rr}[name=LDU]{}
                  \arrow[shift right=20pt, "\hspace{6mm}l"']{rr}[name=LDD]{}
                  \arrow[Rightarrow,to path=(LUU) -- (LUD)\tikztonodes]{rr}{\hspace{-6mm} \alpha}
                  \arrow[Rightarrow,to path=(LDU) -- (LDD)\tikztonodes]{rr}{\hspace{-6mm} \beta}
                  &&\hspace{1mm} Y
                  \arrow[shift left=20pt, "\hspace{6mm}f'"]{rr}[name=RUU, below]{}
                  \arrow["\hspace{6mm}g'"']{rr}[name=RUD]{}    
                  \arrow[swap]{rr}[name=RDU]{}
                  \arrow[shift right=20pt, "\hspace{6mm}l'"']{rr}[name=RDD]{}
                  \arrow[Rightarrow,to path=(RUU) -- (RUD)\tikztonodes]{r}{\hspace{-6mm} \alpha '}
                  \arrow[Rightarrow,to path=(RDU) -- (RDD)\tikztonodes]{r}{\hspace{-6mm}\beta '}
                  &&\hspace{1mm} Z
            \end{tikzcd}
        \end{center}
       se verifica $(\beta ' * \beta)\circ(\alpha ' * \alpha)=(\beta ' \circ \alpha')*(\beta \circ \alpha).$ 
    \end{enumerate}
\end{definition}

\vspace{2mm}
\begin{notation} \label{notation}
    En general, las identidades de la composici\'on vertical se denotar\'an simplemente como flechas ``$f$ '' en lugar de ``$Id_f$''. Adem\'as, omitiremos la notaci\'on $*$ cuando las 2-celdas se compongan horizontalmente con los morfismos. De manera que, si por ejemplo $\alpha$ e $Id_f$ son componibles en el sentido $Id_f * \alpha = f*\alpha$, muchas veces escribiremos $f\alpha$.
\end{notation}

\begin{observation} \label{horizontal_comp}
    En presencia de composici\'on vertical, para determinar una composici\'on horizontal de 2-celdas compatible basta definir una composici\'on horizontal entre 2-celdas y morfismos (interpretando a los morfismos como 2-celdas identidades), a la que algunos autores llaman \emph{wishkering}:
    
    Para cada \begin{tikzcd}
    X \arrow["l"]{r} 
    & Y \arrow[shift left=10pt, "f"]{r}[name=LUU, below]{}
    \arrow[shift right=10pt, "g"']{r}[name=LDD]{}
    \arrow[Rightarrow,to path=(LUU) -- (LDD)\tikztonodes]{r}{\alpha}
    & \hspace{1mm} Z \arrow["r"]{r}
    & W,
    \end{tikzcd}
    supongamos que tenemos definidas 2-celdas $r\alpha$ y $\alpha l$. Si se verifican los axiomas
    \begin{enumerate}
        \item Para cada $\xymatrix{X \ar[r]^f & Y\ar[r]^g & Z}$, se tiene $Id_gf=gId_f=Id_{gf}$; % dice LO MISMO que la ultima parte del axioma 3.
        \item Para cada \begin{tikzcd}
                  X \arrow["l"]{r}
                  & Y \arrow[shift left=20pt, "\hspace{6mm}f"]{r}[name=LUU, below]{}  \arrow["\hspace{6mm} g"']{r}[name=LUD]{}
                  \arrow[swap]{r}[name=LDU]{}
                  \arrow[shift right=20pt, "\hspace{6mm}k"']{r}[name=LDD]{}
                  \arrow[Rightarrow,to path=(LUU) -- (LUD)\tikztonodes]{r}{\hspace{-6mm} \alpha}
                  \arrow[Rightarrow,to path=(LDU) -- (LDD)\tikztonodes]{r}{\hspace{-6mm} \beta}
                  & Z \arrow["r"]{r}
                  & W
            \end{tikzcd}, vale $(\beta l) \circ (\alpha l)=(\beta \circ \alpha)l$ y $(r\alpha) \circ (r\beta)=r(\beta \circ \alpha);$
        \item Para cada
            \begin{tikzcd}
                X \arrow[shift left=10pt, "f"]{r}[name=LUU, below]{}
                \arrow[shift right=10pt, "g"']{r}[name=LDD]{}
                \arrow[Rightarrow,to path=(LUU) -- (LDD)\tikztonodes]{r}{\alpha}
                & \hspace{1mm} Y \arrow[shift left=10pt, "f'"]{r}[name=RUU, below]{}
                \arrow[shift right=10pt, "g'"']{r}[name=RDD]{}
                \arrow[Rightarrow,to path=(RUU) -- (RDD)\tikztonodes]{r}{\alpha'}
                & Z,
            \end{tikzcd}
    $(g'\alpha) \circ (\alpha' f) = (\alpha' g) \circ (f'\alpha)$
    \end{enumerate}
    entonces cualquier composici\'on horizontal $\alpha' * \alpha$ como en el \'ultimo axioma queda definida por $(g'\alpha) \circ (\alpha' f) = (\alpha' g) \circ (f'\alpha).$

\end{observation}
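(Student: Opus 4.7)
La f\'ormula $(g'\alpha) \circ (\alpha' f) = (\alpha' g) \circ (f'\alpha)$ define autom\'aticamente la composici\'on horizontal $\alpha' * \alpha$ gracias al axioma (3), que garantiza que las dos expresiones coinciden y asegura la buena definici\'on. A partir de esta f\'ormula, resta verificar que $*$ satisface los axiomas de composici\'on horizontal de una 2-categor\'ia: compatibilidad con las identidades, funtorialidad en ambas variables (equivalente a la Interchange law) y asociatividad.

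Las compatibilidades con las identidades son un c\'alculo directo usando el axioma (1): por ejemplo,
\[ Id_g * Id_f = (g \, Id_f) \circ (Id_g \, f) = Id_{gf} \circ Id_{gf} = Id_{gf}, \]
y de manera an\'aloga se verifica que los $Id_{id_X}$ act\'uan como identidades para $*$. La asociatividad de $*$ se obtiene expandiendo $(\alpha'' * \alpha') * \alpha$ y $\alpha'' * (\alpha' * \alpha)$ mediante la definici\'on, el axioma (2) y la asociatividad de la composici\'on vertical.

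El n\'ucleo de la demostraci\'on es la Interchange law. Dadas $\alpha: f \Longrightarrow g$, $\beta: g \Longrightarrow h$ en $\mathscr{C}[X,Y]$ y $\alpha': f' \Longrightarrow g'$, $\beta': g' \Longrightarrow h'$ en $\mathscr{C}[Y,Z]$, desarrollando $(\beta' \circ \alpha') * (\beta \circ \alpha)$ mediante la definici\'on y el axioma (2) se obtiene $(h'\beta) \circ (h'\alpha) \circ (\beta' f) \circ (\alpha' f)$, mientras que $(\beta' * \beta) \circ (\alpha' * \alpha)$ queda $(h'\beta) \circ (\beta' g) \circ (g'\alpha) \circ (\alpha' f)$. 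Como coinciden los t\'erminos extremos, la igualdad se reduce a $(h'\alpha) \circ (\beta' f) = (\beta' g) \circ (g'\alpha)$, que es precisamente el axioma (3) aplicado al par $(\alpha, \beta')$. El obst\'aculo principal est\'a, entonces, en reconocer que la instancia del axioma (3) que se necesita no es la aplicada a las 2-celdas que se componen directamente, sino al par cruzado $(\alpha, \beta')$; una vez identificada esta instancia, toda la verificaci\'on se reduce a manipulaciones rutinarias de composiciones verticales.
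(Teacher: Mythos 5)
Tu propuesta es correcta: el paper enuncia esta observaci\'on sin demostraci\'on, y tu argumento es exactamente el est\'andar que la justifica. En particular, identificas bien el punto clave: la ley de intercambio $(\beta'*\beta)\circ(\alpha'*\alpha)=(\beta'\circ\alpha')*(\beta\circ\alpha)$ se reduce, tras expandir con el axioma (2), a la instancia del axioma (3) aplicada al par cruzado $(\alpha,\beta')$, es decir $(h'\alpha)\circ(\beta' f)=(\beta' g)\circ(g'\alpha)$, y la buena definici\'on de $\alpha'*\alpha$ es precisamente el contenido del axioma (3). Como \'unico detalle menor, para la asociatividad y las unidades $id_X$ usas impl\'icitamente que el whiskering es compatible con la composici\'on de 1-celdas (por ejemplo $(g''\alpha')f=g''(\alpha' f)$ y $\alpha\, id_X=\alpha$), lo cual es parte t\'acita de la estructura de whiskering que la observaci\'on presupone.
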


    \vspace{2mm}
    El ejemplo protot\'ipico de 2-categor\'ia es el de $\mathbf{Cat}$, en donde las 2-celdas son las transformaciones naturales.
    La composici\'on vertical de transformaciones naturales $\tau: F \Longrightarrow G$, $\sigma: G \Longrightarrow R$, donde $F, G, R$ son funtores con iguales dominio y codomio, se define como $(\sigma \circ \tau)_X = \sigma_X \tau_X$. Por otro lado,
    si tenemos
    \begin{tikzcd}
     \mathscr{C} \arrow[shift left=10pt, "F"]{r}[name=LUU, below]{}
    \arrow[shift right=10pt, "G"']{r}[name=LDD]{}
    \arrow[Rightarrow,to path=(LUU) -- (LDD)\tikztonodes]{r}{\alpha}
    & \hspace{1mm} \mathscr{D} \arrow[shift left=10pt, "R"]{r}[name=RUU, below]{}
    \arrow[shift right=10pt, "S"']{r}[name=RDD]{}
    \arrow[Rightarrow,to path=(RUU) -- (RDD)\tikztonodes]{r}{\beta}
    & \mathscr{E},
    \end{tikzcd}
    la composici\'on horizontal es definida como la tranformaci\'on natural cuyas componentes son $(\beta \circ \alpha)_X= \beta_{GX}R\alpha_X$ para cada $X$ en $\mathscr{C}$. Notamos que de la conmutatividad del diagrama
    \begin{center}
        $\xymatrix{
            RFX \ar[r]^{\beta_{FX}} \ar[d]_{F\alpha_X} & SFX \ar[d]^{S\alpha_X}\\
            RGX \ar[r]_{\beta_{GX}} & SGX\\
        }$
    \end{center}
    tambi\'en $(\beta \circ \alpha)_X= S\alpha_X \beta_{FX}$.

    \vspace{2mm}
\begin{definitions} \label{def_pseudo}
    Un \emph{2-funtor} $F:\mathscr{C} \longrightarrow \mathscr{D}$ entre 2-categor\'ias manda objetos de $\mathscr{C}$ en objetos de $\mathscr{D}$, morfismos de $\mathscr{C}$ en morfismos de $\mathscr{D}$ y 2-celdas de $\mathscr{C}$ en 2-celdas de $\mathscr{D}$, preservando todas las estructuras: composiciones vertical y horizontal e identidades.
    
    \bigskip
    Si $G$ es otro 2-funtor entre las mismas 2-categor\'ias, una \emph{transformaci\'on 2-natural} $\eta: F \Longrightarrow G$ consiste de una familia de flechas $\eta_X:FX \longrightarrow GX$ en $\mathscr{D}$, con $X$ variando en $\mathscr{C}$, de forma tal que para cada 2-celda $\alpha: f \Longrightarrow g$ se verifique la ecuaci\'on $\eta_YF(\alpha) = G(\alpha)\eta_X$:
    
    \begin{center}
        $\xymatrix{
                    FX \ar[rr]^{\eta_X} \ar@<-10pt>[dd]_{Fg} \ar@<10pt>[dd]^{Ff} \ar@{}[dd]|{\underset{\Longleftarrow}{F(\alpha)}} && GX \ar@<-10pt>[dd]_{Gg} \ar@<10pt>[dd]^{Gf} \ar@{}[dd]|{\underset{\Longleftarrow}{G(\alpha)}}\\
                    \\
                    FY \ar[rr]_{\eta_Y} && GY}$
    \end{center}

    Cuando $\alpha$ es una identidad, la igualdad anterior es la conmutatividad del diagrama correspondiente a la naturalidad de las transformaciones naturales que ya conoc\'iamos.
    
    M\'as generalmente, decimos que $\eta: F \Longrightarrow G$ es una \emph{transformaci\'on pseudonatural} entre 2-funtores si asigna a cada objeto $X$ en $\mathscr{C}$ una flecha $\eta_X: FX \longrightarrow GX$ en $\mathscr{D}$ y a cada flecha $f: X \longrightarrow Y$ en $\mathscr{C}$ una 2-celda inversible $\eta_f: Gf\eta_X \longrightarrow \eta_YFf$ en $\mathscr{D}$
    
    \begin{center}
        $\xymatrix{
            FX \ar[r]^{\eta_X} \ar[d]_{Ff} & GX \ar[d]^{Gf} \ar@{}[dl]|{\rotatebox[origin=c]{45}{$\Leftarrow$} \eta_f}\\
            GY \ar[r]_{\eta_Y} & GY}$
    \end{center}
    satisfaciendo los siguientes axiomas:
    \begin{enumerate}
        \item Para cada $X$ en $\mathscr{C}$, $\eta_{id_X}=Id_{\eta_X}$;
        \item Dadas $\xymatrix{ X \ar[r]^{f} & Y \ar[r]^{g} & Z }$ en $\mathscr{C}$, se tiene que $\eta_g F(f)\circ G(g) \eta_f=\eta_{gf}$, de acuerdo a \ref{notation}.
        \begin{center}
        $\xymatrix{
            FX \ar[r]^{\eta_X} \ar[d]_{Ff} & GX \ar[d]^{Gf} \ar@{}[dl]|{\rotatebox[origin=c]{45}{$\Leftarrow$} \eta_f}\\
            FY \ar[r]_{\eta_Y} \ar[d]_{Fg} & GY \ar[d]^{Gg}\ar@{}[dl]|{\rotatebox[origin=c]{45}{$\Leftarrow$} \eta_g} & =\hspace{3mm} \\
            FZ \ar[r]_{\eta_Z} & GZ\\}$
            $\xymatrix{
            FX \ar[r]^{\eta_X} \ar[dd]_{Fgf} & GX \ar[dd]^{Ggf} \ar@{}[ddl]|{\rotatebox[origin=c]{45}{$\Leftarrow$} \eta_{gf}}\\
            \\
            FZ \ar[r]_{\eta_Z} & GZ}$
        \end{center}
        \item Para cada 2-celda $\alpha: f \Longrightarrow g : X \longrightarrow Y$ en $\mathscr{C}$, vale la ecuaci\'on $\eta_g \circ G(\alpha)\eta_X=\eta_YF(\alpha) \circ \eta_f:$
            \begin{center}
                 $\xymatrix{
                    FX \ar[rr]^{\eta_X} \ar[dd]_{Fg} && GX \ar@<-10pt>[dd]_{Gf} \ar@<10pt>[dd]^{Gg \hspace{5mm} = \hspace{5mm}} \ar@{}[dd]|{\underset{\Longrightarrow}{G(\alpha)}} \ar@{}[ddll]|{\rotatebox[origin=c]{45}{$\Leftarrow$} \eta_{g}} \\
                    \\
                    FY \ar[rr]_{\eta_Y} && GY}$
                    $\xymatrix{
                    FX \ar[rr]^{\eta_X} \ar@<-10pt>[dd]_{Fg} \ar@<10pt>[dd]^{Ff} \ar@{}[dd]|{\underset{\Longleftarrow}{F(\alpha)}} && GX \ar[dd]^{Gf} \ar@{}[ddll]|{\rotatebox[origin=c]{45}{$\Leftarrow$} \eta_f} \\
                    \\
                    FY \ar[rr]_{\eta_Y} && GY}$
            \end{center}

    \end{enumerate}
    Notamos que una transformaci\'on 2-natural es una transformaci\'on pseudonatural tal que $\eta_f$ es la identidad para cada $f$, en cuyo caso las primeras dos condiciones son triviales y la tercera es el axioma de 2-naturalidad que mencionamos anteriormente.
    
    \bigskip
    Supongamos ahora que $\tau,\sigma: F \Longrightarrow G$ son transformaciones 2-naturales entre 2-funtores $F, G: \mathscr{C} \Longrightarrow \mathscr{D}$. Una \emph{modificaci\'on} $\mu : \tau \longrightarrow \sigma$ asigna a cada objeto $X$ de $\mathscr{C}$ una 2-celda $\mu_X : \tau_X \Longrightarrow \sigma_X$ en $\mathscr{D}$, de manera tal que para todo par de flechas $f, g: X \longrightarrow Y$ y para toda 2-celda $\alpha : f\Longrightarrow g$ en $\mathscr{C}$ se verifica la igualdad $\mu_YF(\alpha) = G(\alpha) \mu_X$:
    
\begin{center}
    \begin{tikzcd}
     FX \arrow[shift left=13pt, "Ff"]{r}[name=LUU, below]{}
    \arrow[shift right=13pt, "F g"']{r}[name=LDD]{}
    \arrow[Rightarrow,to path=(LUU) -- (LDD)\tikztonodes]{r}{F\alpha}
    & \hspace{1mm} FY \arrow[shift left=13pt, "\tau_Y"]{r}[name=RUU, below]{}
    \arrow[shift right=13pt, "\sigma_Y"']{r}[name=RDD]{}
    \arrow[Rightarrow,to path=(RUU) -- (RDD)\tikztonodes]{r}{\mu_Y}
    & GY
    \end{tikzcd}
    $=$
    \begin{tikzcd}
     FX \arrow[shift left=13pt, "\tau_X"]{r}[name=LUU, below]{}
    \arrow[shift right=13pt, "\sigma_X"']{r}[name=LDD]{}
    \arrow[Rightarrow,to path=(LUU) -- (LDD)\tikztonodes]{r}{\mu_X}
    & \hspace{1mm} GX \arrow[shift left=13pt, "Gf"]{r}[name=RUU, below]{}
    \arrow[shift right=13pt, "Gg"']{r}[name=RDD]{}
    \arrow[Rightarrow,to path=(RUU) -- (RDD)\tikztonodes]{r}{G\alpha}
    & GY.
    \end{tikzcd}
\end{center}

    Si $\tau$ y $\sigma$ son pseudonaturales, una modificaci\'on $\mu: \tau \longrightarrow \sigma$ ser\'a una asignaci\'on como antes satisfaciendo la igualdad $ (\mu_YF(\alpha))\circ \tau_f = \sigma_g \circ (G(\alpha) \mu_X)$ para cada $\alpha: f \Longrightarrow g$ en $\mathscr{C}$.
    
\end{definitions}

\vspace{3mm}
\begin{observation} \label{strict_&_pseudo}
Las transformaciones 2-naturales (pseudonaturales) y las modificaciones se componen verticalmente y horizontalmente (ver, por ejemplo, \cite{Bak} I.2.4, p. 25).

Si $\mathscr{C}$ y $\mathscr{D}$ son 2-categor\'ias, entonces $Hom(\mathscr{C}, \mathscr{D})$ es la 2-categor\'ia de 2-funtores, que puede ser interpretada de dos maneras a partir de las definiciones anteriores:

En el sentido d\'ebil, denotaremos $Hom_p(\mathscr{C}, \mathscr{D})$ a la 2-categor\'ia en la que los objetos son los 2-funtores de $\mathscr{C}$ en $\mathscr{D}$, las flechas son las transformaciones pseudonaturales y las 2-celdas son las modificaciones.

En el sentido estricto, $Hom_s(\mathscr{C}, \mathscr{D})$ ser\'a la 2-categor\'ia cuyos objetos son los 2-funtores de $\mathscr{C}$ en $\mathscr{D}$, las flechas son las transformaciones 2-naturales y las 2-celdas son las modificaciones.

Notamos que se tiene un 2-funtor $ Hom_s(\mathscr{C}) \rightarrow Hom_p(\mathscr{C})$ que es fiel pero no es pleno. Y para cualquier par de 2-funtores $F$, $G$, el funtor $$Hom_s(\mathscr{C})[F, G] \rightarrow Hom_p(\mathscr{C})[F, G]$$ s\'i es plenamente fiel.
\end{observation}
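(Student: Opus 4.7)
La idea central es que una transformaci\'on 2-natural es, en esencia, una transformaci\'on pseudonatural cuyas 2-celdas estructurales $\eta_f$ son todas identidades, gracias a la igualdad estricta $\eta_Y Ff = Gf \eta_X$ que impone la 2-naturalidad. Primero construir\'ia el 2-funtor $J: Hom_s(\mathscr{C}, \mathscr{D}) \to Hom_p(\mathscr{C}, \mathscr{D})$ como la identidad en los 2-funtores (objetos), asignando a cada transformaci\'on 2-natural $\eta$ la transformaci\'on pseudonatural con las mismas componentes $\eta_X$ y con $\eta_f := Id_{\eta_Y Ff}$ (bien definida por 2-naturalidad), y como la identidad en modificaciones. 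Los tres axiomas de pseudonaturalidad en la definici\'on \ref{def_pseudo} quedan verificados trivialmente: el primero porque $Id_{id_X}$ va a una identidad, el segundo porque $Id \circ Id = Id$, y el tercero se reduce exactamente al axioma de 2-naturalidad sobre 2-celdas $\eta_Y F(\alpha) = G(\alpha)\eta_X$. La verificaci\'on de que $J$ preserva composiciones vertical y horizontal (tanto de transformaciones como de modificaciones) se reduce, en cada caso, a observar que las 2-celdas estructurales involucradas siguen siendo identidades tras componer.

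Luego verifico las propiedades de fidelidad y no-plenitud. $J$ es fiel pues su acci\'on sobre transformaciones est\'a determinada por las componentes en objetos, que preserva, y es la identidad sobre modificaciones; de modo que transformaciones 2-naturales distintas (respectivamente, modificaciones distintas) tienen im\'agenes distintas. Para ver que $J$ no es pleno, basta exhibir una transformaci\'on pseudonatural $\eta$ entre dos 2-funtores con alguna 2-celda estructural $\eta_f$ no identidad. Por ejemplo, tomando $\mathscr{C}$ con una flecha no identidad $f$ y $\mathscr{D}$ con un 2-isomorfismo no trivial $\gamma: h \Rightarrow k$ entre dos flechas paralelas, se construye f\'acilmente una transformaci\'on pseudonatural cuyo $\eta_f$ es esencialmente $\gamma$, la cual no puede provenir de una 2-natural.

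Finalmente, para la plena fidelidad del funtor inducido entre hom-categor\'ias, fijados 2-funtores $F, G$ y transformaciones 2-naturales $\tau, \sigma: F \Longrightarrow G$, hay que probar que toda modificaci\'on pseudonatural $\mu: J(\tau) \longrightarrow J(\sigma)$ es una modificaci\'on 2-natural entre $\tau$ y $\sigma$, y rec\'iprocamente. La clave est\'a en el axioma pseudonatural para modificaciones: $(\mu_Y F(\alpha)) \circ \tau_f = \sigma_g \circ (G(\alpha) \mu_X)$. Como $\tau$ y $\sigma$ son 2-naturales, $\tau_f$ y $\sigma_g$ son 2-celdas identidad, y el axioma se simplifica a $\mu_Y F(\alpha) = G(\alpha) \mu_X$, que es precisamente el axioma de modificaci\'on entre transformaciones 2-naturales. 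As\'i, el funtor entre las hom-categor\'ias es biyectivo en flechas, es decir, plenamente fiel.

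El punto que m\'as atenci\'on requiere es la verificaci\'on de la funtorialidad de $J$ respecto de la composici\'on horizontal, cuya f\'ormula para pseudonaturales involucra la composici\'on de las 2-celdas $\eta_f$ entre s\'i y con las componentes en los objetos, pero al tratarse todas de identidades el c\'alculo es directo. Presentar el contraejemplo concreto para la no-plenitud tambi\'en merece cuidado, aunque se vuelve sencillo una vez escogidos $\mathscr{C}$ y $\mathscr{D}$ suficientemente ricos.
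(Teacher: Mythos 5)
Tu propuesta es correcta, pero conviene señalar que el texto no demuestra esta observación: se limita a citarla (\cite{Bak} I.2.4), de modo que tu aporte es una verificación directa que el trabajo omite. Tu argumento es el estándar y funciona: la inclusión $J$ queda bien definida porque la 2-naturalidad estricta $\eta_Y Ff = Gf\eta_X$ hace que $Id_{\eta_Y Ff}$ sea una 2-celda del tipo requerido $Gf\eta_X \Longrightarrow \eta_Y Ff$, los tres axiomas de la definición \ref{def_pseudo} se trivializan o se reducen al axioma de 2-naturalidad, y la plena fidelidad en las hom-categorías se sigue de que, al ser $\tau_f$ y $\sigma_g$ identidades, la condición de modificación pseudonatural $(\mu_Y F(\alpha))\circ \tau_f = \sigma_g \circ (G(\alpha)\mu_X)$ coincide literalmente con la condición estricta $\mu_Y F(\alpha) = G(\alpha)\mu_X$; es decir, los conjuntos de modificaciones son los mismos. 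Dos precisiones menores: la afirmación ``no es pleno'' debe leerse como ``no es pleno en general'' (si $\mathscr{D}$ sólo tiene 2-celdas identidad, toda transformación pseudonatural es 2-natural), por lo que tu plan de exhibir un contraejemplo concreto es la forma correcta de sustanciarla, y convendría escribirlo explícitamente (por ejemplo $\mathscr{C}$ la categoría libre en una flecha y $\mathscr{D}=\mathbf{Cat}$ con una transformación pseudonatural cuya 2-celda estructural es un isomorfismo natural no trivial); y al verificar que $J$ preserva la composición vertical de transformaciones hay que usar la fórmula $(\theta\circ\eta)_f=(\theta_Y\eta_f)\circ(\theta_f\eta_X)$, que efectivamente da una identidad cuando $\eta_f$ y $\theta_f$ lo son, tal como indicas.
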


\subsection{Equivalencias de 2-categor\'ias y 2-localizaci\'on}

Decimos que una flecha $f: X \longrightarrow Y$ es una \emph{equivalencia} si existe otra flecha $g: Y \longrightarrow X$ que es interpretada como una ``inversa'' de $f$ en el sentido m\'as general posible, seg\'un el contexto, y que recibe el nombre de \emph{cuasi-inversa} de $f$:

\bigskip
Si $f$ es un morfismo en una categor\'ia, entonces es una equivalencia si tiene una cuasi-inversa $g$ que es una verdadera inversa (es decir, $f$ es un isomorfismo).

\bigskip
Si $f$ es un morfismo en una 2-categor\'ia, entonces es una equivalencia si tiene una cuasi-inversa $g:Y \longrightarrow X$ en el sentido de que existen 2-celdas inversibles $id_X \Longrightarrow gf$, $fg \Longrightarrow id_Y $. Cuando estamos en la 2-categor\'ia $\mathbf{Cat}$, estas son precisamente las equivalencias de categor\'ias consideradas en la definici\'on \ref{equi_cat}.

\begin{observation}
    La cuasi-inversa $g$ est\'a determinada salvo una 2-celda inversible.
    
    Siempre pueden elegirse una tal $g$ y 2-celdas inversibles $\alpha: id_X \Longrightarrow fg$ y $\beta: gf \Longrightarrow id_Y$ de forma tal que se verifiquen las \emph{ecuaciones triangulares}:
    \begin{center}
            $\xymatrix{
                & fgf \ar[dr]^{f\beta} \\
                f \ar[ur]^{\alpha f} \ar@{=}[rr] && f 
            }$
            \hspace{2mm} y \hspace{2mm}
            $\xymatrix{
                & gfg \ar[dr]^{g\alpha} \\
                g \ar[ur]^{\beta g} \ar@{=}[rr] && g. 
            }$
        \end{center}
    
\end{observation}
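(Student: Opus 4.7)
El plan es demostrar ambas afirmaciones por separado, mediante manipulaciones estándar de 2-celdas.

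Para la \textbf{unicidad} de la cuasi-inversa salvo 2-celda inversible, dadas dos cuasi-inversas $g, g': Y \longrightarrow X$ de $f$ con sus respectivos pares de 2-celdas inversibles $(\alpha, \beta)$ y $(\alpha', \beta')$, construiría una 2-celda inversible $g \Longrightarrow g'$ como el zig-zag
\begin{center}
$\xymatrix{
g \ar@{=>}[r] & gfg' \ar@{=>}[r] & g',
}$
\end{center}
donde la primera 2-celda es el whiskering $g \cdot (\beta')^{-1}$ (compuesta con un unitor $g \cong g \cdot id_Y$), y la segunda es $\alpha^{-1} \cdot g'$ (compuesta con $id_X \cdot g' \cong g'$). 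Al ser composición horizontal y vertical de 2-celdas inversibles, el resultado es inversible, lo que establece la unicidad salvo 2-celda inversible.

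Para las \textbf{ecuaciones triangulares}, fijaría cualquier cuasi-inversa $g$ con 2-celdas inversibles $\alpha, \beta$ garantizadas por la definición de equivalencia. La meta es conservar $\alpha$ intacta y modificar $\beta$ a una nueva 2-celda inversible $\widetilde{\beta}$ tal que el par $(\alpha, \widetilde{\beta})$ satisfaga ambas ecuaciones triangulares. La receta estándar consiste en definir $\widetilde{\beta}$ como una composición vertical que involucra $\beta$, $\beta^{-1}$ y un whiskering apropiado de $\alpha^{-1}$; esto es análogo al truco clásico que promueve cualquier equivalencia de categorías a una equivalencia adjunta en $\mathbf{Cat}$. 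Por construcción, el par $(\alpha, \widetilde{\beta})$ satisface una de las dos ecuaciones triangulares, y la otra se deducirá a partir de la primera usando la ley de intercambio y la inversibilidad de $\widetilde{\beta}$.

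El paso que espero sea el principal obstáculo es justamente la deducción de la segunda ecuación triangular a partir de la primera. Requiere un argumento de \emph{pasting} delicado: reescribir la composición en cuestión aplicando la naturalidad de la composición horizontal, usando la ley de intercambio en los puntos adecuados, e insertando la primera ecuación triangular como subdiagrama para cancelar términos hasta obtener la identidad. Conceptualmente el argumento es estándar y válido en cualquier 2-categoría; el reto es puramente de \emph{bookkeeping} notacional, con seguimiento cuidadoso de cada 2-celda, cada whiskering y cada aplicación de la ley de intercambio.
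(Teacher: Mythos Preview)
El art\'iculo enuncia esta observaci\'on sin demostraci\'on (se la usa luego en la prueba de la Proposici\'on~\ref{prop_infinita}, pero nunca se la justifica), de modo que no hay un argumento original contra el cual comparar. Tu propuesta es exactamente el argumento cl\'asico de promoci\'on de una equivalencia a una equivalencia adjunta, v\'alido en cualquier 2-categor\'ia, y es la manera est\'andar de establecer ambas afirmaciones. Un detalle menor: conviene que adviertas que en el enunciado del art\'iculo hay un desliz de tipos (se escribe $\alpha: id_X \Longrightarrow fg$ cuando $fg: Y \to Y$; la convenci\'on coherente con la definici\'on previa es $\alpha: id_X \Longrightarrow gf$ y $\beta: fg \Longrightarrow id_Y$, o bien la versi\'on sim\'etrica), y que ajustes tus whiskerings en consecuencia al redactar los detalles.
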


\bigskip
\begin{observation} \label{s*_equivalencia}
Si $s: X \longrightarrow Y$ en una 2-categor\'ia $\mathscr{C}$ es una equivalencia, entonces induce equivalencias en las categor\'ias de morfismos:

Para cada objeto $Z$ en $\mathscr{C}$, el funtor $s^*: \mathscr{C}[Y, Z] \longrightarrow \mathscr{C}[X, Z]$ dado por la precomposici\'on $f \longmapsto fs$ es una equivalencia de categor\'ias. En efecto, se tienen $t: Y \longrightarrow X$ y 2-celdas inversibles $\alpha :ts \Longrightarrow id_X$, $\beta: st \Longrightarrow id_Y$ que nos permiten definir isomorfismos naturales $\eta: Id_{\mathscr{C}[Y, Z]} \Longrightarrow (st)^*=t^*s^*$ y $\theta: Id_{\mathscr{C}[X, Z]} \Longrightarrow (ts)^*=s^*t^*$. Para cada $f$ en  $\mathscr{C}[Y, Z]$, tenemos una flecha $\eta_f:=f\beta : fst \Longrightarrow f$
(una 2-celda en $\mathscr{C}$) que es un isomorfismo por ser una composici\'on $Id_f*\beta$ de isomorfismos. Adem\'as, $\eta$ es natural en $f$ y esto es una consecuencia de la compatibilidad de las composiciones vertical y horizontal en $\mathscr{C}$. An\'alogamente, $\theta_g$ es un isomorfismo natural en $g \in Ob(\mathscr{C}[X, Z])$, y por lo tanto $t^*$ es una cuasi-inversa para $s^*$.

Del mismo modo puede verse que $s_*: \mathscr{C}[Z, X] \longrightarrow \mathscr{C}[Z, Y]$
es una equivalencia de categor\'ias.
\end{observation}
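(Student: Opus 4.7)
El plan es construir una cuasi-inversa expl\'icita para $s^*$. Dada la equivalencia $s: X \longrightarrow Y$, tomo una cuasi-inversa $t: Y \longrightarrow X$ junto con 2-celdas inversibles $\alpha: ts \Longrightarrow id_X$ y $\beta: st \Longrightarrow id_Y$, cuya existencia est\'a garantizada por la definici\'on de equivalencia en una 2-categor\'ia. Propongo el funtor de precomposici\'on $t^*: \mathscr{C}[X, Z] \longrightarrow \mathscr{C}[Y, Z]$, dado por $g \longmapsto gt$, como candidato a cuasi-inversa de $s^*$.

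El n\'ucleo de la demostraci\'on consistir\'a en exhibir isomorfismos naturales $\eta: Id_{\mathscr{C}[Y, Z]} \Longrightarrow t^*s^*$ y $\theta: Id_{\mathscr{C}[X, Z]} \Longrightarrow s^*t^*$. Observando que $t^*s^*(f) = fst$, tomo como componente en $f$ la 2-celda obtenida por whiskering $\eta_f := f\beta^{-1}: f \Longrightarrow fst$, que es inversible por ser composici\'on horizontal de una identidad con la 2-celda inversible $\beta^{-1}$. An\'alogamente defino $\theta_g := g\alpha^{-1}: g \Longrightarrow gts$, aprovechando que $s^*t^*(g) = gts$.

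La naturalidad de $\eta$ en $f$ es el c\'alculo clave: dada una 2-celda $\gamma: f \Longrightarrow f'$ en $\mathscr{C}[Y, Z]$, debo verificar la igualdad $(\gamma st) \circ \eta_f = \eta_{f'} \circ \gamma$, entendida como igualdad entre 2-celdas en $\mathscr{C}$. Esto es precisamente una instancia de la interchange law entre las composiciones vertical y horizontal, o equivalentemente, expresa que el whiskering a derecha por el morfismo $st$ conmuta con la composici\'on vertical. El argumento para $\theta$ es id\'entico usando $\alpha^{-1}$ en lugar de $\beta^{-1}$.

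Para el caso de $s_*: \mathscr{C}[Z, X] \longrightarrow \mathscr{C}[Z, Y]$ se procede de manera enteramente dual, tomando $t_*$ como cuasi-inversa y usando $\alpha$ y $\beta$ por postcomposici\'on: las componentes de las transformaciones naturales ser\'an ahora $\beta f$ y $\alpha g$, respectivamente. Anticipo que el \'unico obst\'aculo real es mantener clara la notaci\'on de whiskering introducida en \ref{notation} y aplicar la ley de intercambio en el momento oportuno; conceptualmente no hay dificultad, pues todo se reduce a traducir la invertibilidad de $\alpha$ y $\beta$ en $\mathscr{C}$ a isomorfismos naturales en la categor\'ia de 2-celdas de la hom-categor\'ia correspondiente.
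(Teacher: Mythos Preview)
Tu propuesta es correcta y sigue esencialmente el mismo argumento que el paper: tomar $t^*$ como cuasi-inversa y definir las componentes de los isomorfismos naturales por whiskering con $\beta$ y $\alpha$, justificando la naturalidad mediante la interchange law. La \'unica diferencia es cosm\'etica: el paper escribe $\eta_f = f\beta : fst \Rightarrow f$ (en la direcci\'on opuesta a la que anuncia), mientras que t\'u usas $f\beta^{-1}$ para ser coherente con la direcci\'on declarada de $\eta$; como $\beta$ es inversible, ambas opciones son equivalentes.
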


M\'as a\'un, se tiene la siguiente
\begin{proposition}
    Dada $s:X \longrightarrow Y$ en una 2-categor\'ia $\mathscr{C}$, entonces $s$ es una equivalencia si y s\'olo si $s^*: \mathscr{C}[Y, Z] \longrightarrow \mathscr{C}[X, Z]$ es una equivalencia para todo $Z$ en $\mathscr{C}$ si y s\'olo si $s_*: \mathscr{C}[Z, X] \longrightarrow \mathscr{C}[Z, Y]$ es una equivalencia para todo $Z$ en $\mathscr{C}.$
\end{proposition}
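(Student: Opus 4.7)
The plan is to note that Observación \ref{s*_equivalencia} already supplies the two forward implications, so I only need to prove the two converses; and in fact the arguments for $s^*$ and $s_*$ will be completely dual, so I will focus on one and describe the other briefly.

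Suppose $s^*:\mathscr{C}[Y,Z]\to\mathscr{C}[X,Z]$ is an equivalence of categories for every object $Z$. To exhibit $s$ as an equivalence I must produce a quasi-inverse $t:Y\to X$ together with invertible 2-cells $\alpha: ts\Rightarrow id_X$ and $\beta: st\Rightarrow id_Y$. The idea is to use essential surjectivity at one object $Z$ to obtain $t$ and the first 2-cell, and then use fully-faithfulness at a second object $Z$ to lift the second 2-cell. Concretely, first I specialize to $Z=X$: by essential surjectividad of $s^*$ there exists $t\in\mathscr{C}[Y,X]$ and an invertible 2-cell $\alpha: ts=s^*(t)\Rightarrow id_X$ in $\mathscr{C}[X,X]$. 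Next I specialize to $Z=Y$ and look at the fully-faithful functor $s^*:\mathscr{C}[Y,Y]\to\mathscr{C}[X,Y]$. Here $s^*(st)=sts$, $s^*(id_Y)=s$, and horizontally composing $\alpha$ with $s$ on the left yields an invertible 2-cell $s\alpha: sts\Rightarrow s$. By fullness there is $\beta:st\Rightarrow id_Y$ with $s^*(\beta)=\beta s=s\alpha$, and this $\beta$ is the desired second 2-cell.

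The step I expect to be the delicate one is verifying that $\beta$ is \emph{invertible}. For this I plan to invoke the standard fact that every fully faithful functor reflects isomorphisms: using fullness once more I pick $\beta':id_Y\Rightarrow st$ with $s^*(\beta')=(s\alpha)^{-1}$, and then faithfulness forces $\beta\circ\beta'=id$ and $\beta'\circ\beta=id$ because both sides have the same image under $s^*$. This finishes the converse for $s^*$; no triangle identities need be checked, since the definition of equivalence in a 2-category adopted here only requires the existence of the two invertible 2-cells.

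For $s_*$ the argument is dual. Assuming $s_*:\mathscr{C}[Z,X]\to\mathscr{C}[Z,Y]$ is an equivalence for every $Z$, I specialize first to $Z=Y$, where essential surjectividad produces $t:Y\to X$ and an invertible $\beta: st=s_*(t)\Rightarrow id_Y$; and then to $Z=X$, where $s_*:\mathscr{C}[X,X]\to\mathscr{C}[X,Y]$ is fully faithful and sends $ts\mapsto sts$, $id_X\mapsto s$, so that the invertible 2-cell $\beta s:sts\Rightarrow s$ lifts, by the same fullness-plus-reflection-of-isos argument, to the required invertible $\alpha:ts\Rightarrow id_X$. In both cases the conceptual core, rather than any computation, is the interplay between essential surjectividad at one hom-category and fully-faithfulness at another, and the observation that these suffice to synthesize a genuine quasi-inverse in the 2-categorical sense.
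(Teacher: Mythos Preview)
Your argument is correct and is the standard Yoneda-style proof of this fact. Note, however, that in the paper this proposition is stated without proof: it appears immediately after Observaci\'on~\ref{s*_equivalencia} (which supplies the forward implications you cite) and the text moves on without a \texttt{proof} environment. So there is no ``paper's own proof'' to compare against; your write-up would serve perfectly well as the missing demonstration. The only cosmetic remark is that your invocation of ``fully faithful functors reflect isomorphisms'' could be made explicit exactly as you sketch (lift the inverse and use faithfulness), since the paper does not record that lemma separately.
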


\vspace{4mm}
En una 3-categor\'ia, una flecha $f: X \longrightarrow Y$ ser\'a una equivalencia si tiene una cuasi-inversa $g:Y \longrightarrow X$ en el sentido de que existen 2-celdas $id_X \Longrightarrow gf$, $fg \Longrightarrow id_Y$ que son equivalencias en las 2-categor\'ias de flechas $Hom(X, X)$ y $Hom(Y, Y)$.

Notamos que $2$-$\mathbf{Cat}$ es una 3-categor\'ia. De esta manera, el concepto de equivalencia entre categor\'ias (es decir, una equivalencia en $\mathbf{Cat}$) se extiende a 2-categor\'ias, usualmente con el nombre de \emph{pseudoequivalencia} de 2-categor\'ias:

\begin{definitions} \label{equiv_2cat}
    Sean $\mathscr{C}, \mathscr{D}$ 2-categor\'ias, $F, G: \mathscr{C} \longrightarrow \mathscr{D}$ 2-funtores.
    \begin{enumerate}
    \item  Una transformaci\'on 2-natural $F \Longrightarrow G$ es una equivalencia si lo es en la 2-categor\'ia de 2-funtores $Hom_s(\mathscr{C}, \mathscr{D})$.
    
    \item Una transformaci\'on pseudonatural $F \Longrightarrow G$ es una equivalencia si lo es en la 2-categor\'ia $Hom_p(\mathscr{C}, \mathscr{D})$.
    \end{enumerate}
\end{definitions}

    \vspace{2mm}
    \begin{definition}
    Decimos que un 2-funtor $F: \mathscr{C} \longrightarrow \mathscr{D}$ es una \emph{pseudoequivalencia de 2-categor\'ias} si existen $G: \mathscr{D} \longrightarrow \mathscr{C}$ y tranformaciones pseudonaturales $\eta: Id_{\mathscr{C}} \Longrightarrow GF$, $\theta: FG \Longrightarrow Id_{\mathscr{D}}$ que son equivalencias en $Hom_p(\mathscr{C}, \mathscr{C})$ y en $Hom_p(\mathscr{D}, \mathscr{D})$, respectivamente.
    
    El 2-funtor $F$ ser\'a una pseudoequivalencia de 2-categor\'ias en el sentido estricto cuando $\eta$ y $\theta$ sean equivalencias en $Hom_s(\mathscr{C}, \mathscr{C})$ y en $Hom_s(\mathscr{D}, \mathscr{D})$, respectivamente.
    \end{definition}

\bigskip
Dada $\eta :\xymatrix{ F \ar@{=>}[r]& G }$, una transformaci\'on pseudonatural entre 2-funtores $F, G \in Hom(\mathscr{C}, \mathscr{D})$, tal que cada componente $\eta_X$ es una equivalencia con cuasi-inversa $\theta_X$ en la 2-categor\'ia $\mathscr{D}$, entonces es posible darle una estructura pseudonatural a la familia $\{\theta_X\}_X $ para definir as\'i una cuasi-inversa $\theta$ de $\eta$ en la 2-categor\'ia $Hom_p(\mathscr{C},  \mathscr{D}).$ Este hecho es utilizado frecuentemente en la literatura pero no hemos podido encontrar una demostraci\'on del mismo, es por ello que aquí incluimos una.

\begin{proposition} \label{prop_infinita}
   Sea $\eta: F \Longrightarrow G : \mathscr{C} \longrightarrow \mathscr{D}$ una transformaci\'on pseudonatural entre 2-funtores. Entonces $\eta$ es una equivalencia en $Hom_p(\mathscr{C}, \mathscr{D})$ si y s\'olo si cada componente $\eta_X$ es una equivalencia en la 2-categor\'ia $\mathscr{D}$.
\end{proposition}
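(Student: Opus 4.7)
El plan consiste en separar las dos implicaciones. La directa es inmediata: si $\eta$ es una equivalencia en $Hom_p(\mathscr{C}, \mathscr{D})$ con cuasi-inversa $\theta : G \Longrightarrow F$ y modificaciones inversibles $\mu : \theta\eta \longrightarrow Id_F$, $\nu : Id_G \longrightarrow \eta\theta$, entonces para cada objeto $X$ las 2-celdas $\mu_X$, $\nu_X$ son inversibles en $\mathscr{D}$ por definici\'on de modificaci\'on inversible, y muestran que $\theta_X$ es una cuasi-inversa de $\eta_X$ en $\mathscr{D}$.

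Para la rec\'iproca, supongamos que cada componente $\eta_X : FX \longrightarrow GX$ es una equivalencia en $\mathscr{D}$. El plan es equipar a la familia $\{\theta_X\}_X$ con una estructura de transformaci\'on pseudonatural $\theta : G \Longrightarrow F$ para la cual $\alpha$ y $\beta$ sean modificaciones inversibles. Primero elijo, para cada $X$, una cuasi-inversa $\theta_X : GX \longrightarrow FX$ junto con 2-celdas inversibles $\alpha_X : id_{FX} \Longrightarrow \theta_X \eta_X$ y $\beta_X : \eta_X \theta_X \Longrightarrow id_{GX}$ que satisfagan las ecuaciones triangulares. Para cada $f : X \longrightarrow Y$ en $\mathscr{C}$ defino entonces la 2-celda de estructura $\theta_f : F(f)\theta_X \Longrightarrow \theta_Y G(f)$ como la composici\'on vertical
\begin{equation*}
    \theta_f := (\theta_Y G(f) \beta_X) \circ (\theta_Y \eta_f^{-1} \theta_X) \circ (\alpha_Y F(f) \theta_X),
\end{equation*}
inversible por serlo cada uno de sus tres factores (obtenidos por whiskering seg\'un \ref{horizontal_comp} y a partir de $\eta_f^{-1}$, $\alpha_Y$ y $\beta_X$).

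Resta verificar: (i) los axiomas de transformaci\'on pseudonatural para $\theta$, a saber, la normalizaci\'on $\theta_{id_X} = Id_{\theta_X}$, la compatibilidad con composiciones $\theta_g G(f) \circ F(g)\theta_f = \theta_{gf}$, y la compatibilidad con 2-celdas; y (ii) que $\alpha$ y $\beta$, vistas con componentes $\alpha_X$ y $\beta_X$, son efectivamente modificaciones entre las transformaciones pseudonaturales $Id_F \Longrightarrow \theta\eta$ y $\eta\theta \Longrightarrow Id_G$, respectivamente. En cada una de estas verificaciones se expande la definici\'on de $\theta_f$ y se utilizan la \emph{interchange law} y las ecuaciones triangulares para cancelar pares $\alpha_X \beta_X$ en las regiones intermedias, reduci\'endose finalmente al axioma correspondiente para $\eta$.

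El principal obst\'aculo es el axioma de composici\'on de $\theta$. Al expandir $\theta_g G(f) \circ F(g)\theta_f$ seg\'un la definici\'on aparecen seis factores que, luego de reordenar horizontal y verticalmente por intercambio e insertar los pares $\eta_Y\theta_Y$ adecuados, deben colapsar al $\theta_{gf}$ directamente definido. El paso clave es reconocer una subregi\'on de la forma $\eta_g F(f) \circ G(g)\eta_f$ y reemplazarla por $\eta_{gf}$; las cancelaciones restantes son aplicaciones sucesivas de las ecuaciones triangulares. Una vez resueltas las verificaciones, la inversibilidad de $\alpha$ y $\beta$ como modificaciones es automatica, pues cada componente ya es inversible, y se concluye que $\theta$ es una cuasi-inversa de $\eta$ en $Hom_p(\mathscr{C}, \mathscr{D})$.
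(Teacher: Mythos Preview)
Tu propuesta es correcta y sigue esencialmente la misma ruta que la demostraci\'on del art\'iculo: la misma definici\'on de $\theta_f$ como composici\'on triple v\'ia $\alpha_Y$, $\eta_f^{-1}$ y $\beta_X$ (con una diferencia puramente convencional en la direcci\'on de $\alpha$), seguida de la verificaci\'on de los tres axiomas de pseudonaturalidad y del car\'acter de modificaci\'on de $\alpha$ y $\beta$, apoy\'andose en las identidades triangulares y la interchange law. El art\'iculo desarrolla expl\'icitamente las cuentas que t\'u s\'olo bosquejas, pero el plan y las ideas clave coinciden.
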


%%%%%%%%%%%%%%%%%%%%%%%%%%%%%%%%%%%%%%%%%%%

\begin{proof}
    Si $\eta$ es una equivalencia con cuasi-inversa $\theta$, es claro que $\theta_X$ es una cuasi inversa para $\eta_X$, para cada $X$ en $\mathscr{C}.$
    
    \bigskip
    Supongamos ahora que cada componente $\eta_X$ es una equivalencia en $\mathscr{D}$ con cuasi-inversa $\theta_X$, y sean $\alpha_X: \theta_X\eta_X \Rightarrow id_{FX}$, $\beta_X: \eta_X \theta_X \Rightarrow id_{GX}$ inversibles. Podemos tomar $\alpha$ y $\beta$ satisfaciendo las identidades triangulares
    \begin{center}
            $\xymatrix{
                & \theta\eta\theta \ar@{=>}[dr]^{\theta\beta} \\
                \theta \ar@{=>}[ur]^{\alpha^{-1}\theta} \ar@{=}[rr] && \theta
            }$
            \hspace{5mm} y \hspace{5mm}
            $\xymatrix{
                & \eta\theta\eta \ar@{=>}[dr]^{\alpha\eta} \\
                \eta \ar@{=>}[ur]^{\beta^{-1}\eta} \ar@{=}[rr] && \eta. 
            }$
    \end{center}
    
    Queremos definir una transformaci\'on pseudonatural $\theta$ cuasi-inversa de $\eta$.
    
    Dado que $\eta$ es pseudonatural, para cada $f: X \longrightarrow Y$ en $\mathscr{C}$ existe una 2-celda inversible $\eta_f: Gf\eta_X \Longrightarrow \eta_YFf$ en $\mathscr{D}$
    \begin{center}
        $\xymatrix{
            FX \ar[rr]^{\eta_X} \ar[d]_{Ff} && GX \ar@{-->}@/_1.5pc/[ll]_{\theta_X}  \ar[d]^{Gf} \ar@{}[dll]|{\rotatebox[origin=c]{45}{$\Leftarrow$} \eta_f}\\
            FY \ar[rr]_{\eta_Y} && GY. \ar@{-->}@/^1.5pc/[ll]^{\theta_Y}\\
        }$
    \end{center}
    
    Definimos $\theta_f :Ff\theta_X \Longrightarrow \theta_YGf$ como la composici\'on
    \begin{center}
        $\xymatrix{
            Ff\theta_X \ar@{=>}[rr]^(.4){\alpha_Y^{-1}Ff\theta_X} && \theta_Y\eta_YFf\theta_X \ar@{=>}[rr]^{\theta_Y\eta_f^{-1}\theta_X} && \theta_YGf\eta_X\theta_X \ar@{=>}[rr]^{\theta_YGg\beta_X} && \theta_YGf. \\
        }$
    \end{center}
    
    Por ser una composici\'on de isomorfismos, $$\theta_f= (\theta_YGf\beta_X)\circ (\theta_Y\eta_f^{-1}\theta_X) \circ (\alpha_Y^{-1}Ff\theta_X)$$ tambi\'en es un isomorfismo.
    
    \vspace{3mm}
    Teniendo en cuenta los axiomas de la definici\'on de tranformaci\'on pseudonatural en \ref{def_pseudo}, veamos que $\theta$ satisface cada uno de estos.
    
    \vspace{4mm}
    \begin{enumerate}
        \item Se verifica $\theta_{id_X}=Id_{\theta_X}.$ En efecto, como $\eta_{id_X}=Id_{\eta_X}$ y se cumplen las identidades triangulares para $\alpha$ y $\beta$, entonces
        \begin{equation*}
        \begin{split}
            \theta_{id_X} & = (\theta_Xid_{GX}\beta_X)\circ (\theta_XId_{\eta_X}\theta_X) \circ (\alpha_X^{-1}Id_{FX}\theta_X)\\
                          \\
                          & = (\theta_X\beta_X)\circ (\theta_X\eta_X\theta_X)\circ (\alpha_X^{-1}\theta_X)\\
                          \\
                          & = (\theta_X\beta_X)\circ (\alpha_X^{-1}\theta_X) = Id_{\theta_X}.
        \end{split}
        \end{equation*}
        
        \vspace{4mm}
        \item Dadas $\xymatrix{ X \ar[r]^{f} & Y \ar[r] ^{g} & Z }$, se tiene el segundo axioma de pseudonaturalidad $(\theta_gGf)\circ (Fg\theta_f)=\theta_{gf}:$ 
        
        \vspace{1mm}
        
        Como $\eta$ es pseudonatural, entonces $\eta_{gf}^{-1}=(Gg\eta_f^{-1})\circ (\eta_g^{-1}Ff)$ y escribimos
        
            \begin{equation} \label{axioma_2_bis}
                \begin{split}
                    \theta_{gf} & = (\theta_ZGgGf\beta_X)\circ (\theta_Z\eta_{gf}^{-1}\theta_X)\circ (\alpha_Z^{-1}FgFf\theta_X) \\
                                \\
                                & = (\theta_ZGgGf\beta_X)\circ (\theta_ZGg\eta_{f}^{-1}\theta_X)\circ (\theta_Z\eta_{g}^{-1}Ff\theta_X)\circ (\alpha_Z^{-1}FgFf\theta_X)\\
                \end{split}
            \end{equation}
            y
            \begin{equation} \label{axioma_2}
                \begin{split}
                   (\theta_gGf)\circ (Fg\theta_f) & =(\theta_ZGg\beta_YGf)\circ [\text{ }(\theta_Z\eta_g^{-1}\theta_YGf) \circ (\alpha_Z^{-1}Fg\theta_YGf)\text{ }]\circ \\ & \hspace{5mm} [\text{ }(Fg\theta_YGf\beta_X) \circ (Fg\theta_Y\eta_f^{-1}\theta_X)\text{ }] \circ (Fg\alpha_Y^{-1}Ff\theta_X)\\
                    &\hspace{1mm}\\
                     & =(\theta_ZGg\beta_YGf) \circ [\text{ }(\theta_Z\eta_g^{-1}) \circ (\alpha_Z^{-1}Fg)\text{ }]\theta_YGf \circ \\ & \hspace{5mm} Fg\theta_Y[\text{ }(Gf\beta_X) \circ (\eta_f^{-1}\theta_X)\text{ }] \circ (Fg\alpha_Y^{-1}Ff\theta_X)\\
                     \\
                     & \hspace{-20mm} =(\theta_ZGg\beta_YGf)\circ [\text{ }(\theta_Z\eta_g^{-1}) \circ (\alpha_Z^{-1}Fg)\text{ }]\theta_Y[\text{ }(Gf\beta_X) \circ (\eta_f^{-1}\theta_X)\text{ }] \\ & \hspace{55mm} \circ (Fg\alpha_Y^{-1}Ff\theta_X).
                \end{split}
            \end{equation}        

        \vspace{5mm}
        El diagrama que sigue representa el lado derecho de la ecuaci\'on (\ref{axioma_2}). 
        
         \begin{center}
            \begin{tikzcd}
                GX \arrow[rrrr,"Ff\theta_X"{name=L, description}]{} &&&& FY \arrow[rr, "Fg"'{name=R, description}]{} && FZ\\
                GX \arrow[rrrr, "\theta_Y\eta_YFf\theta_X"{name=LL, description}]{} &&&& FY \arrow[rr, "Fg"{name=RR, description}]{} && FZ \arrow[Rightarrow, from=L, to=LL, "\alpha_Y^{-1}Ff\theta_X"]{} \arrow[Rightarrow, from=R, to=RR, "Id"]{} \\
                GX \arrow[rr, "\eta_YFf\theta_X"{name=LLL, description}]{} && GY \arrow[rr, "\theta_Y"{name=CCC, description}]{} \arrow[Rightarrow, from=LL, "Id"]{} && FY \arrow[rr, "Fg"{name=RRR, description}]{} \arrow[Rightarrow, from=RR, to=RRR]{} && FZ \\
                GX \arrow[rr, "Gf"{name=LLLL, description}]{} && GY \arrow[rr, "\theta_Y"{name=CCCC, description}]{} \arrow[rrrr, bend right=5, white, ""{name=A,below}]{} \arrow[Rightarrow, from=LLL, to=LLLL, "(Gf\beta_X) \circ (\eta_f^{-1}\theta_X)"]{} && FY \arrow[rr, "\theta_ZGg\eta_Y"{name=RRRR, description}]{}  \arrow[Rightarrow, from=CCC, to=CCCC, "Id"]{}  && FZ \arrow[Rightarrow, from=RRR, to=RRRR, "(\theta_Z\eta_g^{-1}) \circ (\alpha_Z^{-1}Fg)"]{}\\
                GX \arrow[rr, "Gf"{name=LLLLL, description}]{} && GY \arrow[rrrr, "\theta_ZGg\eta_Y\theta_Y"{name=RRRRR, description}]{} \arrow[Rightarrow, from=LLLL, to=LLLLL, "Id"]{} &&&& FZ  \arrow[Rightarrow, from=A, to=RRRRR, "Id"]{} \\
                GX \arrow[rr, "Gf"{name=LLLLLL, description}]{} && GY \arrow[rrrr, "\theta_ZGg"{name=RRRRRR, description}]{} \arrow[Rightarrow, from=LLLLL, to=LLLLLL, "Id"]{} &&&& FZ \arrow[Rightarrow, from=RRRRR, to=RRRRRR, "\theta_ZGg\beta_Y"]{} \\
            \end{tikzcd}
        \end{center}
        
        Como consecuencia de la compatibilidad entre las composiciones horizontal y vertical, puede reescribirse de la siguiente manera:
        \begin{center}
            \begin{tikzcd}
                GX \arrow[rrrr,"Ff\theta_X"{name=L, description}]{} &&&& FY \arrow[rr, "Fg"'{name=R, description}]{} && FZ\\
                GX \arrow[rrrr, "Ff\theta_X"{name=LL, description}]{} &&&& FY \arrow[rr, "\theta_ZGg\eta_Y"{name=RR, description}]{} && FZ \arrow[Rightarrow, from=L, to=LL, "Id"]{} \arrow[Rightarrow, from=R, to=RR, "(\theta_Z\eta_g^{-1}) \circ (\alpha_Z^{-1}Fg)"]{} \\
                GX \arrow[rrrr, "\theta_Y\eta_YFf\theta_X"{name=LLL, description}]{} \arrow[rrrrrr, white, bend right=4, ""{name=A, below}] &&&& FY \arrow[rr, "\theta_ZGg\eta_Y"{name=RRR, description}]{} \arrow[Rightarrow, from=LL, to=LLL, "\alpha_Y^{-1}Ff\theta_X"]{} && FZ \arrow[Rightarrow, from=RR, to=RRR, "Id"]{}\\
                GX \arrow[rr, "\eta_YFf\theta_X"{name=LLLL, description}]{} \arrow[rrrrrr, white, bend left=4, ""{name=B, below}]{} && GY \arrow[rrrr, "\theta_ZGg\eta_Y\theta_Y"{name=RRRR, description}]{} &&&& FZ \arrow[Rightarrow, from=A, to=B, "Id"]{} \\
                GX \arrow[rr, "\eta_YFf\theta_X"{name=LLLLL, description}]{} && GY \arrow[rrrr, "\theta_ZGg"{name=RRRRR, description}]{} \arrow[Rightarrow, from=LLLL, to=LLLLL, "Id"]{} &&&& FZ \arrow[Rightarrow, from=RRRR, to=RRRRR, "\theta_ZGg\beta_Y"]{}\\
                GX \arrow[rr, "Gf"{name=LLLLLL, description}]{} && GY \arrow[rrrr, "\theta_ZGg"{name=RRRRRR, description}]{}\arrow[Rightarrow, from=LLLLL, to=LLLLLL, "(Gf\beta_X)\circ (\eta_f^{-1}\theta_X)"]{} &&&& FZ. \arrow[Rightarrow, from=RRRRR, to=RRRRRR, "Id"]{}\\
            \end{tikzcd}
        \end{center}
        
        Componiendo las 2-celdas en el diagrama anterior, horizontalmente y luego verticalmente, y usando las identidades triangulares ya mencionadas, nos queda la expresi\'on de la ecuaci\'on (\ref{axioma_2_bis}).
        
        \item Dada $\phi : f \Longrightarrow g: X \longrightarrow Y$ una 2-celda en $\mathscr{C}$, veamos que $$\theta_g \circ G(\phi)\eta_X = \eta_Y F(\phi) \circ \eta_f$$.
        
        Tenemos
        \begin{equation} \label{axioma_3}
            \begin{split}
                \theta_g \circ F(\phi) \theta_X & = [(\theta_YGg\beta_Y)\circ (\theta_Y\eta_g^{-1}\theta_X)] \circ [(\alpha_Y^{-1}Fg\theta_X) \circ (F(\phi) \theta_X)]\\
                \\
                & = [(\theta_YGg\beta_Y)\circ (\theta_Y\eta_g^{-1}\theta_X)] \circ [(\theta_Y\eta_YF(\phi)\theta_X)\circ (\alpha_Y^{-1}Ff\theta_X)]\\
            \end{split}
        \end{equation}
        y
        \begin{equation*}
            \begin{split}
                \theta_YG(\phi) \circ \theta_f & = (\theta_YGg\beta_X) \circ (\theta_YG(\phi) \eta_X \theta_X) \circ (\theta_Y \eta_f^{-1}\theta_X) \circ (\alpha_Y^{-1}Ff\theta_X)\\
                \\
                & = (\theta_YGg\beta_X) \circ \theta_Y[(G(\phi) \eta_X) \circ ( \eta_f^{-1})]\theta_X \circ (\alpha_Y^{-1}Ff\theta_X).\\
            \end{split}
        \end{equation*}
        
        Como el axioma se verifica para $\eta$, entonces $$\eta_g^{-1} \circ \eta_YF(\phi) = G(\phi) \eta_X \circ \eta_f^{-1}.$$ Reemplazando esta expresi\'on en (\ref{axioma_3}) obtenemos la igualdad que queriamos probar.
    \end{enumerate}

    \vspace{5mm}
    Lo que queda de la demostraci\'on consiste en exhibir modificaciones inversibles $\mu : \eta \circ \theta \longrightarrow Id_G : G \Longrightarrow G$ y $\lambda: \theta \circ \eta \longrightarrow Id_F : F \Longrightarrow F$.
    
    Definimos $\mu_X=\beta_X : \eta_X\theta_X \Longrightarrow id_{GX}$, para cada $X$. Dada $f: X \longrightarrow Y$, queremos ver que
    $$(Id_G)_g \circ (Gf\mu_X)=\mu_YGf \circ (\eta \circ \theta)_f.$$
    
    Como $(Id_G)_g=g$, $\mu_Y=\beta_Y$, $\mu_X=\beta_X$ y $(\eta \circ \theta)_f=(\eta_Y\theta_f)\circ(eta_f\theta_X),$ entonces la igualdad requerida es $$(Gf\beta_X)=\beta_YGf \circ (\eta_Y\theta_f)\circ(\eta_f\theta_X).$$
    
    Por definici\'on de $\theta_f$ y usando la ecuaci\'on $\eta_Y\alpha_Y^{-1}Ff=\beta_Y^{-1}\eta_YFf$ se tiene
    \begin{equation*}
        \begin{split}
            (\beta_YGf) \circ (\eta_Y\theta_f)\circ(\eta_f\theta_X) & = \eta_Y[(\theta_YGf\beta_X)\circ (\theta_Y\eta_f^{-1}\theta_X) \circ (\alpha_Y^{-1}Ff\theta_X)]\circ (\eta_f\theta_X)\\
            \\
            & = (\beta_YGf)\circ (\eta_Y\theta_YGf\beta_X)\circ(\eta_Y\theta_Y\eta_f^{-1}\theta_X) \\
            & \hspace{50mm} \circ (\eta_Y\alpha_Y^{-1}Ff\theta_X)\circ(\eta_f\theta_X)\\
            \\
            & = (\beta_YGf)\circ (\eta_Y\theta_YGf\beta_X)\circ (\beta_Y^{-1}Gf\eta_X\theta_X)\\
            \\
            & = (\beta_YGf)\circ (\beta_Y^{-1}Gf\beta_X)\\
            \\
            & = Gf\beta_X. \\
        \end{split}
    \end{equation*}

    Por lo tanto, $\mu$ es una 2-celda inversible en $Hom_p(F, F)$.
    
    \bigskip
    Definiendo $\lambda_X=\alpha_X$ para cada $X$, podemos demostrar con una cuenta similar que $\lambda$ es una modificaci\'on inversible.
    
\end{proof}

%%%%%%%%%%%%%%%%%%%%%%%%%%%%%%%%%%%%%%%%%%%%%%

\begin{observation}
    La proposici\'on anterior no vale para transformaciones 2-naturales. Es decir, una transformaci\'on 2-natural que es una equivalencia punto a punto no necesariamente es una equivalencia en $Hom_s(\mathscr{C}, \mathscr{D})$.
\end{observation}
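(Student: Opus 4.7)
La estrategia es exhibir un contraejemplo expl\'icito: una transformaci\'on 2-natural $\eta:G\Rightarrow F$ cuyas componentes son equivalencias en $\mathscr{D}$, pero tal que no exista ninguna transformaci\'on 2-natural $\theta:F\Rightarrow G$ en absoluto (y por lo tanto, a fortiori, ninguna cuasi-inversa 2-natural). La idea central es codificar en el dominio $\mathscr{C}$ una acci\'on de grupo que, al ser enviada a $\mathscr{D}$, fuerce a la cuasi-inversa estricta a elegir un punto fijo que no existe. De esta forma, la obstrucci\'on puntual que en el caso pseudonatural se corrige v\'ia la 2-celda $\theta_f$ (ver Proposici\'on \ref{prop_infinita}) pasa a ser un obst\'aculo absoluto.

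Concretamente, tomar\'ia como $\mathscr{C}$ la 2-categor\'ia localmente discreta con un \'unico objeto $\ast$ y monoide de endomorfismos $\{\mathrm{id},\sigma\}$ con $\sigma^2=\mathrm{id}$, y como $\mathscr{D}$ la 2-categor\'ia $\mathbf{Cat}$. Con estas elecciones, un 2-funtor $\mathscr{C}\to\mathscr{D}$ es una categor\'ia con una involuci\'on estricta, y una transformaci\'on 2-natural es un funtor estrictamente equivariante. Definir\'ia $G(\ast)=\mathbf{I}$, el grupoide con dos objetos $0,1$ y un \'unico isomorfismo no trivial $\iota:0\to 1$, con $G(\sigma)$ el automorfismo que intercambia $0\leftrightarrow 1$ y env\'ia $\iota\mapsto\iota^{-1}$; y $F(\ast)=\mathbf{1}$ la categor\'ia terminal con $F(\sigma)=\mathrm{id}$. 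Lo \'unico rutinario que habr\'ia que chequear es que $G(\sigma)\circ G(\sigma)=\mathrm{id}_{\mathbf{I}}$, es decir, que $G$ es efectivamente un 2-funtor.

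Luego tomar\'ia $\eta:G\Rightarrow F$ dada por la \'unica componente $\eta_\ast:\mathbf{I}\to\mathbf{1}$, el funtor constante. La condici\'on de 2-naturalidad $F(\sigma)\eta_\ast=\eta_\ast G(\sigma)$ se verifica trivialmente porque ambos lados son el mismo funtor constante, y $\eta_\ast$ es una equivalencia en $\mathbf{Cat}$ ya que $\mathbf{I}\simeq\mathbf{1}$; as\'i, $\eta$ es equivalencia punto a punto. Pero cualquier transformaci\'on 2-natural $\theta:F\Rightarrow G$ consistir\'ia de un funtor $\theta_\ast:\mathbf{1}\to\mathbf{I}$ satisfaciendo $G(\sigma)\theta_\ast=\theta_\ast F(\sigma)=\theta_\ast$, lo que forzar\'ia a $\theta_\ast(\star)$ a ser un objeto fijo por la involuci\'on $G(\sigma)$; ni $0$ ni $1$ lo son. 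No existe entonces ninguna cuasi-inversa 2-natural de $\eta$, y en particular $\eta$ no puede ser equivalencia en $Hom_s(\mathscr{C},\mathscr{D})$.

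El \'unico punto delicado del plan es dar con el contraejemplo adecuado; una vez localizada la obstrucci\'on en una acci\'on sin puntos fijos, el resto es un c\'alculo elemental. Como chequeo de consistencia con la Proposici\'on \ref{prop_infinita}, observar\'ia que $\eta$ s\'i es equivalencia en $Hom_p(\mathscr{C},\mathscr{D})$: la cuasi-inversa pseudonatural se obtiene eligiendo $\theta_\ast(\star)=0$ y usando $\iota^{-1}$ como la 2-celda inversible $\theta_\sigma:G(\sigma)\theta_\ast\Rightarrow\theta_\ast F(\sigma)$, de acuerdo a la receta de la demostraci\'on de \ref{prop_infinita}. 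El fen\'omeno es precisamente la flexibilidad adicional que las transformaciones pseudonaturales ganan respecto de las estrictas, fen\'omeno anticipado en \ref{strict_&_pseudo}.
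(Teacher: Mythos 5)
Tu propuesta es correcta, y conviene señalar que la tesis no ofrece demostración alguna de esta observación: se enuncia como un hecho sin contraejemplo. Tu argumento suple precisamente lo que falta. El contraejemplo está bien elegido y los cálculos cierran: $G(\sigma)$ es efectivamente una involución estricta de $\mathbf{I}$ (pues $G(\sigma)(\iota^{-1})=(G(\sigma)\iota)^{-1}=\iota$), $\eta_\ast$ es 2-natural de manera trivial y es una equivalencia puntual, y la ecuación $G(\sigma)\theta_\ast=\theta_\ast$ obliga a $\theta_\ast(\star)$ a ser un punto fijo de la involución sin puntos fijos, de modo que ni siquiera existe una transformación 2-natural $F\Rightarrow G$, mucho menos una cuasi-inversa. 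Esto es incluso más fuerte que lo que pide el enunciado. Tu chequeo de consistencia con la Proposición \ref{prop_infinita} también es correcto: con $\theta_\ast(\star)=0$, la 2-celda $\theta_\sigma=\iota^{-1}:G(\sigma)\theta_\ast\Rightarrow\theta_\ast F(\sigma)$ satisface el axioma de composición ($G(\sigma)\theta_\sigma\,\circ\,\theta_\sigma F(\sigma)$ da $\iota^{-1}\circ\iota=\mathrm{id}_0$, compatible con $\theta_{\sigma^2}=\mathrm{Id}$), lo que ilustra exactamente por qué la versión pseudonatural sí funciona. Lo único que agregaría es una frase explícita recordando que, por definición de equivalencia en una 2-categoría, la inexistencia de cualquier flecha $F\Rightarrow G$ en $Hom_s(\mathscr{C},\mathbf{Cat})$ descarta en particular la existencia de una cuasi-inversa, que es el paso final del argumento; tal como está, queda implícito pero es inmediato.
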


\bigskip
\begin{definition}
    Sean $\mathscr{C}$ una 2-categor\'ia y $\Sigma$ una subclase de morfismos. La \emph{2-localizaci\'on} de $\mathscr{C}$ con respecto a $\Sigma$ es una 2-categor\'ia $\mathscr{C}[\Sigma^{-1}]$ junto con un 2-funtor $q: \mathscr{C} \longrightarrow \mathscr{C}[\Sigma^{-1}]$ tales que
    \begin{enumerate}
        \item $q(s)$ es una equivalencia para todo $s \in \Sigma$;
        \item para toda 2-categor\'ia $\mathscr{D}$, $q$ induce una pseudoequivalencia de 2-categor\'ias dada por la precomposici\'on
        \begin{center}
            $q^*: Hom_p(\mathscr{C}[\Sigma^{-1}], \mathscr{D}) \longrightarrow Hom_{p_+}(\mathscr{C}, \mathcal{D})$,
        \end{center}
        donde $Hom_{p_+}(\mathscr{C}, \mathcal{D})$ consiste de los 2-funtores que mandan los elementos de $\Sigma$ en equivalencias.
    \end{enumerate}
\end{definition}

\begin{observation}
    La 2-categor\'ia $\mathscr{C}[\Sigma^{-1}]$ queda caracterizada salvo pseudoequivalencias.
\end{observation}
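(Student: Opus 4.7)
El plan es seguir el argumento cl\'asico de unicidad de objetos universales, adaptado al contexto 2-categ\'orico. Sup\'ongase que $(L_1, q_1)$ y $(L_2, q_2)$ son dos 2-localizaciones de $\mathscr{C}$ respecto de $\Sigma$; el objetivo ser\'a construir una pseudoequivalencia $F: L_2 \longrightarrow L_1$.

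Primero, notemos que $q_1 \in Hom_{p_+}(\mathscr{C}, L_1)$, pues $q_1(s)$ es una equivalencia para cada $s \in \Sigma$. Aplicando la propiedad universal de $(L_2, q_2)$ con $\mathscr{D}=L_1$, el 2-funtor $q_2^*: Hom_p(L_2, L_1) \longrightarrow Hom_{p_+}(\mathscr{C}, L_1)$ es una pseudoequivalencia y, en particular, esencialmente suryectivo salvo equivalencia pseudonatural. Se obtendr\'an as\'i un 2-funtor $F: L_2 \longrightarrow L_1$ y una equivalencia pseudonatural $\xi: Fq_2 \Longrightarrow q_1$. Intercambiando los roles de $(L_1, q_1)$ y $(L_2, q_2)$, el mismo argumento producir\'a $G: L_1 \longrightarrow L_2$ junto con $\eta: Gq_1 \Longrightarrow q_2$ pseudonatural e inversible.

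El segundo paso consistir\'a en componer estas equivalencias para concluir que $FGq_1 \simeq Fq_2 \simeq q_1 = Id_{L_1}\, q_1$, es decir, $q_1^*(FG) \simeq q_1^*(Id_{L_1})$ como objetos equivalentes de $Hom_{p_+}(\mathscr{C}, L_1)$. Usar\'e entonces la propiedad universal de $(L_1, q_1)$ con $\mathscr{D}=L_1$: el 2-funtor $q_1^*: Hom_p(L_1, L_1) \longrightarrow Hom_{p_+}(\mathscr{C}, L_1)$ es tambi\'en una pseudoequivalencia y, por lo tanto, admite una cuasi-inversa $H$. Aplicando $H$ al isomorfismo anterior se obtendr\'a $FG \simeq H q_1^*(FG) \simeq H q_1^*(Id_{L_1}) \simeq Id_{L_1}$ como equivalencia pseudonatural en $Hom_p(L_1, L_1)$. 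Sim\'etricamente, $GF \simeq Id_{L_2}$, con lo cual $F$ resultar\'a la pseudoequivalencia de 2-categor\'ias buscada.

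El punto m\'as delicado ser\'a justificar que una pseudoequivalencia de 2-categor\'ias \emph{refleja} equivalencias de 1-celdas salvo 2-isomorfismo; este es el ingrediente central del \'ultimo paso, pues el transporte a trav\'es de la cuasi-inversa $H$ debe tanto preservar como reflejar equivalencias pseudonaturales. Este hecho, an\'alogo estructural del argumento cl\'asico en $\mathbf{Cat}$ y sustentado adem\'as por la proposici\'on \ref{prop_infinita} aplicada en las componentes, es la \'unica observaci\'on no puramente formal del plan; el resto ser\'an manipulaciones rutinarias de composici\'on de equivalencias pseudonaturales y de modificaciones inversibles.
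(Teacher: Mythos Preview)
Tu plan es correcto y constituye el argumento est\'andar de unicidad de objetos universales en contexto 2-categ\'orico. Observa, sin embargo, que en el art\'iculo esta observaci\'on se enuncia sin demostraci\'on alguna: aparece inmediatamente despu\'es de la definici\'on de 2-localizaci\'on como un comentario, sin desarrollar el argumento. No hay, por tanto, una prueba del autor con la cual comparar la tuya.

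Un comentario sobre tu ``punto m\'as delicado'': con la estrategia que describes ---aplicar la cuasi-inversa $H$ de $q_1^*$ y usar la equivalencia pseudonatural $Hq_1^* \simeq Id_{Hom_p(L_1,L_1)}$--- no hace falta invocar que una pseudoequivalencia \emph{refleja} equivalencias. Basta con que los 2-funtores \emph{preserven} equivalencias (lo cual es inmediato, pues una equivalencia se define por ecuaciones salvo 2-celdas inversibles y los 2-funtores respetan composici\'on, identidades e inversibilidad de 2-celdas) y con que las componentes de una equivalencia pseudonatural sean equivalencias (que es precisamente la proposici\'on \ref{prop_infinita}). As\'i, de $q_1^*(FG)\simeq q_1^*(Id_{L_1})$ se pasa a $Hq_1^*(FG)\simeq Hq_1^*(Id_{L_1})$ por preservaci\'on, y luego a $FG\simeq Id_{L_1}$ componiendo con las equivalencias $FG\simeq Hq_1^*(FG)$ e $Hq_1^*(Id_{L_1})\simeq Id_{L_1}$ dadas por la counidad de la pseudoequivalencia. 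Tu preocupaci\'on sobre reflexi\'on es, pues, superflua con el camino que ya trazaste.
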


\begin{observation}
    La definici\'on de 2-localizaci\'on que hemos dado es una adaptaci\'on al contexto de 2-categor\'ias de la definici\'on de localizaci\'on de bicategorias (ver \cite{Pro}).
    
    Decimos que el 2-funtor $q$ es la 2-localizaci\'on en el sentido \emph{estricto} si el 2-funtor inducido $q^*$ es una pseudoequivalencia de 2-categor\'ias en el sentido estricto, de acuerdo a la definici\'on \ref{equiv_2cat}.
\end{observation}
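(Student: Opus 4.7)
Esta observación es mayormente definicional, de modo que la ``demostración'' consistiría en verificar dos afirmaciones implícitas: (i) que nuestra formulación de 2-localización efectivamente especializa la localización de bicategorías de Pronk \cite{Pro} cuando la bicategoría resulta ser una 2-categoría, y (ii) que la variante estricta, en la que se pide que $q^*$ sea una pseudoequivalencia estricta, es consistente con la noción general, en el sentido de que toda 2-localización estricta es automáticamente una 2-localización.

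Para (i), el plan es desarrollar la definición de Pronk: una localización de una bicategoría $\mathcal{B}$ respecto de $W$ es un pseudofuntor $q: \mathcal{B} \to \mathcal{B}[W^{-1}]$ tal que la precomposición $q^*: \mathrm{Hom}(\mathcal{B}[W^{-1}], \mathcal{D}) \to \mathrm{Hom}_+(\mathcal{B}, \mathcal{D})$ es una biequivalencia de las bicategorías de pseudofuntores, transformaciones pseudonaturales y modificaciones. Cuando $\mathcal{B}$ y $\mathcal{D}$ son 2-categorías, habría que verificar que (a) una biequivalencia entre dichas Hom-bicategorías coincide con una pseudoequivalencia en nuestro sentido, pues ambas requieren un pseudofuntor inverso más unidad y counidad pseudonaturales inversibles al nivel apropiado, y (b) todo pseudofuntor de una 2-categoría en una 2-categoría puede reemplazarse por un 2-funtor salvo biequivalencia (un resultado clásico de estrictificación). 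De este modo se identifica nuestra definición con la de Pronk restringida al contexto 2-categórico.

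Para (ii), usaría la Observación \ref{strict_&_pseudo}: la inclusión $Hom_s(\mathscr{C}, \mathscr{D}) \to Hom_p(\mathscr{C}, \mathscr{D})$ es un 2-funtor fiel que, además, es plenamente fiel en cada hom-categoría. Una pseudoequivalencia estricta está dada por transformaciones 2-naturales $\eta$, $\theta$ y modificaciones inversibles en $Hom_s$; aplicando la inclusión, se obtienen $\eta$, $\theta$ vistas como pseudonaturales y las mismas modificaciones vistas en $Hom_p$, que por plena fidelidad local conservan su carácter de inversibles. Luego la versión estricta implica la versión general, lo que justifica el uso de la misma palabra ``2-localización'' en ambos casos. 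Observar además que, por la Proposición \ref{prop_infinita}, la condición sobre $\eta$, $\theta$ en la versión general es equivalente a pedir que cada componente sea una equivalencia en la 2-categoría correspondiente.

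El paso que espero sea el principal obstáculo es (i), específicamente verificar que el pasaje entre bicategorías arbitrarias y 2-categorías vía estrictificación se comporta bien respecto de la propiedad universal de localización; esto no es una cuenta sino un pequeño teorema de teoría de bicategorías, y es precisamente la razón por la cual la definición de Pronk sirve de modelo para la nuestra. El paso (ii) es esencialmente formal y se reduce a aplicar la Observación \ref{strict_&_pseudo} junto con la preservación de equivalencias por 2-funtores fieles.
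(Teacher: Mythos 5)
Esta observación no tiene demostración en el trabajo: es una aclaración de procedencia (la definición se inspira en la localización de bicategorías de Pronk) seguida de una definición (la noción de 2-localización \emph{estricta}). Identificaste correctamente su carácter definicional, y eso es lo esencial. Tus dos verificaciones complementarias son de naturaleza distinta: la parte (ii) es correcta y pertinente —que toda 2-localización estricta sea una 2-localización se sigue de que la inclusión $Hom_s(\mathscr{C},\mathscr{D}) \rightarrow Hom_p(\mathscr{C},\mathscr{D})$ es un 2-funtor y todo 2-funtor preserva equivalencias (ni siquiera hace falta la plena fidelidad local que invocás; basta con que se preserven composiciones, identidades y 2-celdas inversibles). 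La parte (i), en cambio, excede lo que la observación afirma: el texto dice que la definición es ``una adaptación'' de la de Pronk, no que coincida con ella al restringirse a 2-categorías, de modo que el teorema de estrictificación de pseudofuntores que proponés verificar, si bien es un resultado conocido y razonable de citar, no es algo que el trabajo pretenda ni necesite establecer aquí. En resumen: tu lectura es correcta, (ii) es un chequeo válido y formal, y (i) es un agregado opcional que va más allá del enunciado.
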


\newpage
\section{Categor\'ias de Modelos}

Las categor\'ias de modelos fueron introducidas por Quillen (\cite{Qui}) como un escenario en el cual es posible desarrollar una teor\'ia de homotop\'ia abstrayendo ciertas propiedades que encontramos en el contexto particular de espacios topol\'ogicos y que son comunes a muchos ejemplos conocidos. Una categor\'ia de modelos consiste de tres clases distinguidas de flechas $\mathcal{F}$ (fibraciones), $co\mathcal{F}$ (cofibraciones) y $\mathcal{W}$ (equivalencias d\'ebiles) cumpliendo ciertos axiomas que codifican sus propiedades. Si bien en la teor\'ia de homotop\'ia asociada a una categor\'ia de modelos lo que se quiere es invertir formalmente los elementos de la clase $\mathcal{W}$, tanto fibraciones como cofibraciones son esenciales a la hora de hacer posible una teor\'ia que va m\'as all\'a de una simple localizaci\'on.

La definici\'on que usaremos nosotros es m\'as fuerte que la definicion original y es la que Quillen introduce con el nombre de \emph{categor\'ia de modelos cerrada}, en la que cualesquiera dos de las tres clases distinguidas de morfismos determinan la tercera.

En general, no es f\'acil demostrar que una categor\'ia admite una estructura de modelos, por lo que s\'olo mencionaremos algunos de los ejemplos m\'as usuales, haciendo especial \'enfasis en la categor\'ia $\mathcal{T}op$ de espacios topol\'ogicos. 

\subsection{Axiomas y definiciones}

\begin{definition} \label{lifting}
    Sea $\mathscr{X}$ una categoría. Decimos que un morfismo $f$ en $\mathscr{X}$ tiene la \emph{propiedad de levantamiento a izquierda} con respecto a un morfismo $g$ si todo problema de la forma
    \begin{center}
        $\xymatrix{
        \cdot \ar[r] \ar[d]_{f} & \cdot \ar[d]^{g}\\
        \cdot \ar[r] \ar@{-->}[ur]^{h} & \cdot\\
        }$
    \end{center}
    
    tiene una solución $h$, no necesariamente única, que hace conmutar ambos triángulos. Equivalentemente, decimos que $g$ tiene la \emph{propiedad de levantamiento a derecha} con respecto a $f$.
\end{definition}

\begin{definition} \label{retract}
    Dadas $f:X \longrightarrow Y$, $g: X' \longrightarrow Y'$ en una categoría $\mathscr{X}$, entonces \emph{f es retracto de g} si existe un diagrama conmutativo:
    \begin{center}
        $\xymatrix{
        X \ar[r] \ar@/^1pc/[rr]^{id_X} \ar[d]_{f} & X' \ar[r] \ar[d]_{g} & X \ar[d]^{f} \\
        Y \ar[r] \ar@/_1pc/[rr]_{id_Y} & Y' \ar[r] & Y \\
        }$
    \end{center}
\end{definition}

\bigskip
La siguiente es una definici\'on tratada por Goerss y Jardine en \cite{JG}, e introducida por Quillen (\cite{Qui}) de manera equivalente.
\begin{definition} \label{model_cat}
    Una \emph{categor\'ia de modelos} es una categor\'ia $\mathscr{C}$ provista de tres clases de morfismos $\mathcal{F}$, $co\mathcal{F}$ y $\mathcal{W}$, que llamamos, respectivamente, Fibraciones, Cofibraciones y Equivalencias Débiles, satisfaciendo los siguientes axiomas.
    \renewcommand{\labelenumi}{M\arabic{enumi}.}
    \renewcommand{\labelenumii}{\roman{enumii}.}
    \begin{enumerate}
        \item $\mathscr{C}$ tiene l\'imites finitos y col\'imites finitos.
        \item Si una cofibración es además una equivalencia débil, entonces tiene la propiedad de levantamiento a izquierda con respecto a  cualquier fibración.
        
        Si una fibración es además una equivalencia débil, entonces tiene la propiedad de levantamiento a derecha con respecto a  cualquier cofibración.
        \item Si $f$ es retracto de $g$ y $g$ es una fibración, una cofibración o una equivalencia débil, entonces $f$ también lo es. Además, las tres clases son cerradas por composición y contienen todas las identidades.
        \item Todo morfismo $f$ en $\mathscr{C}$ puede ser factorizado de dos maneras:
            \begin{enumerate}
                \item $f=pi$, donde $p$ es una fibración e $i$ es una cofibración y también es una equivalencia débil;
                \item $f=pi$, donde $p$ es una fibración y tambi\'en es una equivalencia débil e $i$ es una cofibración.
            \end{enumerate}
        \item Sean \begin{tikzcd}
                        X \arrow[r, "f"] & Y \arrow[r, "g"] & Z
                    \end{tikzcd}
         en $\mathscr{C}$. Si dos de los tres morfismos $f$, $g$ y $gf$ son equivalencias débiles, entonces los tres lo son.
    \end{enumerate}
    
\end{definition}

\bigskip
\begin{duality}
Un objeto $X \in \mathscr{C}$ pensado en la categor\'ia dual formal se denota $\widebar{X} \in \mathscr{C}^{op}$. Los morfismos no cambian la notaci\'on y se tiene que una flecha $\xymatrix{\widebar{X} \ar[r]^{f} & \widebar{Y}}$ en $\mathscr{C}^{op}$ es una flecha $\xymatrix{Y \ar[r]^{f} & X}$ en $\mathscr{C}.$

Los axiomas de la definici\'on \ref{model_cat} son auto-duales. Dada una categoría de modelos $\mathscr{C}$, la categoría opuesta también admite una estructura de modelos, donde un morfismo $f: \widebar{Y} \longrightarrow \widebar{X}$ en $\mathscr{C}^{op}$ es
\begin{enumerate}
    \item una \emph{equivalencia débil} si $f: X \longrightarrow Y$ lo es en $\mathscr{C}$.
    
    \item una \emph{cofibración} si $f$ es una fibración en $\mathscr{C}$.
    
    \item una \emph{fibración} si $f$ es una cofibración en $\mathscr{C}$.
\end{enumerate}
\end{duality}

\bigskip

Observamos que, por el axioma M1, en una categoría de modelos siempre disponemos de un objeto inicial y de un objeto terminal, denotados $0$ y $1$, respectivamente.

\begin{definition}
   Un objeto $X$ en una categoría de modelos $\mathscr{C}$ es \emph{fibrante} si $X \longrightarrow 1$ es una fibración, y es \emph{cofibrante} si $0 \longrightarrow X$ es una cofibración. 
\end{definition}

\begin{definition}
    Una (co)fibración es \emph{trivial} si además es una equivalencia débil.
    Usaremos la siguiente notación:

    \renewcommand{\labelenumi}{\roman{enumi}.}
    \begin{enumerate}
        \item 
            $\xymatrix@1{ \cdot \ar[r]|\circ & \cdot }$ (equivalencias débiles)
        \item 
            $\xymatrix@1{ \cdot \ar@{->>}[r] & \cdot }$ (fibraciones) y $\xymatrix@1{ \cdot \ar@{->>}[r]|\circ & \cdot }$ (fibraciones triviales)
        \item 
            $\xymatrix@1{ \cdot \hspace{2mm} \ar@{>->}[r] & \cdot }$ (cofibraciones) y $\xymatrix@1{ \cdot \hspace{2mm} \ar@{>->}[r]|\circ & \cdot }$ (cofibraciones triviales)
    \end{enumerate}
\end{definition}

\subsection{Ejemplos}
\subsubsection{Espacios Topol\'ogicos}

\begin{definition}
    Una función continua $f: X \longrightarrow Y$ entre espacios topológicos es una \emph{equivalencia homotópica débil} si induce un isomorfismo de grupos de homotop\'ia $f_*: \pi_n(X, x_0) \longrightarrow \pi_n (Y, f(x_0))$ para todo $n \geq 0$, para todo $x_0 \in X$.
\end{definition}

\begin{definition}
    Una función continua $p: X \longrightarrow Y$ es una \emph{fibración de Serre} si tiene la propiedad de levantamiento a derecha con respecto a las inclusiones $\xymatrix{D^n\hspace{1mm} \ar@{>->}[r] & D^n \times [0, 1]}, n\geq 0$.
    \begin{center}
        $\xymatrix{
            D^n \ar[r] \ar@{ >->}[d] & X \ar[d]^{p} \\
            D^n\times [0, 1] \ar@{-->}[ur] \ar[r] & Y
        }$
    \end{center}
\end{definition}

\vspace{2mm}
La categoría $\mathcal{T}op$ de espacios topológicos admite una estructura de categor\'ia de modelos, donde las equivalencias débiles $\mathcal{W}$ son las equivalencias homotópicas débiles, las fibraciones $\mathcal{F}$ son las fibraciones de Serre y las cofibraciones $\mathcal{C}o\mathcal{F}$ son aquellas funciones continuas con la propiedad de levantamiento a izquierda respecto de toda función en $\mathcal{F} \cap \mathcal{W}$.

Algunas caracterizaciones de las clases $\mathcal{F}$ y $\mathcal{C}o\mathcal{F}$ resultan útiles a la hora de demostrar los axiomas:
\begin{enumerate}
    \item las fibraciones triviales son aquellas funciones continuas que tienen la propiedad de levantamiento a derecha respecto de las inclusiones $\xymatrix{S^{n-1} \hspace{1mm} \ar@{>->}[r] & D^n}$, $n \geq 0$;
    \item una funci\'on continua es una cofibraci\'on trivial si y s\'olo si es una cofibraci\'on y un retracto por deformaci\'on fuerte.
\end{enumerate}

Con estas caracterizaciones se puede ver que cualquier espacio topol\'ogico es un objeto fibrante y la clase de objetos cofibrantes incluye a la familia de CW-complejos.

La categoría $\mathcal{T}op$ tiene todos los límites y colímites pequeños. Además, por la definición de equivalencia débil, es evidente que el axioma M5 se verifica. En cuanto a M3, por funtorialidad de $\pi_n$ y la conmutatividad del diagrama de la definición \ref{retract} puede 
verse que la clase $\mathcal{W}$ es cerrada por retractos. Tanto para $\mathcal{F}$ como para $co\mathcal{F}$ el argumento es muy similar y est\'a basado en el siguiente resultado categ\'orico (junto con su versi\'on dual):

\emph{Si $i$ es retracto de $j$ y $j$ tiene la propiedad de levantamiento a derecha con respecto a $f$, entonces $i$ tiene la propiedad de levantamiento a derecha con respecto a $f$.}
\begin{center}
$\xymatrix{
\cdot \ar[r] \ar@/^1pc/[rr]^{id} \ar[d]_(.4){i} & \cdot \ar[r] \ar[d]_(.4){j} & \cdot \ar[r] \ar[d]_(.4){i} & \cdot \ar[d]^(.4){f} \\
\cdot \ar[r] \ar@/_1pc/[rr]_{id} &\cdot \ar@{-->}[urr] \ar[r] & \cdot \ar@{-->}[ur] \ar[r] & \cdot \\
}$
\end{center}

En relación al axioma de levantamiento M2, sólo hay que ver la segunda parte, ya que la primera es inmediata de la definición de cofibración. Dada una función $f: X \longrightarrow Y$ en $co\mathcal{F} \cap \mathcal{W}$, consideramos una factorización $f=pi$ donde $p$ es una fibración e $i$ es una cofibración trivial y, luego, las tres funciones están en $\mathcal{W}$. Como $f$ es cofibración y $p$ una fibración trivial, existe un levantamiento $d$ en el diagrama conmutativo
$\xymatrix{ \cdot \ar[r]^{i} \ar[d]_{f} & \cdot \ar[d]^{p} \\
            \cdot \ar@{-->}[ur]^{d} \ar@{=}[r] & \cdot}$
y, por lo tanto, $f$ es retracto de $i$. Dado que la clase de morfismos con la propiedad de levantamiento es cerrada por retractos, $f$ tiene dicha propiedad.

Notamos que, partiendo de las caracterizaciones de fibraciones trivales y cofibraciones triviales que mencionamos antes, estas demostraciones de los axiomas s\'olo usan argumentos puramente categ\'oricos.

Resta la mayor dificultad, que está en probar la existencia de las factorizaciones de M4, y para ello se emplean ciertas propiedades de los espacios topol\'ogicos y las funciones continuas. La demostración de este axioma suele basarse en un argumento introducido por Quillen en \cite{Qui} con el nombre \emph{"small object argument"}, que es una manera de producir factorizaciones cuando las fibraciones están caracterizadas por tener la propiedad de levantamiento a derecha respecto de cierto conjunto de morfismos.

Podemos encontrar estas demostraciones con todo detalle en \cite{Dwy} y \cite{Hov}.

\begin{comment}
\begin{comentario}
    Dado cualquier espacio topológico $X$, existen un CW-complejo Z y una equivalencia homotópica débil $f: Z \longrightarrow X$, llamada \emph{CW-aproximación de X} [Hatcher - proposici\'on 4.13]. Esto nos dice que en la categoría homotópica de Quillen todo espacio topológico es isomorfo a un CW-complejo. En nuestro caso, cada objeto $X$ será \emph{equivalente} a un CW-complejo en la 2-categoría $\Ho$.
    
    Las equivalencias homot\'opicas d\'ebiles inducen isomorfismos en los grupos de homolog\'ia, pero \emph{no} necesariamente son equivalencias a menos que los espacios involucrados sean CW-complejos. Es por esto que la idea de representar a los isomorfismos de la categor\'ia homot\'opica \'unicamente por las equivalencias resulta poco satisfactoria. Al localizar la categor\'ia $\mathcal{T}op$ con respecto a la clase $\mathcal{W}$, los espacios que son d\'ebilmente equivalentes se admiten de alguna manera como ``equivalentes''.
\end{comentario}
\end{comment}

\subsubsection{Complejos de cadenas de m\'odulos sobre un anillo}

    Sean $A$ un anillo con unidad y $Mod_A$ la categoría de $A$-módulos a izquierda.
    
    Se define la categoría $Ch_A$ de complejos de cadenas (graduados positivamente) sobre $Mod_A$ como aquella en la que cada objeto $M_{\bullet}$ consiste de una familia $\{C_k\}_{k\geq 0}$ de $A$-módulos junto con morfismos de borde $\partial : M_k \longrightarrow M_{k-1}$ para todo $k\geq 1$ tales que $\partial^2=0$; y una flecha $f: M_{\bullet} \longrightarrow N_{\bullet}$ en $Ch_A$ es una colección de morfismos de $A$-módulos $f_k : M_k \longrightarrow N_k$ tales que $f_{k-1}\partial = \partial f_k$.
    
    La categoría $Ch_A$ resulta una categoría de modelos, donde un morfismo $f:M_{\bullet} \longrightarrow N_{\bullet}$ es
    \begin{enumerate}
        \item una \emph{equivalencia débil} si $f$ induce isomorfismos $f_k : H_k(M) \longrightarrow H_k (N)$ para todo $k\geq 0$, siendo $H_k(\cdot)$ el $k$- ésimo grupo de homología;
        \item una \emph{cofibración} si $f_k$ es un monomorfismo y adem\'as su co-núcleo es un $A$-módulo proyectivo, para cada $k\geq 0$;
        \item una \emph{fibración} si $f_k$ es un epimorfismo, $k\geq 0$.
    \end{enumerate}
    
    Como en el caso de espacios topol\'ogicos con los funtores $\pi_n$, los axiomas M3 y M5 son una consecuencia de la funtorialidad de la homolog\'ia $H_n: Ch_A \longrightarrow Mod_A$. La categor\'ia $Ch_A$ tiene todos los l\'imites y col\'imites pequeños, que se calculan gradualmente. En cuanto a los axiomas de factorizaci\'on y levantamiento de morfismos, estos se obtienen de algunas construcciones que requieren de las siguientes caracterizaciones:
    \begin{enumerate}
        \item Sea $f: M_{\bullet} \longrightarrow N_{\bullet}$ un morfismo en $Ch_A$.
        
        Consideramos el pullback
        $\xymatrix{
        Z_{n-1}(M)\underset{Z_{n-1}(N)}{\times}N_n \ar@{-->}[r] \ar@{-->}[d] & N_n, \ar[d] \\
         Z_{n-1}(M) \ar[r] & Z_{n-1}(N)\\
        }$
        donde $Z_n(\cdot)$ denota el $n$-\'esimo ciclo del complejo.
      
        Son equivalentes:
        \renewcommand{\labelenumi}{\roman{enumi}.}
        \begin{enumerate}
            \item f es una fibraci\'on trivial;
            \item el morfismo inducido $M_n \longrightarrow Z_{n-1}(M)\underset{Z_{n-1}(N)}{\times}N_n$ es un epimorfismo para todo $n \geq 0$.
        \end{enumerate}
        
        \item Para cada $n>0$, denotamos $D_{\bullet}^n$ al complejo tal que $D_n^n=D_{n-1}^n=A$, $D_k^n=0$ para todo $k \neq n, n-1$ y $\partial : D_n^n \longrightarrow D_{n-1}^n$ es la identidad.
        
        Un morfismo $f: M_{\bullet} \longrightarrow N_{\bullet}$ es una fibraci\'on si y s\'olo si tiene la propiedad de levantamiento a derecha respecto de $0 \longrightarrow D_{\bullet}^n$ para todo $n > 0$:
        \begin{center}
            $\xymatrix{
                0 \ar[r] \ar[d] & M_k \ar[d]^{f_k} \\
                D^n_k \ar[r] \ar@{-->}[ur] & M_k.\\
            }$
        \end{center}
        
    \end{enumerate}
    
    Las demostraciones de estas caracterizaciones y de los axiomas se encuentran en \cite{Riehl}. En otra versi\'on \cite{Dwy}, contamos con una demostraci\'on del axioma de factorizaci\'on basada en el argumento del objeto pequeño.

\subsubsection{Conjuntos simpliciales}

Sea $\mathcal{SS}et$ la categor\'ia de conjuntos simpliciales $X : \Delta^{op} \longrightarrow Set$ y transformaciones naturales.
Denotamos $d_i: X_n \longrightarrow X_{n-1}$ (para cada $n$) a la \emph{i-\'esima cara} de $X$ y $s_i : X_n \longrightarrow X_{n+1}$ a la \emph{i-\'esima degeneraci\'on} de $X$.
Un $n$-simplex de $X$ es \emph{no degenerado} si no est\'a en la imagen de ning\'un $s_i$.

Recordemos algunos ejemplos de conjuntos simpliciales:

\newcommand{\colim}[2]{
    \underset{#1}{\underrightarrow{colim}}\;{#2}
}

\begin{enumerate}
    \item Para $n \in \mathbb{N}$, el funtor representable $\Delta^n := Hom_{\Delta}(-, [n]): \Delta^{op}\longrightarrow Set$ es el \emph{n- s\'implex est\'andar}.
    Por el Lema de Yoneda (\cite{Mac} Ch.III $\S$2), existe una biyecci\'on $Hom_{SSet}(\Delta^n, X) \simeq X_n$, natural en $X$.
    
    Si consideramos la categor\'ia $\Delta\! \downarrow\!  {X} $ (la categor\'ia de ``elementos'' de $X$), donde los objetos son las flechas $\Delta^n \longrightarrow X$ $(n\geq 0)$, entonces podemos escribir a $X$ como un col\'imite de funtores representables \footnote{Este es un caso particular de la caracterizaci\'on de todo funtor contravariante a valores en $Set$ (``prehaz'') como col\'imite de funtores representables (ver \cite{Mac} Ch.III $\S$7).}:
     \\
     $X \simeq \colim{\Delta^n \longrightarrow X}{\Delta^n}$
    \item El conjunto simplicial $\partial \Delta^n$ es el subconjunto de $\Delta^n$ formado por todos sus s\'implices menos el \'unico $n$-s\'implex no degenerado.
    Este es el \emph{borde} de $\Delta^n$.
    
    \item El funtor $\Lambda^n_k : \Delta^{op} \longrightarrow Set$ es el conjunto simplicial formado por la uni\'on de todas las caras de $\Delta^n$ menos la $k$-\'esima.
\end{enumerate}

\vspace{2mm}
Uno de los conceptos importantes a considerar en esta categor\'ia es el de la \emph{realizaci\'on geom\'etrica}, que nos permite interpretar a los conjuntos simpliciales (que se obtienen ``pegando'' s\'implices a trav\'es de sus bordes) como CW-complejos (que se obtienen pegando celdas a trav\'es de sus bordes), y viceversa.
Se define la realizaci\'on geom\'etrica de $\Delta^n$ como la c\'apsula convexa de los vectores de la base can\'onica de $\mathbb{R}^{n+1}$ dotada de la topolog\'ia subespacio (es decir, es el $n$-s\'implex topol\'ogico est\'andar). Esta definici\'on se extiende a cualquier conjunto simplicial $X$ tomando el col\'imite
$|X|:= \colim{\Delta^n \longrightarrow X}{|\Delta^n|}$. De hecho, $|X|$ es un CW-complejo con una $n$-celda por cada $n$-s\'implex no degenerado de $X$ (ver \cite{Mil}).
\\

La realizaci\'on geom\'etrica $|\cdot|: \mathcal{SS}et \longrightarrow \mathcal{T}op$ es funtorial y tiene un adjunto a izquierda, que es el \emph{funtor singular} $S : X \in \mathcal{T}op \longmapsto S(X) \in \mathcal{SS}et$ definido como $S(X)_n:= Hom (\Delta^n, X)$, $n\geq 0$.

\vspace{5mm}
La categor\'ia $\mathcal{SS}et$ es una categor\'ia de modelos, donde $f: X \longrightarrow Y$ es 

\begin{enumerate}
    \item una \emph{equivalencia d\'ebil} si su realizaci\'on geom\'etrica $|f|: |X| \longrightarrow |Y|$ es una equivalencia d\'ebil de espacios topol\'ogicos;
    \item una \emph{fibraci\'on} si tiene la propiedad de levantamiento a derecha respecto de las inclusiones $\Lambda^n_k \longrightarrow \Delta^n$, para todo $n\geq1$ y $0\leq k \leq n$ (fibraci\'on de Kan);
    \item una \emph{cofibraci\'on} si es un monomorfismo (es decir, $f_n: X_n \longrightarrow Y_n$ es una funci\'on inyectiva para todo $n$).
\end{enumerate}

Los objetos fibrantes en $\mathcal{SS}et$ son los complejos de Kan y cualquier objeto es cofibrante.

\begin{observation}
    En $\mathcal{SS}et$, así como en $\mathcal{T}op$, las fibraciones triviales son aquellos morfismos con la propiedad de levantamiento a derecha respecto a las inclusiones $\partial \Delta^n \hookrightarrow \Delta ^n $, $n \geq 1$,
    \begin{center}
        $\xymatrix{
        \partial\Delta^n \ar[r] \ar@{^{(}->}[d] & X \ar[d] \\
        \Delta^n \ar@{-->}[ur] \ar[r] & Y. \\
        }$
    \end{center}
    
    Como consecuencia de la adjunci\'on $S \dashv |\cdot|$, una flecha $p: E\longrightarrow B$ en $\mathcal{T}op$ es una fibraci\'on trivial si y s\'olo si $S(p)$ lo es en $\mathcal{SS}et$. En efecto, existe una correspondencia entre los diagramas que son de la forma
    \begin{center}
    $\xymatrix{
        |\partial\Delta^n| \ar[r] \ar@{^{(}->}[d] & E \ar[d]^{p} \\
        |\Delta^n| \ar@{-->}[ur] \ar[r] & B \\
    }$
    \hspace{4mm} y \hspace{4mm}
    $\xymatrix{
        \partial\Delta^n \ar[r] \ar@{^{(}->}[d] & S(E) \ar[d]^{S(p)} \\
        \Delta^n \ar@{-->}[ur] \ar[r] & S(B) \\
    }$
    \end{center}
    en $\mathcal{T}op$ y $\mathcal{SS}et$, respectivamente.
\end{observation}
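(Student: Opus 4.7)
El plan es tratar las dos afirmaciones por separado y reducir cada una a las caracterizaciones ya mencionadas junto con la adjunci\'on $|\cdot|\dashv S$. Para el caso topol\'ogico del primer \'item, invocar\'ia la caracterizaci\'on ya citada de las fibraciones triviales en $\mathcal{T}op$ como las flechas con la propiedad de levantamiento a derecha respecto de $S^{n-1}\hookrightarrow D^n$, junto con los homeomorfismos can\'onicos $|\partial\Delta^n|\cong S^{n-1}$ y $|\Delta^n|\cong D^n$ compatibles con las inclusiones; entonces la caracterizaci\'on en la forma pedida es inmediata.

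Para el caso simplicial del primer \'item, una direcci\'on es directa: las inclusiones $\partial\Delta^n\hookrightarrow\Delta^n$ son monomorfismos y por lo tanto cofibraciones, de modo que cualquier fibraci\'on trivial tiene esa propiedad de levantamiento por M2. El trabajo est\'a en la rec\'iproca. Mi estrategia ser\'ia aplicar el \emph{small object argument} al conjunto $\{\partial\Delta^n\hookrightarrow\Delta^n\}_{n\geq 0}$ para factorizar una flecha $f$ arbitraria como $f=pi$ con $i$ una composici\'on transfinita de pushouts de estas inclusiones (en particular, un monomorfismo) y $p$ con la propiedad de levantamiento respecto de los generadores. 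Un argumento de filtraci\'on esqueletal muestra que todo monomorfismo en $\mathcal{SS}et$ se obtiene de este modo, de manera que $p$ tiene la propiedad de levantamiento respecto de todas las cofibraciones y, por la estructura de modelos, es una fibraci\'on trivial. Si $f$ tiene por hip\'otesis la misma propiedad contra los generadores, el argumento cl\'asico de retracci\'on la hace retracto de $p$, y por M3 hereda la condici\'on de fibraci\'on trivial.

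Para la segunda afirmaci\'on usar\'ia la adjunci\'on $|\cdot|\dashv S$, que da biyecciones naturales $\mathcal{T}op(|K|,X)\simeq\mathcal{SS}et(K,S(X))$. Aplic\'andola en cada uno de los cuatro v\'ertices del cuadrado de levantamiento, un problema en $\mathcal{T}op$ con $p:E\to B$ y $|\partial\Delta^n|\hookrightarrow|\Delta^n|$ se corresponde biun\'ivocamente con uno en $\mathcal{SS}et$ con $S(p)$ y $\partial\Delta^n\hookrightarrow\Delta^n$; adem\'as, un levantamiento en un lado se traduce, v\'ia la misma biyecci\'on, en un levantamiento en el otro. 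Combinando con la primera parte se obtiene directamente la equivalencia entre que $p$ sea fibraci\'on trivial en $\mathcal{T}op$ y que $S(p)$ lo sea en $\mathcal{SS}et$.

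El obst\'aculo principal ser\'ia la direcci\'on no trivial del primer \'item en $\mathcal{SS}et$: para concluir que toda flecha con la propiedad de levantamiento contra los generadores es realmente una fibraci\'on trivial se necesita el \emph{small object argument} y el ingrediente espec\'ificamente simplicial de la filtraci\'on esqueletal de un monomorfismo, y no basta con manipulaciones puramente formales. Lo dem\'as es esencialmente traducir diagramas v\'ia la adjunci\'on.
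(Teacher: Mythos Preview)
Your proposal is correct and, for the part the paper actually argues, takes the same approach: the paper's justification of the observation is limited to the second claim, and consists precisely of the adjunction bijection $\mathcal{T}op(|K|,X)\simeq \mathcal{SS}et(K,S(X))$ applied to the lifting squares, exactly as you describe. The characterization of trivial fibrations via $\partial\Delta^n\hookrightarrow\Delta^n$ is simply stated in the paper without proof, so your sketch via the small object argument and the skeletal filtration of monomorphisms, together with the homeomorphisms $|\partial\Delta^n|\cong S^{n-1}$, $|\Delta^n|\cong D^n$ for the topological side, supplies strictly more detail than the paper does and is the standard route. One small remark: for the generating set you should allow $n\geq 0$ (with $\partial\Delta^0=\emptyset$), since lifting against $\emptyset\hookrightarrow\Delta^0$ is what forces surjectivity on $0$-simplices; the paper's ``$n\geq 1$'' is a minor imprecision.
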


\bigskip
\subsubsection{Categor\'ias pequeñas (Estructura de modelos de Thomason) }

La categor\'ia $\mathbf{Cat}$ de categor\'ias pequeñas puede verse como una categor\'ia de modelos de dos formas diferentes. Una de ellas es la estructura can\'onica, donde las equivalencias d\'ebiles son las equivalencias de categor\'ias. La otra es la estructura de modelos de Thomason, en la que un morfismo $F$ en $\mathbf{Cat}$ es una equivalencia d\'ebil si y s\'olo si tomando el nervio obtenemos una equivalencia d\'ebil $NF$ en $\mathcal{SS}et$.

Para explicitar un poco m\'as la estructura de Thomason necesitamos algunas definiciones previas:

\vspace{2mm}
Dada una categor\'ia pequeña $\mathcal{C}$, definimos $N(\mathscr{C})$, el \emph{nervio de $\mathscr{C}$}, como el conjunto simplicial cuyos v\'ertices son los objetos de $\mathscr{C}$ y, para $n\geq 1$, los $n$-s\'implices son las $n$-tuplas de morfismos componibles en $\mathscr{C}$. La $i$-\'esima cara $d_i: N(\mathscr{C})_n \longrightarrow N(\mathscr{C})_{n-1}$ es la funci\'on dada por $$(f_1, ..., f_{i-1}, f_i, f_{i+1}, ..., f_n) \mapsto (f_1, ..., f_{i-1}, f_{i+1}, ..., f_n),$$
mientras que la $i$-\'esima degeneraci\'on $s_i: N(\mathscr{C})_n \longrightarrow N(\mathscr{C})_{n+1}$ consiste en insertar la flecha identidad correspondiente en el lugar $i$-\'esimo de cada tupla $$(f_1, ..., f_i, f_{i+1}, ..., f_n) \mapsto (f_1, ..., f_{i}, id, f_{i+1}, ..., f_n).$$ 
Por ejemplo, tomando la categor\'ia $\mathscr{C}$ con objetos $x, y, z$ y dos morfismos componibles $f: x\rightarrow y$, $g:y \rightarrow z$, entonces $N(\mathscr{C})=\Delta^2$.

\begin{center}
    $\xymatrix{
    \\
    x \ar@/^1.5pc/[rr]^{gf} \ar[r]^{f} & y \ar[r]^{g} & z}$
    $\xymatrix{
    \\
    \hspace{5mm} \ar[rr]^{N} && \hspace{5mm}}$
    $\xymatrix{
    & *+[o][F-]{y} \ar[dr]^{g} \ar@{}[d]|(.7){(f, g)} \\
    *+[o][F-]{x} \ar[ur]^{f} \ar[rr]_{gf} && *+[o][F-]{z}\\
    }$
\end{center}
M\'as generalmente, pensando al conjunto ordenado $[n]=\{0, 1, ..., n\}$ como una categor\'ia, se tiene $N([n])=\Delta^n$.
\\

Dada una flecha $F : \mathscr{C} \longrightarrow \mathscr{D}$ en $\mathbf{Cat}$, $N(F): N(\mathscr{C}) \longrightarrow N(\mathscr{D})$ es la transformaci\'on natural cuyas componentes $N(F)_n :N(\mathscr{C})_n \longrightarrow N(\mathscr{D})_n$ son $(f_0, ..., f_{n-1}) \longmapsto (Ff_0, ..., Ff_{n-1})$. Obtenemos de esta manera una asignaci\'on funtorial $N: \mathbf{Cat} \longrightarrow \mathcal{SS}et$.

El nervio $N$ tiene un adjunto a izquierda, denotado $c: \mathcal{SS}et \longrightarrow \mathbf{Cat}$. Para cada conjunto simplicial $X$, $c(X)$ es una categor\'ia tomando como objetos a los v\'ertices de $X$, y los morfismos son generados libremente por los $1$-s\'implices de $X$ y cocientando por las relaciones $d_1x=d_0xd_2x$ para todo $2$-s\'implex $x$ (\cite{GZ} Ch.II $\S$4).
\\

La \emph{subdivisi\'on} del $n$-s\'implex est\'andar es un conjunto simplicial $Sd\Delta^n$ que se define como el nervio aplicado al poset de subconjuntos no vac\'ios de $[n]$. As\'i, por ejemplo, $Sd\Delta^2$ est\'a dado por los siguientes v\'ertices, $1$-s\'implices no degenerados y $2$-s\'implices no degenerados:

\begin{center}
$\xymatrix@C=.1pt@R=10pt{
            &&&&& \{1\}\ar[ddl] \ar[ddr] \ar[ddd] &&&&&\\
            \\
        &&&& \{0, 1\}\ar[dr] && \{1, 2\} \ar[dl] &&&& \\
                    &&&&& \{0,1,2\} &&&&&\\
\{0\}\ar[uurrrr]\ar[urrrrr]\ar[rrrrr] &&&&& \{0,2\}\ar[u] &&&&& \{2\}\ar[lllll]\ar[ulllll]\ar[uullll]}$
\end{center}

Esta definici\'on se extiende a cualquier conjunto simplicial $X$ tomando el col\'imite $SdX:= \colim{\Delta^n \longrightarrow X}{Sd\Delta^n}$.
La asignaci\'on $Sd:\mathcal{SS}et \longrightarrow \mathcal{SS}et$ tambi\'en es funtorial y tiene un adjunto a derecha $E\text{x}(X)_n:= Hom_{\mathcal{SS}et}(Sd\Delta^n, X)$. Luego, se tiene una adjunci\'on $(Sd)^2 \dashv (E\text{x})^2$, donde $(Sd)^2(X)=Sd(Sd(X))$ y 
$(E\text{x})^2(X)=E\text{x}(E\text{x}(X)).$

Esto induce un par de funtores adjuntos entre la categor\'ia $\mathcal{SS}et$ y la categor\'ia $\mathbf{Cat}$ dados por las composiciones $c(Sd)^2$ y $(E\text{x})^2N$

\begin{center}
    $\xymatrix@C=3pc{
        \mathcal{SS}et \ar@/^1pc/[r]^{(Sd)^2} \ar@{}[r]|{\perp} & \mathcal{SS}et \ar@/^1pc/[r]^{c} \ar@/^1pc/[l]^{(\text{Ex})^2} \ar@{}[r]|{\perp} & \mathbf{Cat}. \ar@/^1pc/[l]^{N}
    }$
\end{center}

\vspace{3mm}
Un morfismo $F$ en $\mathbf{Cat}$ es
\begin{enumerate}
    \item una equivalencia d\'ebil si $(E\text{x})^2NF$ es una equivalencia d\'ebil en $\mathcal{SS}et$,
    \item una fibraci\'on si $(E\text{x})^2NF$ es una fibraci\'on en $\mathcal{SS}et$,
    \item una cofibraci\'on si tiene la propiedad de levantamiento a izquierda respecto de las fibraciones triviales.
\end{enumerate}

La categor\'ia $\mathbf{Cat}$ es una categor\'ia de modelos con $\mathcal{W}$, $\mathcal{F}$ y $co\mathcal{F}$ como acabamos de definir.
Thomason (\cite{Tho}) demuestra que, efectivamente, se verifican los axiomas de categor\'ias de modelos, y toda la dificultad se concentra en M2 (la parte no trivial de M2 en este caso) y en M4.

Con esta estructura se tiene que

\begin{center}
\emph{$F$ es una equivalencia d\'ebil en $\mathbf{Cat}$ si y s\'olo si $NF$ lo es en $\mathcal{SS}et$},
\end{center}
y puede verse que el nervio $N$ induce una equivalencia entre las categor\'ias homot\'opicas de Quillen $\mathbf{Ho}(\mathbf{Cat})$ y $\mathbf{Ho}(\mathcal{SS}et)$.

\vspace{3mm}
Por otro lado, los funtores $\xymatrix@C=2.5pc{ \mathbf{Cat} \ar@<5pt>[r]^{(E\text{x})^2N} \ar@<-5pt>@{<-}[r]_{c(Sd)^2} & \mathcal{SS}et}$ satisfacen las hip\'otesis del teorema de equivalencia entre teor\'ias de homotop\'ia establecido por Quillen (\cite{Qui}, Ch.I $\S 4$.) y, luego, este par de funtores adjuntos tambi\'en induce una equivalencia entre la cl\'asica localizaci\'on de $\mathcal{SS}et$ y la localizaci\'on de $\mathbf{Cat}$ con respecto a las equivalencias d\'ebiles.

\vspace{5mm}

\subsection{Determinaci\'on}
En una categoría $\mathscr{C}$ la estructura de modelos queda determinada por dos de las tres clases distinguidas $\mathcal{F}$, $co \mathcal{F}$, $\mathcal{W}$. Esto será una consecuencia de los siguientes resultados.

\begin{lemma}
    Si $f=hg$ es un morfismo en una categoría $\mathscr{X}$ tal que $f$ tiene la propiedad de levantamiento a izquierda con respecto a $h$, entonces $f$ es retracto de $g$. Dualmente, si $f$ tiene la propiedad de levantamiento a derecha con respecto a $g$, entonces $f$ es retracto de $h$.
\end{lemma}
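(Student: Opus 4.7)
El plan es construir expl\'icitamente los diagramas de retracto requeridos, usando una \'unica aplicaci\'on de la propiedad de levantamiento en cada caso. Para la primera afirmaci\'on, partir\'ia del cuadrado conmutativo
\[
\xymatrix{
X \ar[r]^{g} \ar[d]_{f} & Z \ar[d]^{h} \\
Y \ar@{=}[r] \ar@{-->}[ur]^{k} & Y,
}
\]
cuya conmutatividad es precisamente la igualdad $f = hg$. Como $f$ tiene la propiedad de levantamiento a izquierda respecto de $h$, existe una flecha $k: Y \longrightarrow Z$ tal que $kf = g$ y $hk = id_Y$.

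Con esta $k$, el diagrama a verificar es
\[
\xymatrix{
X \ar@{=}[r] \ar[d]_{f} \ar@/^1.5pc/[rr]^{id_X} & X \ar[r]^{id_X} \ar[d]_{g} & X \ar[d]^{f} \\
Y \ar[r]_{k} \ar@/_1.5pc/[rr]_{id_Y} & Z \ar[r]_{h} & Y,
}
\]
donde la conmutatividad del cuadrado izquierdo se reduce a $kf = g$, la del cuadrado derecho a $hg = f$, y las composiciones de las filas son identidades por construcci\'on (fila superior) y por la igualdad $hk = id_Y$ (fila inferior). Esto exhibe a $f$ como retracto de $g$ seg\'un la definici\'on \ref{retract}.

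La afirmaci\'on dual admite una demostraci\'on estrictamente sim\'etrica: considerar\'ia el cuadrado conmutativo con lados $g$ a la izquierda, $f$ a la derecha, $id_X$ arriba y $h$ abajo (que conmuta por la misma igualdad $f = hg$), y la propiedad de levantamiento a derecha de $f$ respecto de $g$ provee una flecha $k: Z \longrightarrow X$ con $kg = id_X$ y $fk = h$. A partir de esta $k$ se arma de manera inmediata el diagrama de retracto de $f$ a partir de $h$, cuyas filas superiores son $g$ y $k$, filas inferiores son las identidades de $Y$, y los mapas verticales son $f$, $h$, $f$.

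No preveo obst\'aculo esencial en esta demostraci\'on: toda la dificultad se concentra en identificar correctamente el cuadrado al cual aplicar la hip\'otesis, y una vez hecho esto las verificaciones se reducen a reescribir las igualdades provistas por el levantamiento. El argumento es puramente categ\'orico y no depende de ninguna estructura adicional sobre $\mathscr{X}$, lo cual resulta coherente con el rol que este lema habr\'a de cumplir al probar que dos cualesquiera de las clases $\mathcal{F}$, $co\mathcal{F}$, $\mathcal{W}$ determinan la tercera.
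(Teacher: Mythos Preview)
Tu demostraci\'on es correcta y sigue esencialmente el mismo camino que la del art\'iculo: identificas el mismo cuadrado conmutativo, aplicas la propiedad de levantamiento para obtener la flecha diagonal (que el art\'iculo llama $l$ en lugar de $k$), y armas el diagrama de retracto de id\'entica manera. La \'unica diferencia es que t\'u desarrollas expl\'icitamente el caso dual, mientras que el art\'iculo lo despacha con la frase ``Por dualidad''.
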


\begin{proof}
    Si escribimos $f: X \longrightarrow Y$, $g:X \longrightarrow Z$ y $h: Z \longrightarrow Y$, como $f$ tiene la propiedad de levantamiento con respecto a $h$ y $f=hg$, existe un morfismo $l$ que hace conmutar ambos triangulos en el diagrama
    \begin{center}
        $\xymatrix{
            X \ar[r]^{g} \ar[d]_{f} & Z \ar[d]^{h} \\
            Y \ar@{-->}[ur]^{l} \ar@{=}[r] & Y.\\
        }$
    \end{center}
    
    Por lo tanto, tenemos
    \begin{center}
        $\xymatrix{
            X \ar@{=}[r] \ar[d]_{f} & X \ar@{=}[r] \ar[d]_{g} & X \ar[d]^{f} \\
            Y \ar[r]_{l} & Z \ar[r]_{h} & Y,\\
        }$
    \end{center}
    donde $hl=id_{Y}$, y luego $f$ es retracto de $g$.
    
    Por dualidad, $f$ es retracto de $h$ si tiene la propiedad de levantamiento con respecto a $g$.
    
\end{proof}

\begin{proposition} \label{lifting_sii}
    Sea $\mathscr{C}$ una categoría de modelos.
    \renewcommand{\labelenumi}{\roman{enumi}.}
    \begin{enumerate}
        \item Un morfismo en $\mathscr{C}$ es una cofibración (cofibración trivial) si y sólo si tiene la propiedad de levantamiento a izquierda con respecto a toda fibración trivial (fibración).
        \item Un morfismo en $\mathscr{C}$ es una fibración (fibración trivial) si y sólo si tiene la propiedad de levantamiento a derecha con respecto a toda cofibración trivial (cofibración).
    \end{enumerate}
\end{proposition}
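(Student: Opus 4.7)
The proof splits into four statements (two in each item), and the strategy is uniform: one implication is essentially axiom M2, while the reverse implication uses the factorization axiom M4 together with the preceding lemma and the closure under retracts given by M3.

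For the ``only if'' direction of (i), the second part of M2 states that every trivial fibration has the right lifting property with respect to cofibrations, which is the same as saying every cofibration has the left lifting property with respect to every trivial fibration; the first part of M2 gives directly that trivial cofibrations lift on the left against fibrations. The ``only if'' direction of (ii) is the same statement read from the other side. So no work is needed here beyond quoting M2.

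For the ``if'' direction of (i), assume first that $f: X \longrightarrow Y$ has the left lifting property with respect to every trivial fibration. Apply M4(ii) to write $f = p\,i$ with $i$ a cofibration and $p$ a trivial fibration. Since $f$ lifts against $p$, the previous lemma shows that $f$ is a retract of $i$; then M3 (closure of cofibrations under retracts) yields that $f$ is a cofibration. If instead $f$ has the left lifting property with respect to every fibration, apply M4(i) to write $f = p\,i$ with $i$ a trivial cofibration and $p$ a fibration; the lemma again makes $f$ a retract of $i$, and M3 applied to both $co\mathcal{F}$ and $\mathcal{W}$ shows that $f$ is a trivial cofibration. Part (ii) is obtained by the same argument in $\mathscr{C}^{op}$, using the self-duality of the axioms noted in the preceding Dualidad.

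There is no real obstacle: the only point requiring care is matching each hypothesis with the correct factorization in M4, so that the resulting retract lies in the class one wants to conclude about, and remembering that for the ``trivial'' cases M3 must be invoked twice (once for the (co)fibration class and once for $\mathcal{W}$). The appeal to the previous lemma is what turns the abstract lifting property into the concrete retract diagram needed by M3.
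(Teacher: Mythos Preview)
Your proof is correct and follows the same approach as the paper: use M2 for one direction, and for the other factor via M4, apply the preceding retract lemma, and conclude by M3. The paper only spells out the case ``LLP against all fibrations $\Rightarrow$ trivial cofibration'' and declares the remaining cases analogous, whereas you write out both cases of (i) and invoke duality for (ii); the content is identical.
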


\begin{proof}
    Sea $f:X \longrightarrow Y$ en $\mathscr{C}$ con la propiedad de levantamiento a izquierda respecto a la clase $\mathcal{F}$ y sea $f=pi$ una factorización, con $i$ una cofibración trivial y $p$ una fibración. Por el lema previo, $f$ es retracto de $i$. Dado que las tres clases son cerradas por retractos, $f$ es también una cofibración trivial. Por otro lado, el axioma M2 garantiza que toda cofibración trivial tiene dicha propiedad de levantamiento.
    
    Como para el resto de los casos el argumento es muy similiar, podemos concluir la demostración.
    
\end{proof}

\begin{observation}
La proposición anterior nos dice que la clase $\mathcal{F}$ queda determinada por $(co\mathcal{F}, \mathcal{W})$ y la clase $co\mathcal{F}$ está determinada por $(\mathcal{F}, \mathcal{W})$.

Además, un morfismo $f$ es una equivalencia débil si y sólo si admite una factorización $f=pi$, donde $p$ es una fibración trivial e $i$ es una cofibración trivial: la implicación $(\Rightarrow)$ se debe a los axiomas M4 y M5, mientras que la recíproca es evidente ya que la clase $\mathcal{W}$ es cerrada por composición.
\end{observation}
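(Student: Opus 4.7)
El plan es separar la Observaci\'on en sus dos afirmaciones y justificar cada una usando la proposici\'on reci\'en probada y los axiomas correspondientes.

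Para la determinaci\'on de clases, mi estrategia es una apelaci\'on directa a la Proposici\'on \ref{lifting_sii}: ya que un morfismo pertenece a $\mathcal{F}$ si y s\'olo si tiene la propiedad de levantamiento a derecha respecto a toda cofibraci\'on trivial, es decir, respecto a los elementos de $co\mathcal{F} \cap \mathcal{W}$, conocer el par $(co\mathcal{F}, \mathcal{W})$ basta para recuperar $\mathcal{F}$. La afirmaci\'on dual para $co\mathcal{F}$ a partir de $(\mathcal{F}, \mathcal{W})$ se sigue del mismo modo usando la otra mitad de la Proposici\'on \ref{lifting_sii}.

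Para la caracterizaci\'on de $\mathcal{W}$ mediante factorizaci\'on, que es el contenido central de la Observaci\'on, procedo en dos direcciones. Trato primero la rec\'iproca $(\Leftarrow)$ por ser inmediata: si $f=pi$ con $p$ fibraci\'on trivial e $i$ cofibraci\'on trivial, entonces $p, i \in \mathcal{W}$ y, por la cerradura bajo composici\'on garantizada por M3, resulta $f \in \mathcal{W}$. Para la implicaci\'on directa $(\Rightarrow)$, dado $f \in \mathcal{W}$, aplico M4(ii) para factorizarla como $f=pi$ con $p$ fibraci\'on trivial e $i$ cofibraci\'on. El paso clave es observar que, por M5 (``dos de tres'') aplicado al tri\'angulo $f=pi$, como $f$ y $p$ ya est\'an en $\mathcal{W}$, tambi\'en $i \in \mathcal{W}$; por lo tanto $i$ es cofibraci\'on trivial y la factorizaci\'on es del tipo buscado.

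No anticipo obst\'aculos t\'ecnicos: la Observaci\'on es esencialmente un corolario inmediato de M3, M4 y M5 junto con la Proposici\'on \ref{lifting_sii}. El \'unico punto al que prestar\'ia cuidado es la elecci\'on de la variante adecuada de M4 (en este caso (ii), que pone la fibraci\'on del lado trivial autom\'aticamente) para que M5 fuerce sin esfuerzo la informaci\'on faltante sobre $i$; con M4(i) funciona an\'alogamente intercambiando los roles de $p$ e $i$.
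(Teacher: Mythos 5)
Tu propuesta es correcta y sigue esencialmente el mismo camino que el texto: la determinaci\'on de $\mathcal{F}$ y $co\mathcal{F}$ se obtiene directamente de la Proposici\'on \ref{lifting_sii}, y la caracterizaci\'on de $\mathcal{W}$ usa M4 (en su variante con fibraci\'on trivial) junto con M5 para la implicaci\'on directa y la cerradura por composici\'on de M3 para la rec\'iproca. Lo \'unico que haces es explicitar con algo m\'as de detalle el argumento que el paper deja indicado en una l\'inea, sin desviarte de \'el.
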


\begin{proposition}
    Si $\mathscr{C}$ es una categoría de modelos, entonces las clase de cofibraciones y de cofibraciones triviales son ambas estables por pushouts. Las fibraciones y las fibraciones triviales son clases estables por pullbacks.
\end{proposition}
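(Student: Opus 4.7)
El plan es reducir el problema a la caracterización obtenida en la Proposición \ref{lifting_sii}: un morfismo es una cofibración (resp. cofibración trivial) si y sólo si tiene la propiedad de levantamiento a izquierda respecto de toda fibración trivial (resp. fibración), y dualmente para fibraciones. De esta manera basta probar que la clase de morfismos con la propiedad de levantamiento a izquierda respecto de una familia fija es estable por pushouts, y luego la afirmación sobre pullbacks se obtiene por dualidad formal.

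Primero trataría el caso de las cofibraciones. Supongamos dado un pushout
\begin{center}
$\xymatrix{ A \ar[r]^{f} \ar[d]_{i} & B \ar[d]^{i'} \\ C \ar[r]_{g} & D }$
\end{center}
con $i$ una cofibración, y una fibración trivial $p: X \longrightarrow Y$ junto con un cuadrado conmutativo $(u,v)$ de $i'$ a $p$. Componiendo con $f$ y $g$, el cuadrado $(uf, vg)$ es un problema de levantamiento entre la cofibración $i$ y la fibración trivial $p$, que por hipótesis admite una solución $h: C \longrightarrow X$. La propiedad universal del pushout aplicada a las flechas $u: B \longrightarrow X$ y $h: C\longrightarrow X$ (que son compatibles pues $hi = uf$ por construcción de $h$) produce una única $k: D \longrightarrow X$ con $ki'=u$ y $kg=h$. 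Para concluir que $k$ es el levantamiento buscado hay que verificar $pk=v$: esto se sigue de las igualdades $pki'=pu=vi'$ y $pkg=ph=vg$ y de la unicidad en la propiedad universal del pushout.

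El mismo argumento, reemplazando ``fibración trivial'' por ``fibración'' y usando la parte (i) de la Proposición \ref{lifting_sii} para cofibraciones triviales, muestra que la clase $co\mathcal{F}\cap \mathcal{W}$ es estable por pushouts. La afirmación sobre fibraciones y fibraciones triviales es el enunciado dual formal del anterior: como los axiomas de categoría de modelos son auto-duales (véase la \emph{Dualidad} posterior a la definición \ref{model_cat}), y los pullbacks en $\mathscr{C}$ son pushouts en $\mathscr{C}^{op}$, la estabilidad por pullbacks se deduce inmediatamente aplicando el caso ya probado a $\mathscr{C}^{op}$.

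El paso que requiere un poco de cuidado es verificar la igualdad $hi=uf$ que permite invocar la propiedad universal del pushout: esto es exactamente el triángulo superior del cuadrado de levantamiento $(uf,vg)$, lo cual es automático. Fuera de eso, la demostración es una aplicación directa de la caracterización por propiedades de levantamiento y del yoga de pushouts; no hay obstáculo técnico serio una vez que se dispone de la Proposición \ref{lifting_sii}.
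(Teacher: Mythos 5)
Tu demostración es correcta y sigue esencialmente el mismo camino que la del texto: reducir a la caracterización por levantamiento de la Proposición \ref{lifting_sii}, componer el cuadrado dado con las flechas del pushout para obtener un problema de levantamiento contra la cofibración original, resolverlo, y usar la propiedad universal del pushout (incluida la unicidad para verificar el triángulo inferior $pk=v$), concluyendo el caso de fibraciones por dualidad. No hay diferencias sustanciales que señalar.
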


\begin{proof}
    Nuevamente, la segunda afirmación del enunciado se obtendrá de la primera por un argumento de dualidad.
    
    Sean $i: X \longrightarrow Y$ una cofibración y $f: X \longrightarrow Z$ cualquier morfismo en $\mathscr{C}$. Consideramos el pushout de $i$ y $f$
    \begin{center}
        $\xymatrix{
            X \ar[r]^{f} \ar@{ >->}[d]_{i} & Z \ar[d]^{j} \\
            Y \ar[r]_{g} & W \\
        }$,
    \end{center}
    y queremos ver que $j \in \mathcal{C}o\mathcal{F}$. Por la proposición anterior, es equivalente ver que $j$ tiene la propiedad de levantamiento a izquierda con respecto a toda fibración trivial. Sea, entonces, $p \in \mathcal{F} \cap \mathcal{W}$, junto con un diagrama conmutativo 
    \begin{center}
        $\xymatrix{
            Z \ar[r]^{f'} \ar[d]_{j} & X' \ar[d]^{p} \\
            W \ar[r]_{g'} & Y' \\
        }$.
    \end{center}
    
    Como $i$ tiene dicha propiedad de levantamiento, existe un morfismo diagonal $d: Y \longrightarrow Y'$ haciendo conmutar un nuevo diagrama:
    \begin{center}
        $\xymatrix{
           X \ar[r]^{f}\ar[d]_{i} & Z \ar[r]^{f'} & X' \ar[d]^{p} \\
            Y\ar[r]_{g} \ar@{-->}[urr]^{d} & W \ar[r]_{g'} & Y' \\
        }$
    \end{center}
    
    De la propiedad universal del pushout se obtiene una flecha $h: W \longrightarrow X'$ tal que $hg=d$ y $hp=f'$. Además, de la unicidad del morfismo que sale de $W$ como consecuencia de la misma propiedad universal se deduce que $ph=g'$.
    
    Para el caso en el que $i$ es una cofibración trivial la demostración es análoga: la diferencia está en usar la correspondiente versión de la proposición \ref{lifting_sii}.
\end{proof}

\begin{remark}
    Si bien $\mathcal{F}$ ($\mathcal{F}\cap \mathcal{W}$) y $co\mathscr{F}$  ($co\mathcal{F}\cap \mathcal{W}$) son clases cerradas por pullbacks y pushouts, respectivamente, \emph{no} es cierto que la clase $\mathcal{W}$ de equivalencias débiles tenga alguna de estas propiedades.
    
    Los siguientes ejemplos, propuestos por Jonathan Barmak, muestran que las equivalencias d\'ebiles no son estables por pullbacks ni por pushouts en la categor\'ia de modelos de espacios topol\'ogicos.
    
    Sea $I=[0, 1].$ Tomamos el pullback del diagrama 
    $\xymatrix{
        & \{*\}, \ar[d] \\
        S^0 \ar@{^{(}->}[r]^i & I \\
    }$ \\
    donde $i$ es la inclusi\'on de $S^0=\{0, 1\}$ en el intervalo, y la funci\'on $\{*\} \rightarrow I$ es una equivalencia homot\'opica (en particular, equivalencia d\'ebil). Si esta \'ultima manda al punto en el $0$ o el $1$, el pullback es
    $$\xymatrix{
       \{*\}\ar[r] \ar[d] & \{*\} \ar[d] \\
        S^0 \ar@{^{(}->}[r]^i & I, \\
    }$$
    y si manda al punto en cualquier otro elemento de $I$ distinto de $0$ y $1$ el pullback es
    $$\xymatrix{
        \emptyset \ar[r] \ar[d] & \{*\} \ar[d] \\
        S^0 \ar@{^{(}->}[r]^i & I. \\
    }$$
    Cualquiera sea el caso, el pullback de $\{*\} \rightarrow I$ no es equivalencia d\'ebil. Esto muestra que la clase $\mathcal{W}$ no es cerrada por pullbacks en $\mathcal{T}op.$ 
    
    Por otro lado, sean $I \rightarrow \{*\}$ la funci\'on al punto y $exp: I \rightarrow D^2$ la funci\'on dada por $t \mapsto e^{2\pi it}.$ Tomando el pushout, obtenemos  
    $$\xymatrix{
        I \ar[r]^{exp} \ar[d] & D^2 \ar[d] \\
        \{*\} \ar[r]^i & \small{\faktor{D^2}{S^1}}. \\
    }$$
    La funci\'on del intervalo al punto es una equivalencia homot\'opica, pero el cociente $D^2 \longrightarrow \small{\faktor{D^2}{S^1}} \simeq S^2 $ no es equivalencia d\'ebil. As\'i, las equivalencias d\'ebiles no son estables por pushouts en $\mathcal{T}op.$
\end{remark}

\newpage
\section{La 2-categor\'ia homot\'opica \texorpdfstring{$\catH$}{}}

Quillen (\cite{Qui}) introduce las nociones de \emph{cilindro} y de \emph{homotop\'ia} desarrollando una teor\'ia asociada a una categor\'ia de modelos $\mathscr{C}$. Demuestra que las homotop\'ias definen una relaci\'on de equivalencia en los morfismos de la subcategor\'ia plena $\mathscr{C}_{fc}$ de objetos fibrantes-cofibrantes y, cocientando por esta relaci\'on de equivalencia, construye una categor\'ia $\pi\mathscr{C}_{fc}$ cuyos objetos son los objetos de $\mathscr{C}_{fc}$ y los morfismos son clases de flechas en $\mathscr{C}_{fc}$. El funtor $\mathscr{C}_{fc} \longrightarrow \pi\mathscr{C}_{fc}$ resulta equivalente a la localizaci\'on que se obtiene inviertiendo los elementos de la clase $\mathcal{W}$ de equivalencias d\'ebiles.

En esta secci\'on estudiaremos una versi\'on an\'aloga a la construcci\'on de la categor\'ia homot\'opica de Quillen. Fijada una \emph{categor\'ia de modelos} $\mathscr{C}$, queremos definir una 2-categor\'ia $\catH$ cuyos objetos y flechas sean como en $\mathscr{C}$ y cuyas 2-celdas est\'en dadas por homotop\'ias entre flechas, con el objetivo de obtener resultados relativos a la 2-localizaci\'on de $\mathscr{C}$ respecto de la clase $\mathcal{W}$. Este ser\'a un caso particular de la versi\'on 2-dimensional presentada por Descotte, Dubuc y Szyld (\cite{e.d.}) en una exposici\'on en la que introducen el concepto de \emph{bicategor\'ia de modelos} y desarrollan, en este contexto, una teor\'ia cuyo resultado principal es el teorema de la 2-localizaci\'on.

Daremos una definici\'on de cilindro m\'as general que la definici\'on de Quillen, y consecuentemente obtendremos una definici\'on de homotop\'ia tambi\'en m\'as general. Con $q$-cilindro y $q$-homotop\'ia nos referiremos a las nociones originales introducidas por Quillen, para distinguirlas de estas nuevas definiciones. Esto nos permitir\'a establecer una apropiada relaci\'on de equivalencia entre homotop\'ias, que depende s\'olo de la clase de equivalencias d\'ebiles $\mathcal{W}$, y adem\'as nos permitir\'a componer homotop\'ias de forma tal que esta composici\'on resulte compatible con dicha relaci\'on, y se verifiquen los axiomas de 2-categor\'ia.

Una vez definida la 2-categor\'ia $\catH$, la inclusi\'on $i: \mathscr{C} \longrightarrow \catH$ no ser\'a precisamente la 2-localizaci\'on de $\mathscr{C}$ ya que, en general, no manda equivalencias d\'ebiles en equivalencias, y es por esta raz\'on que necesitaremos restringir el funtor $i$ a la subcategor\'ia de objetos fibrantes-cofibrantes para quedarnos con una sub-2-categor\'ia $\Ho$ de $\catH$. Usando los axiomas de la definici\'on 3.3, podremos ver que $i: \mathscr{C}_{fc} \longrightarrow \Ho$ es la 2-localizaci\'on de $\mathscr{C}_{fc}$ respecto de $\mathcal{W}$.

Tomando el funtor $\pi_0$ de componentes conexas, obtendremos los resultados de Quillen como una consecuencia de esta construcci\'on.
\subsection{Homotop\'ias de Quillen}
\begin{definition}
    Un \emph{q-cilindro} $C=(W, d_0, d_1, s)$ para un objeto $X$ en $\mathscr{C}$ es una factorización de la codiagonal 
            \begin{center}
                $\xymatrix{
                    X\coprod X \ar@/^2.5pc/[rrrr] ^{\nabla_X} \ar[rr]^{\binom{d_0}{d_1}} && W \ar[rr]^{s} && X,
                }$
            \end{center}
    donde $\binom{d_0}{d_1}$ es una cofibración y $s$ es una equivalencia débil.
    
    Dualmente, un \emph{q-path object} $P=(V, \delta_0, \delta_1, \sigma)$ para un objeto $Y$ es una factorizaci\'on de la diagonal 
            \begin{center}
                $\xymatrix{
                    Y \ar@/^2.5pc/[rrrr] ^{\Delta_Y} \ar[rr]^{\sigma} &&V \ar[rr]^(.4){(\delta_0, \delta_1)}  && Y\times Y,
                }$
            \end{center}
    con $(\delta_0, \delta_1)$ una fibraci\'on y $\sigma$ una equivalencia d\'ebil.
\end{definition}

\begin{observation}
    Dado $X$ en $\mathscr{C}$, diremos que un $q$-cilindro $C$ es \emph{fibrante} si $s:W\longrightarrow X$ es una fibración.
    
    Por el axioma M4, existe al menos un $q$-cilindro fibrante para $X$, que se obtiene factorizando la codiagonal $\nabla_X$ de la siguiente forma
    \begin{center}
        $\xymatrix{
            X\coprod X \ar[rr]^{\nabla_{X}} \ar@{>->}[dr]_{\binom{d_0}{d_1}}&& X, \\
            & W \ar@{->>}[ur]_{s}|\circ}$
    \end{center}
    
\end{observation}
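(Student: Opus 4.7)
El plan es aplicar directamente la versión (ii) del axioma M4 al morfismo codiagonal $\nabla_X : X \coprod X \longrightarrow X$. Primero observaría que dicho morfismo está bien definido, pues el axioma M1 garantiza la existencia de colímites finitos en $\mathscr{C}$, y en particular del coproducto $X \coprod X$ junto con su codiagonal.

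A continuación, M4(ii) proveería una factorización $\nabla_X = s \circ \binom{d_0}{d_1}$, donde $\binom{d_0}{d_1}: X\coprod X \longrightarrow W$ es una cofibración y $s: W \longrightarrow X$ es una fibración trivial, es decir, una fibración que además es una equivalencia débil. Tomando $C = (W, d_0, d_1, s)$, obtendría entonces un $q$-cilindro para $X$ según la definición dada (pues $\binom{d_0}{d_1}$ es cofibración y $s$ es, en particular, equivalencia débil), y dicho $q$-cilindro sería fibrante porque $s$ es fibración, que es precisamente la condición adicional requerida.

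No anticipo ningún obstáculo técnico: se trata esencialmente de una traducción directa del enunciado del axioma M4(ii) a la terminología de $q$-cilindros fibrantes. El único punto a enfatizar es que la factorización provista por M4(ii) entrega simultáneamente las dos propiedades que se exigen a $s$ (ser equivalencia débil y ser fibración), y el diagrama incluido en la observación ilustra exactamente esta construcción.
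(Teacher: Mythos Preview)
Tu propuesta es correcta y coincide exactamente con el argumento del paper: la observación misma ya indica que basta aplicar M4 a $\nabla_X$ para obtener la factorización $\binom{d_0}{d_1}$ cofibración seguida de $s$ fibración trivial, y eso es precisamente lo que haces (con el añadido, razonable, de invocar M1 para justificar la existencia de $X\coprod X$). No hay diferencia de enfoque que comentar.
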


\begin{lemma} \label{lemma2_quillen}
    Si $X$ es cofibrante y $C=(W, d_0, d_1, s)$ es un $q$-cilindro para $X$, entonces $d_0$ y $d_1$ son cofibraciones triviales.
\end{lemma}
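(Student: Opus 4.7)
La estrategia se basa en dos observaciones complementarias: una aplicada a la clase $\mathcal{W}$ (v\'ia el axioma de 2-de-3) y otra a la clase $co\mathcal{F}$ (v\'ia la estabilidad por pushouts demostrada previamente). Primero observar\'ia que, como $s \circ \binom{d_0}{d_1} = \nabla_X$, se tiene $s d_0 = id_X = s d_1$. Dado que $id_X$ y $s$ son equivalencias d\'ebiles, por el axioma M5 se concluye inmediatamente que $d_0$ y $d_1$ son equivalencias d\'ebiles.

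Para ver que adem\'as son cofibraciones, usar\'ia la hip\'otesis de que $X$ es cofibrante, es decir, $0 \longrightarrow X$ es una cofibraci\'on. Luego observar\'ia que las inclusiones can\'onicas $in_0, in_1 : X \longrightarrow X \coprod X$ son pushouts de $0 \longrightarrow X$:
\begin{center}
$\xymatrix{
0 \ar[r] \ar[d] & X \ar[d]^{in_1} \\
X \ar[r]_{in_0} & X \coprod X.
}$
\end{center}
Como las cofibraciones son estables por pushouts (resultado ya probado), $in_0$ e $in_1$ resultan cofibraciones. Entonces $d_0 = \binom{d_0}{d_1} \circ in_0$ y $d_1 = \binom{d_0}{d_1} \circ in_1$ se expresan como composici\'on de dos cofibraciones, y por el axioma M3 (que incluye clausura por composici\'on) pertenecen a $co\mathcal{F}$.

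Combinando ambos hechos, $d_0$ y $d_1$ son cofibraciones triviales. No veo un obst\'aculo serio: todo el argumento es una aplicaci\'on directa de los axiomas M3 y M5 junto con la estabilidad por pushouts; el \'unico paso que requiere atenci\'on es reconocer que las inclusiones del coproducto son pushouts del morfismo inicial $0 \longrightarrow X$, lo cual es est\'andar en cualquier categor\'ia con coproductos finitos y objeto inicial.
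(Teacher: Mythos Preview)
Tu demostraci\'on es correcta y coincide esencialmente con la del art\'iculo: primero usas M5 sobre la identidad $sd_0 = id_X = sd_1$ para obtener que $d_0, d_1 \in \mathcal{W}$, y luego reconoces las inclusiones del coproducto como pushouts de $0 \to X$ para concluir que son cofibraciones, de donde $d_0, d_1$ lo son por composici\'on. La \'unica diferencia es de presentaci\'on; el argumento es id\'entico.
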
 

    \begin{proof}
    Dado que $id_X$ y $s$ son equivalencias débiles, y que $sd_0=id_X$ y $sd_1=id_X$, por el axioma M5 obtenemos que $d_0$ y $d_1$ son equivalencias débiles.
    
    Por otro lado, tenemos el pushout
        \begin{center}
            $\xymatrix{
                0  \ar[r] \ar[d] & X \ar[d]^{i_0} \\
                X \ar[r]_{i_1}   &     X\coprod X
            }$
        \end{center}
    Como $0\longrightarrow X$ es una cofibración y la clase $\mathcal{C}o\mathcal{F}$ es cerrada por pushouts, se tiene que $i_0$ e $i_1$ son cofibraciones y, luego, $d_0=\binom{d_0}{d_1}\circ i_0$ y $d_1=\binom{d_0}{d_1}\circ i_1$ lo son también.
    \end{proof}

\begin{definition} \label{fibrante_hpy}
    Sean $f,g: X \longrightarrow Y$ en $\mathscr{C}$. Una \emph{q-homotopía a izquierda} \\ $H:\xymatrix{f \ar@2{~>}[r] & g}$ con $q$-cilindro $C=(W, d_0, d_1, s)$ (para $X$) es un morfismo $h:W\longrightarrow Y$ tal que $hd_0=f$ y $hd_1=g$.
   
    \vspace{2mm}
      \begin{center}
                $\xymatrix{
                    X\coprod X \hspace{1mm} \ar@{>->}[rr]^{\binom{d_0}{d_1}} \ar[dr]_{\nabla_X} && W  \ar[r]^{h} \ar[dl]^{s}|\circ  & Y, \\
                                                      &  X \\ 
                  }$
      
            \end{center}
    
    Diremos que $H$ es \emph{fibrante} si es una homotopía con cilindro fibrante.

    \vspace{2mm}
    An\'alogamente, una \emph{q-homotop\'ia a derecha} $K: \xymatrix{f \ar@2{~>}[r] & g}$ con $q$-path-object $P=(V, \delta_0, \delta_1, \sigma)$ (para $Y$) es un morfismo $k: X \longrightarrow V$ satisfaciendo $\delta_0k=f$ y $\delta_1k=g.$   
    
    \vspace{2mm}
    \begin{center}
            $\xymatrix{
                    X \ar[r]^{k} & V \ar@{->>}[rr]^{(\delta_0, \delta_1)} && Y \times Y\\
                    && Y \ar[ul]^{\sigma}| \circ \ar[ur]_{\Delta_Y}
            }$,
    \end{center}
\end{definition}

En adelante, trabajaremos s\'olo con homotop\'ias a izquierda.

\begin{comment}
    Sea $X$ un espacio topol\'ogico. Consideramos el cilindro $X\times I$ y definimos $d_0(x)=(x, 0)$, $d_1(x)=(x, 1)$ y $s(x,t)=x$, obteniendo as\'i la conmutatividad del diagrama asociado a la definici\'on de los cilindros de Quillen
    
     \begin{center}
        $\xymatrix{
     X\hspace{1mm}  \ar@{^{(}->}@<4pt>[rr]^{d_0}
        \ar@{^{(}->}@<-4pt>[rr]_{d_1}
        \ar[dr]_{id} && X\times I. 
        \ar[dl]^{s}| \circ\\
        &  X \\  }$
    \end{center}
    
    Si bien es cierto que $s$ es una equivalencia d\'ebil (de hecho, es una equivalencia homot\'opica), la inclusi\'on $$X\coprod X = \xymatrix{X\times\{0\} \cup X\times \{1\} \ar@{^{(}->}[r]^(.7){\binom{d_0}{d_1}} & X\times I}$$ podr\'ia no ser una cofibraci\'on si $X$ no es cofibrante.
    Es decir, el cilindro $X\times I$ que conocemos de espacios topol\'ogicos no es un $q$-cilindro para cualquier $X$, pero s\'i lo es, por ejemplo, cuando $X$ es un CW-complejo.
    
    Cuando $X\times I$ es un cilindro de Quillen, las $q$-homotop\'ias con cilindro $X\times I$ son las verdaderas homotop\'ias de $\mathcal{T}op$.
\end{comment}

\bigskip
\begin{lemma} \label{comp_v}
    
Sean $f, g, l : X \longrightarrow Y$, y sean $H:\xymatrix{f \ar@2{~>}[r] & g}$, $H':\xymatrix{g \ar@2{~>}[r] & l}$ dos q-homotopías con cilindros $C$ y $C'$, respectivamente. Si $X$ es cofibrante, entonces existe una q-homotopía $H'':\xymatrix{f \ar@2{~>}[r] & l}$ con cilindro $C''$, donde $C''$ se obtiene del pushout de $d_1$ y $d'_0$.
\end{lemma}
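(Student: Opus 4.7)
I would let $W''$ be the pushout of $d_1:X\to W$ and $d'_0:X\to W'$, with canonical maps $j_0:W\to W''$ and $j_1:W'\to W''$ satisfying $j_0d_1=j_1d'_0$. Setting $d''_0:=j_0d_0$ and $d''_1:=j_1d'_1$, I observe that $sd_1=id_X=s'd'_0$ and $hd_1=g=h'd'_0$, so the universal property of the pushout yields unique $s'':W''\to X$ and $h'':W''\to Y$ with $s''j_0=s$, $s''j_1=s'$, $h''j_0=h$, and $h''j_1=h'$. A direct computation then gives $s''d''_0=s''d''_1=id_X$, $h''d''_0=f$, and $h''d''_1=l$, reducing the problem to verifying that $s''$ is a weak equivalence and that $\binom{d''_0}{d''_1}:X\coprod X\to W''$ is a cofibration.

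For the weak equivalence of $s''$: since $X$ is cofibrant, Lemma \ref{lemma2_quillen} says $d_0$ and $d'_0$ are trivial cofibrations; as trivial cofibrations are stable under pushouts, $j_0$ is itself a trivial cofibration, so $d''_0=j_0d_0$ is a weak equivalence. Axiom M5 applied to $s''d''_0=id_X$ then yields that $s''$ is a weak equivalence.

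The main obstacle is showing that $\binom{d''_0}{d''_1}:X\coprod X\to W''$ is a cofibration; I would verify the left lifting property against trivial fibrations. Given a trivial fibration $p:E\to B$ and a commutative square with top arrow $\binom{a}{b}:X\coprod X\to E$ and bottom arrow $c:W''\to B$, I would first produce an auxiliary arrow $\tilde{x}:X\to E$ with $p\tilde{x}=cj_0d_1=cj_1d'_0$ by lifting $0\to X$ (a cofibration, since $X$ is cofibrant) against $p$. Then, using the cofibration $\binom{d_0}{d_1}$, I would lift $\binom{a}{\tilde{x}}$ along $cj_0$ to obtain $\tilde{h}:W\to E$; symmetrically, using $\binom{d'_0}{d'_1}$, I would lift $\binom{\tilde{x}}{b}$ along $cj_1$ to obtain $\tilde{h}':W'\to E$. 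The compatibility $\tilde{h}d_1=\tilde{x}=\tilde{h}'d'_0$ then forces these two lifts to glue, via the universal property of the pushout, into the required lift $W''\to E$, completing the verification.
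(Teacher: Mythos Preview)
Your proof is correct. The construction of $W''$, $s''$, $h''$ and the verification that $s''$ is a weak equivalence match the paper almost verbatim (the paper uses the factorization $s=s''\alpha$ with $\alpha$ a trivial cofibration and M5, while you use $s''d''_0=id_X$ with $d''_0$ a weak equivalence and M5; these are the same argument up to a trivial shift).

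Where you genuinely diverge is in proving that $\binom{d''_0}{d''_1}$ is a cofibration. The paper does this \emph{structurally}: it factors
\[
\binom{d''_0}{d''_1}\;=\;\binom{j_0}{\,j_1 d'_1\,}\circ(d_0+id_X)
\]
and shows each factor is a cofibration by exhibiting it as a pushout of a known cofibration (the first factor is a pushout of $d_0$, the second a pushout of $\binom{d'_0}{d'_1}$). You instead verify the left lifting property against trivial fibrations \emph{by hand}: you manufacture an intermediate point $\tilde{x}:X\to E$ using the cofibrancy of $X$, then lift separately through each cylinder via the cofibrations $\binom{d_0}{d_1}$ and $\binom{d'_0}{d'_1}$, and glue along the pushout. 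Both arguments are clean and of comparable length; the paper's version has the advantage of being reusable (the factorization itself is sometimes useful later), while yours is perhaps more self-contained since it does not require recognizing the two auxiliary pushout squares.
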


\begin{proof}

Como $X$ es cofibrante, $d_{0}, d_{1}, d'_{0}$ y $d'_{1}$ son cofibraciones triviales.\\
El pushout de $d_{1}$ y $d'_{0}$ nos permite definir un cilindro $C''$ como podemos ver en el siguiente diagrama

\vspace{3mm}

\begin{center}
    $\xymatrix{
            & W \ar@{>-->}[dr]_{\alpha}
                 \ar@/^/[drrr]^{s}|\circ\\
        X \hspace{2mm} \ar@{>->}@<3pt>[ur]^{d_0}
          \ar@{>->}@<-3pt>[ur]_{d_1}
          \ar@{>->}@<3pt>[dr]^{d'_0}
          \ar@{>->}@<-3pt>[dr]_{d'_1}  && W'' \ar@{-->}[rr]^{\exists s''}|\circ && X. \\
            & W' \ar@{>-->}[ur]^{\beta}
                 \ar@/_/[urrr]_{s'}|\circ \\}$

\end{center}

\vspace{3mm}

Dado que $sd_0 = id_X = s'd'_1$, por la propiedad universal de $W''$, exite $s''$ que factoriza tanto a $s$ como a $s'$. Además, como $d_1$ y $d'_0$ son cofibraciones triviales, por M3 $\alpha$ y $\beta$ también lo son. Luego, del axioma M5 y del hecho de que $s$ y $s'$ son equivalencias débiles, se deduce que $s''$ es una equivalencia débil.
Si escribimos $d_0''=\alpha d_0$ y $d_1''=d'_1\beta$, entonces se tiene que $\binom{d_0''}{d_1''}$ es una cofibraci\'on:

Consideramos los pushouts
\begin{center}
$\xymatrix{
    X\coprod X \ar[rr]^{\binom{d_0'}{d_1'}} \ar[dd]_{d_1+id_X} && W' \ar[dd]^{\beta} \\
    \\
    W \coprod X \ar[rr] _{\binom{\alpha}{d_1}} && W'' \\
}$
\hspace{3mm} y  \hspace{3mm}
$\xymatrix{
    X \ar[rr]^{d_0} \ar[dd]_{i_0} && W \ar[dd]^{i_0} \\
    \\
    X \coprod X \ar[rr] _{d_0 + id_X} && W\coprod X, \\
}$
\end{center}
y se tiene que $\binom{\alpha}{d_1}$ y $d_0+id_X$ son ambas cofibraciones, porque $\binom{d'_0}{d'_1}$ y $d_0$ lo son. Luego,
$$ \xymatrix{
X \coprod X \ar@/^2pc/[rrrr]^{\binom{d''_0}{d''_1}} \ar[rr]^{d_0 + id_X} && W \coprod X \ar[rr]^{\binom{\alpha}{d_1}} && W''.
} $$
es tambi\'en una cofibraci\'on.

As\'i, $C'' = (W'', d_0'', d_1'', s'')$ es, efectivamente, un $q$-cilindro para $X$.

\vspace{2mm}
Nuevamente, de la propiedad universal del pushout se tiene una $q$-homotop\'ia $H''$ de $f$ a $l$ con cilindro $C''$. En efecto, ampliamos el diagrama
anterior de la siguiente forma

\begin{center}
    $\xymatrix{
            & W \ar[dr]_{\alpha}
                 \ar@/^/[drrr]^{s}|\circ \ar@/^2pc/[rrrrd]^{h}\\
        X \hspace{2mm} \ar@{>->}@<3pt>[ur]^{d_0}
          \ar@{>->}@<-3pt>[ur]_{d_1}
          \ar@{>->}@<3pt>[dr]^{d'_0}
          \ar@{>->}@<-3pt>[dr]_{d'_1}  && W'' \ar@{-->}[rr]^{s''}|\circ \ar@{-->}@/_1pc/[rrr]_(.6){h''} && X & Y\\
            & W' \ar[ur]^{\beta}
                 \ar@/_/[urrr]_{s'}|\circ \ar@/_2pc/[rrrru]_{h'}\\}$
\end{center}

\vspace{3mm}
donde la existencia del morfismo $h''$ se debe a que $hd_1=g=h'd'_0$, y obtenemos, por lo tanto, $H'':\xymatrix{f \ar@2{~>}[r] & l}$ dada por $H''= (C'', h'').$
\end{proof}

\vspace{3mm}
\begin{observation}
Para construir la categoría homotópica de una categoría de modelos $\mathscr{C}$, Quillen demuestra que las homotopías definen una relación de equivalencia entre los morfismos de la subcategoría plena de objetos cofibrantes, y el lema anterior se corresponde con la transitividad de esta relación entre flechas.
En nuestro caso, dadas $H:\xymatrix{f \ar@2{~>}[r] & g}$ y $H':\xymatrix{g \ar@2{~>}[r] & l}$, podemos pensar a la homotopía $H''$ de este resultado como una manera de definir la composición vertical $H\circ H'$.
Sin embargo, si bien las homotopías se componen cuando los objetos son cofibrantes, esta composición que acabamos de exhibir no determina ni una 2-categor\'ia ni una bicategor\'ia. Es necesario, por lo tanto, definir una adecuada relación de equivalencia entre homotopías, de manera que si tomamos como 2-celdas a las clases de equivalencia se verifiquen, entonces, los axiomas de 2-categorías. La categor\'ia as\'i obtenida tendr\'a todas sus 2-celdas inversibles (ver lema \ref{2_cells_inv}).
\end{observation}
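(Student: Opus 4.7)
The final statement is an \emph{observation} bundling three claims: (i) the pushout-based vertical composition of $q$-homotopies defined in Lemma \ref{comp_v} does not, on its own, yield a 2-category or even a bicategory; (ii) an appropriate equivalence relation $\sim$ on homotopies can be introduced so that $\sim$-classes serve as the 2-cells of a genuine 2-category $\catH$; and (iii) every such 2-cell turns out to be invertible, the detailed verification being postponed to Lemma \ref{2_cells_inv}.

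My plan for (i) is to isolate the concrete failures of the pushout recipe. The pushout $W''$ of Lemma \ref{comp_v} is defined only up to a unique isomorphism, so $H\circ H'$ depends on a choice of representative cylinder; iterating for three composable homotopies produces two distinct but isomorphic pushouts of pushouts, breaking strict associativity. For units, a strict identity 2-cell on $f:X\to Y$ would demand a $q$-cylinder of the form $(X,\mathrm{id}_X,\mathrm{id}_X,\mathrm{id}_X)$, but this requires the codiagonal $\nabla_X:X\sqcup X\to X$ to be a cofibration, which fails generically. Hence no canonical identity homotopy exists in the strict $q$-sense, and even a bicategorical structure would require coherence data not supplied by the pushout recipe.

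My plan for (ii) is to declare $H=(C,h)\sim H'=(C',h')$, for homotopies $f\Rightarrow g$ with common source and target, precisely when there exists a common $q$-cylinder refinement $\widehat C$ equipped with structure-preserving comparison maps $\widehat C\to C$ and $\widehat C\to C'$ that equalize $h$ and $h'$. The argument then proceeds in three stages: first, prove that $\sim$ is an equivalence relation, with transitivity reduced via a further pushout of $q$-cylinders together with M2 and M5 to produce a joint refinement of two given refinements; second, verify that the pushout composition of Lemma \ref{comp_v} descends to $\sim$-classes and becomes strictly associative and unital on them, once a canonical identity class is fixed using the fibrant $q$-cylinder produced by M4; and third, define a horizontal composition by whiskering on $\sim$-classes fulfilling the reduced axioms of Observation \ref{horizontal_comp}, so that an interchange-compatible 2-category structure emerges.

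For (iii), the plan is to exploit the symmetry of the cylinder definition: from $H=(C,h)$ with $C=(W,d_0,d_1,s)$, produce $H^{-1}$ via $C^{\mathrm{op}}=(W,d_1,d_0,s)$ and the same underlying $h$, which furnishes a homotopy $g\Rightarrow f$; one then shows $H^{-1}\circ H\sim\mathrm{Id}_f$ and $H\circ H^{-1}\sim\mathrm{Id}_g$ by producing an explicit morphism of $q$-cylinders from the pushout $W\sqcup_X W$ to a cylinder representing the identity class via the codiagonal, together with an M2-lift compatible with $h$. The principal obstacle throughout is stage one of (ii): establishing transitivity of $\sim$ and, crucially, its compatibility with both vertical and horizontal composition. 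This forces iterated pushouts of $q$-cylinders and repeated invocation of M2 to produce the necessary comparison maps, with M5 guaranteeing that the resulting structure maps remain weak equivalences. The coherence extracted from M2 in this process is precisely what will make the eventual 2-category axioms hold on the nose on $\sim$-classes rather than only up to further higher data.
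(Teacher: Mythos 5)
Your plan for stage (ii) diverges from the paper at the decisive point, and in a way the paper itself flags as a dead end. You propose to define $H\sim H'$ by the existence of a common cylinder refinement with comparison maps equalizing the homotopies; this is essentially the germ relation of Definition \ref{rel_germ}, generated by morphisms of cylinders. The paper explicitly warns, in the Comentario following Proposition \ref{categoria_Ho}, that with that relation one can define both compositions and verify every requirement \emph{except} the axiom relating the vertical and horizontal compositions (the interchange law), and that Quillen's own homotopy relation is equivalent to it. The construction actually used is different in kind: $H\sim H'$ iff $\widehat{FH}=\widehat{FH'}$ for \emph{every} 2-functor $F:\mathscr{C}\to\mathscr{D}$ sending $\Sigma$ to equivalences (Definition \ref{rel_adhoc}), where $\widehat{FH}=Fh\widehat{FC}$ is built from the unique 2-cell $\widehat{FC}$ satisfying $Fs\widehat{FC}=Fx$. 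This relation is coarser than yours, and the 2-category axioms (associativity, units, interchange, and the invertibility of 2-cells in Lemma \ref{2_cells_inv}) then hold essentially for free, because each is checked inside the target 2-category $\mathscr{D}$ rather than by constructing comparison maps between iterated pushouts. Your stage (iii), which argues invertibility via an explicit cylinder morphism out of $W\sqcup_X W$, inherits the same difficulty; the paper's proof is a one-line uniqueness argument for the 2-cell over $Fd_0$.

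A second, smaller divergence: the paper does not repair the unit problem you identify in (i) inside the class of $q$-cylinders. It instead generalizes the notion of cylinder (Definition \ref{cilindros_homotopias}) to a configuration $(W,Z,d_0,d_1,s,x)$ with $s\in\Sigma$ and no cofibration condition, so that $(X,X,id_X,id_X,id_X,id_X)$ is a legitimate identity cylinder and the whiskering $Hl$ always exists; $q$-homotopies are recovered only later, on $\mathscr{C}_{fc}$, via Lemma \ref{lema_3_pasos}. Without this generalization, your canonical identity class ``fixed using the fibrant $q$-cylinder produced by M4'' is not obviously a strict unit, and whiskering on the left by $l$ need not preserve $q$-cylinders, so the reduced axioms of Observation \ref{horizontal_comp} cannot be verified as stated.
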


\vspace{3mm}
\begin{comentario}
 En $\mathcal{T}op$, el pushout que permite definir la composici\'on vertical anterior es precisamente el cilindro que se obtiene pegando los primeros dos, definiendo as\'i una nueva homotop\'ia como consecuencia del lema del pegado:
\\

   $\xymatrix@C=1.5pc@R=1.7pc{
&& W\\
&&\\
&& \save []+<0.5cm, 0.7cm>*{\begin{tikzpicture}
        \draw (0,0) ellipse (0.7 and 0.2);
        \draw (-0.7,0) -- (-0.7,-1.2);
        \draw (-0.7,-1.2) arc (180:360:0.7 and 0.2);
        \draw [dashed] (-0.7,-1.2) arc (180:360:0.7 and -0.2);
        \draw (0.7,-1.2) -- (0.7,0);  
        \fill [gray,opacity=0.3] (-0.7,-1.2) arc (180:360:0.7 and 0.2) -- (-0.7,-1.2) arc (180:360:0.7 and -0.2);
    \end{tikzpicture}} \ar@<15pt>@/^0.6pc/[drrr] \ar@<15pt>@/^1.7pc/[drrrrrrr]^h \restore &&&&& \hspace{1mm} W'' \\
X\hspace{1mm} \begin{tikzpicture}
        \draw (0,0) ellipse (0.7 and 0.2);
        \fill [gray,opacity=0.3] (-0.7,0) arc (180:360:0.7 and 0.2) -- (-0.7,0) arc (180:360:0.7 and -0.2);
  \end{tikzpicture} \ar@/^1pc/[urr]^{d_1}
                  \ar@<9pt>@/^1pc/[uurr]^{d_0}
                  \ar@/^0.5pc/[drr]_{d'_0}
                  \ar@<-10pt>@/^0.3pc/[ddrr]_{d'_1} &&&&&
 \save []+<1cm, 0cm>*{\hspace{2mm} \begin{tikzpicture}
        \draw (0,0) ellipse (0.7 and 0.2);
        \draw (-0.7,0) -- (-0.7,-1.2);
        \draw (-0.7,-1.2) arc (180:360:0.7 and 0.2);
        \draw [dashed] (-0.7,-1.2) arc (180:360:0.7 and -0.2);
        \draw (0.7,-1.2) -- (0.7,0);
        \draw (-0.7,-1.2) -- (-0.7,-2.4);
        \draw (-0.7,-2.4) arc (180:360:0.7 and 0.2);
        \draw [dashed] (-0.7,-2.4) arc (180:360:0.7 and -0.2);
        \draw (0.7,-2.4) -- (0.7,-1.2);
        \fill [gray,opacity=0.3] (-0.7,-1.2) arc (180:360:0.7 and 0.2) -- (-0.7,-1.2) arc (180:360:0.7 and -0.2);
    \end{tikzpicture}} \ar@{-->}[rrrr]^{h''} \restore &&&& Y \\
&& \save []+<0.5cm, -0.7cm>* {\begin{tikzpicture}
        \draw (0,0) ellipse (0.7 and 0.2);
        \draw (-0.7,0) -- (-0.7,-1.2);
        \draw (-0.7,-1.2) arc (180:360:0.7 and 0.2);
        \draw [dashed] (-0.7,-1.2) arc (180:360:0.7 and -0.2);
        \draw (0.7,-1.2) -- (0.7,0);  
        \fill [gray,opacity=0.3] (0,0) ellipse (0.7 and 0.2);
    \end{tikzpicture}} \ar@<-15pt>@/^0.2pc/[urrr] \ar@<-15pt>@/_1.5pc/[urrrrrrr]_{h'} \restore \\                  
&&\\
&& W' \\}$

\end{comentario}

\bigskip
\subsection{Construcci\'on de \texorpdfstring{$\catH$}{}}
Recordemos que el problema que queremos resolver consiste en definir una 2-categor\'ia $\catH$ junto con un 2-funtor $i: \mathscr{C} \longrightarrow \catH$ que tenga la propiedad universal de la 2-localización:

\begin{equation} \label{diagrama}
    \xymatrix{
        \mathscr{C} \hspace{1mm} \ar[rr]^{i} \ar[dr]_{F} && \catH \ar@{-->}[dl]^(.5){\exists \widetilde{F}} \\
        & \mathscr{D}}
\end{equation}
para todo 2- funtor $F:\mathscr{C} \longrightarrow \mathscr{D}$ tal que $F(W) \subseteq Equiv(\mathscr{D})$.

Con el objetivo de establecer una relación entre homotopías para que sean parte de la estructura de 2-categoría de $\catH$, en adelante vamos a trabajar con una generalización de los cilindros y de las homotopías de Quillen que ya conocemos.
Estas nuevas definiciones pueden ser introducidas en el contexto de una categoría con una única clase de morfismos $\Sigma$ conteniendo a las identidades, y por el momento trabajaremos con estas nociones prescindiendo de la estructura de modelos de $\mathscr{C}$.

\begin{definition} \label{cilindros_homotopias}
     Sea $\Sigma$ una familia de morfismos en una categor\'ia $\mathscr{C}$ conteniendo a todas las identidades. Un \emph{cilindro} $C=(W, Z, d_0, d_1, s, x)$ para un objeto $X$ en $\mathscr{C}$ es una configuración
    \begin{center}
        $\xymatrix{
     X  \ar@<3pt>[rr]^{d_0}
        \ar@<-3pt>[rr]_{d_1}
        \ar[dr]_{x} && W, 
        \ar[dl]^{s}| \circ\\
        &  Z \\  }$
    \end{center}
    
    donde $s \in \Sigma$ y $sd_0=sd_1=x$.
    \bigskip
    
    Una $\emph{homotopía (a izquierda)}$ $H=(C, h)$ de $f$ a $g$ con cilindro \\ $C=(W, Z, d_0, d_1, s, x)$ es una flecha $h:W \longrightarrow Y$ cumpliendo $hd_0=f$ y $hd_1=g$. Notamos, como antes, $H:\xymatrix{f \ar@2{~>}[r] & g}$.

    \begin{center}
    $\xymatrix{
     X  \ar@/^3pc/@<6pt>[rrrr]^{f}
        \ar@/^3pc/[rrrr]_{g}
        \ar@<2pt>[rr]^{d_0}
        \ar@<-3pt>[rr]_{d_1}
        \ar[dr]_{x} && W 
        \ar[rr]^{h} \ar[dl]^{s}|\circ  && Y. \\
        &  Z \\  }$
\end{center}

\end{definition}

\bigskip
\subsubsection{Clases de equivalencia de homotop\'ias}
La relaci\'on de equivalencia que buscamos se desprender\'a de la siguiente observaci\'on, que adem\'as sugiere c\'omo debemos definir el 2-funtor $\widetilde{F}$ del diagrama \ref{diagrama} en las clases de homotop\'ia.

\begin{observation}

Sea $\mathscr{D}$ una 2-categoría y consideremos en $\mathscr{D}$ el siguiente diagrama conmutativo

\begin{center}
    $\xymatrix{
     X  \ar@/^3pc/@<6pt>[rrrr]^{f}
        \ar@/^3pc/[rrrr]_{g}
        \ar@<2pt>[rr]^{d_0}
        \ar@<-3pt>[rr]_{d_1}
        \ar[dr]_{x} && W 
        \ar[rr]^{h} \ar[dl]^{s}  && Y \\
        &  Z \\  }$
\end{center}
donde el morfismo $s$ es una verdadera \emph{equivalencia} .

Para cada objeto $X'$ en $\mathscr{D}$, $s$ induce un funtor plenamente fiel

\vspace{3mm}

\begin{center}
    $\xymatrix{ \mathscr{D}[X', W] \ar[rr]^{s_{*}} && \mathscr{D}[X', Z]
    }$
\end{center}

\vspace{3mm}
y, tomando $X'=X$, existe entonces una única 2-celda $\widehat{C}: d_0 \Longrightarrow d_1$ tal que $s\widehat{C}=x$ (ver \ref{s*_equivalencia}):

\vspace{3mm}

\begin{center}
    \begin{tikzcd}
    X \arrow[shift left=10pt, "d_0"]{r}[name=LUU, below]{}
    \arrow[shift right=10pt, "d_1"']{r}[name=LDD]{}
    \arrow[Rightarrow,to path=(LUU) -- (LDD)\tikztonodes]{r}{\hat{C}}
    & W
    \arrow[ r, "s"]
    & Z
  \end{tikzcd}
  \hspace{2mm}$=$ \hspace{2mm}
  \begin{tikzcd}
    X \arrow[shift left=10pt, "sd_0=x"]{rr}[name=LUU, below]{}
    \arrow[shift right=10pt, "sd_1=x"']{rr}[name=LDD]{}
    \arrow[Rightarrow,to path=(LUU) -- (LDD)\tikztonodes]{r}{id_x}
    && Z
  \end{tikzcd}
\end{center}

\vspace{10mm}

Sean ahora $\xymatrix{ \mathscr{C} \ar[r]^{F} & \mathscr{D}}$ un 2-funtor que manda las flechas de la clase $\Sigma$ en equivalencias, y $H=(C, h)$ una homotop\'ia de $f$ a $g$ en $\mathscr{C}$ con cilindro $C=(W, Z, d_0, d_1, s, x)$.

Aplicando $F$ al diagrama en la definici\'on \ref{cilindros_homotopias} resulta

\vspace{3mm}

\begin{center}
    $\xymatrix{
     FX  \ar@/^3pc/@<6pt>[rrrr]^{Ff}
        \ar@/^3pc/[rrrr]_{Fg}
        \ar@<2pt>[rr]^{Fd_0}
        \ar@<-3pt>[rr]_{Fd_1}
        \ar[dr]_{Fx} && FW 
        \ar[rr]^{Fh} \ar[dl]^{Fs}  && FY, \\
        &  FZ \\  }$
\end{center}

que es un diagrama conmutativo en $\mathscr{D}$.

Como $Fs$ es una equivalencia, hay una \'unica 2-celda $\widehat{FC}: Fd_0 \Longrightarrow Fd_1$ que satisface $Fs\widehat{FC}=Fx$.

\end{observation}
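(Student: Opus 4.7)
Para probar el contenido de esta observaci\'on, el plan es reducir ambas afirmaciones a una aplicaci\'on directa de la Observaci\'on \ref{s*_equivalencia}, seg\'un la cual toda equivalencia $s: W \longrightarrow Z$ en una 2-categor\'ia $\mathscr{D}$ induce una equivalencia $s_*: \mathscr{D}[X, W] \longrightarrow \mathscr{D}[X, Z]$ de categor\'ias de morfismos por postcomposici\'on. En particular, $s_*$ resulta plenamente fiel, y este es el ingrediente clave de todo el argumento.

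Primero abordar\'ia la afirmaci\'on general en $\mathscr{D}$. Las flechas $d_0, d_1: X \longrightarrow W$ son dos objetos de la categor\'ia $\mathscr{D}[X, W]$ con $s_*(d_0) = sd_0 = x = sd_1 = s_*(d_1)$. La 2-celda identidad $\mathrm{Id}_x$ es entonces un morfismo de $s_*(d_0)$ a $s_*(d_1)$ en $\mathscr{D}[X, Z]$, y por plena fidelidad de $s_*$ existe una \'unica 2-celda $\widehat{C}: d_0 \Longrightarrow d_1$ en $\mathscr{D}[X, W]$ cuya imagen por $s_*$ es $\mathrm{Id}_x$; es decir, $s\widehat{C} = \mathrm{Id}_x$. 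Esta igualdad es la versi\'on expl\'icita de la ecuaci\'on $s\widehat{C} = x$, interpretada seg\'un la convenci\'on notacional \ref{notation} que identifica una flecha con su 2-celda identidad.

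Para la segunda afirmaci\'on, el argumento se reduce al anterior aplicado a la configuraci\'on transportada por $F$. Como el 2-funtor $F$ manda las flechas de $\Sigma$ en equivalencias y $s \in \Sigma$ por ser parte de un cilindro, la flecha $Fs$ es una equivalencia en $\mathscr{D}$. Por funtorialidad de $F$ se preservan las conmutatividades $Fs \cdot Fd_0 = Fs \cdot Fd_1 = Fx$, de modo que la primera parte del argumento se aplica sin cambios al diagrama $(FX, FW, FZ, Fd_0, Fd_1, Fs, Fx)$, produciendo la \'unica 2-celda $\widehat{FC}: Fd_0 \Longrightarrow Fd_1$ con $Fs\,\widehat{FC} = Fx$. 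No hay un obst\'aculo sustancial en esta demostraci\'on: el trabajo no trivial ya fue realizado al establecer \ref{s*_equivalencia}, y el \'unico punto delicado a tener en cuenta es la convenci\'on \ref{notation}, que hace que las ecuaciones $s\widehat{C}=x$ y $Fs\widehat{FC}=Fx$ sean igualdades bien formadas entre 2-celdas.
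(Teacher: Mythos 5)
Tu propuesta es correcta y sigue esencialmente el mismo camino que el texto: reducir ambas afirmaciones a la plena fidelidad de $s_*$ (respectivamente $(Fs)_*$) garantizada por la observación \ref{s*_equivalencia}, obteniendo $\widehat{C}$ y $\widehat{FC}$ como únicas preimágenes de la 2-celda identidad $Id_x$ (resp.\ $Id_{Fx}$). La aclaración sobre la convención \ref{notation}, que identifica $x$ con $Id_x$ en la ecuación $s\widehat{C}=x$, es pertinente y coincide con el uso del paper.
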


\bigskip

\begin{definition} \label{FH}
    Dada una homotop\'ia $H=(C, h)$ en $\mathscr{C}$ y dado un 2-funtor $F: \mathscr{C} \longrightarrow \mathscr{D}$ que manda la clase $\Sigma$ en equivalencias, definimos una 2-celda $\widehat{FH}$ en $\mathscr{D}$ como $\widehat{FH}:=Fh\widehat{FC} : Ff \Longrightarrow Fg$, donde $\widehat{FC}: Fd_0 \Longrightarrow Fd_1$ es la \'unica tal que  $Fs\widehat{FC}=Fx$.
\end{definition}
    
\begin{definition} \label{rel_adhoc}
    Sean $f, g: X \longrightarrow Y$ morfismos en $\mathscr{C}$ y $H, H': \xymatrix{f \ar@2{~>}[r] & g}$ dos homotopías. Decimos que $H \sim H'$ si y sólo si $\widehat{FH}=\widehat{FH'}$ para todo 2-funtor $F: \mathscr{C} \longrightarrow \mathscr{D}$ tal que $F(\Sigma) \subseteq Equiv(\mathscr{D})$, para toda 2-categoría $\mathscr{D}$. 
    
    Más generalmente, dadas homotopías $\xymatrix{ f \ar@2{~>}[r]^{H} & g \ar@2{~>}[r]^{K} & l}$ y $\xymatrix{ f \ar@2{~>}[r]^{H'} & g' \ar@2{~>}[r]^{K'} & l}$, decimos que $(K, H) \sim (K', H')$ si y sólo si $\widehat{FK}\circ\widehat{FH} = \widehat{FK'}\circ\widehat{FH'}$, para todo 2-funtor $F:\mathscr{C} \longrightarrow \mathscr{D}$ que manda la clase $\Sigma$ en equivalencias de $\mathscr{D}.$
\end{definition}

\vspace{6mm}

Es claro c\'omo de esta manera podemos establecer una relaci\'on de equivalencia entre secuencias de homotop\'ias componibles. Definimos una 2-celda en $\catH$ como la clase $[H_n,..., H_1]$ de una secuencia finita de homotop\'ias $\xymatrix{ f_0 \ar@2{~>}[r]^(.3){H_1} & f_{1} \ldots f_{n-1} \ar@2{~>}[r]^(.7){H_n} & f_n }$, donde 

\begin{center}
    $(H_n, ..., H_1) \sim (K_m, ..., K_1)$ si y s\'olo si $\widehat{FH_n} \circ ... \circ \widehat{FH_1} = \widehat{FK_m}\circ ... \circ \widehat{FK_1}$.
\end{center}

\bigskip
Veamos que, así, $\catH$ es efectivamente una 2-categoría.

\subsubsection{Composición vertical}

Definimos la composición vertical de homotopías como la yuxtaposición $[K]\circ [H] = [K, H]$.
\begin{center}
    \begin{tikzcd}
    X \arrow[bend left=50, "f"]{rr}[name=LUU, below]{}
    \arrow["\hspace{-6mm}g"]{rr}[name=LUD]{}
    \arrow[swap]{rr}[name=LDU]{}
    \arrow[bend right=50, "l"']{rr}[name=LDD]{}
    \arrow[Rightarrow,to path=(LUU) -- (LUD)\tikztonodes]{r}{[H]}
    \arrow[Rightarrow,to path=(LDU) -- (LDD)\tikztonodes]{r}{[K]}
    && 
    Y
  \end{tikzcd}
  =
      \begin{tikzcd}
    X \arrow[bend left=50, "f"]{rr}[name=LUU, below]{}
    \arrow[bend right=50, "l"']{rr}[name=LDD]{}
    \arrow[Rightarrow,to path=(LUU) -- (LDD)\tikztonodes]{r}{[K, H]}
    && 
    Y
  \end{tikzcd}
\end{center}

\bigskip

La asociatividad es una consecuencia de la asociatividad de la composición vertical en $\mathscr{D}$. En efecto, para todo $F: \mathscr{C} \longrightarrow \mathscr{D}$ tal que $F(\Sigma)\subseteq Equiv(\mathscr{D})$

\begin{center}

$[L, K]\circ [H]=[L]\circ [K, H]$ si y sólo si $(\widehat{FL}\circ \widehat{FK})\circ \widehat{FH}=\widehat{FL}\circ (\widehat{FK}\circ\widehat{FH})$.

\end{center}

\begin{observation} \label{secuencias}
    Las clases de secuencias de una sola homotop\'ia generan las 2-celdas en $\catH$.
\end{observation}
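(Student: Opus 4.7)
El enunciado es esencialmente una cuesti\'on de desempaquetar definiciones, y mi plan es hacerlo expl\'icito en tres pasos.

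Primero, verificar\'ia que la composici\'on vertical $[K]\circ[H]:=[K,H]$ est\'a bien definida en clases. Si $(H)\sim(H')$ y $(K)\sim(K')$ (con $K,K'$ componibles a continuaci\'on de $H,H'$ respectivamente), entonces para cada 2-funtor $F:\mathscr{C}\longrightarrow\mathscr{D}$ que manda $\Sigma$ en equivalencias se tiene $\widehat{FH}=\widehat{FH'}$ y $\widehat{FK}=\widehat{FK'}$, de donde $\widehat{FK}\circ\widehat{FH}=\widehat{FK'}\circ\widehat{FH'}$. Por la definici\'on \ref{rel_adhoc} esto dice precisamente que $(K,H)\sim(K',H')$, as\'i que la yuxtaposici\'on respeta $\sim$. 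El mismo argumento, aplicado a secuencias arbitrarias, muestra que la yuxtaposici\'on de secuencias pasa al cociente.

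En segundo lugar, proceder\'ia por inducci\'on en $n$ para probar la igualdad
\begin{equation*}
[H_n]\circ[H_{n-1}]\circ\cdots\circ[H_1]\;=\;[H_n,H_{n-1},\ldots,H_1]
\end{equation*}
en $\catH$. El caso $n=1$ es trivial. Para el paso inductivo, usando la asociatividad de $\circ$ (que se desprende de la asociatividad de la composici\'on vertical en cada $\mathscr{D}$) y la definici\'on de la yuxtaposici\'on,
\begin{equation*}
[H_n]\circ\bigl([H_{n-1}]\circ\cdots\circ[H_1]\bigr)=[H_n]\circ[H_{n-1},\ldots,H_1]=[H_n,H_{n-1},\ldots,H_1].
\end{equation*}

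Finalmente, como toda 2-celda en $\catH$ es, por construcci\'on, una clase de la forma $[H_n,\ldots,H_1]$ para alguna secuencia finita de homotop\'ias componibles, la igualdad anterior muestra que dicha 2-celda es la composici\'on vertical de las 2-celdas $[H_1],\ldots,[H_n]$, cada una representada por una \'unica homotop\'ia. No anticipo ning\'un obst\'aculo real: el \'unico punto que requiere una l\'inea de justificaci\'on es la buena definici\'on de la yuxtaposici\'on sobre clases, que es inmediata de c\'omo se defini\'o $\sim$ para secuencias componibles; el resto es inducci\'on directa sobre la longitud.
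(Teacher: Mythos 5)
Your proposal is correct and matches the paper's (implicit) argument: the paper states this as an observation without a written proof precisely because, with vertical composition defined as juxtaposition $[K]\circ[H]=[K,H]$, the identity $[H_n]\circ\cdots\circ[H_1]=[H_n,\ldots,H_1]$ is immediate by induction, and well-definedness on classes follows directly from Definición \ref{rel_adhoc}. Nothing is missing.
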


\bigskip
\begin{id_vertical}

Sea $f \in \mathscr{C}[X,Y]$ y sea $H: \xymatrix{f \ar@2{~>}[r] & f}$ una homotop\'ia con cilindro $C$.

Por c\'omo est\'a definida la relaci\'on de equivalencia entre secuencias de homotop\'ias, es claro que
    $[H]= Id_f$ en $\catH$ si y s\'olo si $\widehat{FH}=Id_{Ff}$ en $\mathscr{D}$ para todo $F \in Hom_+(\mathscr{C}, \mathscr{D}).$

As\'i, por ejemplo, la homotop\'ia $H$ dada por

\begin{center}
      $\xymatrix{
     X  \ar@/^3pc/@<6pt>[rrrr]^{f}
        \ar@/^3pc/[rrrr]_{f}
        \ar@<3pt>[rr]^{id_X}
        \ar@<-3pt>[rr]_{id_X}
        \ar[dr]_{id_X} && X 
        \ar[rr]^{f} \ar[dl]^{id_X}|\circ  && Y. \\
        &  X \\  }$
\end{center}
es tal que $[H]=Id_f$. En efecto, por definici\'on tenemos que $\widehat{FH}=Ff\widehat{FC}$, donde $\widehat{FC}:id_{FX} \Longrightarrow id_{FX}$ es como en la definici\'on \ref{FH}, y la 2-celda $Id_{id_{FX}}$ satisface $id_{FX}*Id_{id_{FX}}=id_{FX}$, entonces $\widehat{FC}=Id_{id_{FX}}$, y luego $\widehat{FId_f}=Id_{Ff}$.

An\'alogamente puede verse que si $H$ es la homotop\'ia \hspace{3mm}
\begin{center}
$\xymatrix{
     X  \ar@/^3pc/@<6pt>[rrrr]^{f}
        \ar@/^3pc/[rrrr]_{f}
        \ar@<3pt>[rr]^{f}
        \ar@<-3pt>[rr]_{f}
        \ar[dr]_{f} && Y 
        \ar[rr]^{id_Y} \ar[dl]^{id_Y}|\circ  && Y, \\
        &  Y \\  }$
\end{center}
entonces $[H]=Id_f.$

Denotamos $I_f$ a cualquier homotop\'ia $H$ tal que $\widehat{FH}=Id_{Ff}.$

\end{id_vertical}

\bigskip
%%%%%%%%%%%%%%%%%%%%%%%%%%%%%%%%%%%%%%%%%%%%%%%%%
\begin{comment}
Si $K$ es una homotopía de $f$ a $g$, entonces efectivamente $[Id_f, K] \sim [K]$, ya que  $\widehat{FId_f}\circ\widehat{FK}=Id_{Ff}\circ \widehat{FK}=\widehat{FK}$.  Y de la misma manera podemos ver que $[K, Id_g]=[K]$. En particular, $[Id_f, Id_f] \sim [Id_f]$.
\end{comment}
%%%%%%%%%%%%%%%%%%%%%%%%%%%%%%%%%%%%%%%%%%%%%%%%%%

De esta forma, $\mathscr{C}[X,Y]$ es una categoría para cada par de objetos $X$, $Y$ en $\mathscr{C}$.

\bigskip

\subsubsection{Composición horizontal} \label{composicion_horiz_def}
Dado que ya tenemos composición vertical, si definimos la composiciones horizontales con las identidades $l*[H]=[Id_l]*[H]$ y $[H]*r=[H]*[Id_r]$, la composición horizontal de 2-celdas se obtiene de la siguiente manera:

\begin{center}
    \begin{tikzcd}
    X \arrow[bend left=70, "f"]{r}[name=LUU, below]{}
    \arrow[bend right=70, "g"']{r}[name=LDD]{}
    \arrow[Rightarrow,to path=(LUU) -- (LDD)\tikztonodes]{r}{H}
    & \hspace{1mm} Y \arrow[bend left=70, "f'"]{r}[name=RUU, below]{}
    \arrow[bend right=70, "g'"']{r}[name=RDD]{}
    \arrow[Rightarrow,to path=(RUU) -- (RDD)\tikztonodes]{r}{H'}
    & \hspace{1mm} Y'
  \end{tikzcd}
  $\underset{\text{def}}{=}$
  \begin{tikzcd}
    X \arrow[bend left=50, "f'f"]{rr}[name=LUU, below]{}
    \arrow{rr}[name=LUD]{}
    \arrow[swap]{rr}[name=LDU]{}
    \arrow[bend right=50, "g'g"']{rr}[name=LDD]{}
    \arrow[Rightarrow,to path=(LUU) -- (LUD)\tikztonodes]{rr}{f'*H}
    \arrow[Rightarrow,to path=(LDU) -- (LDD)\tikztonodes]{rr}{H'*g}
    &&\hspace{1mm} Y'
  \end{tikzcd}
  $\underset{(1)}{=}$
   \begin{tikzcd}
    X \arrow[bend left=50, "f'f"]{rr}[name=LUU, below]{}
    \arrow{rr}[name=LUD]{}
    \arrow[swap]{rr}[name=LDU]{}
    \arrow[bend right=50, "g'g"']{rr}[name=LDD]{}
    \arrow[Rightarrow,to path=(LUU) -- (LUD)\tikztonodes]{rr}{H'*f}
    \arrow[Rightarrow,to path=(LDU) -- (LDD)\tikztonodes]{rr}{g'*H}
    &&\hspace{1mm} Y',
  \end{tikzcd}
\end{center}
siempre y cuando se verifique la igualdad $(1)$, de acuerdo a la observaci\'on \ref{horizontal_comp}.

%%%%%%%%%%%%%%%%%%%%%%%%%%%%%%%%%%%%%%%%%%%
\begin{comment}  
  
  \begin{tikzcd}
    X \arrow[bend left=70, "f"]{r}[name=LUU, below]{}
    \arrow{r}[name=LUD]{}
    \arrow[swap]{r}[name=LDU]{}
    \arrow[bend right=70, "g"']{r}[name=LDD]{}
    \arrow[Rightarrow,to path=(LUU) -- (LUD)\tikztonodes]{r}{H}
    \arrow[Rightarrow,to path=(LDU) -- (LDD)\tikztonodes]{r}{Id_g}
    &\hspace{1mm} Y
    \arrow[bend left=70, "f'"]{r}[name=RUU, below]{}
    \arrow{r}[name=RUD]{}
    \arrow[swap]{r}[name=RDU]{}
    \arrow[bend right=70, "g'"']{r}[name=RDD]{}
    \arrow[Rightarrow,to path=(RUU) -- (RUD)\tikztonodes]{r}{Id_{f'}}
    \arrow[Rightarrow,to path=(RDU) -- (RDD)\tikztonodes]{r}{H'}
    &\hspace{1mm} Y'
  \end{tikzcd}

\end{comment}
%%%%%%%%%%%%%%%%%%%%%%%%%%%%%%%%%%%%%%%%%%%

\vspace{8mm}
    
Sean $\xymatrix{X'\ar[r]^{l} & X \ar@<4pt>[r]^{f} \ar@<-3pt>[r]_{g} & Y \ar[r]^{r} & Y' }$  y $H= (C, h):\xymatrix{f \ar@2{~>}[r] & g}$ una homotopía con cilindro $C=(W, Z, d_0, d_1, s, x)$.

Consideramos $Hl=(Cl, h): \xymatrix{fl \ar@2{~>}[r] & gl}$ y $rH=(C, rh): \xymatrix{rf \ar@2{~>}[r] & rg}$, donde $Cl=(W, Z, d_0l, d_1l, s, xl)$ es un cilindro para $X'$.

\begin{center}
    $\xymatrix{
     X'\ar[r]^{l} \ar@/_0.5pc/[drr]_{xl}
     & X  \ar@/^2pc/@<6pt>[rrrr]^{f}
        \ar@/^2pc/[rrrr]_{g}
        \ar@<2pt>[rr]^{d_0}
        \ar@<-3pt>[rr]_{d_1}
        \ar[dr]_{x} && W 
        \ar[rr]^{h} \ar[dl]^{s}|\circ  && Y \ar[r]^{r}& Y'\\
        &&  Z \\  }$
\end{center}

\bigskip

Es claro que tanto $Hl$ como $rH$ resultan homotopías. Adem\'as, se tienen las ecuaciones
\begin{equation} \label{eq}
    \widehat{F(Hl)}=\widehat{FH}Fl \text{  y  } \widehat{F(rH)}=Fr\widehat{FH},
\end{equation}
ya que, como $\widehat{F(Hl)}=Fh\widehat{F(Cl)}$ y $Fs(\widehat{FC}Fl)=(Fs\widehat{FC})Fl=xl$, por la unicidad de $\widehat{F(Cl)}$ en la definici\'on \ref{FH} se tiene que $\widehat{F(Cl)}=\widehat{FC}Fl$ y, luego, $\widehat{F(Hl)}=\widehat{FH}Fl$. La segunda ecuaci\'on es evidente, dado que $H$ y $rH$ tienen el mismo cilindro.

\bigskip
Definimos $[H]*l=[Hl]$ y $r*[H]=[rH]$.
Más generalmente,
\begin{center}
    $[K, H]*l:=[Kl, Hl]$ \hspace{2mm} y \hspace{2mm} $r*[K, H]:=[rK, rH]$.
\end{center}

Ahora, si $H \sim H'$, entonces
\begin{center}
$\widehat{F(Hl)}=\widehat{FHFl}=\widehat{FH}Fl=\widehat{FH'}Fl=\widehat{FH'Fl}=\widehat{F(H'l)}$.
\end{center}
Esto nos dice que la composición con $l$ está bien definida, y de la misma manera se ve la buena definici\'on de la composici\'on con $r$.

\bigskip
De las ecuaciones $\widehat{F(Hl)}=\widehat{FH}Fl$ y $\widehat{F(rH)}=Fr\widehat{FH}$, y de la compatibilidad entre las composiciones vertical y horizontal en $\mathscr{D}$, se deduce la igualdad $(1)$ requerida, correspondiente al \'ultimo axioma en \ref{horizontal_comp}.

Adem\'as, por definici\'on tambi\'en se tiene que
\begin{center}
    $([K]*l)\circ ([H]*l)=([Kl])\circ ([Hl])=[Kl,Hl]=[K, H]*l=([K]\circ [H])*l$,
\end{center}

\begin{center}
    $[I_f]*l=[I_fl]=[I_{fl}].$
\end{center}

Luego, los axiomas de la observaci\'on \ref{horizontal_comp} se verifican, por lo que la composici\'on horizontal de 2-celdas en $\catH$ queda determinada y es compatible con la composici\'on vertical.
\\

En virtud de las mismas ecuaciones (\ref{eq}) se tiene que la composición horizontal también es asociativa, y las identidades en este caso son como en $\mathscr{C}$:
para cada objeto $X$, tenemos la 2-celda $[I_{id_X}]=[Id_{id_X}]$, que escribimos $[Id_X]$ para simplificar la notación.

\vspace{6mm}
\begin{comentario}
    En el caso en el que $\Sigma$ es la clase $\mathcal{W}$ de equivalencias d\'ebiles de una categor\'ia de modelos, la composición $r[H]=[rH]$ está bien definida incluso si $H$ es una $q$-homotopía, ya que $rH$ también lo es, pero no ocurre lo mismo con $Hl$. Sin embargo, veremos m\'as adelante que cuando nos restringimos a la subcategor\'ia $\mathscr{C}_{fc}$, dada cualquier homotop\'ia $H$ existe una homotopía de Quillen que está en la misma clase, de manera que ser\'a posible definir la composición con $l$ cuando los objetos sean fibrantes y cofibrantes (ver \ref{lema_3_pasos}).
\end{comentario}

\subsection{Propiedades de \texorpdfstring{$\catH$}{} y 2-localizaci\'on de \texorpdfstring{$\mathscr{C}_{fc}$}{}}

Junto con la 2-categor\'ia $\catH$ se obtiene un 2-funtor $i: \mathscr{C} \longrightarrow \catH $ dado por la inclusi\'on, y si bien no manda equivalencias d\'ebiles en equivalencias de $\catH$, tiene la siguiente propiedad universal.

\begin{proposition} \label{pu_i}
    Sean $i: \mathscr{C} \longrightarrow \catH $ la inclusión, $\mathscr{D}$ una 2-categoría y $F: \mathscr{C} \longrightarrow \mathscr{D}$ un 2-funtor que manda los elementos de $\Sigma$ en equivalencias. Entonces, existe un único 2-funtor $\widetilde{F}: \catH \longrightarrow \mathscr{D}$ tal que $\widetilde{F}X=FX$ y $\widetilde{F}f=Ff$:
    
\begin{center}
     $\xymatrix{
        \ar @{} [drr] |(.45){} 
        \mathscr{C} \hspace{1mm} \ar@{^{(}->}[rr]^{i} \ar[dr]_{F} && \catH \ar@{-->}[dl]^(.5){\exists! \widetilde{F}} \\
        & \mathscr{D} & \\
    }$
\end{center}

\end{proposition}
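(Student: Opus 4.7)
El plan es usar directamente la asignaci\'on $H \mapsto \widehat{FH}$ de la Definici\'on \ref{FH}, dise\~nada precisamente para producir las 2-celdas de $\widetilde{F}$. En objetos y 1-celdas no hay elecci\'on posible: pondremos $\widetilde{F}X=FX$ y $\widetilde{F}f=Ff$. En 2-celdas, para una clase $[H_n,\ldots,H_1]:f\Rightarrow g$ en $\catH$, definiremos
$$\widetilde{F}[H_n,\ldots,H_1]\;:=\;\widehat{FH_n}\circ\cdots\circ\widehat{FH_1}.$$
La buena definici\'on es por construcci\'on: seg\'un la Definici\'on \ref{rel_adhoc} y la Observaci\'on \ref{secuencias}, dos secuencias equivalentes producen exactamente el mismo compuesto vertical en $\mathscr{D}$ para todo 2-funtor $F$ que env\'ie $\Sigma$ a equivalencias, de manera que la asignaci\'on no depende del representante.

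Luego habr\'ia que verificar los axiomas de 2-funtor. La preservaci\'on de la composici\'on vertical es tautol\'ogica, dado que la yuxtaposici\'on de secuencias es justamente la definici\'on de la composici\'on vertical en $\catH$, y esto se corresponde con la composici\'on vertical de los $\widehat{F}$-im\'agenes en $\mathscr{D}$. La preservaci\'on de las identidades verticales se sigue de ``Identidades para la composici\'on vertical'': para cualquier homotop\'ia identidad $I_f$ se tiene $\widehat{FI_f}=Id_{Ff}$. Para la composici\'on horizontal basta, por la Observaci\'on \ref{horizontal_comp}, verificar los whiskerings; aqu\'i se invocan las ecuaciones (\ref{eq}), es decir $\widehat{F(Hl)}=\widehat{FH}Fl$ y $\widehat{F(rH)}=Fr\widehat{FH}$, junto con la compatibilidad de las composiciones horizontal y vertical en $\mathscr{D}$.

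La unicidad es donde hace falta un argumento genuino. Supongamos que $G:\catH\longrightarrow\mathscr{D}$ es otro 2-funtor con $GX=FX$ y $Gf=Ff$. Por la Observaci\'on \ref{secuencias} basta comprobar que $G[H]=\widetilde{F}[H]$ para una \'unica homotop\'ia $H=(C,h)$ con cilindro $C=(W,Z,d_0,d_1,s,x)$. Consideremos la homotop\'ia ``can\'onica'' $H_C=(C,id_W):d_0\Rightarrow d_1$; por construcci\'on vale $[H]=h*[H_C]$ en $\catH$, y entonces $G[H]=Fh\,G[H_C]$. M\'as a\'un, componiendo horizontalmente con $s$ obtenemos $s*[H_C]=[sH_C]$ donde $sH_C=(C,s):x\Rightarrow x$, y un c\'alculo directo muestra $\widehat{F(sH_C)}=Fs\widehat{FC}=Fx=Id_{Fx}$, as\'i que $[sH_C]=Id_x$ en $\catH$. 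Aplicando $G$ resulta $Fs\,G[H_C]=Id_{Fx}$; como $Fs$ es una equivalencia, el funtor $(Fs)_*$ es plenamente fiel (Observaci\'on \ref{s*_equivalencia}), de modo que $G[H_C]$ queda forzado a ser la \'unica 2-celda con esta propiedad, que es $\widehat{FC}$. Concluimos $G[H]=Fh\,\widehat{FC}=\widehat{FH}=\widetilde{F}[H]$.

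El principal obst\'aculo que anticipo es este paso de unicidad: hay que reconocer a la homotop\'ia can\'onica $H_C$ como el ``generador'' del 2-celda $[H]$ v\'ia el whiskering por $h$, y luego extraer la rigidez de $G[H_C]$ combinando la identidad forzada $s*[H_C]=Id_x$ con la fidelidad plena del whiskering por la equivalencia $Fs$. Las verificaciones de 2-funtorialidad, en cambio, son rutinarias una vez que se dispone de las ecuaciones (\ref{eq}) y de las identidades de la composici\'on vertical.
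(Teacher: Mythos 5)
Tu propuesta es correcta y sigue esencialmente el mismo camino que la demostraci\'on del trabajo: la misma definici\'on de $\widetilde{F}$ sobre clases de secuencias, las mismas verificaciones de funtorialidad v\'ia las ecuaciones (\ref{eq}), y el mismo argumento de unicidad factorizando $H=h\ast H_0$ con $H_0=(C,id_W)$ y usando que $Fs\,G[H_0]=Id_{Fx}$ fuerza $G[H_0]=\widehat{FC}$ por la unicidad que da la equivalencia $Fs$. No hay diferencias sustanciales que señalar.
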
 

\begin{proof}
    Definimos $\widetilde{F}$ en las clases de homotop\'ias como
    \begin{equation}
        \widetilde{F}([H])= \widehat{FH}, \label{functor_hpy}   
    \end{equation}
     y por la observaci\'on \ref{secuencias} podemos extender funtorialmente esta definici\'on a cualquier 2-celda en $\catH$, siendo $$\widetilde{F}([H_n, ..., H_1]) = \widehat{FH_n} \circ ... \circ \widehat{FH_1}.$$ 
    
    De este modo, $\widetilde{F}$ resulta un funtor para la composici\'on vertical.
    
    Como $\widetilde{F}l\widetilde{F}([H])=Fl\widehat{FH}=\widehat{FlFH}=\widehat{F(lH)}= \widetilde{F}([lH])$ y, análogamente, $\widetilde{F}([H])\widetilde{F}r=\widetilde{F}([Hr])$, entonces de la definici\'on \ref{composicion_horiz_def} se sigue que $\widetilde{F}$ es funtorial respecto a la composición horizontal.
    
    \vspace{5mm}
    
    Queremos ver ahora la unicidad de $\widetilde{F}$.
    
    Sea $R:\catH \longrightarrow \mathscr{D}$ un 2-funtor tal que $Ri=F$.
    Si $H=(C, h)$ es una homotopía en $\mathscr{C}$ con cilindro $C=(W, Z, d_0, d_1, s, x)$, escribimos $H=hH_0$ donde $H_0=(C, id_W)$.
    \begin{center}
       $\xymatrix{
        X
        \ar@<3pt>[rr]^{d_0}
        \ar@<-3pt>[rr]_{d_1}
        \ar[dr]_{x} && W 
        \ar[r]^{id_W} \ar[dl]^{s}|\circ  & W \ar[r]^{h} & Y\\
        &  Z\\  }$
    \end{center}
    
    Como $R([H])=RhR([H_0])=FhR([H_0])$ y $\widetilde{F}(H)=\widehat{FH}=Fh\widehat{Fc}$, para probar que $R$ coincide con $\widetilde{F}$ en $[H]$, alcanza con ver que $R([H_0])=\widehat{Fc}$. 
    
    Sabemos que $\widehat{Fc}: Fd_0 \Longrightarrow Fd_1$ es la única tal que $Fs\widehat{Fc}=id_x$.
    Por otro lado $FsR([H_0])=RsR([H_0])=R(s[H_0])=R([sH_0])$, y $[sH_0]=[id_x]$ ya que $Fid_xid_{Fx}=id_{Fx}=Fs\widehat{Fc}$.
    \begin{center}
        $\xymatrix{
        X
        \ar@<3pt>[rr]^{d_0}
        \ar@<-3pt>[rr]_{d_1}
        \ar[dr]_{x} && W 
        \ar[rr]^{s} \ar[dl]^{s}|\circ && Z\\
        &  Z\\  }$
        \hspace{3mm} $\sim$ \hspace{3mm}
        $\xymatrix{
        X
        \ar@<3pt>[rr]^{x}
        \ar@<-3pt>[rr]_{x}
        \ar[dr]_{x} && Z 
        \ar[rr]^{id_Z} \ar[dl]^{id_Z}|\circ && Z\\
        &  Z\\  }$
    \end{center}
    
    Luego, $FsR([H_0])=R([id_{Fx}])=id_{Rx}=id_{Fx}$ y, por unicidad de $\widehat{Fc}$, es $R([H_0])=\widehat{Fc}$.
    
\end{proof}

\bigskip

Denotaremos $Hom_{p_+}(\mathscr{C},\mathscr{D})$ y $Hom_{s_+}(\mathscr{C},\mathscr{D})$ a las subcategoría de $Hom_p(\mathscr{C}, \mathscr{D})$ y $Hom_s(\mathscr{C}, \mathscr{D})$, respectivamente, cuyos objetos son los funtores $F$ tales que $F(\Sigma)\subseteq Equiv(\mathscr{D})$.

La última proposición nos dice que la precomposición
\begin{center}
    $i^*: Hom_{s_+}(\catH, \mathscr{D})\longrightarrow Hom_{s_+}(\mathscr{C},\mathscr{D})$ 
\end{center}
es un funtor biyectivo en los objetos; en particular, es esencialmente suryectivo. Veamos que además es plenamente fiel.

\begin{lemma} \label{clave}
    Sean $\mathscr{D}$ una 2-categoría, $F, G \in Hom_+(\mathscr{C},\mathscr{D})$ y un cilindro $C=(W, Z, d_0, d_1, s, x)$ para un objeto $X \in \mathscr{C}$. Si $\theta: F \Longrightarrow G$ es una transformación natural, entonces vale $\theta_W\widehat{FC}=\widehat{GC}\theta_X$.
\end{lemma}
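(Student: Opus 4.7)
El plan es usar la caracterizaci\'on universal de $\widehat{GC}$ y la naturalidad de $\theta$. Primero verificar\'ia que ambos lados de la igualdad son 2-celdas bien tipadas entre los mismos morfismos. Por ser $\theta$ una transformaci\'on 2-natural, aplicada a $d_0$ y $d_1$ da $\theta_W Fd_i = Gd_i \theta_X$ para $i=0,1$, de modo que tanto $\theta_W \widehat{FC}$ como $\widehat{GC}\theta_X$ son 2-celdas en $\mathscr{D}[FX, GW]$ con fuente $Gd_0\theta_X = \theta_W Fd_0$ y blanco $Gd_1\theta_X = \theta_W Fd_1$.

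Luego, como $G \in Hom_+(\mathscr{C}, \mathscr{D})$ y $s \in \Sigma$, la flecha $Gs$ es una equivalencia en $\mathscr{D}$. Por la observaci\'on \ref{s*_equivalencia}, el funtor de poscomposici\'on
$$(Gs)_*: \mathscr{D}[FX, GW] \longrightarrow \mathscr{D}[FX, GZ]$$
es una equivalencia de categor\'ias; en particular, es fiel. Por lo tanto, para probar la igualdad $\theta_W \widehat{FC} = \widehat{GC}\theta_X$, basta probar que $Gs \cdot (\theta_W \widehat{FC}) = Gs \cdot (\widehat{GC}\theta_X)$.

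El c\'alculo clave es el siguiente. Por la 2-naturalidad de $\theta$ en $s$ se tiene $Gs\,\theta_W = \theta_Z Fs$, y por la definici\'on \ref{FH}, $Fs\,\widehat{FC}=Id_{Fx}$. Entonces, usando adem\'as la 2-naturalidad de $\theta$ en $x$ (es decir, $\theta_Z Fx = Gx\,\theta_X$), se obtiene
\begin{equation*}
Gs \cdot (\theta_W \widehat{FC}) = (Gs\,\theta_W)\widehat{FC} = \theta_Z(Fs\,\widehat{FC}) = \theta_Z\cdot Id_{Fx} = Id_{\theta_Z Fx} = Id_{Gx\,\theta_X}.
\end{equation*}
Por otro lado, de $Gs\,\widehat{GC} = Id_{Gx}$ se obtiene
\begin{equation*}
Gs \cdot (\widehat{GC}\theta_X) = (Gs\,\widehat{GC})\theta_X = Id_{Gx}\cdot \theta_X = Id_{Gx\,\theta_X}.
\end{equation*}
Por la fidelidad de $(Gs)_*$, concluimos $\theta_W \widehat{FC}=\widehat{GC}\theta_X$.

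El punto delicado es m\'as una cuesti\'on de ``tipado'' que de profundidad: hay que tener cuidado con el dominio y codominio de las composiciones horizontales, y en particular apelar a la 2-naturalidad de $\theta$ tanto en las flechas $d_0, d_1$ (para que ambos lados sean 2-celdas entre el mismo par de morfismos) como en $s$ y $x$ (para concluir el c\'alculo). Todo lo dem\'as es el uso formal de la compatibilidad entre composici\'on horizontal y vertical en $\mathscr{D}$.
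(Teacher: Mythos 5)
Tu propuesta es correcta y sigue esencialmente el mismo camino que la demostración del texto: reducir la igualdad a verificarla tras poscomponer con $Gs$ (lo que tú formulas como fidelidad de $(Gs)_*$ y el texto como la unicidad de la 2-celda que satisface $Gs(\,\cdot\,)=Gx\theta_X$, que es lo mismo), y luego concluir con la naturalidad de $\theta$ en $s$ y en $x$ junto con $Fs\widehat{FC}=Id_{Fx}$.
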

\begin{center}
    $\xymatrix{
        FX \ar[rr]^{\theta_X} \ar@<-10pt>[dd]_{Fd_0} \ar@{}[dd]|{\overset{\widehat{FC}}{\Rightarrow}} \ar@<10pt>[dd]^{Fd_1} && GX \ar@<-10pt>[dd]_{Gd_0} \ar@{}[dd]|{\overset{\widehat{GC}}{\Rightarrow}} \ar@<10pt>[dd]^{Gd_1} \\
        \\
        FW \ar[rr]_{\theta_W} && GW }$
\end{center}

\begin{proof}
    Como $\widehat{GC}:Gd_0 \Longrightarrow Gd_1$ es la \'unica 2-celda que satisface $Gs\widehat{GC}=Gx$, entonces $\widehat{GC}\theta_X$ es la única tal que $Gs(\widehat{GC}\theta_X)=Gx\theta_X$. Luego, basta ver que $Gs(\theta_W\widehat{FC})=Gx\theta_X$.
    
    De la naturalidad de $\eta$ se obtiene $Gs\theta_W=\theta_ZFs$ y $\theta_ZFx=Gx\theta_X$.
    
    \begin{center}
        $\xymatrix{
            FW \ar[r]^{Fs}\ar[d]_{\theta_W} & FZ\ar[d]^{\theta_Z}\\
            GW\ar[r]_{Gs}& GZ\\
        }$
        \hspace{25mm}
        $\xymatrix{
            FX \ar[r]^{Fx}\ar[d]_{\theta_X} & FX\ar[d]^{\theta_Z}\\
            GX\ar[r]_{Gx}& GZ\\
        }$
    \end{center}
    
    \bigskip
    
    Por lo tanto,
        $Gs(\theta_W\widehat{FC})=(Gs\theta_W)\widehat{FC}=(\theta_ZFs)\widehat{FC}=\theta_ZFx=Gx\theta_X$.

\end{proof}

\begin{proposition} \label{prop_clave}
    Si $F, G: \mathscr{C}\longrightarrow\mathscr{D}$ mandan las flechas de $\Sigma$ en equivalencias y $\theta: F \Longrightarrow G$ es una transformación natural, entonces existe una única transformación 2-natural $\widetilde{\theta}:\widetilde{F}\Longrightarrow\widetilde{G}$ tal que $\widetilde{\theta}i=\theta$.
\end{proposition}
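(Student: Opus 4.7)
The plan is to set $\widetilde{\theta}_X := \theta_X$ for every object $X$ of $\catH$ (whose objects coincide with those of $\mathscr{C}$), noting that since $\widetilde{F}X=FX$ and $\widetilde{G}X=GX$ this gives 1-cells $\widetilde{\theta}_X: \widetilde{F}X \longrightarrow \widetilde{G}X$ in $\mathscr{D}$. Uniqueness is immediate: any 2-natural $\widetilde{\theta}$ with $\widetilde{\theta} i = \theta$ must satisfy $\widetilde{\theta}_X = \theta_X$ on each object, and a 2-natural transformation is determined by its components on objects. All the content of the proposition therefore lies in checking the 2-naturality axiom.

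Naturality on 1-cells will come for free: for $f \in \mathscr{C}[X,Y]$, the equation $\widetilde{\theta}_Y\,\widetilde{F}(f) = \widetilde{G}(f)\,\widetilde{\theta}_X$ reduces to $\theta_Y Ff = Gf\,\theta_X$, which is the ordinary naturality of $\theta$ in $\mathscr{C}$. For the 2-cell condition I plan to rely on Observation \ref{secuencias} and the fact that the axiom is stable under vertical composition of 2-cells, so it suffices to verify $\widetilde{\theta}_Y\,\widetilde{F}([H]) = \widetilde{G}([H])\,\widetilde{\theta}_X$ for a single class $[H]$ coming from a homotopy $H=(C,h)$ with $C=(W,Z,d_0,d_1,s,x)$. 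Using $\widetilde{F}([H]) = Fh\,\widehat{FC}$ and $\widetilde{G}([H]) = Gh\,\widehat{GC}$, together with the naturality of $\theta$ at the 1-cell $h$ in $\mathscr{C}$, and crucially Lemma \ref{clave} applied to $C$, I compute
$$\theta_Y \bigl(Fh\,\widehat{FC}\bigr) = (Gh)\bigl(\theta_W\,\widehat{FC}\bigr) = (Gh)\bigl(\widehat{GC}\,\theta_X\bigr) = \bigl(Gh\,\widehat{GC}\bigr)\theta_X,$$
which is precisely the 2-naturality equation at $[H]$.

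The main conceptual work has already been carried out in Lemma \ref{clave}, which is the step that transports $\theta$ across the formal 2-cell $\widehat{FC}$; the present proposition is essentially its packaging. No genuine obstacle remains beyond bookkeeping: extending from a generator $[H]$ to a general class $[H_n,\ldots,H_1]$ follows by iterating the displayed computation and invoking the compatibility of horizontal and vertical composition in $\mathscr{D}$, and well-definedness on the equivalence classes of Definition \ref{rel_adhoc} is automatic because $\widetilde{F}$ and $\widetilde{G}$ were already shown to descend to them in Proposition \ref{pu_i}.
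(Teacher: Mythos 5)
Your proposal is correct and follows essentially the same route as the paper: define $\widetilde{\theta}_X=\theta_X$, reduce to a single generating class $[H]=(C,h)$, and combine the naturality of $\theta$ at $h$ with Lemma \ref{clave} to transport $\widehat{FC}$ to $\widehat{GC}$. The displayed computation is exactly the paper's (and you even state the intermediate naturality square at $h$ more carefully, with $\theta_W$ where the paper has a small typo), so nothing further is needed.
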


\begin{proof}
    Para cada $X$ en $\mathscr{C}$, sabemos que $\widetilde{F}i=F$ y $\widetilde{G}i=G$. Definimos $\widetilde{\theta}_X :\widetilde{F}X \longrightarrow \widetilde{G}X$ como $\widetilde{\theta}_X=\theta_X$ y veamos, entonces, que $\widetilde{\theta}$ satisface las condiciones de 2-naturalidad.
    
    Sean $f, g: X \longrightarrow Y$ y $[H]: f \Longrightarrow g$ una 2-celda en $\catH$. Queremos probar la igualdad $\widetilde{\theta}_Y\widetilde{F}[H]=\widetilde{G}[H]\widetilde{\theta}_X$. Si $H=(C, h)$, de la naturalidad de $\theta$ se tiene $\theta_YFh=Gh\theta_X$, por lo que $\theta_Y\widehat{FH}=\theta_YFh\widehat{FC}=Gh\theta_W\widehat{FC}$. Además, $\widehat{GH}\theta_X=Gh\widehat{GC}\theta_X$. Del lema anterior obtenemos $\theta_W\widehat{FC}=\widehat{GC}\theta_X$ y, luego, $\theta_Y\widehat{FH}=\widehat{GH}\theta_X$, que por definición de $\widetilde{F}, \widetilde{G}$ y de $\widetilde{\theta}$, es exactamente lo que queríamos ver.
    
\end{proof}

\bigskip

Como consecuencia de los resultados anteriores obtenemos que la precomposición con $i$ induce, para toda 2-categoría $\mathscr{D}$, una equivalencia de categorías, que de hecho es un isomorfismo. En cuanto al aspecto 2-categórico, se tiene el siguiente

\begin{lemma} \label{aspecto_2categorico}
     Sean $F, G \in Hom_+(\mathscr{C}, \mathscr{D})$. Si $\eta$, $\theta$: $F \Longrightarrow G$ son transformaciones naturales y $\mu: \eta \longrightarrow \theta$ una modificación, existe una única $\Tilde{\mu}: \Tilde{\eta} \longrightarrow \Tilde{\theta}$ tal que $\Tilde{\mu}i=\mu$.
\end{lemma}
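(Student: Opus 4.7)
La unicidad de $\tilde{\mu}$ es inmediata: como la inclusi\'on $i$ es biyectiva en objetos, la condici\'on $\tilde{\mu}i=\mu$ obliga a definir $\tilde{\mu}_X:=\mu_X$ para cada $X \in \mathscr{C}$. El plan, entonces, es definir $\tilde{\mu}$ as\'i y verificar que resulta una modificaci\'on entre las transformaciones 2-naturales $\tilde{\eta}, \tilde{\theta}: \tilde{F} \Longrightarrow \tilde{G}$ provistas por la proposici\'on \ref{prop_clave}.

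El axioma de modificaci\'on requiere $\tilde{\mu}_Y * \tilde{F}(\alpha) = \tilde{G}(\alpha) * \tilde{\mu}_X$ para toda 2-celda $\alpha: f \Longrightarrow g$ en $\catH$. Por la observaci\'on \ref{secuencias}, basta verificar este axioma cuando $\alpha = [H]$ es la clase de una sola homotop\'ia $H=(C, h)$ con $C=(W, Z, d_0, d_1, s, x)$: la extensi\'on a secuencias generales se har\'a por inducci\'on, aplicando la ley de intercambio junto con la proposici\'on \ref{prop_clave} (que permite ``deslizar'' $\theta_Y$ a trav\'es de cada $\widehat{FH_i}$ por 2-naturalidad, dejando aislada a $\mu_X$ a un extremo). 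Para el caso de una sola homotop\'ia, como $\widehat{FH}=Fh\widehat{FC}$ y $\widehat{GH}=Gh\widehat{GC}$, y el axioma de modificaci\'on para $\mu$ sobre el morfismo $h: W \rightarrow Y$ asegura $\mu_YFh=Gh\mu_W$, la igualdad $\mu_Y\widehat{FH}=\widehat{GH}\mu_X$ queda reducida a
\begin{equation*}
    \mu_W\widehat{FC}=\widehat{GC}\mu_X.
\end{equation*}

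Esta \'ultima ecuaci\'on es el an\'alogo ``modificado'' del lema \ref{clave} y se prueba con el mismo esquema. Como $F, G \in Hom_+(\mathscr{C}, \mathscr{D})$, $Gs$ es una equivalencia en $\mathscr{D}$, por lo que el funtor $(Gs)_*$ en las categor\'ias de morfismos es plenamente fiel (observaci\'on \ref{s*_equivalencia}), en particular fiel. Basta entonces mostrar que los dos lados coinciden despu\'es de componer horizontalmente con $Gs$: usando los axiomas de modificaci\'on para $\mu$ sobre los morfismos $s$ y $x$, y las propiedades caracter\'isticas $Fs\widehat{FC}=Fx$ y $Gs\widehat{GC}=Gx$, un c\'alculo directo muestra que ambos lados valen $Gx\mu_X$.

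El punto principal del argumento es este an\'alogo modificado del lema \ref{clave}, que depende cr\'iticamente de que $Gs$ sea una equivalencia (gracias a la hip\'otesis $F, G \in Hom_+$); la extensi\'on a 2-celdas generales es mec\'anica una vez que se dispone de la 2-naturalidad de $\tilde{\eta}, \tilde{\theta}$ y la ley de intercambio.
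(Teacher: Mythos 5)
Tu propuesta es correcta, pero la demostración del paso central sigue una ruta distinta de la del texto. Tú reduces todo al ``análogo modificado'' del lema \ref{clave}, es decir, a la ecuación $\mu_W\widehat{FC}=\widehat{GC}\mu_X$, que demuestras directamente componiendo con $Gs$ y usando la fidelidad de $(Gs)_*$ junto con el axioma de modificación en $s$ y en $x$ (ambos lados dan $Gx\mu_X$). El texto, en cambio, no introduce ningún lema nuevo: descompone $\mu_Y*\widehat{FH}$ mediante la ley de intercambio como $(\mu_Y Fg)\circ(\eta_Y\widehat{FH})$, aplica el axioma de modificación una sola vez (en el morfismo $g$) y reutiliza el lema \ref{clave} tal cual está enunciado, aplicado a $\eta$ (esto es, $\eta_W\widehat{FC}=\widehat{GC}\eta_X$), para transformar el segundo factor en $\widehat{GH}\eta_X$ y recomponer. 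Lo que gana el texto es economía: no hay que volver a invocar la fidelidad de $(Gs)_*$ ni verificar que las dos 2-celdas comparadas son paralelas. Lo que gana tu versión es que deja registrada la generalización natural del lema \ref{clave} a modificaciones, que es reutilizable por sí misma; además tu reducción (usando $\mu_YFh=Gh\mu_W$ y la asociatividad de $*$) es válida, y tu tratamiento de la extensión a secuencias $[H_n,\ldots,H_1]$ vía intercambio y 2-naturalidad de $\tilde{\eta}$, aunque esbozado, funciona (el propio texto tampoco lo explicita). No veo ningún hueco.
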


\begin{proof}
    Definimos $\Tilde{\mu}: \Tilde{\eta} \longrightarrow \Tilde{\theta}$ como $\Tilde{\mu}_X=\mu_X$ para cada $X$.
    
    Sean $f, g: X \longrightarrow Y$ y $H$ una homotopía de $f$ a $g$. Queremos ver que $\Tilde{\mu}_Y\Tilde{F}[H]=\Tilde{G}[H]\Tilde{\mu}_X$; es decir, $\mu_Y\widehat{FH}=\widehat{GH}\mu_X$. Como $\mu$ es una modificación, $\theta_YFg=Gg\mu_X$, y por el lema \ref{clave} también tenemos $\eta_W\widehat{FC}=\widehat{GC}\eta_X$, de manera que
    \begin{equation*}
    \begin{split}
        \mu_Y\widehat{FH} & =\mu_YFg \circ \eta_Y \widehat{FH}
                          =Gg\mu_X\circ\eta_YFh\widehat{FC}
                          =Gg\mu_X \circ Gh\eta_W\widehat{FC}\\
                          & = Gg\mu_X \circ Gh\widehat{GC}\eta_X
                          =Gg \mu_X \circ \widehat{GH}\eta_X
                          =\widehat{GH}\mu_X.
    \end{split}
    \end{equation*}
\end{proof}

\begin{corollary} \label{hom_iso}
    El funtor $i^*: Hom_{s_+}(\catH, \mathscr{D})\longrightarrow Hom_{s_+}(\mathscr{C}, \mathscr{D})$ es un isomorfismo de 2-categorías. 
\end{corollary}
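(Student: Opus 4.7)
The plan is to combine the three previous results --- Proposition \ref{pu_i}, Proposition \ref{prop_clave}, and Lemma \ref{aspecto_2categorico} --- into the single assertion that $i^*$ is bijective at each of the three levels of the 2-categorical structure, and then to check that these bijections assemble into an isomorphism of 2-categories. Note that $i^*$ is a 2-functor by construction, since precomposition with a fixed 2-functor preserves identities and both compositions automatically.

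I would first argue bijectivity on objects: given $F \in Hom_{s_+}(\mathscr{C}, \mathscr{D})$, Proposition \ref{pu_i} yields a unique 2-functor $\widetilde{F}: \catH \to \mathscr{D}$ with $\widetilde{F} i = F$, and since $\widetilde{F}$ coincides with $F$ on morphisms, it still sends $\Sigma$ into $Equiv(\mathscr{D})$, so it lies in $Hom_{s_+}(\catH, \mathscr{D})$. Next, bijectivity on 1-cells: for $\widetilde{F}, \widetilde{G} \in Hom_{s_+}(\catH, \mathscr{D})$, every 2-natural transformation $\theta: \widetilde{F}i \Longrightarrow \widetilde{G}i$ extends uniquely, by Proposition \ref{prop_clave}, to a 2-natural $\widetilde{\theta}: \widetilde{F} \Longrightarrow \widetilde{G}$ with $\widetilde{\theta} i = \theta$. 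Finally, bijectivity on 2-cells: Lemma \ref{aspecto_2categorico} provides, for each modification $\mu$ between the restrictions, a unique modification $\widetilde{\mu}$ between the extensions with $\widetilde{\mu} i = \mu$.

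What remains is the routine verification that the extension operation respects identities and vertical/horizontal composition of 2-natural transformations and of modifications. Since both sides of these compositions are defined componentwise, and each extension is characterized by the restriction property $\widetilde{\theta} i = \theta$ (resp.\ $\widetilde{\mu} i = \mu$), the uniqueness clause in each extension result immediately yields the compatibility: for instance, if $\sigma, \theta$ are composable 2-natural transformations between restrictions, then both $\widetilde{\sigma \circ \theta}$ and $\widetilde{\sigma} \circ \widetilde{\theta}$ restrict along $i$ to $\sigma \circ \theta$, so they must coincide; identical arguments dispose of horizontal composition and of modifications. I do not expect any substantive obstacle in this proof, since all of the technical content --- in particular, the fact that strict 2-naturality (and not only pseudonaturality) is preserved under the extension, which is what distinguishes $Hom_{s_+}$ from $Hom_{p_+}$ --- has already been handled in Proposition \ref{prop_clave} and Lemma \ref{aspecto_2categorico}. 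The corollary is essentially a packaging statement.
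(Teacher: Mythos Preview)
Your proposal is correct and follows exactly the approach the paper intends: the corollary is presented there as an immediate consequence of Proposition~\ref{pu_i}, Proposition~\ref{prop_clave}, and Lemma~\ref{aspecto_2categorico}, and your elaboration simply spells out the routine packaging argument via uniqueness of the extensions at each level.
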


\begin{Observation} \label{hom_p_iso}
    El corolario anterior tambi\'en puede obtenerse en t\'erminos de 2-funtores y transformaciones pseudonaturales con una demostraci\'on muy similar a la que exhibimos para los resultados \ref{clave}, \ref{prop_clave} y \ref{aspecto_2categorico}. Es decir, la inclusi\'on $i$ induce un isomorfismo de 2-categor\'ias $$i^*: Hom_{p_+}(\catH, \mathscr{D})\longrightarrow Hom_{p_+}(\mathscr{C}, \mathscr{D}).$$
\end{Observation}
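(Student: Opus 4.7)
The plan is to replicate, in the pseudonatural setting, the three-level argument of Lemas \ref{clave}, \ref{prop_clave} and \ref{aspecto_2categorico}. At the level of 2-functors nothing changes: Proposici\'on \ref{pu_i} is already stated for arbitrary 2-functors and, since $\widetilde{F}$ coincides with $F$ on morphisms, it still sends $\Sigma$ to equivalences. This yields the bijection on objects of $i^*$.

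Given a pseudonatural $\theta: F \Longrightarrow G$ in $Hom_{p_+}(\mathscr{C}, \mathscr{D})$ we extend it by setting $\widetilde{\theta}_X = \theta_X$ on objects and $\widetilde{\theta}_f = \theta_f$ for each morphism $f$ of $\catH$ (which is a morphism of $\mathscr{C}$). The first two pseudonatural axioms are then inherited from $\theta$. The third axiom demands, for every 2-cell $[H]: f\Longrightarrow g$ represented by a homotopy $H=(C,h)$ with $C=(W,Z,d_0,d_1,s,x)$, the equality $\theta_g\circ \widehat{GH}\,\theta_X = \theta_Y\widehat{FH}\circ \theta_f$. Expanding $\widehat{FH}=Fh\widehat{FC}$ and $\widehat{GH}=Gh\widehat{GC}$, using the composition axiom $\theta_{hd_i}=(\theta_h Fd_i)\circ(Gh\theta_{d_i})$ and the interchange law applied to $\theta_h:Gh\theta_W\Longrightarrow \theta_Y Fh$ against $\widehat{FC}$, the axiom reduces to the pseudonatural analogue of Lema \ref{clave}:
\begin{equation}\label{keylemma}
\theta_{d_1} \circ (\widehat{GC}\,\theta_X) \;=\; (\theta_W\,\widehat{FC})\circ \theta_{d_0}.
\end{equation}
Arbitrary classes $[H_n,\ldots,H_1]$ are then handled by vertical composition.

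The main obstacle is \eqref{keylemma}. Since $Gs$ is an equivalence in $\mathscr{D}$, the whiskering functor $(Gs)_*$ is faithful (Observaci\'on \ref{s*_equivalencia}), so it suffices to verify the identity after post-composing both sides with $Gs$. On the left-hand side, $Gs\widehat{GC}=Gx$ together with the pseudonatural relation $Gs*\theta_{d_1}=(\theta_s^{-1}Fd_1)\circ \theta_x$ coming from $sd_1=x$ yields $(\theta_s^{-1}Fd_1)\circ \theta_x$. On the right-hand side, the two decompositions of the horizontal composite $\theta_s*\widehat{FC}$ combined with $Fs\widehat{FC}=Fx$ produce $Gs\theta_W*\widehat{FC}=(\theta_s^{-1}Fd_1)\circ(\theta_s Fd_0)$; composing with $Gs*\theta_{d_0}$ and simplifying via $\theta_{sd_0}=\theta_x$ gives the same 2-cell $(\theta_s^{-1}Fd_1)\circ \theta_x$, closing the argument.

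Finally, the modification step mirrors Lema \ref{aspecto_2categorico}: for a modification $\mu:\theta\to \sigma$ between pseudonatural transformations we set $\widetilde{\mu}_X=\mu_X$ and verify the pseudo-modification axiom $(\widetilde{\mu}_Y\,\widetilde{F}[H])\circ \widetilde{\theta}_{[H]}=\widetilde{\sigma}_{[H]}\circ(\widetilde{G}[H]\,\widetilde{\mu}_X)$ on each 2-cell $[H]$ of $\catH$. After the same reduction to a single cylinder this identity follows from the modification axiom of $\mu$ applied to the morphisms $d_0,d_1,h$ together with \eqref{keylemma}. Uniqueness of $\widetilde{\theta}$ and $\widetilde{\mu}$ at each level is forced, as in the strict case, by the requirements $\widetilde{\theta}\,i=\theta$ and $\widetilde{\mu}\,i=\mu$, establishing the desired isomorphism of 2-categories.
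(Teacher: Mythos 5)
Your proof is correct and follows precisely the route the paper indicates but does not write out: the displayed identity $\theta_{d_1}\circ(\widehat{GC}\,\theta_X)=(\theta_W\,\widehat{FC})\circ\theta_{d_0}$ is the right pseudonatural analogue of Lema \ref{clave}, and your verification of it via faithfulness of $(Gs)_*$, the interchange law and the composition axiom $\theta_{sd_i}=\theta_x$ is sound, as are the subsequent extensions of transformations and modifications. The only blemish is notational: in the modification step the structural cells should be written $\widetilde{\theta}_f$ and $\widetilde{\sigma}_g$ (indexed by the source and target arrows of $[H]$) rather than $\widetilde{\theta}_{[H]}$ and $\widetilde{\sigma}_{[H]}$.
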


\vspace{6mm}

Fijamos ahora una categor\'ia de modelos $\mathscr{C}$. Para que la 2-categor\'ia $\catH$ sea la localización de $\mathscr{C}$ con respecto a la clase $\mathcal{W}$, s\'olo necesitamos que la inclusión $i: \mathscr{C}\longrightarrow \catH$ mande las equivalencias d\'ebiles en equivalencias. Sin embargo, vamos a poder demostrar esto si nos restringimos a la subcategoría plena $\mathscr{C}_{fc}$ de objetos fibrantes-cofibrantes, obteniendo de esta forma la propiedad universal de la 2-localización para $\mathscr{C}_{fc}$ respecto de $\mathcal{W}$. La demostraci\'on requiere del siguiente lema.

\bigskip

\begin{lemma} \label{2_cells_inv}
    Toda 2-celda en $\catH$ es inversible.
\end{lemma}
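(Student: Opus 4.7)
La idea es construir un inverso expl\'icito para cada 2-celda en $\catH$ invirtiendo el orden de la secuencia y ``dando vuelta'' cada homotop\'ia. Dado un cilindro $C=(W, Z, d_0, d_1, s, x)$ para $X$, defino el cilindro \emph{reverso} $C^{-1}=(W, Z, d_1, d_0, s, x)$ intercambiando $d_0$ con $d_1$; sigue siendo un cilindro porque $sd_1=sd_0=x$. Para una homotop\'ia $H=(C, h):\xymatrix{f \ar@2{~>}[r] & g}$, defino $H^{-1}=(C^{-1}, h):\xymatrix{g \ar@2{~>}[r] & f}$. El candidato a inverso de $[H_n, \ldots, H_1]$ ser\'a $[H_1^{-1}, \ldots, H_n^{-1}]$.

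El paso central es ver que, para todo $F \in Hom_+(\mathscr{C},\mathscr{D})$, la 2-celda $\widehat{FC^{-1}}: Fd_1 \Rightarrow Fd_0$ es el inverso vertical de $\widehat{FC}: Fd_0 \Rightarrow Fd_1$ en $\mathscr{D}$. Por la definici\'on \ref{FH}, ambas satisfacen $Fs\widehat{FC}=Id_{Fx}=Fs\widehat{FC^{-1}}$. Por la compatibilidad entre composiciones horizontal y vertical,
\begin{equation*}
    Fs(\widehat{FC^{-1}}\circ \widehat{FC}) \;=\; (Fs\widehat{FC^{-1}})\circ (Fs\widehat{FC}) \;=\; Id_{Fx} \;=\; Fs\,Id_{Fd_0}.
\end{equation*}
Como $Fs$ es una equivalencia en $\mathscr{D}$, la observaci\'on \ref{s*_equivalencia} nos dice que $(Fs)_*: \mathscr{D}[FX, FW]\longrightarrow \mathscr{D}[FX, FZ]$ es una equivalencia de categor\'ias, en particular fiel en las 2-celdas, de donde $\widehat{FC^{-1}}\circ \widehat{FC}=Id_{Fd_0}$. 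Sim\'etricamente $\widehat{FC}\circ \widehat{FC^{-1}}=Id_{Fd_1}$. Componiendo horizontalmente con $Fh$ se obtiene
\begin{equation*}
    \widehat{FH^{-1}}\circ \widehat{FH} \;=\; Fh(\widehat{FC^{-1}}\circ \widehat{FC}) \;=\; Fh\,Id_{Fd_0} \;=\; Id_{Ff},
\end{equation*}
y an\'alogamente $\widehat{FH}\circ \widehat{FH^{-1}}=Id_{Fg}$.

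En virtud de la caracterizaci\'on de las identidades discutida en las \emph{Identidades para la composici\'on vertical} (una secuencia es la identidad sii su imagen por todo $F \in Hom_+(\mathscr{C},\mathscr{D})$ lo es), se sigue que $[H^{-1}, H]=Id_f$ y $[H, H^{-1}]=Id_g$ en $\catH$. Para una 2-celda general $[H_n, \ldots, H_1]:f_0 \Rightarrow f_n$, aplicando $F$ a la composici\'on vertical con $[H_1^{-1}, \ldots, H_n^{-1}]$ se produce una cadena de cancelaciones sucesivas $\widehat{FH_1^{-1}}\circ \cdots \circ \widehat{FH_n^{-1}}\circ \widehat{FH_n}\circ \cdots \circ \widehat{FH_1}=Id_{Ff_0}$ por el paso anterior, y lo mismo para la composici\'on en el otro orden; por definici\'on de $\sim$ ambas secuencias son equivalentes a las identidades correspondientes.

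El \'unico punto no autom\'atico del argumento es la fidelidad del wishkering $Fs*(-)$ sobre 2-celdas de codominio $Fd_0$ o $Fd_1$, pero esto es precisamente la consecuencia de que $Fs$ es equivalencia en $\mathscr{D}$ proporcionada por \ref{s*_equivalencia}; todo lo dem\'as es manipulaci\'on formal bidimensional en $\mathscr{D}$.
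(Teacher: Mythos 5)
Tu propuesta es correcta y sigue esencialmente el mismo camino que la demostraci\'on del trabajo: el cilindro reverso $C^{-1}$ obtenido intercambiando $d_0$ y $d_1$, la cancelaci\'on $\widehat{FC^{-1}}\circ\widehat{FC}=Id_{Fd_0}$ v\'ia la unicidad de la 2-celda sobre $Fs$ (que es justamente la plenitud y fidelidad de $(Fs)_*$ de la observaci\'on \ref{s*_equivalencia}), y la composici\'on con $Fh$. La \'unica diferencia es que explicitas el caso de secuencias generales por cancelaci\'on telesc\'opica, que en el trabajo queda impl\'icito v\'ia la observaci\'on \ref{secuencias}.
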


\begin{proof}
    Fijemos $H=(C, h)$ una homotopía de $f$ a $g$ con cilindro $C=(W, Z, d_0, d_1, s, x)$. Definimos $H^{-1}=(C^{-1}, h)$, donde $C^{-1}$ es el cilindro que se obtiene de $C$ intercambiando $d_0$ y $d_1$, por lo que $H^{-1}$ es una homotopía de $g$ a $f$. Veamos que $[H^{-1}]\circ[H]=[Id_f]$.
    
    Sea $F: \mathscr{C}\longrightarrow\mathscr{D}$ un 2-funtor. Tenemos $\widehat{FC}: Fd_0 \Longrightarrow Fd_1$ y $\widehat{FC^{-1}}: Fd_1 \Longrightarrow Fd_0$ satisfaciendo las ecuaciones $Fs\widehat{FC}=Fx$ y $Fs\widehat{FC^{-1}}=Fx$. Luego, para la composición se tiene $Fs(\widehat{FC^{-1}}\circ\widehat{FC})=Fx\circ Fx=Fx$, pero como existe una única 2-celda de $Fd_0$ en $Fd_0$ que satisface la igualdad anterior, entonces $\widehat{FC^{-1}}\circ\widehat{FC}=Id_{Fd_0}=Fd_0$. De esta forma,
    \begin{center}
    $\widehat{FH^{-1}}\circ \widehat{FH}=Fh\widehat{FC^{-1}} \circ Fh\widehat{FC}= Fh(\widehat{FC^{-1}} \circ \widehat{FC})=FhFd_0=Ff$;
    \end{center}
    es decir, $[H^{-1}, H]=[Id_f]$. Una cuenta similar muestra que $[H, H^{-1}]=[Id_g]$, por lo tanto $[H]$ es inversible y su inversa es $[H]^{-1}=[H^{-1}]$. 
    
\end{proof}

\begin{definition}
    Un morfismo $f:X \longrightarrow Y$ es una \emph{sección} si admite una inversa a izquierda; es decir, existe $g: Y\longrightarrow X$ que satisface $gf=id_X$.
    
    Dualmente, decimos que $f$ es una \emph{retracción} si tiene una inversa a derecha.
\end{definition}

\begin{theorem}
    Sea $s: X \longrightarrow Y$ una equivalencia débil en una categor\'ia de modelos $\mathscr{C}$. Si $X$ es fibrante e $Y$ es cofibrante, entonces $s$ es una equivalencia en $\catH$.
\end{theorem}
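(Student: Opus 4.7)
The plan is to produce a quasi-inverse $t: Y \to X$ together with 2-cells $id_X \Rightarrow ts$ and $id_Y \Rightarrow st$ in $\catH$; by Lemma \ref{2_cells_inv} every 2-cell in $\catH$ is invertible, so these 2-cells will automatically witness $s$ as an equivalence. The construction of $t$ will follow the classical recipe via the factorization axiom and the lifting properties of fibrant/cofibrant objects, while the core technical difficulty will lie in constructing the two required homotopies---not by a naive Quillen cylinder (for which the cylinder object would need to be cofibrant), but by exploiting the extra flexibility of the generalized cylinders of Definition \ref{cilindros_homotopias}.

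First I would factor $s = pi$ using M4(i), with $i: X \to W$ a trivial cofibration and $p: W \to Y$ a fibration; M5 applied to $s = pi$ forces $p$ to be a weak equivalence, hence a trivial fibration. Fibrancy of $X$ yields a lift $r: W \to X$ of $id_X$ along $i$ against $X \to 1$, so that $ri = id_X$, and $r \in \mathcal{W}$ by 2-out-of-3. Dually, cofibrancy of $Y$ yields a section $q: Y \to W$ of $p$ with $pq = id_Y$ and $q \in \mathcal{W}$. Setting $t := rq$, one computes $ts = rqpi$ and $st = pirq$, so it suffices to build homotopies $H_1: id_W \Rightarrow qp$ and $H_2: id_W \Rightarrow ir$: the whiskerings $r H_1 i: id_X \Rightarrow ts$ and $p H_2 q: id_Y \Rightarrow st$ then furnish the needed 2-cells.

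The heart of the proof is the construction of $H_2$ (and symmetrically $H_1$). The key observation is the identity $r \circ id_W = r = r \circ (ir)$, which follows from $ri = id_X$; by M4(ii) one can factor $(id_W, ir): W \sqcup W \to W$ as a cofibration $(d_0, d_1): W \sqcup W \to W'$ followed by a trivial fibration $h: W' \to W$. Taking $Z := X$, $x := r$ and $\sigma := r \circ h: W' \to X$ (a weak equivalence as a composite of two), the tuple $(W', X, d_0, d_1, \sigma, r)$ satisfies $\sigma d_0 = r = \sigma d_1$ and hence is a cylinder for $W$ in the sense of Definition \ref{cilindros_homotopias}; the map $h$ itself provides the desired homotopy $H_2: id_W \Rightarrow ir$. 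The construction of $H_1$ is entirely analogous, factoring $(id_W, qp): W \sqcup W \to W$ and using $p: W \to Y$ in place of $r$ as the equalizing weak equivalence (the identity $p \circ id_W = p = p \circ qp$ follows from $pq = id_Y$). I expect the main conceptual hurdle to be precisely this idea: recognizing that replacing the ``middle object'' of a Quillen cylinder by $X$ or $Y$, with the equalizing map supplied by $r$ or $p$, lets one build the homotopies without needing any cofibrancy hypothesis on $W$ or on the cylinder object $W'$.
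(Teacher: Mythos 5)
Your argument is correct and rests on exactly the same key idea as the paper's proof: after factoring $s=pi$ and using fibrancy of $X$ and cofibrancy of $Y$ to extract the one-sided inverses $r$ and $q$, the required homotopies come from generalized cylinders in the sense of Definition \ref{cilindros_homotopias} whose ``middle object'' $Z$ is $X$ (resp.\ $Y$) and whose structural weak equivalence is $r$ (resp.\ $p$), exploiting the identities $r(ir)=r$ and $p(qp)=p$. The only differences are organizational: the paper reduces to showing that a weak equivalence which is a section or a retraction is an equivalence in $\catH$ and then composes the two equivalences, whereas you assemble the quasi-inverse $t=rq$ directly and whisker; also your M4(ii) factorization of $\binom{id_W}{ir}$ is superfluous, since the generalized cylinder imposes no cofibration condition and one may take $(W,X,id_W,ir,r,r)$ with $h=id_W$ directly, which is precisely what the paper does.
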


\begin{proof}
    Consideramos una factorización $s=pi$, donde $p$ es una fibración, $i$ una cofibración y alguna de las dos es una equivalencia débil, usando el axioma M4. Como $s$ es una equivalencia débil, por M5 obtenemos que los tres morfismos lo son.
    
    Ahora, dado que $X$ es fibrante, $i$ resulta una sección: como es una cofibración trivial y además $X\longrightarrow 1$ es una fibración, la propiedad de levantamiento garantiza la existencia de la flecha punteada en el diagrama conmutativo $\xymatrix{
            X \ar@{=}[r] \ar@{ >->}[d]_i | \circ & X \ar@{->>}[d]\\
            Z \ar@{-->}[ur] \ar[r] & 1\\},$
    por lo que $i$ tiene una inversa a izquierda.
    
    Dualmente, $p$ es una retracción gracias a que $Y$ es cofibrante.
    Luego, basta ver que si una equivalencia débil es además una sección o una retracción, es una equivalencia en $\catH$, ya que las equivalencias son cerradas por composición.
    
    Supongamos entonces que $s$ es una sección con $r:Y \longrightarrow X$ una inversa a izquierda. Para ver que es una equivalencia en $\catH$, tenemos que mostrar que hay un isomorfismo entre $id_Y$ y la composición $sr$, pero como en $\catH$ toda 2-celda es inversible, entonces sólo necesitamos probar la existencia de una homotopía $\xymatrix{ sr \ar@2{~>}[r] & id_Y}$.
    
    Dado que $rsr=r$, el diagrama
    \begin{center}
        $\xymatrix{
           Y \ar@<3pt>[rr]^{sr}\ar@<-3pt>[rr]_{id_Y}\ar[dr]_{r} &&Y\ar[rr]^{id_Y}\ar[dl]^{r}|\circ  &&Y\\
           &  X \\  
        }$
    \end{center}
    es conmutativo y, efectivamente, nos da una homotopía de $sr$ a $id_Y$. 
    
    \bigskip
    
     Como el caso en el que $s$ es una retracción es completamente análogo, podemos concluir la demostración. 
    
\end{proof}

Denotaremos $\Ho$ a la subcategoría de $\catH$ cuyos objetos y flechas son como en $\mathscr{C}_{fc}$ y cuyas 2-celdas son clases de homotopías en $\mathscr{C}$.

\begin{observation}
    Dado que toda equivalencia débil es una equivalencia en $\Ho$, entonces todo 2-funtor $\Ho \longrightarrow \mathscr{D}$ manda equivalencias débiles en equivalencias; es decir,  $Hom_{s_+}(\Ho, \mathscr{D})=Hom_s(\Ho, \mathscr{D})$ y $Hom_{p_+}(\Ho, \mathscr{D})=Hom_p(\Ho, \mathscr{D})$.
\end{observation}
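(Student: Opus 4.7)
The plan is to derive this observation as a direct consequence of the preceding theorem together with a standard fact about 2-functors. First I would invoke the theorem just proved: if $s: X \longrightarrow Y$ is a weak equivalence in $\mathscr{C}$ with $X$ fibrant and $Y$ cofibrant (in particular, for any $s$ in $\mathscr{C}_{fc}$), then $i(s)$ is an equivalence in $\catH$, and hence in the sub-2-category $\Ho$. So every morphism in the class $\mathcal{W}$ restricted to $\mathscr{C}_{fc}$ already becomes an equivalence in $\Ho$ itself, before applying any 2-functor.

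Next I would use the general principle that 2-functors preserve equivalences. Given $F: \Ho \longrightarrow \mathscr{D}$ a 2-functor and $s$ a weak equivalence in $\mathscr{C}_{fc}$, pick a quasi-inverse $t$ in $\Ho$ together with invertible 2-cells $\alpha: id \Rightarrow ts$ and $\beta: st \Rightarrow id$. Applying $F$ yields morphisms $F(s)$, $F(t)$ and 2-cells $F(\alpha)$, $F(\beta)$ between the appropriate composites; since $F$ preserves composition, identities, and sends invertible 2-cells to invertible 2-cells (the inverse of $F(\alpha)$ being $F(\alpha^{-1})$, using that $F$ preserves vertical composition), $F(s)$ is an equivalence in $\mathscr{D}$ with quasi-inverse $F(t)$.

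Consequently, every object of $Hom_s(\Ho, \mathscr{D})$ automatically lies in $Hom_{s_+}(\Ho, \mathscr{D})$, and analogously for the pseudonatural version; since the reverse inclusions are tautological, the equalities $Hom_{s_+}(\Ho, \mathscr{D})=Hom_s(\Ho, \mathscr{D})$ and $Hom_{p_+}(\Ho, \mathscr{D})=Hom_p(\Ho, \mathscr{D})$ follow. There is no real obstacle here: the content lies entirely in the previous theorem, and the present observation is just the packaging of that fact together with the elementary remark that 2-functors preserve equivalences, which is the 2-categorical analogue of the well-known fact that ordinary functors preserve isomorphisms.
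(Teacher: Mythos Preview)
Your argument is correct and is exactly the reasoning the paper intends: the observation is stated without a separate proof because it follows immediately from the preceding theorem together with the elementary fact that 2-functors preserve equivalences, which is precisely what you spell out. The only thing you add beyond the paper's terse statement is the explicit check that the quasi-inverse and witnessing 2-cells land in $\Ho$ and are preserved by $F$, which is routine.
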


\bigskip

\begin{theorem} \label{2_loc_fc}
    La inclusión $i:\mathscr{C}_{fc}\longrightarrow \Ho$ es la 2-localización, en el sentido estricto, de la subcategoría $\mathscr{C}_{fc}$ con respecto a la clase $\mathcal{W}$ (ver definici\'on \ref{2-localization}).
    
    Más aún, los 2-funtores $i^*: Hom_s(\Ho, \mathscr{D})\longrightarrow Hom_{s_+}(\mathscr{C}_{fc}, \mathscr{D})$ e $i^*: Hom_p(\Ho, \mathscr{D})\longrightarrow Hom_{p_+}(\mathscr{C}_{fc}, \mathscr{D})$ son ambos isomorfismos de 2-categorías.
\end{theorem}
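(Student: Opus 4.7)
The plan is to deduce both assertions by adapting, to the restricted setting, the constructions already carried out for $\catH$ in Proposition \ref{pu_i}, Proposition \ref{prop_clave}, Lemma \ref{aspecto_2categorico}, and their consequences (Corollary \ref{hom_iso} and Observation \ref{hom_p_iso}). The first assertion follows from the second, since an isomorphism of 2-categories is stronger than the pseudoequivalence required by the definition of 2-localization; hence I would concentrate on the isomorphism statements.

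My first step would be to record that $i: \mathscr{C}_{fc}\longrightarrow \Ho$ sends weak equivalences to equivalences. This is immediate from the preceding theorem (a weak equivalence between fibrant-cofibrant objects is an equivalence in $\catH$) together with the fact that $\Ho$ is a full sub-2-category of $\catH$, so such an equivalence automatically lives in $\Ho$. Combined with the observation preceding the statement, this ensures $Hom_{s_+}(\Ho,\mathscr{D}) = Hom_s(\Ho,\mathscr{D})$ and $Hom_{p_+}(\Ho,\mathscr{D}) = Hom_p(\Ho,\mathscr{D})$, so the two $i^*$ in the statement are well posed.

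The second step would be to extend a 2-functor $F\in Hom_{s_+}(\mathscr{C}_{fc},\mathscr{D})$ uniquely to a 2-functor $\widetilde{F}:\Ho\longrightarrow\mathscr{D}$. On objects and morphisms set $\widetilde{F}X=FX$ and $\widetilde{F}f=Ff$; on a 2-cell $[H]$ with $H=(C,h)$ I would like to set $\widetilde{F}([H])=\widehat{FH}$, exactly as in Proposition \ref{pu_i}. Strict and pseudonatural transformations, and modifications, would then be extended by the identity formulas $\widetilde{\theta}_X=\theta_X$, $\widetilde{\mu}_X=\mu_X$ of Proposition \ref{prop_clave} and Lemma \ref{aspecto_2categorico}. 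Functoriality, 2-naturality (resp.\ pseudonaturality), the modification axiom, and the uniqueness of every extension would follow from exactly the same calculations as in those preceding results; nothing in those arguments depends on the ambient category beyond what is inherited by $\mathscr{C}_{fc}$.

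The main obstacle is that a homotopy $H$ between arrows $f,g\in\mathscr{C}_{fc}[X,Y]$ may involve a cylinder $(W,Z,d_0,d_1,s,x)$ whose objects $W,Z$ lie outside $\mathscr{C}_{fc}$, so that $FW,FZ$, and therefore the 2-cell $\widehat{FH}$, are not immediately available. To circumvent this I would establish a replacement lemma: every class $[H]$ in $\Ho$ admits a representative whose cylinder lies in $\mathscr{C}_{fc}$. The natural candidate is a fibrant $q$-cilindro produced by axiom M4 applied to $\nabla_X$; when $X\in\mathscr{C}_{fc}$, the middle object $W$ is cofibrant (since $X\coprod X$ is cofibrant and $\binom{d_0}{d_1}$ is a cofibration) and fibrant (since $s$ is a fibration to a fibrant object), so $W\in\mathscr{C}_{fc}$. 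A lifting argument exploiting the cofibration/trivial-fibration structure of the original cylinder then yields a homotopy with this good cylinder in the same class as $H$, and a parallel argument handles pairs and longer sequences as needed for the composition-level claims. Once the replacement is in hand, $\widehat{FH}$ is unambiguously defined on classes, and the remainder of the proof amounts to tracking through the verifications of Corollary \ref{hom_iso} and Observation \ref{hom_p_iso} in the restricted setting.
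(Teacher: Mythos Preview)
Your approach mirrors what the paper does, but goes further in rigor. The paper gives no proof for this theorem: it is stated as an immediate consequence of Corollary \ref{hom_iso}, Observation \ref{hom_p_iso}, and the preceding theorem that a weak equivalence between fibrant-cofibrant objects is an equivalence in $\catH$ (whence $Hom_{p_+}(\Ho,\mathscr{D})=Hom_p(\Ho,\mathscr{D})$ and likewise for the strict version). Your proposal correctly reproduces this reasoning and, in addition, names the obstacle the paper leaves implicit: a homotopy between arrows of $\mathscr{C}_{fc}$ may have its cylinder outside $\mathscr{C}_{fc}$, so $\widehat{FH}$ is not a priori available for $F$ defined only on $\mathscr{C}_{fc}$. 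Your replacement lemma is exactly Lemma \ref{lema_3_pasos}, which the paper proves only \emph{after} stating the present theorem; so in effect you are supplying the justification the paper defers.

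There is one point you pass over too quickly. The 2-cells of $\Ho$ are, by the paper's definition, classes for the relation $\sim$ tested against 2-functors $G:\mathscr{C}\to\mathscr{D}$, whereas your $\widehat{FH'}$ uses only $F:\mathscr{C}_{fc}\to\mathscr{D}$. Saying that ``$\widehat{FH}$ is unambiguously defined on classes'' once a representative $H'$ with cylinder in $\mathscr{C}_{fc}$ is chosen amounts to the claim that if $H',H''$ are two such representatives with $[H']=[H'']$ in $\catH$, then $\widehat{FH'}=\widehat{FH''}$. Since not every $F$ on $\mathscr{C}_{fc}$ is the restriction of a $G$ on $\mathscr{C}$, this is not automatic; it is equivalent to showing that the comparison 2-functor $\mathcal{H}o(\mathscr{C}_{fc})\to\Ho$ (identity on objects and arrows) is faithful on 2-cells. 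The paper does not isolate this step either, so your plan is at least as complete as the paper's, but you should flag it rather than assert it.
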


\subsection{Localizaci\'on de Quillen de \texorpdfstring{$\mathscr{C}_{fc}$}{}}

Fijamos una categor\'ia de modelos $\mathscr{C}$ y consideramos $\mathscr{C}_{fc} \subseteq \mathscr{C}$. Quillen define la categor\'ia homot\'opica de $\mathscr{C}$ como la localizaci\'on (en el sentido estricto) con respecto a la clase de equivalencias d\'ebiles, denotada $\mathbf{Ho}$($\mathscr{C}$). Los objetos de esta categor\'ia coinciden con los objetos de $\mathscr{C}$ y las flechas son clases de equivalencia de morfismos en $\mathscr{C}_{fc}$ que se obtienen mediante reemplazos fibrante y cofibrante, donde las clases de equivalencia se definen por la relaci\'on de homotop\'ia. Quillen demuestra que $\mathbf{Ho}$($\mathscr{C}$) es \emph{equivalente} a la categor\'ia de objetos fibrantes-cofibrantes $\mathscr{C}_{fc}$ cocientada por la misma relaci\'on en los morfismos, denotada $\pi\mathscr{C}_{fc}$. Cuando la categor\'ia que se quiere localizar es $\mathscr{C}_{fc}$, dicha localizaci\'on $\mathbf{Ho}$($\mathscr{C}_{fc}$) es isomorfa a $\pi\mathscr{C}_{fc}$.

En lo que sigue veremos cómo obtener la categor\'ia homot\'opica de $\mathscr{C}_{fc}$ a partir de la 2-categor\'ia $\Ho$.
Comenzamos introduciendo el concepto de \emph{morfismo de cilindros}, y probaremos que si dos homotop\'ias se conectan por un morfismo de estos, entonces ambas definen la misma 2-celda en $\catH$, lo que facilitar\'a, luego, algunas demostraciones.

\bigskip

\begin{definition}
    Sean $C=(W, Z, d_0, d_1, s, x)$, $C'=(W', Z', d'_0, d'_1, s', x')$ dos cilindros para $X$ en $\mathscr{C}$. Un \emph{morfismo de cilindros} $C\longrightarrow C'$ consiste de un par de morfismos $\phi: W\longrightarrow W'$ y $\psi: Z \longrightarrow Z'$ que hacen conmutativo el diagrama
    
\begin{center}
    $\xymatrix{
        && W \ar[dd]^(.4){s}|(.4)\circ
           \ar[drr]^{\phi} \\      
        X \ar@<3pt>[urr]^{d_0}
          \ar@<-3pt>[urr]_{d_1}
          \ar@/_1pc/@<3pt>[rrrr]^(.7){d'_0}
          \ar@/_1pc/@<-3pt>[rrrr]_(.7){d'_1}
          \ar@{=}[dd] &&&& W'\ar[dd]^(.6){s'}|(.6)\circ \\
        && Z\ar[drr]^{\psi}\\
        X\ar[urr]^{x} \ar@/_1pc/[rrrr]^{x'} &&&& Z'}$
\end{center}
\end{definition}

\vspace{5mm}

\newcommand \germ{\underset{g}{\thicksim}}

\begin{definition} \label{rel_germ}
    La \emph{relación de gérmenes} entre homotopías es la relación de equivalencia generada por los morfismos de cilindros:
    dadas $H=(C, h)$ y $H'=(C', h')$ dos homotopías de $f$ a $g$, decimos que $H\germ H'$ si existe un morfismo de cilindros $\xymatrix {C \ar[r]^{(\phi, \psi)} & C'}$ tal que $h' \circ \phi = h$.
   
\end{definition}

\bigskip

\begin{lemma}
    Si $H$, $H':\xymatrix{f \ar@2{~>}[r] & g}$ son dos homotopías tales que $H \germ H'$, entonces $[H]=[H']$.
\end{lemma}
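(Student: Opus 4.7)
Mi plan es el siguiente. Como $\germ$ es, por definición, la relación de equivalencia generada por los morfismos de cilindros, y como la igualdad de clases en $\catH$ es ya una relación de equivalencia, alcanza con probar la afirmación en el caso generador: supondré entonces que existe un morfismo de cilindros $(\phi,\psi):C\longrightarrow C'$ tal que $h'\phi=h$. Por la definición \ref{rel_adhoc} de $\sim$, lo que debo verificar es que $\widehat{FH}=\widehat{FH'}$ para todo 2-funtor $F:\mathscr{C}\longrightarrow \mathscr{D}$ que manda $\Sigma$ en equivalencias.

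Primero desarmaría ambos lados según la definición \ref{FH}: $\widehat{FH}=Fh\,\widehat{FC}$ y $\widehat{FH'}=Fh'\,\widehat{FC'}$. El paso central del argumento consistirá en demostrar la igualdad de 2-celdas $F\phi\,\widehat{FC}=\widehat{FC'}$. Ambos miembros son 2-celdas $Fd_0'\Longrightarrow Fd_1'$, ya que las ecuaciones $\phi d_i=d_i'$ del morfismo de cilindros dan $F\phi\,Fd_i=Fd_i'$; por la unicidad que caracteriza a $\widehat{FC'}$ (gracias a que $Fs'$ es equivalencia), basta entonces comprobar que $Fs'\,F\phi\,\widehat{FC}=Fx'$. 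Esto se obtendrá de las relaciones restantes del morfismo de cilindros, $s'\phi=\psi s$ y $\psi x=x'$: aplicando $F$ se tiene
\[
Fs'\,F\phi\,\widehat{FC}=F\psi\,Fs\,\widehat{FC}=F\psi\,Fx=Fx'.
\]

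Una vez establecida la igualdad $F\phi\,\widehat{FC}=\widehat{FC'}$, el cierre será inmediato usando $h'\phi=h$:
\[
\widehat{FH'}=Fh'\,\widehat{FC'}=Fh'\,F\phi\,\widehat{FC}=F(h'\phi)\,\widehat{FC}=Fh\,\widehat{FC}=\widehat{FH}.
\]

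No anticipo obstáculos serios: toda la fuerza conceptual ya reside en la propiedad universal que caracteriza a $\widehat{FC}$ cuando $Fs$ es una equivalencia (garantizada por la hipótesis sobre $F$). La única sutileza a cuidar es respetar la compatibilidad entre composición vertical y horizontal al manipular las 2-celdas, y verificar que los dominios y codominios encajen correctamente al invocar la unicidad, paso en el que interviene de manera esencial la conmutatividad del diagrama que define el morfismo $(\phi,\psi)$.
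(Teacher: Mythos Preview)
Your proposal is correct and follows essentially the same route as the paper's proof: reduce to the generating case of a single cylinder morphism, use the commutativity relations $s'\phi=\psi s$ and $\psi x=x'$ to check that $F\phi\,\widehat{FC}$ satisfies the equation characterizing $\widehat{FC'}$, and conclude via $h'\phi=h$. Your write-up is in fact cleaner than the paper's, which contains a few typos in this computation.
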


\begin{proof}
    Escribimos $H=(C, h)$ y $H'=(C', h')$, $C=(W, Z, d_0, d_1, s, x)$ y $C'=(W', Z', d'_0, d'_1, s', x')$. Sea $\xymatrix{C \ar[r]^{(\phi, \psi)} & C'}$ un morfismo de cilindros tal que $h'\phi =h$.
    
    Si $\mathscr{D}$ es una 2-categoría y $F:\mathscr{C}\longrightarrow \mathscr{D}$ es un funtor que manda equivalencias débiles en equivalencias, como $F\psi Fs=Fs'F\phi$, entonces se tiene $Fx=F\psi Fx=F\psi Fs \widehat{FC}=Fs'F\phi\widehat{FC}$, y $F\phi\widehat{FC}: Fd'_0 \Longrightarrow Fd'_1$. Por unicidad de $\widehat{FC'}$, resulta $F\phi\widehat{FC}=\widehat{FC'}$, y luego, de la ecuación $h\phi=h'$ obtenemos $\widehat{FH'}=Fh'F\psi \widehat{FC}=Fh\widehat{FC}=\widehat{FH}.$
    
\end{proof}

\begin{lemma} \label{lema_3_pasos}
    Sean $X$ e $Y$ ambos fibrantes y cofibrantes, $f, g: X \longrightarrow Y$ y $H$ una homotopía de $f$ a $g$. Entonces existe $H'$, una $q$-homotopía en $\mathscr{C}_{fc}$ tal que $[H]=[H']$.
\end{lemma}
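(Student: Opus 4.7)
La estrategia es reemplazar el cilindro general $C=(W,Z,d_0,d_1,s,x)$ subyacente a $H$ por un $q$-cilindro en tres pasos sucesivos, cada uno conectado al anterior por un morfismo de cilindros en el sentido de la Definici\'on \ref{rel_germ}. Por el lema inmediatamente anterior, cada morfismo preserva la clase de la homotop\'ia, y la transitividad de la relaci\'on $\germ$ garantizar\'a la igualdad $[H]=[H']$ buscada.

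\emph{Paso 1.} Factorizo $s=s_1 j$ con $j: W \to W_1$ cofibraci\'on trivial y $s_1: W_1 \to Z$ fibraci\'on, usando M4; por M5, $s_1$ resulta tambi\'en equivalencia d\'ebil, luego fibraci\'on trivial. Definiendo $d_i^{(1)}=j d_i$ obtengo un cilindro $C_1=(W_1,Z,d_0^{(1)},d_1^{(1)},s_1,x)$ junto con un morfismo de cilindros $(j,id_Z): C \to C_1$. Como $Y$ es fibrante y $j$ es cofibraci\'on trivial, el axioma de levantamiento proporciona $h_1: W_1 \to Y$ con $h_1 j = h$. Entonces $H_1=(C_1,h_1)$ es una homotop\'ia de $f$ a $g$ y $H \germ H_1$.

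\emph{Paso 2.} Formo el pullback de $s_1: W_1 \to Z$ a lo largo de $x: X \to Z$, obteniendo $s_2: W_2 \to X$, que es fibraci\'on trivial por estabilidad por pullbacks de la clase $\mathcal{F}\cap \mathcal{W}$. Como $s_1 d_i^{(1)}=x = x \cdot id_X$, la propiedad universal induce morfismos \'unicos $d_i^{(2)}: X \to W_2$ con $s_2 d_i^{(2)}=id_X$ y $\pi d_i^{(2)}=d_i^{(1)}$, donde $\pi: W_2 \to W_1$ es la proyecci\'on. El par $(\pi,x)$ es un morfismo de cilindros $C_2 \to C_1$, con $C_2=(W_2,X,d_0^{(2)},d_1^{(2)},s_2,id_X)$; definiendo $h_2:=h_1\pi$ se obtiene $H_2=(C_2,h_2)$ con $H_1 \germ H_2$. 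Aqu\'i ya tenemos $Z=X$ y $x=id_X$, pero $\binom{d_0^{(2)}}{d_1^{(2)}}$ no necesariamente es cofibraci\'on.

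\emph{Paso 3.} Factorizo $\binom{d_0^{(2)}}{d_1^{(2)}}: X\coprod X \to W_2$ por M4 como una cofibraci\'on $i: X \coprod X \hookrightarrow W_3$ seguida de una fibraci\'on trivial $q: W_3 \to W_2$. Tomando $d_i^{(3)}=i\circ \iota_i$ (donde $\iota_0,\iota_1$ son las inclusiones can\'onicas) y $s_3=s_2 q$, obtengo una factorizaci\'on de la codiagonal $\nabla_X$ en una cofibraci\'on y una equivalencia d\'ebil, esto es, un $q$-cilindro $C_3=(W_3,d_0^{(3)},d_1^{(3)},s_3)$ para $X$. Definiendo $h_3:=h_2 q$ se obtiene una $q$-homotop\'ia $H_3=(C_3,h_3)$ de $f$ a $g$, y $(q,id_X): C_3 \to C_2$ es un morfismo de cilindros con $h_2 q = h_3$, lo que da $H_2 \germ H_3$. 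Encadenando los tres pasos, $[H]=[H_3]$, y $H_3$ es la $q$-homotop\'ia buscada.

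El principal obst\'aculo est\'a en el primer paso: la fibrancia de $Y$ es esencial para extender la homotop\'ia original $h$ al cilindro mejorado $W_1$ mediante levantamiento contra la cofibraci\'on trivial $j$. Los dos pasos restantes son construcciones puramente categ\'oricas --- un pullback y una factorizaci\'on M4 --- cuya verificaci\'on se reduce a comprobar que cada configuraci\'on produce efectivamente un morfismo de cilindros y que las equivalencias d\'ebiles introducidas se preservan (por composici\'on y por estabilidad por pullback).
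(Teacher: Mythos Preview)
Tu propuesta es correcta y sigue esencialmente la misma ruta que la demostraci\'on del art\'iculo: los tres pasos (factorizar $s$ para hacerlo fibraci\'on y extender $h$ por levantamiento usando que $Y$ es fibrante, hacer pullback a lo largo de $x$ para llevar la base a $X$ con $x=id_X$, y factorizar $\binom{d_0}{d_1}$ para hacerlo cofibraci\'on) coinciden punto por punto con los del texto. La \'unica omisi\'on menor es que no verificas expl\'icitamente que $W_3$ es fibrante-cofibrante, lo cual es parte del enunciado (``$q$-homotop\'ia en $\mathscr{C}_{fc}$''); el art\'iculo s\'i lo hace observando que $W_2$ es fibrante (por ser $s_2$ fibraci\'on y $X$ fibrante), luego $W_3$ es fibrante v\'ia $q$, y cofibrante v\'ia la composici\'on $0 \rightarrowtail X \rightarrowtail X\coprod X \rightarrowtail W_3$.
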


\begin{proof}
    Podemos suponer que $H$ es una homotopía fibrante (ver definici\'on \ref{fibrante_hpy}). En efecto, si $H$ es de la forma
    \begin{center}
        $\xymatrix{
           X \ar@<3pt>[rr]^{d_0}\ar@<-3pt>[rr]_{d_1}\ar[dr]_{x} && W\ar[rr]^{h}\ar[dl]^{s}|\circ  && Y\\
           &  Z \\  
        }$
    \end{center}
    consideramos una factorización de $s$ dada por
    $\xymatrix{ W \ar[rr]^{s}| \circ \ar@{>->}[dr]_{j}|\circ && Z\\
                & \widetilde{W} \ar@{->>}[ur]_{\Tilde{s}} |\circ\\ }$,
    y un morfismo $\Tilde{h}:\widetilde{W}\longrightarrow Y$ haciendo conmutar el diagrama 
    $\xymatrix{ W\ar[r]^{h} \ar@{ >->}[d]_(.4){j}|(.4)\circ & Y \ar@{->>}[d] \\
                \widetilde{W} \ar@{-->}[ur]^{\Tilde{h}} \ar[r] & 1 \\ }$.
                
    Tomando $\Tilde{d}_0:=jd_0$ y $\Tilde{d}_1:=jd_1$ queda definida una homotopía de $f$ a $g$ dada por $\xymatrix{
           X \ar@<3pt>[rr]^{\Tilde{d}_0}\ar@<-3pt>[rr]_{\Tilde{d}_0}\ar[dr]_{x} && \widetilde{W}\ar[rr]^{\Tilde{h}}\ar@{->>}[dl]^{\Tilde{s}}|\circ  && Y\\
           &  Z \\  
        }$, que es fibrante y está en la misma clase que $H$ ya que $(j, id_Z)$ es un morfismo de cilindros tal que $\Tilde{h}j=h$.

    \bigskip   
    Suponiendo entonces que $s$ es una fibración trivial, tomando el pullback de $x$ y $s$ obtenemos
    
    \begin{center}
        $\xymatrix{
            X \ar@{=}[rrr] \ar@{-->}@<3pt>[dd]^{\delta_1} \ar@{-->}@<-3pt>[dd]_{\delta_0} \ar@/_2.5pc/@{=}[dddd] &&& X \ar@<3pt>[dd]^{d_1} \ar@<-3pt>[dd]_{d_0} \\
            \\
            P \ar[rrr]^{t} \ar@{} [ddrrr]|{p.b.} \ar@{->>}[dd]_{\sigma}|\circ &&& W \ar@{->>}[dd]^{s}|\circ \\
            \\
            X \ar[rrr]_{x} &&& Z\\
        }$
    \end{center}
    
    donde $\sigma$ es también una fibración trivial por ser el pullback de $s$ que es un morfismo en la misma clase, y la existencia de $\delta_0$ y $\delta_1$ se debe a la propiedad universal.
    
    Luego, $(P, X, \delta_0, \delta_1, \sigma, id_X)$ es un cilindro para $X$, y con $k:=ht: P \longrightarrow Y$ tenemos una homotopía de $f$ a $g$ \hspace{2mm} $\xymatrix{
           X \ar@<3pt>[rr]^{\delta_0}\ar@<-3pt>[rr]_{\delta_1}\ar[dr]_{id_X} && P\ar[rr]^{k}\ar[dl]^{\sigma}|\circ  && Y\\
           &  X \\  
        }$,
    que está en la clase de $H$ dado que $(t, x)$ es un morfismo de cilindros satisfaciendo $ht=k$. Notamos que $P$ es fibrante porque $X$ lo es y, luego, la composición $\xymatrix{ P \ar@{->>}[r]^(.4){\sigma}|\circ & X \ar@{->>}[r] & 1 }$ es una fibración.
    
    Como $\binom{\sigma_0}{\sigma_1}$ no necesariamente es una cofibración, el diagrama anterior no necesariamente es una $q$-homotopía. Consideramos, entonces, la siguiente factorización $\xymatrix{ X\coprod X \ar[rr]^{\binom{\sigma_0}{\sigma_1}} \ar@{>->}[dr]_{\binom{d'_0}{d'_1}} && P\\
                            & W' \ar@{->>}[ur]_{p} |\circ\\ }$,
    junto con $s':=\sigma p: W'\longrightarrow X$ y $h':=kp: W'\longrightarrow Y$, y definimos $H'$ como
    \begin{center}
        $\xymatrix{
           X \ar@<3pt>[rr]^{d'_0}\ar@<-3pt>[rr]_{d'_1}\ar[dr]_{x} && W'\ar[rr]^{h'}\ar[dl]^{s'}|\circ  && Y\\
           &  X \\  
        }$
    \end{center}
    Dado que $h'd'_0=kpd'_0=k\delta_0=f$ y $h'd'_1=k\delta_1=g$, entonces $H'$ es una $q$-homotopía de $f$ a $g$ que está en la misma clase que $H$.
    
    Además, como $P$ es fibrante y $p$ es una fibración, $W'$ también es fibrante, y de la composición
    $\xymatrix{ 0 \hspace{2mm} \ar@{>->}[r] &X \hspace{1mm} \ar@{>->}[r]^(.4){i_0} & X\coprod X \hspace{1mm} \ar@{>->}[r]^(.6){\binom{d'_0}{d'_1}} & W' }$
    se deduce que $W'$ es cofibrante; por lo que $H'$ es una $q$-homotopía en $\mathscr{C}_{fc}$.
    
\end{proof}

\begin{lemma}
    Dadas $f, g, l: X \longrightarrow Y$ en $\mathscr{C}_{fc}$ y dos homotopías componibles $H: \xymatrix{ f \ar@2{~>}[r] & g}$ y $H': \xymatrix{ g \ar@2{~>}[r] & l}$, existe $H'': \xymatrix{f \ar@2{~>}[r] & l}$ tal que $[H'']=[H', H]$.
\end{lemma}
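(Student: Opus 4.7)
El plan es reducirnos al caso de $q$-homotop\'ias, donde el lema \ref{comp_v} ya provee un candidato natural para $H''$, y luego traducir la propiedad universal del pushout en la ecuaci\'on requerida entre las 2-celdas inducidas. Como $X, Y \in \mathscr{C}_{fc}$, el lema \ref{lema_3_pasos} nos da $q$-homotop\'ias $\widetilde{H}=(\widetilde{C},\tilde h):\xymatrix{f \ar@2{~>}[r] & g}$ y $\widetilde{H}'=(\widetilde{C}',\tilde h'):\xymatrix{g \ar@2{~>}[r] & l}$ en $\mathscr{C}_{fc}$ con $[\widetilde{H}]=[H]$ y $[\widetilde{H}']=[H']$, escribiendo $\widetilde{C}=(W, X, d_0, d_1, s, id_X)$ y $\widetilde{C}'=(W', X, d'_0, d'_1, s', id_X)$. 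Bastar\'a entonces construir $H''$ con $[H''] = [\widetilde{H}', \widetilde{H}]$, lo que autom\'aticamente dar\'a $[H''] = [H', H]$.

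Dado que $X$ es cofibrante, el lema \ref{comp_v} produce una $q$-homotop\'ia $H'' = (C'', h''):\xymatrix{f \ar@2{~>}[r] & l}$ con cilindro $C''=(W'', X, d_0'', d_1'', s'', id_X)$, donde $W''$ es el pushout de $d_1$ y $d'_0$. Denotando $\alpha: W \longrightarrow W''$ y $\beta: W' \longrightarrow W''$ las inyecciones del pushout, se tienen las igualdades $d_0''=\alpha d_0$, $d_1''=\beta d_1'$, $s''\alpha = s$, $s''\beta = s'$, $h''\alpha = \tilde h$ y $h''\beta = \tilde h'$; la existencia y unicidad de $h''$ est\'a garantizada por la propiedad universal del pushout, gracias a $\tilde h d_1 = g = \tilde h' d'_0$.

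Para mostrar que $[H''] = [\widetilde{H}', \widetilde{H}]$, fijamos un 2-funtor $F:\mathscr{C}\longrightarrow \mathscr{D}$ que manda $\mathcal{W}$ en equivalencias. La 2-celda $(F\beta \widehat{F\widetilde{C}'}) \circ (F\alpha \widehat{F\widetilde{C}}): Fd_0'' \Longrightarrow Fd_1''$ est\'a bien definida gracias a la conmutatividad $\alpha d_1 = \beta d'_0$. Componiendo a izquierda con $Fs''$ y usando $s''\alpha=s$, $s''\beta=s'$ junto con las ecuaciones caracter\'isticas $Fs\widehat{F\widetilde{C}} = id_{FX} = Fs'\widehat{F\widetilde{C}'}$, se obtiene
\begin{equation*}
Fs'' \cdot \bigl[(F\beta \widehat{F\widetilde{C}'}) \circ (F\alpha \widehat{F\widetilde{C}}) \bigr] = (Fs' \widehat{F\widetilde{C}'}) \circ (Fs \widehat{F\widetilde{C}}) = id_{FX}.
\end{equation*}
Por la unicidad que define $\widehat{FC''}$, concluimos $\widehat{FC''} = (F\beta \widehat{F\widetilde{C}'}) \circ (F\alpha \widehat{F\widetilde{C}})$; componiendo horizontalmente con $Fh''$ y recordando $h''\alpha = \tilde h$, $h''\beta = \tilde h'$, se deduce $\widehat{FH''} = \widehat{F\widetilde{H}'} \circ \widehat{F\widetilde{H}}$, como se pretend\'ia.

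El paso t\'ecnico m\'as delicado es la reducci\'on a $q$-homotop\'ias mediante el lema \ref{lema_3_pasos}, que es donde realmente se usa la hip\'otesis de que $X$ e $Y$ son fibrantes-cofibrantes; una vez hecha esta reducci\'on, la combinaci\'on del lema \ref{comp_v} con la propiedad universal del pushout lleva directamente a la construcci\'on de $H''$ y a la verificaci\'on de la igualdad de clases.
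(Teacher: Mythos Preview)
Your proof is correct and follows essentially the same approach as the paper: reduce to $q$-homotop\'ias via lema~\ref{lema_3_pasos}, build $H''$ from the pushout of lema~\ref{comp_v}, and verify $\widehat{FC''}=(F\beta\widehat{F\widetilde{C}'})\circ(F\alpha\widehat{F\widetilde{C}})$ by composing with $Fs''$ and invoking uniqueness. Your notation is slightly more careful in distinguishing the original homotopies from their $q$-homotopy replacements, but the argument is the same.
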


\begin{proof}
    Como $X$ e $Y$ son fibrantes y cofibrantes, por el lema anterior podemos suponer que tanto $H$ como $H'$ son $q$-homotopías. Si $H=(C, h)$ y $H'=(C', h')$, consideremos la homotopía $H''=(C'', h'')$ del lema \ref{comp_v} y veamos que $\widehat{FH''}=\widehat{FH'}\circ \widehat{FH}$ para todo 2-funtor $F: \mathscr{C} \longrightarrow \mathscr{D}$ que manda equivalencias débiles en equivalencias.

    La homotopía $H''$ queda determinada por el diagrama conmutativo
    
    \begin{center}
    $\xymatrix{
            & W \ar[dr]_{\alpha}
                 \ar@/^/[drrr]^{s}|\circ \ar@/^2pc/[rrrrd]^{h}\\
        X \hspace{2mm} \ar@<3pt>[ur]^{d_0}
          \ar@<-3pt>[ur]_{d_1}
          \ar@<3pt>[dr]^{d'_0}
          \ar@<-3pt>[dr]_{d'_1}  && W'' \ar[rr]^{s''}|\circ \ar@/_1pc/[rrr]_(.6){h''} && X & Y\\
            & W' \ar[ur]^{\beta}
                 \ar@/_/[urrr]_{s'}|\circ \ar@/_2pc/[rrrru]_{h'}\\}$
    
\end{center}

\vspace{3mm}
Como $\widehat{FH'}\circ \widehat{FH}=Fh''F\beta\widehat{FC'} \circ Fh''F\alpha \widehat{FC}= Fh'' (F\beta \widehat{FC'} \circ F\alpha \widehat{FC})$, basta ver que $F\beta \widehat{FC'} \circ F\alpha \widehat{FC}=\widehat{FC''}$. De las ecuaciones $s=s''\alpha$ y $s'=s''\beta$ se deduce que $id_{FX}= id_{FX} \circ id_{FX}= Fs''F\alpha \widehat{FC} \circ Fs'' F\beta \widehat{FC'}=Fs''(F\beta \widehat{FC'} \circ F\alpha \widehat{FC})$ y, luego, $F\alpha \widehat{FC} \circ F\beta \widehat{FC'}= \widehat{FC''}$.

\end{proof}

Con lo visto hasta el momento podemos asegurar que  cuando nos restringimos a la subcategoría $\mathscr{C}_{fc}$ existe una correspondencia entre las clases de $q$-homotopías y las clases de secuencias finitas de homotopías componibles, y se tiene de esta manera la siguiente proposici\'on

\begin{proposition} \label{categoria_Ho}
    La 2-categor\'ia $\Ho$ es aquella cuyos objetos y morfismos son como en $\mathscr{C}_{fc}$ y cuyas 2-celdas son las clases de $q$-homotop\'ias, donde dos $q$-homotop\'ias se identifican conforme a la relaci\'on de equivalencia definida en \ref{rel_adhoc}.
\end{proposition}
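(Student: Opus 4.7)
El plan consiste en demostrar que, bajo la restricción a $\mathscr{C}_{fc}$, cada 2-celda en $\Ho$ admite un representante (módulo la relación de equivalencia pertinente) consistente en una sola $q$-homotopía, y que la relación sobre $q$-homotopías heredada de $\catH$ coincide exactamente con la de la Definición \ref{rel_adhoc}.

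Por la construcción de $\catH$, una 2-celda arbitraria en $\Ho$ se presenta como una clase $[H_n, \ldots, H_1]$ de una secuencia finita de homotopías componibles entre morfismos de $\mathscr{C}_{fc}$. Primero aplicaría iterativamente el lema anterior---válido porque todos los objetos son fibrantes-cofibrantes---para reemplazar dos homotopías consecutivas por una sola en la misma clase, acortando la secuencia hasta obtener una única homotopía $H$ con $[H]=[H_n,\ldots,H_1]$ en $\catH$.

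Luego invocaría el Lema \ref{lema_3_pasos} para sustituir $H$ por una $q$-homotopía $H'$ en $\mathscr{C}_{fc}$ con $[H]=[H']$, obteniendo así un representante $q$-homotópico de la 2-celda original. Finalmente, observaría que la relación de equivalencia sobre secuencias heredada de $\catH$, restringida a secuencias de longitud uno entre $q$-homotopías, es precisamente la de la Definición \ref{rel_adhoc}, pues ambas están dictadas por la igualdad $\widehat{FH}=\widehat{FH'}$ para todo 2-funtor $F\in Hom_{+}(\mathscr{C},\mathscr{D})$.

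El paso que esperaría requerir más cuidado es la iteración del lema de composición: hay que verificar que cada reducción de una secuencia a una sola homotopía respeta la composición vertical en $\catH$ con respecto a la relación de equivalencia, de manera que el representante final sea genuinamente equivalente a $[H_n,\ldots,H_1]$. Esto se sigue, sin embargo, de la igualdad $\widehat{FH''}=\widehat{FH_2}\circ\widehat{FH_1}$ establecida allí, aplicada inductivamente, junto con la asociatividad de la composición vertical en $\mathscr{D}$.
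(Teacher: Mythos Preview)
Your plan is correct and follows exactly the route the paper intends: the paper presents this proposition without an explicit proof, stating it as a direct consequence of the two preceding lemmas (the one reducing a pair of composable homotopies to a single one, and Lema~\ref{lema_3_pasos} replacing any homotopy by a $q$-homotopy in the same class), which is precisely the inductive reduction you describe. The only minor redundancy is that the composition lemma already produces a $q$-homotopy (via Lema~\ref{comp_v} after first invoking Lema~\ref{lema_3_pasos} internally), so your final application of Lema~\ref{lema_3_pasos} is not strictly needed, but this does not affect correctness.
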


\begin{comentario}
    La relaci\'on de g\'ermenes (\ref{rel_germ}) permitir\'ia definir otra 2-categor\'ia homot\'opica. Pueden definirse las composiciones vertical y horizontal, y demostrarse todos los requisitos, salvo el axioma que relaciona ambas composiciones.

    Quillen tambi\'en define una relaci\'on de equivalencia entre homotop\'ias (ver \cite{Qui} Ch.I $\S$2), y demuestra que las clases de homotop\'ias pueden componerse verticalmente y horizontalmente, pero no hace menci\'on de la compatibilidad entre ambas composiciones, lo que sugiere que dicha compatibilidad no puede demostrarse o no ser\'ia v\'alida. De hecho, la relaci\'on entre homotop\'ias definida por Quillen es equivalente a la relaci\'on de g\'ermenes.
\end{comentario}

\vspace{5mm}
Obtendremos la categoría homotópica de $\mathscr{C}_{fc}$ aplicando el funtor de componentes conexas $\pi _0 : 2$-$Cat \longrightarrow Cat$, definido como:

\begin{enumerate}
    \item Por cada 2-categoría $\mathscr{D}$, $\pi_0(\mathscr{D})$  es una categoría cuyos objetos son los de $\mathscr{D}$ y, por cada par de objetos $X$, $Y$ en $\mathscr{D}$, $\pi_0(\mathscr{D})[X, Y]=\small{\faktor{\mathscr{D}[X, Y]}{\equiv}\text{ }}$, donde `` $\equiv$ '' es la clausura transitiva de la relaci\'on
    \begin{center}
    $f \thicksim g \Leftrightarrow$ existe una 2-celda $f \Longrightarrow g$ o existe una 2-celda $g \Longrightarrow f$.
    \end{center}
    \item Dado un 2-funtor $G: \mathscr{D}_1 \longrightarrow \mathscr{D}_2$, $\pi_0(G): \pi_0(\mathscr{D}_1) \longrightarrow \pi_0(\mathscr{D}_2)$ es $\pi_0(G)X=GX$ para todo $X$ en $\mathscr{D}_1$ y $\pi_0(G)[f]=[Gf]$ para toda $f$ en $\mathscr{D}_1$.
\end{enumerate}

\bigskip
\begin{observation} \label{pi_0_functor}
    Sea $d: Cat \longrightarrow 2$-$Cat$ el funtor que asocia cada categoría $\mathscr{X}$ consigo misma vista como 2-categoría discreta. Entonces, $\pi_0$ es adjunto a izquierda de $d$: 
    
    Para cada 2-categoría $\mathscr{D}$, existe un 2-funtor $\theta_{\mathscr{D}}: \mathscr{D} \longrightarrow \pi_0 \mathscr{D}$ tal que para toda categoría $\mathscr{X}$ y para todo 2-funtor $F: \mathscr{D} \longrightarrow \mathscr{X}$ existe un único funtor $\widetilde{F}: \pi_0 \mathscr{D} \longrightarrow \mathscr{X}$ satisfaciendo $\widetilde{F}\theta_{\mathscr{D}}=F$
    
    \begin{center}
        $\xymatrix{
            \mathscr{D} \ar[rr]^{\theta_{\mathscr{D}}} \ar[d]_{\forall F} && \pi_0 \mathscr{D}. \ar@{-->}[dll]^(.4){\exists ! \widetilde{F}} \\
            \mathscr{X}\\
        }$
    \end{center}
    
    El 2-funtor $\theta_{\mathscr{D}}$ se define de la siguiente manera. En los objetos, $\theta_{\mathscr{D}}(X)=X$ para todo $X$ en $\mathscr{D}$; en las flechas, $\theta_{\mathscr{D}}(f)=[f]$ para toda $f$ en $\mathscr{D}$; y, adem\'as, $\theta_{\mathscr{D}}$ manda toda 2-celda de $\mathscr{D}$ en la 2-celda trivial correspondiente. Queda definida, as\'i, una transformaci\'on $\theta: Id_{2-Cat} \Longrightarrow d\pi_0$ natural en la variable $\mathscr{D}$.
    
    \vspace{3mm}
    Dado que esto quiere decir que $\pi_0 : 2$-$Cat \longrightarrow Cat$ es adjunto a izquierda del funtor $d$, ambas categor\'ias $\mathscr{X}$ y $\pi_0\mathscr{D}$ son interpretadas como 2-categor\'ias; es decir, el diagrama anterior es un diagrama en $2$-$Cat$, y tenemos $d(\mathscr{X})$ en lugar de $\mathscr{X}$ y $d(\pi_0\mathscr{D})$ en lugar de $\pi_0\mathscr{D}$, pero hacemos aqu\'i un abuso de notaci\'on.
    
    Adem\'as, en adelante denotaremos $\pi_0$ en lugar de $\theta_{\mathscr{D}}$.
\end{observation}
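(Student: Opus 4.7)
The plan is to verify in order the well-definedness of $\pi_0\mathscr{D}$, the functoriality of $\pi_0$, the existence and 2-functoriality of $\theta_{\mathscr{D}}$, its naturality, and finally the universal property, from which the adjunction $\pi_0 \dashv d$ follows formally.

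First I would check that $\pi_0\mathscr{D}$ is a genuine category. The relation $\equiv$ on each $\mathscr{D}[X,Y]$ is, by definition, the transitive closure of $\thicksim$; reflexivity follows from the existence of identity 2-cells $Id_f: f \Longrightarrow f$, symmetry is built into the definition of $\thicksim$, and transitivity holds by construction. For composition to descend to equivalence classes, it suffices to observe that if $\alpha : f \Longrightarrow f'$ is a 2-cell in $\mathscr{D}[X, Y]$ and $g \in \mathscr{D}[Y, Z]$, then $g\alpha : gf \Longrightarrow gf'$ is a 2-cell; analogously on the other side. Iterating along finite chains one gets $gf \equiv g'f'$ whenever $f \equiv f'$ and $g \equiv g'$, so composition is well-defined, associative and unital in $\pi_0\mathscr{D}$. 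Functoriality of $\pi_0$ on a 2-functor $G$ is then immediate since $G$ sends 2-cells to 2-cells, hence respects $\thicksim$ and therefore $\equiv$.

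Next I would define $\theta_{\mathscr{D}}$ as in the statement and observe it is a 2-functor trivially: $\pi_0\mathscr{D}$ is viewed as a discrete 2-category, so the only 2-cells are identities, and sending all 2-cells of $\mathscr{D}$ to these identities is compatible with both compositions (identities compose to identities both vertically and horizontally). Naturality of $\theta$ in $\mathscr{D}$ reduces to checking on objects and morphisms that for any 2-functor $H : \mathscr{D} \longrightarrow \mathscr{D}'$ one has $\pi_0(H)\,\theta_{\mathscr{D}}X = HX = \theta_{\mathscr{D}'}HX$ and $\pi_0(H)\,\theta_{\mathscr{D}}f = [Hf] = \theta_{\mathscr{D}'}Hf$.

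The heart of the proof is the universal property. Given a 2-functor $F : \mathscr{D} \longrightarrow \mathscr{X}$ with $\mathscr{X}$ discrete, define $\widetilde{F}X = FX$ and $\widetilde{F}[f] = Ff$. Well-definedness is the main point to check: if $\alpha : f \Longrightarrow g$ is a 2-cell in $\mathscr{D}$, then $F\alpha : Ff \Longrightarrow Fg$ lives in $\mathscr{X}$; since $\mathscr{X}$ has only identity 2-cells and an identity 2-cell requires source equal to target, we get $Ff = Fg$. Iterating along zig-zags gives $Ff = Fg$ whenever $f \equiv g$, so $\widetilde{F}$ is well-defined on equivalence classes. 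Functoriality of $\widetilde{F}$ is inherited from that of $F$. By construction $\widetilde{F}\theta_{\mathscr{D}} = F$; for uniqueness, any $G$ with $G\theta_{\mathscr{D}} = F$ is forced on objects and on representatives of equivalence classes, hence equals $\widetilde{F}$.

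Finally, to conclude that $\pi_0 \dashv d$, assemble these facts into the natural bijection
\begin{equation*}
\mathrm{Hom}_{2\text{-}Cat}(\mathscr{D}, d\mathscr{X}) \;\cong\; \mathrm{Hom}_{Cat}(\pi_0\mathscr{D}, \mathscr{X}), \qquad F \longmapsto \widetilde{F},
\end{equation*}
with inverse $G \longmapsto d(G)\circ\theta_{\mathscr{D}}$. Naturality in $\mathscr{X}$ holds by construction, naturality in $\mathscr{D}$ uses the naturality of $\theta$ already verified. The only subtle step is the well-definedness of $\widetilde{F}$, and that step is essentially where the discreteness of $\mathscr{X}$ is used; everything else is bookkeeping.
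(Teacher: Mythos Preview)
Your argument is correct. Note, however, that in the paper this statement is recorded as an \emph{Observaci\'on} and is not given a separate proof: the text simply defines $\theta_{\mathscr{D}}$ on objects, arrows and 2-cells, asserts the universal property, and moves on. What you have written is therefore not a different route but rather a complete verification of the details the paper leaves implicit. The one point worth flagging explicitly in your write-up of $\theta_{\mathscr{D}}$ is that sending a 2-cell $\alpha:f\Rightarrow g$ to an identity 2-cell in $d(\pi_0\mathscr{D})$ only makes sense because $[f]=[g]$ in $\pi_0\mathscr{D}$; you use this implicitly, and it is immediate from the definition of $\equiv$, but it is the reason the assignment on 2-cells is coherent.
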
    

    \vspace{3mm}
    Para cada 2-categor\'ia $\mathscr{D}$, el 2-funtor $\pi_0:\mathscr{D} \longrightarrow \pi_0 \mathscr{D}$ induce un isomorfismo en las categor\'ias de funtores:
    
\begin{proposition} \label{pi0_iso}
    El funtor $\pi_0^*: Hom(\pi_0\mathscr{D}, \mathscr{X}) \longrightarrow Hom(\mathscr{D}, \mathscr{X})$ es un isomorfismo de categor\'ias.
\end{proposition}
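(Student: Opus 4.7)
La demostraci\'on consiste en verificar que el funtor $\pi_0^*$ es biyectivo en los objetos y plenamente fiel. La parte sobre los objetos es exactamente la propiedad universal del 2-funtor $\pi_0: \mathscr{D} \longrightarrow \pi_0\mathscr{D}$ enunciada en la Observaci\'on \ref{pi_0_functor}: todo 2-funtor $F: \mathscr{D}\longrightarrow \mathscr{X}$ (pensando a $\mathscr{X}$ como 2-categor\'ia discreta) se factoriza de manera \'unica como $F = \widetilde F \pi_0$ con $\widetilde F : \pi_0\mathscr{D} \longrightarrow \mathscr{X}$. Es decir, $\pi_0^*$ es biyectivo en los objetos de las categor\'ias de funtores.

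Para las flechas, fijo funtores $\widetilde F, \widetilde G : \pi_0\mathscr{D} \longrightarrow \mathscr{X}$ y denoto $F = \widetilde F\pi_0$, $G = \widetilde G\pi_0$. Lo que hay que probar es que la asignaci\'on $\widetilde \eta \longmapsto \widetilde \eta \pi_0$ establece una biyecci\'on entre las transformaciones naturales $\widetilde F \Rightarrow \widetilde G$ y las transformaciones 2-naturales $F \Rightarrow G$. Dos observaciones simplifican el argumento. Primero, como $\mathscr{X}$ es una 2-categor\'ia discreta, el axioma de 2-naturalidad sobre las 2-celdas de $\mathscr{D}$ es autom\'atico (toda 2-celda en $\mathscr{X}$ es una identidad), de modo que una transformaci\'on 2-natural $F \Rightarrow G$ queda determinada por una familia $\{\eta_X\}_{X\in \mathscr{D}}$ en $\mathscr{X}$ que satisface la naturalidad usual respecto de las flechas de $\mathscr{D}$. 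Segundo, como $\pi_0$ es la identidad en los objetos, una transformaci\'on natural $\widetilde F \Rightarrow \widetilde G$ est\'a determinada por una familia $\{\widetilde \eta_X\}_{X\in \mathscr{D}}$ sujeta a la naturalidad respecto de las clases $[f]$ de $\pi_0\mathscr{D}$.

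La fidelidad es inmediata: si $\widetilde\eta \pi_0 = \widetilde\mu \pi_0$ entonces $\widetilde \eta_X = \widetilde\mu_X$ para todo $X$. Para la plenitud, dada $\eta: F \Rightarrow G$ defino $\widetilde\eta_X := \eta_X$ y verifico la naturalidad frente a cada $[f]:X \to Y$ en $\pi_0\mathscr{D}$ tomando un representante $f:X\to Y$ en $\mathscr{D}$: la ecuaci\'on $\widetilde G[f]\circ \widetilde \eta_X = Gf \circ \eta_X = \eta_Y \circ Ff = \widetilde\eta_Y \circ \widetilde F[f]$ se sigue de la naturalidad de $\eta$ sobre $f$, y es independiente del representante porque $\widetilde F[f]$, $\widetilde G[f]$ ya est\'an bien definidas por la factorizaci\'on del primer p\'arrafo. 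El \'unico punto no inmediato es justamente esta buena definici\'on (es decir, que $F$ y $G$ anulan las 2-celdas de $\mathscr{D}$), y esto ya est\'a garantizado por la adjunci\'on $\pi_0 \dashv d$; fuera de esa verificaci\'on, el resto es una traducci\'on directa entre la estructura 2-natural con valores en $\mathscr{X}$ discreta y la estructura natural sobre $\pi_0\mathscr{D}$, sin obst\'aculos t\'ecnicos adicionales.
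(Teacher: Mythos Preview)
Your proof is correct and follows essentially the same approach as the paper's: bijectivity on objects is the universal property of Observation \ref{pi_0_functor}, and fullness (the only nontrivial point) is established by setting $\widetilde\eta_X := \eta_X$ and using that $\pi_0$ is the identity on objects together with $\widetilde F[f] = Ff$. You are somewhat more explicit about faithfulness and about the 2-naturality axiom being automatic when $\mathscr{X}$ is discrete, but the argument is the same.
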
 

\begin{proof}
    S\'olo resta probar que es plenamente fiel.
    
    Dados 2-funtores $F, G: \mathscr{D} \longrightarrow \mathscr{X}$ y $\eta: F \Longrightarrow G$ una transformación 2-natural, definimos $\Tilde{\eta}: \widetilde{F} \Longrightarrow \widetilde{G}$ como $\Tilde{\eta}_X=\eta_X$. La buena definición de $\Tilde{\eta}$ se debe a que $\pi_0$ es la identidad en los objetos, y entonces $FX=\widetilde{F}X$, $GX=\widetilde{G}X$ para todo $X$. Por otro lado, como adem\'as $\widetilde{F}[f]=Ff$ para toda $f$, de la naturalidad de $\eta$ obtenemos que $\Tilde{\eta}$ es una transformación natural, y $\Tilde{\eta}\pi_0 = \eta$.
    
\end{proof}

%%%%%%%%%%%%%%%%%%%%%%%%%%%%%%
\begin{comment}

\begin{lemma} \label{lemma8_quillen}
    Sea $F: \mathscr{C}_{fc} \longrightarrow \mathscr{X}$ un funtor que manda equivalencias d\'ebiles en isomorfismos. Si existe una homotop\'ia entre dos flechas $f$ y $g$, entonces $Ff=Fg$ en $\mathscr{X}$.    
\end{lemma}

\begin{proof}
    Podemos suponer que existe una $w$-homotop\'ia $H=(C, h)$ de $f$ a $g$ en $\mathscr{C}_{fc}$, con $w$-cilindro $C=(W, d_0, d_1, s)$. Como $s \in \mathcal{W}$, $Fs$ es un isomorfismo y en consecuencia $Fd_0=Fd_1$, ya que $sd_0=sd_1=id_X$. Luego, $Ff=FhFd_0=FhFd_1=Fg$.
    
\end{proof}
%%%%%%%%%%%%%%%%%%%%%%%%%%%%%%%%%%%%%%%%%%%%%
\end{comment}

\bigskip
\begin{proposition} \label{Quillen_loc}
    Dados los 2-funtores $i: \mathscr{C}_{fc} \longrightarrow \Ho$ del teorema \ref{2_loc_fc} y $\pi_0: \Ho \longrightarrow \pi_0(\Ho)$ de la observaci\'on \ref{pi_0_functor}, el funtor definido por la composición $\pi_0i:\mathscr{C}_{fc}\longrightarrow \pi_0(\Ho)$ es la localización de $\mathscr{C}_{fc}$ con respecto a la clase $\mathcal{W}$.
    
    Más aún, la precomposición $$(\pi_0i)^*:Hom_+(\pi_0(\Ho), \mathscr{X})\longrightarrow Hom(\mathscr{C}_{fc}, \mathscr{X})$$ es un isomorfismo de categorías, para toda categoría $\mathscr{X}$.
\end{proposition}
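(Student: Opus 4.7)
El plan consiste en descomponer $(\pi_0 i)^*$ como composici\'on $i^* \circ \pi_0^*$ y concatenar el teorema \ref{2_loc_fc} (propiedad universal de la 2-localizaci\'on de $\mathscr{C}_{fc}$) con la proposici\'on \ref{pi0_iso} (propiedad universal de $\pi_0$ como adjunto a izquierda de $d$). Previamente hay que verificar la primera condici\'on de la definici\'on \ref{localization}: que $\pi_0 i$ env\'ia los elementos de $\mathcal{W}$ en isomorfismos.

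Para esta verificaci\'on, dada $s \in \mathcal{W}$, por el teorema \ref{2_loc_fc} la flecha $i(s)$ es una equivalencia en $\Ho$, y por lo tanto admite una cuasi-inversa $t$ junto con 2-celdas inversibles $ts \Longrightarrow id$ y $st \Longrightarrow id$. Por la definici\'on misma de $\pi_0$, estas 2-celdas fuerzan las igualdades $[ts]=[id]$ y $[st]=[id]$ en $\pi_0(\Ho)$, de modo que $(\pi_0 i)(s)$ resulta un isomorfismo.

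A continuaci\'on, interpretando $\mathscr{X}$ como la 2-categor\'ia discreta $d(\mathscr{X})$, un 2-funtor a $d(\mathscr{X})$ se identifica con un funtor ordinario a $\mathscr{X}$ (las 2-celdas van obligadamente a identidades), y las equivalencias en $d(\mathscr{X})$ coinciden con los isomorfismos en $\mathscr{X}$. Bajo estas identificaciones, la proposici\'on \ref{pi0_iso} proporciona un isomorfismo $\pi_0^*: Hom(\pi_0(\Ho), \mathscr{X}) \longrightarrow Hom_s(\Ho, d(\mathscr{X}))$, y el teorema \ref{2_loc_fc} proporciona un isomorfismo $i^*: Hom_s(\Ho, d(\mathscr{X})) \longrightarrow Hom_{s_+}(\mathscr{C}_{fc}, d(\mathscr{X})) = Hom_+(\mathscr{C}_{fc}, \mathscr{X})$. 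La composici\'on de ambos es precisamente $(\pi_0 i)^*$, que resulta as\'i un isomorfismo de categor\'ias. El punto m\'as delicado del argumento ser\'a efectuar con cuidado esta identificaci\'on entre las distintas variantes de $Hom$ (con y sin la condici\'on ``$+$'') al pasar por la 2-categor\'ia discreta $d(\mathscr{X})$, y confirmar que la composici\'on de los dos isomorfismos coincide efectivamente con la precomposici\'on por $\pi_0 i$.
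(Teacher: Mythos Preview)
Tu propuesta es correcta y sigue esencialmente el mismo camino que la demostraci\'on del art\'iculo: primero se verifica que $\pi_0 i$ env\'ia equivalencias d\'ebiles en isomorfismos (porque $i$ las manda a equivalencias y $\pi_0$ manda equivalencias a isomorfismos), y luego se descompone $(\pi_0 i)^* = i^* \pi_0^*$ como composici\'on de los isomorfismos dados por el teorema \ref{2_loc_fc} y la proposici\'on \ref{pi0_iso}. La \'unica diferencia es que t\'u haces expl\'icita la identificaci\'on entre $Hom(\cdot,\mathscr{X})$ y $Hom_s(\cdot, d(\mathscr{X}))$, mientras que el art\'iculo la da por sobreentendida.
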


\begin{proof}
    El funtor $\pi_0i$ manda equivalencias débiles en isomorfismos ya que $i(\mathcal{W})\subseteq Equiv(\Ho)$ y, por cómo está definido $\pi_0$ en las 2-celdas, este manda equivalencias en isomorfismos.
    
    Por otro lado, como $i^*$ es un isomorfismo por \ref{hom_iso} y $\pi_0^*$ es un isomorfismo por \ref{pi0_iso}
    \begin{center}
        $\xymatrix{
            \mathscr{C}_{fc} \ar@{^{(}->}[rr]^{i} \ar[ddr]_(.4){F} && \Ho \ar[rr]^{\pi_0} \ar@{-->}[ddl]^(.4){\exists !\widehat{F}} && \pi_0(\Ho), \ar@{-->}[ddlll]^(.4){\exists !\Tilde{F}}\\
            \\
            &\mathscr{X}\\}$
        
    \end{center}
    entonces la composici\'on $i^*\pi_0^*$ es tambi\'en un isomorfismo de categor\'ias.
\end{proof}

\newpage

\section{La 2-localizaci\'on de la categor\'ia \texorpdfstring{$\mathscr{C}$}{}}

Queremos definir ahora un funtor $\mathscr{C} \longrightarrow \mathscr{C}_{fc}$ y tomar la composición con $i: \mathscr{C}_{fc} \longrightarrow \Ho$. Probaremos que este 2-funtor, que llamaremos $q:\mathscr{C} \longrightarrow \Ho $, es la 2-localización de la categoría $\mathscr{C}$ respecto de la clase $\mathcal{W}$, que es nuestro principal objetivo. Para esto vamos a considerar las subcategorías plenas $\mathscr{C}_f$ y $\mathscr{C}_c$ de objetos fibrantes y cofibrantes, respectivamente, y dos asignaciones $R: \mathscr{C} \longrightarrow \mathscr{C}_f$ y $Q: \mathscr{C} \longrightarrow \mathscr{C}_c$ que pueden construirse a partir de los axiomas de categorías de modelos de la definición \ref{model_cat},
pero que \emph{no necesariamente} son funtoriales; en ese caso, la composición $q$ no será exactamente un 2-funtor y en consecuencia no podrá determinar la localizaci\'on buscada. Por esta razón, vamos a pedir que la factorización del axioma M4 sea funtorial, y si bien esto modifica la definición original de categoría de modelos que hemos dado, gran parte de los ejemplos conocidos cumplen esta axiomática (por ejemplo, todas las categor\'ias de modelos \emph{cofibrantemente generadas} - ver \ref{comentario}), que tambi\'en es ampliamente utilizada en la literatura. Asumiendo que la estructura de modelos de $\mathscr{C}$ admite una factorización funtorial, las flechas $\xymatrix{ \mathscr{C} \ar[r]^{Q} &\mathscr{C}_c \ar[r]^{R} & \mathscr{C}_{fc} }$ determinar\'an efectivamente un funtor y, junto con lo que vimos en la secci\'on 3 para la subcategoría $\mathscr{C}_{fc}$, nos permitirá concluir el teorema de la localización.

\subsection{Reemplazos fibrante y cofibrante} \label{fibrant-cofibrant}

\begin{definition}
    Un \emph{sistema de factorización débil} en una categoría $\mathscr{X}$ es un par $(\mathcal{L}, \mathcal{R})$ de clases distinguidas de morfismos tales que 
    \begin{enumerate}
        \item Todo morfismo $h$ en $\mathscr{X}$ puede ser factorizado como $h=gf$, con $f \in \mathcal{L}$ y $g \in \mathcal{R}$.
        \item $\mathcal{L}$ es precisamente la clase de morfismos que tienen la propiedad de levantamiento a izquierda con respecto a todo morfismo de $\mathcal{R}$.
        
            $\mathcal{R}$ es la clase de morfismos que tienen la propiedad de levantamiento a derecha con respecto a todo morfismo en $\mathcal{L}$.
    \end{enumerate}
\end{definition}

\vspace{4mm}

\begin{examples} 
    En las categorías $Set$, $Grp$ y $R$-$Mod$, donde $R$ es un anillo con unidad, las clases $\mathcal{L}= monomorfismos$ y $\mathcal{R}=epimorfismos$ forman un sistema de factorización débil.
 
    Si $\mathscr{C}$ es una categoría de modelos, las clases $(\mathcal{C} \cap \mathcal{W}, \mathcal{F})$ son un sistema de factorización débil, asi como también lo son $(\mathcal{C}, \mathcal{F} \cap \mathcal{W})$.

\end{examples}
\vspace{6mm}

\begin{definition} \label{funtorial}
    Decimos que una factorización débil $(\mathcal{L}, \mathcal{R})$ es \emph{funtorial} si cada vez que tenemos un diagrama conmutativo
    \begin{center}
            $\xymatrix{
                \cdot \ar[rr]^{u} \ar[dd]_{f} && \cdot \ar[dd]^{g}\\
                \\
                \cdot \ar[rr]_{v} && \cdot \\
            }$
    \end{center}
    existen morfismos $(\lambda_f, \rho_f)$, $(\lambda_g, \rho_g)$ y un morfismo $F(u, v)$ haciendo conmutar el diagrama
    \begin{center}
            $\xymatrix{
                \cdot \ar[rr]^{u} \ar[dd]_{\lambda_f} \ar@/_2pc/[dddd]_{f} && \cdot \ar@/^2pc/[dddd]^{g} \ar[dd]^{\lambda_g}\\
                \\
                \cdot \ar@{-->}[rr]_{F(u, v)} \ar[dd]_{\rho_f} && \cdot \ar[dd]^{\rho_g} \\
                \\
                \cdot \ar[rr]_{v} && \cdot \\
            }$
    \end{center}
    donde $\lambda_f$, $\lambda_g \in \mathcal{L}$, $\rho_f$, $\rho_g \in \mathcal{R}$,
    y $F(u, v)$ depende funtorialmente de $u$ y $v$; es decir, $F(u\circ u', v\circ v')= F(u, v) \circ F(u', v')$ y si $f=g$ entonces $F(id, id)=id$.
\end{definition}

\begin{observation}
    Denotamos $\overrightarrow{\mathscr{C}}$ a la categoría cuyos objetos son los morfismos de $\mathscr{C}$ y una flecha de $f$ a $g$ en $\overrightarrow{\mathscr{C}}$ es un par $(u, v)$ de morfismos de $\mathscr{C}$ tales que $gu=vf$. Sean $dom, codom: \overrightarrow{\mathscr{C}} \longrightarrow \mathscr{C}$ los funtores que proyectan a dominio y codominio, respectivamente. La definición anterior nos dice precisamente que un sistema $(\mathcal{L}, \mathcal{R})$ es funtorial si existen un funtor $F:\overrightarrow{\mathscr{C}} \longrightarrow \mathscr{C}$ y transformaciones naturales $\lambda: dom \longrightarrow F$ y $\rho: F \longrightarrow codom$ tales que para toda $f$ en $\mathscr{C}$ se tiene 
    
    \begin{center}
        $\xymatrix{
            dom(f) \ar[rr]^{f} \ar[dr]_{\lambda_f} && codom(f)\\
            & F(f) \ar[ur]_{\rho_f}
        }$
    \end{center}
    con $\lambda_f \in \mathcal{L}$, $\rho_f \in \mathcal{R}$.
    
    Decimos que $(F, \lambda, \rho)$ es una \emph{realización funtorial (functorial realization)} para la factorización débil $(\mathcal{L}, \mathcal{R})$. Ver \cite{Riehl}.
\end{observation}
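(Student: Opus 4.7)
The plan is to show that Definition \ref{funtorial} and the reformulation in terms of a functor $F\colon \overrightarrow{\mathscr{C}} \longrightarrow \mathscr{C}$ together with natural transformations $\lambda\colon dom \longrightarrow F$ and $\rho\colon F \longrightarrow codom$ carry \emph{exactly} the same data. So I would proceed by direct translation, unpacking each piece of one formulation and matching it to the corresponding piece of the other, in two directions.

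For the direction from the categorical formulation to Definition \ref{funtorial}, suppose we are given $(F,\lambda,\rho)$. A commutative square with vertical sides $f$ and $g$ is precisely a morphism $(u,v)\colon f \longrightarrow g$ in $\overrightarrow{\mathscr{C}}$, with $dom(u,v)=u$ and $codom(u,v)=v$. I would take $F(u,v)\colon F(f)\longrightarrow F(g)$ as the middle horizontal arrow of the large diagram. The naturality square of $\lambda$ at $(u,v)$ reads $F(u,v)\circ \lambda_f = \lambda_g \circ u$, and the naturality square of $\rho$ at $(u,v)$ reads $\rho_g \circ F(u,v) = v\circ \rho_f$; together these give the commutativity demanded in Definition \ref{funtorial}. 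The required relations $F(u\circ u', v\circ v') = F(u,v)\circ F(u',v')$ and $F(id,id)=id$ are just the functoriality of $F$ rewritten using that composition and identities in $\overrightarrow{\mathscr{C}}$ are computed componentwise.

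Conversely, starting from Definition \ref{funtorial}, I would define $F$ on objects by $f\mapsto F(f)$ and on morphisms by $(u,v)\mapsto F(u,v)$; the functoriality clause written into the definition is exactly the assertion that this assignment preserves identities and composition, hence defines a functor $\overrightarrow{\mathscr{C}}\longrightarrow \mathscr{C}$. The formulas $\lambda_f$ and $\rho_f$ attach to each object $f$ of $\overrightarrow{\mathscr{C}}$ an arrow of $\mathscr{C}$ with the correct sources and targets, and the upper and lower squares in the big diagram of Definition \ref{funtorial} say, respectively, that $\lambda\colon dom\Rightarrow F$ and $\rho\colon F\Rightarrow codom$ are natural on the morphism $(u,v)$. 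The factorisation $\rho_f \circ \lambda_f = f$, with $\lambda_f\in\mathcal{L}$ and $\rho_f\in\mathcal{R}$, is common to both formulations.

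No genuine obstacle arises; the whole argument is a repackaging of data. The only point requiring some care is the informal phrase ``$F(u,v)$ depends funtorialmente on $u$ and $v$'' in Definition \ref{funtorial}, which has to be read as saying precisely that the pair $\bigl(f\mapsto F(f),\,(u,v)\mapsto F(u,v)\bigr)$ preserves identities and composition in $\overrightarrow{\mathscr{C}}$. Once this reading is fixed, the equivalence is tautological, which is consistent with the author citing \cite{Riehl} without giving a separate proof.
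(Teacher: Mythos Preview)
Your argument is correct: the equivalence between the two formulations is indeed a direct repackaging of data, and you have matched each ingredient carefully in both directions. Note, however, that the paper does not actually prove this observation; it simply states the reformulation and defers to \cite{Riehl}. So there is no ``paper's proof'' to compare against --- you have supplied a full justification where the author chose to give only a reference, and you yourself correctly anticipated this at the end of your write-up.
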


\bigskip
En adelante trabajaremos sobre una categoría de modelos $\mathscr{C}$ en la que las factorizaciones del axioma M4 pueden elegirse funtorialmente en el sentido de la definici\'on \ref{funtorial}.

\begin{comentario} \label{comentario}
    La mayoría de las estructuras de modelos conocidas son \emph{cofibrantemente generadas}, es decir que la clase $\mathcal{R}$ de la factorización débil es definida como aquellos morfismos que tienen la propiedad de levantamiento a derecha con respecto a cierto \emph{conjunto} de flechas (y, consecuentemente, la clase $\mathcal{L}$ consiste precisamente de aquellos morfismos que tienen la propiedad de levantamiento a izquierda respecto de la clase $\mathcal{R}$), donde además dicho conjunto permite el \emph{argumento del objeto pequeño (small object argument)}, que es un proceso de factorización asociado a este conjunto, y es la principal herramienta para producir factorizaciones funtoriales. Las categor\'ias $\mathcal{T}op$, $\mathcal{SS}et$, $Ch_A$ y $\mathbf{Cat}$ que mencionamos en la secci\'on 3 son ejemplos de estructuras cofibrantemente generadas. Podemos encontrar en \cite{Hov} una descripci\'on m\'as precisa de este tipo de estructuras junto con algunos ejemplos que tambi\'en son presentados con todo detalle.

\end{comentario}

\vspace{1mm}
\begin{definition} 
    Sea $\mathscr{C}_c \subseteq \mathscr{C}$ la subcategoría de objetos cofibrantes. Definimos un funtor $Q: \mathscr{C} \longrightarrow \mathscr{C}_c$ de la siguiente forma:
    \begin{enumerate}
        \item Para cada $X$ en $\mathscr{C}$, factorizamos $0 \longrightarrow X$ obteniendo un objeto cofibrante $QX$ y una fibración trivial $p_X: QX \longrightarrow X$.
        
        Si $F:\overrightarrow{\mathscr{C}} \longrightarrow \mathscr{C}$ es el funtor de la realización funtorial asocida a esta factorización, entonces $QX=F(0 \rightarrow X)$.
        \item Dada $f:X \longrightarrow Y$, se define $Qf:QX \longrightarrow QY$ cumpliendo la ecuación $p_{Y}Qf=fp_{X}$ como vemos en el diagrama 
        \begin{center}
            $\xymatrix{
            0 \ar@{ >->}[rr] \ar@{ >->}[dd] && QY \ar@{->>}[dd]^{p_Y}|\circ \\
            \\
            QX \ar@{-->}[uurr]|{Qf} \ar@{->>}[r]_{p_X}|\circ & X \ar[r]_{f} & Y\\
            }$
        \end{center}
       
       Notamos que $Qf= F(id_0, f)$. Además, por el axioma M5, si $f \in \mathcal{W}$ entonces $Qf \in \mathcal{W}$.
    \end{enumerate}
    
    \bigskip
    De la funtorialidad de $F$ se deduce que $Q$ también es un funtor: 
    
    \begin{center}
        $\xymatrix{
            0 \ar[rr] \ar[d] && 0\ar[d]\\
                QX \ar[rr]|{id_{QX}} \ar@{->>}[d]_{p_X}|\circ && QX \ar@{->>}[d]^{p_X}|\circ  & \hspace{3mm}\text{y}\\
                X \ar[rr]_{id_X} && Y\\
        }$
        \hspace{6mm}
        $\xymatrix{
            0\ar[rr] \ar[d] && 0 \ar[rr] \ar[d] && 0 \ar[d] \\
            QX \ar@/^1.5pc/[rrrr]|(.4){Q(gf)} \ar[rr]_{Qf} \ar@{->>}[d]_{p_X}|\circ && QY \ar[rr]_{Qg} \ar@{->>}[d]_{p_Y}|\circ && QZ \ar@{->>}[d]^{p_Z}|\circ \\
            X \ar[rr]_{f} && Y\ar[rr]_{g} && Z\\
        }$
    \end{center}

\vspace{5mm}

Dualmente, obtenemos también un funtor $R:\mathscr{C} \longrightarrow \mathscr{C}_f$ con $RX$ un objeto fibrante y una cofibración trivial $i_X: X \longrightarrow RX$ factorizando el morfismo $X \longrightarrow 1$, para todo $X$; y una flecha $Rf: RX \longrightarrow RY$ satisfaciendo $Rfi_X=i_Yf$, para cada $f:X \longrightarrow Y$ en $\mathscr{C}$. Además, si $f$ es una equivalencia débil, entonces $Rf$ también.

Los funtores $Q$ y $R$ se conocen como \emph{reemplazo cofibrante (cofibrant replacement)} y \emph{reemplazo fibrante (fibrant replacement)}, respectivamente.
\end{definition}

\begin{observation}
    Pensando a $Q$ y a $R$ como funtores de $\mathscr{C}$ en $\mathscr{C}$ se obtienen transformaciones naturales $p:Q \Longrightarrow Id$ e $i: Id \Longrightarrow R$ definidas por $p_X$ e $i_X$, respectivamente, para cada $X$ en $\mathscr{C}$:
    de las definiciones de $Q$ y $R$ en los morfismos se tienen las ecuaciones que demuestran la naturalidad de $p$ e $i$. 
    \\
    
    Notamos que, por definici\'on, $p_X$ e $i_X$ son equivalencias d\'ebiles.
\end{observation}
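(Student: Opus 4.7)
El plan es verificar directamente los cuadrados de naturalidad para $p$ e $i$, observando que estas condiciones son exactamente las ecuaciones que se impusieron al definir $Q$ y $R$ sobre las flechas.

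Dada una flecha $f: X \longrightarrow Y$ en $\mathscr{C}$, la naturalidad de $p: Q \Longrightarrow Id$ en $f$ equivale a la conmutatividad del cuadrado
\[
\xymatrix{
QX \ar[r]^{p_X} \ar[d]_{Qf} & X \ar[d]^{f} \\
QY \ar[r]_{p_Y} & Y,
}
\]
es decir, a la ecuación $p_Y \circ Qf = f \circ p_X$. Pero esta es precisamente la ecuación que $Qf$ satisface por construcción, pues $Qf$ fue definido como el levantamiento obtenido en el diagrama correspondiente. Dualmente, la naturalidad de $i: Id \Longrightarrow R$ en $f$ se reduce a la ecuación $Rf \circ i_X = i_Y \circ f$, que también es la usada para definir $Rf$. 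Así, ambas naturalidades se verifican sin trabajo adicional.

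Para la última afirmación, basta notar que, por construcción, $p_X: QX \longrightarrow X$ es una fibración trivial (el segundo factor en la factorización funtorial de $0 \longrightarrow X$ dada por el axioma M4), y $i_X: X \longrightarrow RX$ es una cofibración trivial (el primer factor en la factorización de $X \longrightarrow 1$). Ambos son en particular equivalencias débiles.

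No anticipo ningún obstáculo: toda la sustancia de la afirmación está contenida en la funtorialidad de $Q$ y $R$, que ya quedó establecida al definirlos a partir de la realización funtorial del sistema de factorización débil $(\mathcal{C}o\mathcal{F}, \mathcal{F} \cap \mathcal{W})$ (y su dual). Una vez disponible esa funtorialidad, la naturalidad de $p$ e $i$ se lee directamente de las ecuaciones definitorias.
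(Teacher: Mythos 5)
Tu propuesta es correcta y sigue esencialmente el mismo camino que el texto: la naturalidad de $p$ e $i$ se lee directamente de las ecuaciones $p_Y\circ Qf=f\circ p_X$ y $Rf\circ i_X=i_Y\circ f$ que definen $Qf$ y $Rf$, y la última afirmación se sigue de que $p_X$ es una fibración trivial e $i_X$ una cofibración trivial por construcción. No hay nada que objetar.
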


\bigskip

\begin{definition} \label{funtor_q}
    Restringiendo $R$ a la subcategoría $\mathscr{C}_c$ y precomponiendo con $Q$ tenemos un funtor $RQ: \mathscr{C} \longrightarrow \mathscr{C}_{fc}$. Definimos $q:\mathscr{C} \longrightarrow \Ho$ como la composición

    \begin{center}
        $\xymatrix{
        \mathscr{C} \ar[rr]^{Q}\ar@/^2.5pc/[rrrrrr]^{q} && \mathscr{C}_c \ar[rr]^{R} && \mathscr{C}_{fc} \ar@{^{(}->}[rr]^{i} && \Ho
        }$
    \end{center}
\end{definition}

\vspace{3mm}
\begin{Observation}
    Si $s$ es una equivalencia débil en $\mathscr{C}$, $RQs$ es una equivalencia débil en $\mathscr{C}_{fc}$ y, luego, resulta una equivalencia en $\Ho$, por lo que $q$ manda la clase $\mathcal{W}$ en equivalencias.

\end{Observation}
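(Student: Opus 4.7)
The plan is to chain together the preservation properties already established for the three functors $Q$, $R$, and $i$, so that the statement follows by successive applications with no further ingenuity required. I take $s: X \longrightarrow Y$ in $\mathcal{W}$ as input.

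First I apply the cofibrant replacement. The definition of $Q$ states explicitly that $Q$ sends weak equivalences to weak equivalences (this is a direct consequence of the axiom M5 applied to the commutative square $p_Y \circ Qs = s \circ p_X$, since $p_X, p_Y \in \mathcal{F} \cap \mathcal{W}$ and $s \in \mathcal{W}$ force $Qs \in \mathcal{W}$). Moreover, $QX$ and $QY$ are cofibrant by construction, so $Qs$ is a weak equivalence in $\mathscr{C}_c$.

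Next I apply the fibrant replacement. Dually, the definition of $R$ gives $RQs \in \mathcal{W}$. The objects $RQX$ and $RQY$ are fibrant by construction of $R$, and I need to check they are also cofibrant. For $RQX$ this follows from axiom M3 (closure of cofibrations under composition) applied to the chain
$$\xymatrix{0 \hspace{1mm} \ar@{>->}[r] & QX \hspace{1mm} \ar@{>->}[r]^{i_{QX}}|\circ & RQX,}$$
where the first map is a cofibration because $QX$ is cofibrant and the second is a trivial cofibration by the definition of $R$. Analogously for $RQY$. Hence $RQs$ is a weak equivalence in $\mathscr{C}_{fc}$.

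Finally, the theorem proved earlier in Section 3 — every weak equivalence between fibrant-cofibrant objects becomes an equivalence in $\catH$, and therefore in the sub-2-category $\Ho$ — applied to $RQs$ yields that $q(s) = i(RQs)$ is an equivalence in $\Ho$. Since this argument works for an arbitrary $s \in \mathcal{W}$, the 2-functor $q$ sends the class $\mathcal{W}$ into equivalences. I do not anticipate any real obstacle: the only non-bookkeeping step is verifying that the fibrant replacement preserves cofibrancy, and that reduces to the closure of cofibrations under composition.
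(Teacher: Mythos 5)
Your proposal is correct and follows exactly the argument the paper intends: $Q$ and $R$ preserve weak equivalences (as noted in their definitions via M5 and duality), $R$ restricted to $\mathscr{C}_c$ lands in $\mathscr{C}_{fc}$ because $0 \rightarrowtail QX \rightarrowtail RQX$ is a composite of cofibrations, and the theorem of Section 3 then makes $RQs$ an equivalence in $\Ho$. The paper states this observation without proof precisely because it is this chain of already-established facts, so there is nothing to add.
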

    
\subsection{El teorema de 2-localizaci\'on}

Queremos ver que tenemos una pseudoequivalencia de 2-categorías
\begin{center}
    $\xymatrix{ Hom_p(\Ho, \mathscr{D}) \ar[rr]^{q^*} && Hom_{p_+}(\mathscr{C}, \mathscr{D}), }$
\end{center}
para toda 2-categoría $\mathscr{D}$.

Recordemos que el funtor $i: \mathscr{C}_{fc} \longrightarrow \Ho$ se obtiene restringiendo $i:\mathscr{C} \longrightarrow \catH$ como es indicado en el siguiente diagrama

\begin{center}
    $\xymatrix{
        \mathscr{C}_{fc} \ar[r]^{i} \ar@{^{(}->}[d] & \Ho \ar@{^{(}->}[d]\\
        \mathscr{C} \ar[r]^{i} & \catH. \\
    }$
\end{center}

Si bien esta última inclusión no manda equivalencias débiles en equivalencias, sí cumple la propiedad universal de la proposicion \ref{pu_i}:

\begin{center}
    $\xymatrix{
        \ar @{} [drr] |(.45){\equiv} 
        \mathscr{C} \hspace{1mm} \ar@{^{(}->}[rr]^{i} \ar[dr]_{F(\mathcal{W})\subseteq Equiv(\mathscr{D}) \hspace{8mm} F} && \catH \ar@{-->}[dl]^(.5){\exists !\hspace{1mm}\bar{F}} & \hspace{2mm}\catH \ar@{-->}[dl]^{\bar{q}}\\
        & \hspace{2mm}\mathscr{D} &\Ho \\
    }$
\end{center}.

Tomando $\mathscr{D}=\Ho$ y $F=q$, existe entonces un único 2-funtor $\bar{q}$ tal que $\bar{q}i=q$.

\begin{proposition}
    Dado $F: \Ho \longrightarrow \mathscr{D}$, el 2-funtor $F\bar{q}$ manda la clase $\mathcal{W}$ en equivalencias.
\end{proposition}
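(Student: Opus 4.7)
The plan is to trace the action of $F\bar{q}$ on a weak equivalence $s \in \mathcal{W}$ through the factorization $q = iRQ$, using the universal property that defines $\bar{q}$ together with the earlier theorem stating that weak equivalences between fibrant-cofibrant objects are sent to equivalences by $i : \mathscr{C}_{fc} \longrightarrow \Ho$.

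First I would reduce the statement to showing that $\bar{q}$ itself sends $\mathcal{W}$ into equivalences. Indeed, any 2-functor preserves equivalences (composing the quasi-inverse and the invertible 2-cells with $F$ produces a quasi-inverse and invertible 2-cells in $\mathscr{D}$), so once we know $\bar{q}(s) \in Equiv(\Ho)$ for every $s \in \mathcal{W}$, applying $F$ yields an equivalence in $\mathscr{D}$.

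Next I would evaluate $\bar{q}$ on a weak equivalence $s : X \longrightarrow Y$ in $\mathscr{C}$. Since the inclusion $i : \mathscr{C} \longrightarrow \catH$ is the identity on objects and morphisms, we may regard $s$ as a morphism of $\catH$ with $s = i(s)$. The defining equation $\bar{q}\,i = q$ from the paragraph preceding the proposition then gives
\begin{equation*}
    \bar{q}(s) \;=\; \bar{q}(i(s)) \;=\; q(s) \;=\; i\bigl(R(Q(s))\bigr).
\end{equation*}
By the Observaci\'on following the definition of $q$ (Definici\'on \ref{funtor_q}), both $Q$ and $R$ preserve weak equivalences, so $RQ(s)$ is a weak equivalence in $\mathscr{C}_{fc}$. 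The main point is then an appeal to the earlier theorem (the one proved in the previous section, showing that a weak equivalence between fibrant-cofibrant objects becomes an equivalence in $\catH$, and hence in $\Ho$): applied to $RQ(s)$ it gives that $i(RQ(s))$ is an equivalence in $\Ho$.

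No real obstacle is expected here; the whole argument is just the chain $\bar{q}(s) = iRQ(s) \in Equiv(\Ho)$, followed by the general fact that 2-functors preserve equivalences. The only mildly delicate point is the verification that the equation $\bar{q}\,i = q$ really determines $\bar{q}$ on a generic morphism of $\catH$ coming from $\mathscr{C}$, which is immediate because such morphisms are literally in the image of $i$.
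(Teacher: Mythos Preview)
Your proposal is correct and follows essentially the same route as the paper's proof: use $\bar{q}i = q$ to compute $\bar{q}(s) = q(s)$ for $s \in \mathcal{W}$, invoke that $q$ sends weak equivalences to equivalences, and then apply the general fact that 2-functors preserve equivalences. The only difference is that you unpack explicitly why $q(s) = iRQ(s)$ is an equivalence (via preservation of $\mathcal{W}$ by $Q$ and $R$ and the earlier theorem on $\mathscr{C}_{fc}$), whereas the paper simply cites the Observaci\'on Importante that already recorded this fact.
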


\begin{proof}
    Como $\bar{q}i=q$ y $q$ manda equivalencias débiles en equivalencias, entonces $\bar{q}$ también. Luego, necesariamente $F\bar{q}(\mathcal{W}) \subseteq Equiv(\mathscr{D}).$
    
\end{proof}

Fijemos una 2-categoría $\mathscr{D}$. Tenemos el 2-funtor dado por la precomposición
\begin{center}
    $\bar{q}^*: Hom_p(\Ho, \mathscr{D}) \longrightarrow Hom_{p_+}(\catH, \mathscr{D})$.
\end{center}

Como $q^*=i^*\bar{q}^*$, para probar que $q^*$ es una pseudoequivalencia será suficiente ver que tanto $\bar{q}^*$ como $i^*: Hom_{p_+}(\catH, \mathscr{D}) \longrightarrow Hom_{p_+}(\mathscr{C}, \mathscr{D})$ son ambos pseudoequivalencias de 2-categorías. En cuanto a $i^*$, ya vimos (\ref{hom_p_iso}) que, de hecho, es un isomorfismo.

Consideremos la inclusión $j: \Ho \longrightarrow \catH$ y el 2-funtor inducido $j^*: Hom_{p_+}(\catH, \mathscr{D}) \longrightarrow Hom_p(\Ho, \mathscr{D})$.

\begin{theorem} \label{pseudoequivalencia}
    Los 2-funtores $$\xymatrix{ Hom_p(\Ho, \mathscr{D}) \ar@<5pt>[rr]^{\bar{q}^*} \ar@<-5pt>@{<-}[rr]_{j^*} && Hom_{p_+}(\catH, \mathscr{D}) }$$ determinan una pseudoequivalencia de 2-categorías.
\end{theorem}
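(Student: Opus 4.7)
The strategy is to reduce the statement, via the isomorphisms already established, to the comparison of the two naturally occurring precomposition 2-functors induced by the inclusion $k:\mathscr{C}_{fc}\hookrightarrow\mathscr{C}$ and the fibrant-cofibrant replacement $RQ:\mathscr{C}\to\mathscr{C}_{fc}$. Because $q=iRQ$ and $ji=ik$ (using on the right $i:\mathscr{C}_{fc}\hookrightarrow\Ho$ and on the left $i:\mathscr{C}\hookrightarrow\catH$), the isomorphisms
\[
i^*:Hom_p(\Ho,\mathscr{D})\xrightarrow{\;\sim\;}Hom_{p_+}(\mathscr{C}_{fc},\mathscr{D})\quad\text{(Teorema \ref{2_loc_fc})}
\]
and
\[
i^*:Hom_{p_+}(\catH,\mathscr{D})\xrightarrow{\;\sim\;}Hom_{p_+}(\mathscr{C},\mathscr{D})\quad\text{(Observaci\'on \ref{hom_p_iso})}
\]
sit in commuting squares that conjugate the pair $(\bar q^*,j^*)$ into the pair $((RQ)^*,k^*)$ between $Hom_{p_+}(\mathscr{C}_{fc},\mathscr{D})$ and $Hom_{p_+}(\mathscr{C},\mathscr{D})$. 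Hence it suffices to exhibit $(RQ)^*$ and $k^*$ as quasi-inverses in the sense of pseudoequivalence of 2-categories.

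To construct the required pseudonatural equivalences, I would exploit the natural transformations $p:Q\Rightarrow Id_\mathscr{C}$ and $i:Id_\mathscr{C}\Rightarrow R$, whose components are weak equivalences. For any $F\in Hom_{p_+}(\mathscr{C},\mathscr{D})$, the whiskerings $Fp:FQ\Rightarrow F$ and $Fi_Q:FQ\Rightarrow FRQ=F\,k\,RQ$ are 2-natural transformations in $Hom_p(\mathscr{C},\mathscr{D})$ whose every component is an equivalence in $\mathscr{D}$ (since $F$ sends $\mathcal{W}$ to equivalences). By Proposici\'on \ref{prop_infinita}, each is therefore a pseudonatural equivalence; composing $Fi_Q$ with a quasi-inverse of $Fp$ produces the component $\alpha^F:F\Rightarrow F\,k\,RQ=(RQ)^*k^*F$, which lies in $Hom_{p_+}(\mathscr{C},\mathscr{D})$ since $RQ$ preserves $\mathcal{W}$. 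A symmetric construction, restricted to the subcategory $\mathscr{C}_{fc}$ (where $RQX$ is still fibrant-cofibrant when $X$ is), yields $\beta^H:H\Rightarrow H\,RQ\,k=k^*(RQ)^*H$ for every $H\in Hom_{p_+}(\mathscr{C}_{fc},\mathscr{D})$.

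The main obstacle will be to promote these pointwise constructions into genuine pseudonatural equivalences of 2-functors $Id\Rightarrow (RQ)^*k^*$ and $Id\Rightarrow k^*(RQ)^*$, rather than merely families of equivalences indexed by $F$ (resp.\ $H$). Concretely, for each pseudonatural transformation $\theta:F\Rightarrow F'$ and each modification between such, one must produce coherent invertible modifications $\alpha^{F'}\circ\theta\Rrightarrow\theta_{kRQ}\circ\alpha^F$ (and their higher-dimensional analogues), and verify the pseudonatural-transformation axioms of Definiciones \ref{def_pseudo}. These coherences follow from the strict naturality of $p$ and $i$ in $\mathscr{C}$ and from the 2-functoriality of each $F$, together with the interchange law in $\mathscr{D}$; the non-canonical choice of quasi-inverses for $Fp$ is absorbed precisely by Proposici\'on \ref{prop_infinita}, which converts the pointwise-equivalence data into pseudonatural-equivalence data without further bookkeeping. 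Once these coherences are in place, the assembled pseudonatural equivalences show that $(RQ)^*$ and $k^*$, and hence $\bar q^*$ and $j^*$, form a pseudoequivalence.
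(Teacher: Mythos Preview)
Your approach is correct and closely parallels the paper's, but with one organizational difference worth noting. The paper works directly at the level of $\catH$ and $\Ho$: it builds the pseudonatural equivalence $\eta:Id\Rightarrow \bar q^*j^*$ by setting $(\eta_F)_X=F(i_{QX})\circ F(p_X)^{-1}$ for $F\in Hom_{p_+}(\catH,\mathscr{D})$, checks that each $\eta_F$ is 2-natural by invoking Lema~\ref{clave} (since the source $\catH$ has nontrivial 2-cells), and then writes out the modification $\eta_\sigma$ explicitly from $\sigma_{p_X}$, $\sigma_{i_{QX}}$ and the unit/counit of the chosen quasi-inverses, before applying Proposici\'on~\ref{prop_infinita}. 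You instead first conjugate by the two $i^*$-isomorphisms to reduce to showing that $(RQ)^*$ and $k^*$ are quasi-inverse between $Hom_{p_+}(\mathscr{C}_{fc},\mathscr{D})$ and $Hom_{p_+}(\mathscr{C},\mathscr{D})$, and then run the same construction there. This buys you a small simplification: since $\mathscr{C}$ and $\mathscr{C}_{fc}$ are ordinary categories, the 2-naturality of each $\alpha^F$ is automatic and you bypass the appeal to Lema~\ref{clave}. The price is the extra bookkeeping of the conjugation squares $i^*\bar q^*=(RQ)^*i^*$ and $i^*j^*=k^*i^*$, which you have verified correctly from $\bar q i=q=iRQ$ and $ji=ik$. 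In both routes the substantive work---constructing the coherence modifications for a pseudonatural $\sigma$ and checking the axioms of Definiciones~\ref{def_pseudo}---is the same, and both rely on Proposici\'on~\ref{prop_infinita} at the same point.
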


\begin{proof}
    Sea $\bar{q}^*j^*: Hom_{p_+}(\catH, \mathscr{D})\longrightarrow Hom_{p_+}(\catH, \mathscr{D})$. Definimos una transformación pseudonatural $\eta: id_{Hom_{p_+}(\catH, \mathscr{D})} \Longrightarrow \bar{q}^*j^*$ de la siguiente forma:
    
    Sea $F \in Hom_+(\catH, \mathscr{D})$. Para cada objeto $X$ en $\mathscr{C}$, sabemos que $i_X$ y $p_X$ son equivalencias débiles, y aplicando $F$ obtenemos equivalencias $\xymatrix{ FRQX \ar@{<-}[r]^{Fi_{QX}} & FQX \ar[r]^{Fp_X} & FX }$. Tomando la inversa de $Fp_X$, se tiene una equivalencia
    \begin{center}
        $\xymatrix{ FX \ar@/^2pc/[rrrr]^{(\eta_F)_X} \ar@{<-}[rr]^{\sim} && FQX \ar[rr]^{\sim} && FRQX }$.
    \end{center}
    
     Definimos $\eta_F: F \Longrightarrow Fj\bar{q}$ asociando a cada $X \in Ob(\catH)$ el morfismo $(\eta_F)_X:FX \longrightarrow FRQX$. Notamos que $\eta_F$ es una transformación natural porque $p$ e $i$ lo son; es decir que para toda flecha $f: X \longrightarrow Y$ en $\catH$ se tiene $Fj\bar{q}(f)(\eta_F)_X=(\eta_F)_YFf$. Usando el lema \ref{clave}
     y con una demostración análoga a la de la proposición \ref{prop_clave}
     puede verse que $\eta_F$ es 2-natural. En particular, $\eta_F$ es una flecha en $Hom_{p_+}(\catH, \mathscr{D})$, y es adem\'as una equivalencia, ya que lo es punto a punto (ver proposici\'on \ref{prop_infinita}).
     \\
     
     A su vez, si $\eta$ es pseudonatural, entonces ser\'a una equivalencia en la correspondiente 2-categor\'ia de 2-funtores, ya que por lo anterior cada componente $\eta_F$ lo es.
     
     En los objetos de $Hom_{p_+}(\Ho, \mathscr{D})$, tenemos a $\eta$ ya definida. Ahora, dada $\sigma: F \Longrightarrow G$ una flecha en $Hom_{p_+}(\catH, \mathscr{D})$, queremos definir una modificaci\'on inversible
     $\eta_{\sigma}:q^*j^*(\sigma)\circ \eta_F \longrightarrow \eta_G \circ \sigma$
     \begin{center}
         $\xymatrix@R=1.5pc@C=1.5pc{
            F \ar@{=>}[rr]^{\eta_F} \ar@{=>}[dd]_{\sigma} && Fqj \ar@{=>}[dd]^{q^*j^*(\sigma)} \ar@{}[ddll]|{\rotatebox[origin=c]{45}{$\leftarrow$} \eta_{\sigma}}\\
            \\
            G \ar@{=>}[rr]_{\eta_G} && Gqj. \\
         }$
     \end{center}
     Definimos $\eta_{\sigma}$ punto a punto de la siguiente manera:
     
     Para cada $X$ en $\mathscr{C}$ tenemos $(\eta_F)_X=F(i_{QX})F(p_X)^{-1}$, $(\eta_G)_X=G(i_{QX})G(p_X)^{-1}$ y el siguiente diagrama 
     \begin{center}
         $\xymatrix@R=3pc@C=3pc{
            FX \ar@{-->}@/^2.3pc/[rr]^{(\eta_F)_X} \ar[d]_{\sigma_X} \ar@{}[dr]|{\rotatebox[origin=c]{-45}{$\Rightarrow$} \sigma_{p_X}} & FQX \ar[l]_{F(p_X)} \ar[r]^{F(i_{QX})} \ar[d]^(.4){\sigma_{QX}} & FRQX \ar[d]^{\sigma_{RQX}} \ar@{}[dl]|{\rotatebox[origin=c]{45}{$\Leftarrow$} \sigma_{i_{QX}}}\\
            GX \ar@{-->}@/_2.3pc/[rr]_{(\eta_G)_X}  & GQX \ar[l]^{G(p_X)} \ar[r]_{G(i_{QX})} & GRQX, \\
         }$
     \end{center}
     donde $F(p_{X})^{-1}$ y $G(p_{X})^{-1}$ denotan las respectivas cuasi-inversas de $F(p_{X})$ y $G(p_{X})$, y $\sigma_{p_X}$, $\sigma_{i_{QX}}$ son 2-celdas inversibles en $\mathscr{D}$.
     
     Tenemos $\alpha_G: id_{GQX} \Longrightarrow G(p_X)^{-1}G(p_X)$ y $\beta_F: F(p_X)F(p_X)^{-1} \Longrightarrow id_{FX} $ 2-celdas, tambi\'en inversibles, y podemos considerar las siguientes composiciones
    \begin{center}
     $\xymatrix{
        \sigma_{RQX}(\eta_F)_X \ar@{=>}[r]^(.4){\simeq}_(.4){(1)} & G(i_{QX})\sigma_{QX}F(p_X)^{-1} \ar@{=>}[r]^(.45){\simeq}_(.45){(2)} & (\eta_G)_XG(p_X)\sigma_{QX}F(p_X)^{-1}
     }$
     \end{center}
     y
     \begin{center}
         $\xymatrix{
            (\eta_G)_XG(p_X)\sigma_{QX}F(p_X)^{-1} \ar@{=>}[r]^(.5){\simeq}_(.5){(3)} & (\eta_G)_X\sigma_X F(p_X)F(p_X)^{-1} \ar@{=>}[r]^(.65){\simeq}_(.65){(4)} & (\eta_G)_X\sigma_X,
         }$
     \end{center}
     donde $(1)$ es la 2-celda $\sigma_{i_{QX}}F(p_X)^{-1},$ $(2)$ es la 2-celda $G(i_{QX})\alpha_G\sigma_{QX}F(p_X)^{-1}$,
    $(3)$ es $(\eta_G)_X\sigma_{p_X}^{-1}F(p_X)^{-1}$ y $(4)$ es $(\eta_G)_X\sigma_X\beta_F$. Notamos que $\alpha_G$ y $\beta_F$ dependen de $X$. Tomamos $\eta_{\sigma_X}$ como la composici\'on de ambas secuencias:
    \begin{equation*}
    \begin{split}
     (\eta_{\sigma})_X &=[(\eta_G)_X\sigma_X\beta_F] \circ [(\eta_G)_X\sigma_{p_X}^{-1}F(p_X)^{-1}] \circ [G(i_{QX})\alpha_G\sigma_{QX}F(p_X)^{-1}]\circ \\
     & \hspace{100mm}[(\sigma_{i_{QX}}F(p_X)^{-1}].
    \end{split}
    \end{equation*}
    Se puede ver que $\eta_{\sigma}$ es efectivamente una modificaci\'on y, con esta definici\'on,
     $\eta$ satisface los axiomas de pseudonaturalidad. Luego, $\eta$ es una equivalencia en la 2-categor\'ia de 2-funtores de $Hom_{p_+}(\catH, \mathscr{D})$ en $Hom_{p_+}(\catH, \mathscr{D}).$
    \\
    
    De la misma forma puede definirse $\theta: j^*\bar{q}^* \Longrightarrow id_{Hom(\Ho, \mathscr{D})}$ tomando $(\theta_F)_X$ como la composición 
     \begin{center}
        $\xymatrix{ FRQX \ar@/^2pc/[rrrr]^{(\theta_F)_X} \ar@{<-}[rr]^{Fi_{QX}} && FQX \ar[rr]^{Fp_X} && FX }$,
    \end{center}
    para cada $F \in Hom_p(\Ho, \mathscr{D})$, $X \in Ob(\Ho)$. Además, $\theta$ también es una equivalencia en la correspondiente 2-categor\'ia de 2-funtores y, por lo tanto, podemos concluir la demostración. 
    
\end{proof}

\begin{observation}
    El teorema anterior nos dice que la composici\'on de los reemplazos fibrante y cofibrante $QR:\mathscr{C}\longrightarrow \mathscr{C}_{fc}$ induce una pseudoequivalencia de 2-categor\'ias
    \begin{center}
        $\xymatrix@R=3pc@C=1.5pc{
        \mathscr{C} \ar[d]_{i} \ar[rr]^{QR} && \mathscr{C}_{fc} \ar[d]^{i}\\
        \catH \ar@{-->}[rr]_{\overline{QR}} && \Ho.\\
        }$
    \end{center}
\end{observation}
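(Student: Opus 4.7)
The plan is to derive the observation from Theorem \ref{pseudoequivalencia} by proposing the inclusion $j: \Ho \hookrightarrow \catH$ as a quasi-inverse to $\overline{QR}$, and constructing pseudonatural equivalences $\theta: \overline{QR} \circ j \Rightarrow id_{\Ho}$ and $\eta: id_{\catH} \Rightarrow j \circ \overline{QR}$ that witness the pseudoequivalence.

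First I would build $\theta$. For each fibrant-cofibrant $X$, the zigzag of weak equivalences $RQX \leftarrow QX \rightarrow X$ (arrows $i_{QX}$ and $p_X$) assembles to a component $\theta_X: RQX \to X$ in $\Ho$: since $X$ is cofibrant, lifting $0 \hookrightarrow X$ against the trivial fibration $p_X$ yields a section of $p_X$, and dually $X$ fibrant yields a homotopy retraction of $i_{QX}$. By the theorem preceding \ref{2_loc_fc} these weak equivalences become equivalences in $\Ho$, and the naturality of the transformations $p: Q \Rightarrow Id$ and $i: Id \Rightarrow R$ promotes $\theta$ to a pseudonatural equivalence in $Hom_p(\Ho, \Ho)$ via Proposition \ref{prop_infinita}.

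For $\eta$ I would specialize the pseudonatural transformation $\eta_F: F \Rightarrow F j \overline{QR}$ constructed in the proof of Theorem \ref{pseudoequivalencia}, whose components are $(\eta_F)_X = F(i_{QX}) F(p_X)^{-1}$, at $F = j \overline{QR} \in Hom_{p_+}(\catH, \catH)$. This $F$ indeed lies in $Hom_{p_+}$ because $\overline{QR}$ factors through $\Ho$ (where weak equivalences are equivalences) and $j$ preserves equivalences. Horizontal composition of the resulting pseudonatural $j \overline{QR} \Rightarrow (j \overline{QR})(j \overline{QR})$ with the $\theta$ just built, suitably whiskered, should collapse it to the desired $id_{\catH} \Rightarrow j \overline{QR}$.

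The principal obstacle will be verifying that each component $\eta_X: X \to RQX$ is genuinely an equivalence in $\catH$, as Proposition \ref{prop_infinita} demands, since $p_X$ itself need not be an equivalence in $\catH$ when $X$ is neither fibrant nor cofibrant. Resolving this requires combining Lemma \ref{2_cells_inv} (every 2-cell in $\catH$ is invertible) with the universal property from Proposition \ref{pu_i}, so as to transport invertibility from the target $\Ho$, where the replacement morphisms are bona fide equivalences, back to $\catH$ via the homotopy 2-cells developed in Section 3.
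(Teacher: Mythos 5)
There is a genuine gap, and it sits exactly where you flag it. The paper does not prove this Observación independently: it is a reformulation of Teorema \ref{pseudoequivalencia}, whose actual content is a pseudoequivalence between the hom-2-categories $Hom_p(\Ho,\mathscr{D})$ and $Hom_{p_+}(\catH,\mathscr{D})$ given by $\bar q^*=\overline{QR}^{\,*}$ and $j^*$. You instead try to produce an \emph{internal} quasi-inverse, i.e. $j$ together with a pseudonatural equivalence $\eta: id_{\catH}\Rightarrow j\,\overline{QR}$. Its components would have to be arrows $\eta_X: X\longrightarrow RQX$ of $\catH$, that is, arrows of $\mathscr{C}$; but the only comparison maps available are $p_X: QX\to X$ and $i_{QX}: QX\to RQX$, and to assemble them into $\eta_X$ you must invert $p_X$ \emph{inside} $\catH$. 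This is impossible in general: the paper states explicitly that $i:\mathscr{C}\to\catH$ does not send weak equivalences to equivalences (that is the entire reason $\Ho$ is introduced), and the theorem on weak equivalences becoming equivalences in $\catH$ requires the source to be fibrant and the target cofibrant, which fails for $p_X$ when $X$ is arbitrary.

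Your proposed repair does not close this gap. Lemma \ref{2_cells_inv} gives invertibility of 2-cells, not of 1-cells, and knowing that $\overline{QR}(p_X)$ is an equivalence in $\Ho$ says nothing about $p_X$ in $\catH$: 2-functors do not reflect equivalences, and Proposición \ref{pu_i} offers no mechanism for transporting an inverse backwards along $\overline{QR}$. The device the paper uses is precisely to avoid this: by restricting to $Hom_{p_+}(\catH,\mathscr{D})$, every object $F$ of the source 2-category inverts $\mathcal{W}$, so the component $(\eta_F)_X=F(i_{QX})F(p_X)^{-1}$ exists in $\mathscr{D}$ even though no corresponding arrow exists in $\catH$. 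The Observación should therefore be read as asserting exactly that statement about precomposition, not the existence of a quasi-inverse to $\overline{QR}$ itself. (Two smaller issues with your $\theta$: the zigzag passes through $QX$, which need not be fibrant and hence need not be an object of $\Ho$; and a section of $p_X$ obtained by lifting $0\rightarrowtail X$ is not natural in $X$, so pseudonaturality would still have to be argued via the 2-cells of $\Ho$ rather than inherited from $p$ and $i$.)
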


\vspace{6mm}
Como consecuencia del teorema \ref{pseudoequivalencia} obtenemos el resultado principal de esta tesis:

\begin{theorem} \label{2-localization}
    Dada una categoría de modelos $\mathscr{C}$, se tiene que el funtor $q: \mathscr{C} \longrightarrow \Ho$ de la definición \ref{funtor_q} es la 2-localización de $\mathscr{C}$ con respecto a la clase $\mathcal{W}$, en el sentido que manda los elementos de $\mathcal{W}$ en equivalencias y, además, el 2-funtor $q^*: Hom_p(\Ho, \mathscr{D}) \longrightarrow Hom_{p_+}(\mathscr{D}, \mathscr{D})$ es una pseudoequivalencia de 2-categorías para toda 2-categoría $\mathscr{D}$.
\end{theorem}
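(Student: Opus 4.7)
El plan es combinar dos resultados previos mediante una factorización de $q$ a través de $\catH$. La primera condición de 2-localización, que $q$ manda $\mathcal{W}$ en equivalencias, es exactamente la Observación que sigue a la Definición \ref{funtor_q}: por el axioma M5 junto con la naturalidad de $p$ e $i$, los reemplazos $Q$ y $R$ preservan equivalencias débiles, y la inclusión $i: \mathscr{C}_{fc} \longrightarrow \Ho$ manda equivalencias débiles en equivalencias por el Teorema \ref{2_loc_fc}.

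Para la segunda condición, aplicaré la Proposición \ref{pu_i} al 2-funtor $q:\mathscr{C} \longrightarrow \Ho$, que manda $\mathcal{W}$ en equivalencias por el punto anterior, tomando $\mathscr{D}=\Ho$ y $F=q$. Esto provee un único 2-funtor $\bar{q}: \catH \longrightarrow \Ho$ tal que $\bar{q} \circ i = q$, donde $i: \mathscr{C} \longrightarrow \catH$ es la inclusión. Tomando precomposición en esta igualdad se obtiene la factorización $q^* = i^* \circ \bar{q}^*$, con $\bar{q}^*: Hom_p(\Ho, \mathscr{D}) \longrightarrow Hom_{p_+}(\catH, \mathscr{D})$ e $i^*: Hom_{p_+}(\catH, \mathscr{D}) \longrightarrow Hom_{p_+}(\mathscr{C}, \mathscr{D})$. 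Por el Teorema \ref{pseudoequivalencia}, $\bar{q}^*$ es una pseudoequivalencia de 2-categorías, con cuasi-inversa $j^*$ y las transformaciones pseudonaturales $\eta,\theta$ construidas allí. Por la Observación \ref{hom_p_iso}, $i^*$ es un isomorfismo de 2-categorías. Dado que la composición de una pseudoequivalencia con un isomorfismo de 2-categorías es nuevamente una pseudoequivalencia, concluyo que $q^*$ es una pseudoequivalencia de $Hom_p(\Ho, \mathscr{D})$ en $Hom_{p_+}(\mathscr{C}, \mathscr{D})$.

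La demostración será así puramente formal: toda la dificultad técnica quedó resuelta en el Teorema \ref{pseudoequivalencia}, donde los reemplazos funtoriales fibrante y cofibrante intervienen para fabricar las componentes de $\eta$ y $\theta$ a partir de las equivalencias débiles $p_X$ e $i_{QX}$, y verificar los axiomas de pseudonaturalidad y de modificación. El único punto a confirmar explícitamente es que la igualdad de 2-funtores $q = \bar{q} \circ i$ se transfiere, vía $(-)^*$, a una igualdad estricta $q^* = i^* \circ \bar{q}^*$ (cosa automática por la funtorialidad contravariante estricta de la precomposición) y, consecuentemente, que la cuasi-inversa $\bar{q}^* \circ (i^*)^{-1}$ y las equivalencias pseudonaturales correspondientes herededas de $\eta$ y $\theta$ proveen efectivamente la estructura pseudonatural requerida para $q^*$. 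No anticipo obstáculos mayores, puesto que la construcción no trivial de dichas transformaciones ya fue realizada en la prueba previa, y el argumento restante consiste en transportarlas a lo largo del isomorfismo $i^*$.
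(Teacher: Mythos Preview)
Tu propuesta es correcta y sigue esencialmente el mismo camino que el texto: la factorizaci\'on $q^* = i^* \circ \bar{q}^*$ junto con la Observaci\'on \ref{hom_p_iso} y el Teorema \ref{pseudoequivalencia} es exactamente el argumento que el art\'iculo desarrolla en los p\'arrafos previos al enunciado, present\'andolo luego como consecuencia inmediata. El \'unico detalle que podr\'ias hacer expl\'icito es que $\bar{q}^*$ tiene imagen en $Hom_{p_+}(\catH, \mathscr{D})$, lo cual el art\'iculo justifica con la proposici\'on breve que precede al Teorema \ref{pseudoequivalencia}.
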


\bigskip
Aplicando el funtor de componentes conexas $\pi_0$ concluimos un resultado an\'alogo a la proposici\'on \ref{Quillen_loc}. M\'as precisamente, lo que se tiene es la localizaci\'on de la categor\'ia $\mathscr{C}$ con respecto a la clase de equivalencias d\'ebiles en el sentido de la definici\'on \ref{localization}. A diferencia de lo hecho en la secci\'on 4, donde pudimos obtener la categor\'ia homot\'opica de la subcategor\'ia $\mathscr{C}_{fc}$, en este caso obtenemos un funtor $\pi_0q:\mathscr{C}\longrightarrow \pi_0(\Ho)$ con la propiedad universal de la localizaci\'on pero no en el sentido estricto, por lo que no es exactamente la localizaci\'on de Quillen, sino que es equivalente a ella.

\begin{observation}
Consideremos una categor\'ia $\mathscr{X}$. El 2-funtor $q^*$ del teorema \ref{2-localization}, tomando $\mathscr{D}= \mathscr{X}$, ahora es un funtor entre las categor\'ias de 2-funtores $$q^*: Hom(\Ho, \mathscr{X}) \longrightarrow Hom_+(\mathscr{C}, \mathscr{X}),$$
ya que $Hom_p(\Ho, \mathscr{X})=Hom_s(\Ho, \mathscr{X})=Hom(\Ho, \mathscr{X})$, $Hom_{p_+}(\mathscr{C}, \mathscr{X})=Hom_+(\mathscr{C}, \mathscr{X})$, y todas las modificaciones son identidades.

Adem\'as, $q^*$ es una equivalencia de categor\'ias. Notamos que esto quiere decir que existe una cuasi-inversa para $q^*$ tal que la unidad y la counidad, digamos $\eta$ y $\theta$, son isomorfismos naturales. En principio, el teorema \ref{2-localization} afirma que $\eta$ y $\theta$ son equivalencias como transformaciones pseudonaturales, pero, de nuevo, una transformacion pseudonatural entre funtores es una transformaci\'on natural y las modificaciones son necesariamente identidades, de manera que tanto la unidad como la counidad son isomorfismos. Tenemos entonces:
\end{observation}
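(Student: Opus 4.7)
The plan is to derive the claim by specializing Theorem \ref{2-localization} to the case $\mathscr{D}=\mathscr{X}$, viewed as a 2-category with only identity 2-cells, and then noting that all the 2-categorical machinery (pseudonaturality, modifications, pseudoequivalences) collapses to ordinary 1-categorical notions. So the result will essentially be a corollary, and there is no genuine new mathematical obstacle; the work is bookkeeping about how the structures degenerate.

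First I would verify the identifications of the hom-2-categories. A 2-functor $F:\mathscr{A}\longrightarrow \mathscr{X}$ from any 2-category $\mathscr{A}$ to a 1-category (regarded as discrete 2-category) must send every 2-cell of $\mathscr{A}$ to an identity, since the only 2-cells available in $\mathscr{X}$ are identities. In particular any pseudonatural transformation $\eta:F\Rightarrow G$ between such 2-functors has each structural 2-cell $\eta_f:Gf\eta_X\Rightarrow \eta_YFf$ equal to an identity, forcing the equation $Gf\eta_X=\eta_YFf$ on the nose; thus $\eta$ is an ordinary natural transformation and we get the equalities $Hom_p(\Ho,\mathscr{X})=Hom_s(\Ho,\mathscr{X})=Hom(\Ho,\mathscr{X})$ and $Hom_{p_+}(\mathscr{C},\mathscr{X})=Hom_+(\mathscr{C},\mathscr{X})$. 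For the same reason, any modification between such transformations is pointwise an identity 2-cell in $\mathscr{X}$, so all modifications are trivial and the two hom-2-categories in question are just 1-categories.

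Next I would invoke Theorem \ref{2-localization} for this particular $\mathscr{D}=\mathscr{X}$. It furnishes a quasi-inverse $G$ of $q^*$ together with pseudonatural transformations $\eta:\mathrm{Id}\Rightarrow G\circ q^*$ and $\theta:q^*\circ G\Rightarrow \mathrm{Id}$ that are equivalences in the ambient 2-categories of 2-functors $Hom_p(\cdot,\mathscr{X})$. By the previous step, $\eta$ and $\theta$ are ordinary natural transformations, and the 2-cells witnessing that they are equivalences (and the coherence modifications between them) are forced to be identities, so the equations $\eta$ and $\theta$ are actually equivalences boil down to each component $\eta_F$, $\theta_F$ being an equivalence in the corresponding functor category $Hom(\mathscr{C},\mathscr{X})$ or $Hom(\Ho,\mathscr{X})$. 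Applying the same degeneracy once more at the level of these target functor categories — where the only 2-cells are identities — an equivalence of objects is precisely an isomorphism. Hence $\eta$ and $\theta$ are natural isomorphisms, which exhibits $q^*$ as an equivalence of 1-categories.

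The only point that might require a line of care is the following: Theorem \ref{2-localization} gives pseudoequivalences in the bigger 2-category $Hom_p(\cdot,\mathscr{D})$, and a priori the quasi-inverse and unit/counit live in that larger setting. But since $Hom_p(\cdot,\mathscr{X})$ coincides with the ordinary 1-category $Hom(\cdot,\mathscr{X})$ by the first paragraph, the data restricts without loss, and the ``pseudo'' qualifier disappears. I would close by remarking, as the observation does, that the counit and unit being equivalences in a discrete 2-category is the same as being isomorphisms, which gives the stronger statement that $q^*$ is an equivalence of categories (and not merely that each functor in $Hom_+(\mathscr{C},\mathscr{X})$ factors essentially uniquely through $q$).
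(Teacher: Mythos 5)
Tu propuesta es correcta y sigue esencialmente el mismo razonamiento que la observación del texto: al ser $\mathscr{X}$ una 2-categoría discreta, las transformaciones pseudonaturales colapsan a transformaciones naturales, las modificaciones a identidades, y las equivalencias (unidad y counidad de la pseudoequivalencia del teorema \ref{2-localization}) a isomorfismos naturales, con lo cual $q^*$ resulta una equivalencia de categorías ordinaria. No hay diferencias sustanciales con el argumento del paper.
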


\begin{proposition}
    Dados los 2-funtores $q: \mathscr{C} \longrightarrow \Ho$ de la definici\'on \ref{funtor_q} y $\pi_0: \Ho \longrightarrow \pi_0(\Ho)$ de \ref{pi_0_functor}, se tiene que la composición $\pi_0q:\mathscr{C}\longrightarrow \pi_0(\Ho)$ es la localización de $\mathscr{C}$ con respecto a la clase $\mathcal{W}$ en el sentido de la definici\'on \ref{localization}.
\end{proposition}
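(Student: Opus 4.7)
The plan is to verify the two conditions of Definition \ref{localization} for the composite $\pi_0 q$ by simply assembling the results already proved for $q$ (Theorem \ref{2-localization}) and for the connected-components 2-functor $\pi_0$ (Observation \ref{pi_0_functor} and Proposition \ref{pi0_iso}). No new construction is needed.

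First, to check condition (i), I would show that $\pi_0 q$ sends every $s \in \mathcal{W}$ to an isomorphism in $\pi_0(\Ho)$. By the Observation following Definition \ref{funtor_q}, $q(s)$ is an equivalence in the 2-category $\Ho$: there exist a morphism $t$ and inversible 2-cells $ts \Rightarrow id$, $st \Rightarrow id$. By the construction of $\pi_0$ (Observation \ref{pi_0_functor}), parallel morphisms connected by any 2-cell become equal in $\pi_0(\Ho)$, so the equalities $[t]\,[\pi_0 q(s)] = [id]$ and $[\pi_0 q(s)]\,[t] = [id]$ hold; hence $\pi_0 q(s)$ is an isomorphism.

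Next, for condition (ii), I would factor the precomposition functor. For any category $\mathscr{X}$, one has
$$(\pi_0 q)^* \;=\; q^* \circ \pi_0^* \;:\; Hom(\pi_0(\Ho),\mathscr{X}) \;\longrightarrow\; Hom_+(\mathscr{C},\mathscr{X}).$$
Now $\pi_0^*$ is an isomorphism of categories by Proposition \ref{pi0_iso}, and $q^*$ is an equivalence of categories by the Observation immediately preceding the present statement, which specializes Theorem \ref{2-localization} to $\mathscr{D} = \mathscr{X}$ (noting that when the target is a 1-category, pseudonatural transformations reduce to natural transformations and all modifications are identities, so the pseudoequivalence becomes a plain equivalence of categories). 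The composition of an isomorphism with an equivalence is an equivalence, which yields the required universal property.

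The main ``obstacle'' is really conceptual rather than technical: one must correctly verify that the pseudoequivalence of 2-categories supplied by Theorem \ref{2-localization} degenerates to an honest equivalence of 1-categories when the target is a 1-category viewed as a discrete 2-category. This degeneration is exactly the content of the preceding Observation, so at this point the proof reduces to citing and composing results already established in the thesis.
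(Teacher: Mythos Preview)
Your proposal is correct and follows essentially the same approach as the paper: both verify condition (i) by noting that $q$ sends $\mathcal{W}$ to equivalences and $\pi_0$ sends equivalences to isomorphisms, and both establish condition (ii) by factoring $(\pi_0 q)^* = q^* \circ \pi_0^*$, invoking Proposition \ref{pi0_iso} for $\pi_0^*$ and the preceding Observation (the specialization of Theorem \ref{2-localization} to a 1-categorical target) for $q^*$.
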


\begin{proof}
    Como $q$ manda equivalencias d\'ebiles en equivalencias y $\pi_0$ manda equivalencias en isomorfismos, entonces la composici\'on manda la clase $\mathcal{W}$ en $Isos(\mathscr{\pi_0(\Ho)})$.
    
    Por otro lado, hay que ver que $$q^*\pi_0^*: Hom(\pi_0(\Ho), \mathscr{D}) \longrightarrow Hom_+(\mathscr{\mathscr{C}, \mathscr{D}})$$ es una equivalencia de categor\'ias para toda categor\'ia $\mathscr{D}$.
    Pero como se tiene un isomorfismo $\pi_0^*:Hom(\pi_0(\Ho), \mathscr{X})\longrightarrow Hom(\Ho, \mathscr{X})$ y adem\'as $q^*: Hom(\Ho, \mathscr{X}) \longrightarrow Hom_+(\mathscr{C}, \mathscr{X})$ es una equivalencia por la observaci\'on anterior, entonces la composici\'on define una equivalencia de categor\'ias.
\end{proof}

\subsection{Homotopías a derecha}

De lo hecho en \ref{fibrant-cofibrant} vemos que, en presencia de funtorialidad en las factorizaciones, las homotop\'ias a derecha no son necesarias, a diferencia del caso con las factorizaciones no funtoriales de los axiomas de Quillen.

La categoría $\Ho$ también puede obtenerse tomando como 2-celdas a las clases de secuencias finitas de homotopías a derecha. Generalizando el concepto de path-object como lo hicimos para los cilindros, obtenemos una versión de homotopías a derecha también más general que la de Quillen. Decimos que dos homotopías a derecha $K$ y $K'$ están en la misma clase si y sólo si $\widehat{FK}=\widehat{FK'}$ para todo 2-funtor $F: \mathscr{C} \longrightarrow \mathscr{D}$ tal que $F(\mathcal{W}) \subseteq Equiv(\mathscr{D})$, para toda 2-categoría $\mathscr{D}$. Definimos las composiciones vertical y horizontal como lo hicimos antes, construyendo de esta forma una 2-categoría que llamamos $\catH ^r$ para distinguirla de $\catH ^l := \catH$.

Considerando la inclusión $j: \mathscr{C} \longrightarrow \catH ^r$, la precomposición también nos da un isomorfismo como en el corolario \ref{hom_iso}, pero esto no quiere decir que ambas categorías sean isomorfas, ya que ni $i$ ni $j$ mandan equivalencias débiles en equivalencias.

Por otro lado, una homotopía a derecha $K=(P, k)$ y una homotopía a izquierda $H=(C, h)$ están relacionadas si $\widehat{FK}=\widehat{FH}$ para todo $F: \mathscr{C} \longrightarrow \mathscr{D}$ que manda equivalencias d\'ebiles en equivalencias. Si bien $\catH ^r$ y $\catH ^l$ no tienen por qué coincidir, el siguiente resultado nos permitirá establecer cierta correspondencia cuando los objetos sean fibrantes y cofibrantes.

\bigskip
\begin{observation}
    Por el axioma M4 en la definici\'on \ref{model_cat}, dado un objeto $Y$ existe al menos un path-object $P=(V, \delta_0,  \delta_1, \sigma)$ para $Y$ que se obtiene factorizando la diagonal $\Delta_Y$ de la siguiente forma
    $$ \xymatrix{ Y \ar[rr]^{\Delta_Y} \ar@{>->}[dr]|\circ && Y\times Y\\
                                            & V \ar@{->>}[ur]_{(\delta_0, \delta_1)} }.$$
    
\end{observation}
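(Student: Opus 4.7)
The plan is to apply axiom M4 directly to the diagonal morphism $\Delta_Y : Y \longrightarrow Y \times Y$, which exists thanks to M1 (since $\mathscr{C}$ has finite limits, the product $Y \times Y$ is available). Concretely, I would invoke part (ii) of M4 from Definition \ref{model_cat}, which produces a factorization $\Delta_Y = (\delta_0, \delta_1) \circ \sigma$ through some intermediate object $V$, with $\sigma : Y \longrightarrow V$ a trivial cofibration and $(\delta_0,\delta_1): V \longrightarrow Y \times Y$ a fibration.

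Next I would verify that the tuple $P = (V, \delta_0, \delta_1, \sigma)$ meets the definition of a $q$-path-object. By construction $(\delta_0,\delta_1)$ is a fibration, and $\sigma$ is a weak equivalence because trivial cofibrations are, by definition, contained in $\mathcal{W}$. The components $\delta_0, \delta_1 : V \longrightarrow Y$ are recovered by postcomposing $(\delta_0,\delta_1)$ with the canonical projections $\mathrm{pr}_0, \mathrm{pr}_1 : Y \times Y \longrightarrow Y$; since $\mathrm{pr}_i \circ \Delta_Y = \mathrm{id}_Y$, we obtain $\delta_i \circ \sigma = \mathrm{id}_Y$, which is exactly the compatibility condition encoded in the factorization diagram.

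The argument is essentially the formal dual of the earlier observation on $q$-cylinders, where axiom M4(i) was applied to the codiagonal $\nabla_X$ to obtain a fibrant $q$-cylinder. For this reason I expect no real obstacle: the statement is a direct instantiation of M4(ii) together with the inclusion $co\mathcal{F} \cap \mathcal{W} \subseteq \mathcal{W}$. The only bookkeeping worth mentioning is that M4(ii) guarantees existence but not uniqueness (hence the phrasing ``al menos un path-object''), and that in general $\sigma$ need not itself be a fibration, so the path-object obtained this way is not automatically what one might call a ``cofibrant path-object''; this is the dual asymmetry of the situation discussed for fibrant $q$-cylinders.
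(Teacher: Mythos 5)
Your proposal is correct and is exactly the argument the paper intends: apply the factorization axiom M4 to $\Delta_Y$ (which exists by M1) to get a trivial cofibration $\sigma$ followed by a fibration $(\delta_0,\delta_1)$, dualizing the earlier observation on fibrant $q$-cylinders. One small slip: the factorization you describe (trivial cofibration then fibration) is part (i) of M4 in Definition \ref{model_cat}, not part (ii); the content you use is nonetheless the right one and matches the diagram in the statement.
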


\begin{proposition}
    Sean $f,g: X \longrightarrow Y$, $H: \xymatrix{ f \ar@2{~>}[r]^{l} & g }$ una $q$-homotopía y $P=(V, \delta_0,  \delta_1, \sigma)$ un path-object de $Y$. Si $X$ es cofibrante, entonces existe una $q$-homotopía a derecha $K: \xymatrix{ f \ar@2{~>}[r]^{r} & g }$ con path-object $P$ tal que $[K]=[H]$.
\end{proposition}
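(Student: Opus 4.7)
El plan es construir $K$ mediante un argumento de levantamiento estándar y luego demostrar la igualdad de clases usando la unicidad que define $\widehat{F(\cdot)}$ junto con la ley de intercambio.

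Primero, como $X$ es cofibrante y $C=(W,d_0,d_1,s)$ es un $q$-cilindro, por el lema \ref{lemma2_quillen} los morfismos $d_0,d_1:X\longrightarrow W$ son cofibraciones triviales. Por definición de $q$-path-object, $(\delta_0,\delta_1):V\longrightarrow Y\times Y$ es una fibración. Considero el cuadrado
$$\xymatrix{
X \ar[rr]^{\sigma f} \ar@{>->}[d]_{d_0}|\circ && V \ar@{->>}[d]^{(\delta_0,\delta_1)} \\
W \ar[rr]_{(fs,\,h)} \ar@{-->}[urr]^{H'} && Y\times Y,
}$$
que conmuta ya que $(\delta_0,\delta_1)\sigma f=(f,f)$ (porque $\sigma$ factoriza la diagonal) y $(fs,h)d_0=(fsd_0,hd_0)=(f,f)$, usando $sd_0=id_X$ y $hd_0=f$. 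Por el axioma M2 obtengo un levantamiento $H':W\longrightarrow V$ con $H'd_0=\sigma f$, $\delta_0 H'=fs$ y $\delta_1 H'=h$. Defino $k:=H'd_1:X\longrightarrow V$, y entonces $\delta_0 k=fsd_1=f$ y $\delta_1 k=hd_1=g$, de modo que $K=(P,k)$ es, efectivamente, una $q$-homotopía a derecha de $f$ a $g$ con path-object $P$.

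Para ver que $[K]=[H]$ fijo un 2-funtor $F:\mathscr{C}\longrightarrow\mathscr{D}$ que manda $\mathcal{W}$ en equivalencias. Como $Fs$ y $F\sigma$ son equivalencias, existen únicas 2-celdas $\widehat{FC}:Fd_0\Longrightarrow Fd_1$ con $Fs\,\widehat{FC}=Id$ (y así $\widehat{FH}=Fh\,\widehat{FC}$) y, dualmente, $\widehat{FP}:F\delta_0\Longrightarrow F\delta_1$ con $\widehat{FP}\,F\sigma=Id$ (y así $\widehat{FK}=\widehat{FP}\,Fk$). Considero la 2-celda
$$\gamma := FH'\cdot\widehat{FC}\;:\; F\sigma\,Ff \Longrightarrow Fk$$
en $\mathscr{D}$ y la compongo con $F\delta_0$ y con $F\delta_1$:
$$F\delta_0\cdot\gamma = (F\delta_0\cdot FH')\cdot\widehat{FC} = Ffs\cdot\widehat{FC} = Ff\cdot(Fs\,\widehat{FC}) = Id_{Ff},$$
$$F\delta_1\cdot\gamma = (F\delta_1\cdot FH')\cdot\widehat{FC} = Fh\cdot\widehat{FC} = \widehat{FH}.$$

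Finalmente, aplicando la ley de intercambio a la composición horizontal $\widehat{FP}*\gamma$ obtengo simultáneamente las dos igualdades
$$\widehat{FP}*\gamma = (F\delta_1\cdot\gamma)\circ(\widehat{FP}\cdot F\sigma\cdot Ff) = \widehat{FH}\circ Id_{Ff} = \widehat{FH},$$
$$\widehat{FP}*\gamma = (\widehat{FP}\cdot Fk)\circ(F\delta_0\cdot\gamma) = \widehat{FK}\circ Id_{Ff} = \widehat{FK},$$
de donde $\widehat{FH}=\widehat{FK}$ para todo $F$, y por lo tanto $[H]=[K]$. El principal obstáculo será verificar con cuidado que la definición dual $\widehat{FK}=\widehat{FP}\cdot Fk$ (análoga a \ref{FH} pero para $q$-homotopías a derecha) es la correcta para que la relación $[K]=[H]$ tenga sentido; el argumento restante es puramente formal una vez elegido el levantamiento $H'$.
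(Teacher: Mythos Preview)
Tu demostración es correcta y sigue esencialmente el mismo camino que la del artículo: construyes $K$ mediante el mismo levantamiento (el artículo llama $k'$ a tu $H'$) y verificas $\widehat{FH}=\widehat{FK}$ usando las ecuaciones $\delta_0 H'=fs$, $\delta_1 H'=h$, $H'd_0=\sigma f$ junto con las propiedades definitorias de $\widehat{FC}$, $\widehat{FP}$ y la ley de intercambio. Tu presentación, introduciendo explícitamente la 2-celda auxiliar $\gamma=FH'\cdot\widehat{FC}$ y aplicando una sola vez el intercambio a $\widehat{FP}*\gamma$, es más concisa que la cadena de ocho diagramas del artículo, pero el contenido matemático es el mismo.
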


\begin{proof}
    Si $H$ es de la forma  $\xymatrix{ X    \ar@<3pt>[rr]^{d_0}
                                            \ar@<-3pt>[rr]_{d_1}
                                            \ar[dr]_{id} && W 
                                            \ar[rr]^{h} \ar[dl]^{s}  && Y, \\
                                            &  X \\  }$
   por el lema \ref{lemma2_quillen} tanto $d_0$ como $d_1$ son cofibraciones triviales, de manera que existe un morfismo $k': W \longrightarrow V$ que hace conmutar el diagrama siguiente
    \begin{center}
        $\xymatrix{ X \ar[rr]^{\sigma f} \ar@{ >->}[dd]_{d_0}|\circ && V \ar@{->>}[dd]^{(\delta_0, \delta_1)} \\
                    \\
                    W \ar@{-->}[uurr]^{\exists \hspace{.5mm} k'} \ar[rr]^{(fs, h)} && Y \times Y \\ }$
    \end{center}
    
    Definiendo $k=K'd_1$ se obtiene una $q$-homotopía a derecha $K$ dada por 
    \begin{center}
        $\xymatrix{
        X \ar[rr]^{k} && V \ar@<3pt>[rr]^{\delta_o} \ar@<-3pt>[rr]_{\delta_1} && Y. \\
        &&& Y \ar[ul]^{\sigma}|\circ \ar[ur]_{id}\\
        }$
    \end{center}
    
    Si $F: \mathscr{C} \longrightarrow \mathscr{D}$, veamos que $\widehat{FH}=\widehat{FK}$. Escribimos $\widehat{FH}=Fh\widehat{Fc}$ y $\widehat{FK}=\widehat{Fp}Fk$, donde además $Fh=F\delta_1 Fk'$ y $Fk=Fk'Fd_1$. Luego,\vspace{5mm}\\
    \begin{tikzcd}
    FX \arrow[bend left=40, "Ff"]{rr}[name=LUU, below]{}
    \arrow[bend right=40, "Fg"']{rr}[name=LDD]{}
    \arrow[Rightarrow,to path=(LUU) -- (LDD)\tikztonodes]{rr}{\widehat{FH}}
    && \hspace{1mm} FY 
    \end{tikzcd}
    $=$
    \begin{tikzcd}
     FX \arrow[bend left=40, "Ff"]{rr}[name=LUU, below]{}
    \arrow[bend right=40, "Fg"']{rr}[name=LDD]{}
    \arrow[Rightarrow,to path=(LUU) -- (LDD)\tikztonodes]{rr}{\widehat{FH}}
    && \hspace{1mm} FY \arrow["F\sigma"]{r}
    & \hspace{1mm} FV \arrow[bend left=40, "F\delta_0"]{rr}[name=RUU, below]{}
    \arrow[bend right=40, "F\delta_1"']{rr}[name=RDD]{}
    \arrow[Rightarrow,to path=(RUU) -- (RDD)\tikztonodes]{rr}{\widehat{Fp}}
    && \hspace{1mm} FY
    \end{tikzcd}\\
    \vspace{3mm}
    $=$
    \begin{tikzcd}
     FX \arrow[bend left=40, "F\sigma f"]{rr}[name=LUU, below]{}
    \arrow[bend right=40, "F\sigma g"']{rr}[name=LDD]{}
    \arrow[Rightarrow,to path=(LUU) -- (LDD)\tikztonodes]{rr}{F\sigma \widehat{FH}}
    && \hspace{1mm} FV \arrow[bend left=40, "F\delta_0"]{rr}[name=RUU, below]{}
    \arrow[bend right=40, "F\delta_1"']{rr}[name=RDD]{}
    \arrow[Rightarrow,to path=(RUU) -- (RDD)\tikztonodes]{rr}{\widehat{Fp}}
    && FY
    \end{tikzcd}\\
    \vspace{3mm}
    $=$
    \begin{tikzcd}
    FX \arrow[bend left=50, "F\sigma f"]{rr}[name=LUU, below]{}
    \arrow{rr}[name=LUD]{}
    \arrow[swap]{rr}[name=LDU]{}
    \arrow[bend right=50, "F\sigma g"']{rr}[name=LDD]{}
    \arrow[Rightarrow,to path=(LUU) -- (LUD)\tikztonodes]{rr}{Id_{F\sigma f}}
    \arrow[Rightarrow,to path=(LDU) -- (LDD)\tikztonodes]{rr}{F\sigma \widehat{FH}}
    &&\hspace{1mm} FV
    \arrow[bend left=50, "F \sigma_0"]{rr}[name=RUU, below]{}
    \arrow{rr}[name=RUD]{}
    \arrow[swap]{rr}[name=RDU]{}
    \arrow[bend right=50, "F\sigma_1"']{rr}[name=RDD]{}
    \arrow[Rightarrow,to path=(RUU) -- (RUD)\tikztonodes]{r}{\widehat{Fp}}
    \arrow[Rightarrow,to path=(RDU) -- (RDD)\tikztonodes]{r}{Id_{F\sigma_1}}
    &&\hspace{1mm} FY
  \end{tikzcd}\\
    \vspace{3mm}
    $=$
    \begin{tikzcd}
    FX \arrow[bend left=50, "Fd_0"]{rr}[name=LUU, below]{}
    \arrow{rr}[name=LUD]{}
    \arrow[swap]{rr}[name=LDU]{}
    \arrow[bend right=50, "Fd_1"']{rr}[name=LDD]{}
    \arrow[Rightarrow,to path=(LUU) -- (LUD)\tikztonodes]{rr}{d_0}
    \arrow[Rightarrow,to path=(LDU) -- (LDD)\tikztonodes]{rr}{\widehat{Fc}}
    &&\hspace{1mm} FW \arrow["Fk'"]{rr}
    &&\hspace{1mm} FV \arrow[bend left=50, "F \sigma_0"]{rr}[name=RUU, below]{}
    \arrow{rr}[name=RUD]{}
    \arrow[swap]{rr}[name=RDU]{}
    \arrow[bend right=50, "F\sigma_1"']{rr}[name=RDD]{}
    \arrow[Rightarrow,to path=(RUU) -- (RUD)\tikztonodes]{r}{\widehat{Fp}}
    \arrow[Rightarrow,to path=(RDU) -- (RDD)\tikztonodes]{r}{Id_{F\sigma_1}}
    &&\hspace{1mm} FY
  \end{tikzcd}\\
  \vspace{3mm}
  $=$
  \begin{tikzcd}
    FX \arrow[bend left=50, "Fd_0"]{rr}[name=LUU, below]{}
    \arrow{rr}[name=LUD]{}
    \arrow[swap]{rr}[name=LDU]{}
    \arrow[bend right=50, "Fd_1"']{rr}[name=LDD]{}
    \arrow[Rightarrow,to path=(LUU) -- (LUD)\tikztonodes]{rr}{\widehat{Fc}}
    \arrow[Rightarrow,to path=(LDU) -- (LDD)\tikztonodes]{rr}{d_1}
    &&\hspace{1mm} FW \arrow["Fk'"]{rr}
    &&\hspace{1mm} FV \arrow[bend left=50, "F \sigma_0"]{rr}[name=RUU, below]{}
    \arrow{rr}[name=RUD]{}
    \arrow[swap]{rr}[name=RDU]{}
    \arrow[bend right=50, "F\sigma_1"']{rr}[name=RDD]{}
    \arrow[Rightarrow,to path=(RUU) -- (RUD)\tikztonodes]{r}{Id_{F\sigma_0}}
    \arrow[Rightarrow,to path=(RDU) -- (RDD)\tikztonodes]{r}{\widehat{Fp}}
    &&\hspace{1mm} FY
  \end{tikzcd} \\
  $=$
  \begin{tikzcd}
    FX \arrow[bend left=50, "Ff"]{rrr}[name=LUU, below]{}
    \arrow{rrr}[name=LUD]{}
    \arrow[swap, "\hspace{-10mm} Ff"']{rrr}[name=LDU]{}
    \arrow[bend right=50, "Fg"']{rrr}[name=LDD]{}
    \arrow[Rightarrow,to path=(LUU) -- (LUD)\tikztonodes]{rrr}{Ffs\widehat{Fc}}
    \arrow[Rightarrow,to path=(LDU) -- (LDD)\tikztonodes]{rrr}{\widehat{Fp}Fk}
    &&&\hspace{1mm} FY
  \end{tikzcd}
  $=$
  \begin{tikzcd}
    FX \arrow[bend left=40, "Ff"]{rr}[name=LUU, below]{}
    \arrow[bend right=40, "Fg"']{rr}[name=LDD]{}
    \arrow[Rightarrow,to path=(LUU) -- (LDD)\tikztonodes]{rr}{\widehat{FK}}
    && \hspace{1mm} FY. 
    \end{tikzcd}

  \vspace{6mm}
  Esto nos dice entonces que $K \thicksim H$, concluyendo la demostración.
 
\end{proof}

De la proposición anterior junto con su versión dual se deduce que cuando los objetos son fibrantes y cofibrantes las clases de $q$-homotopías a derecha se corresponden con las clases de $q$-homotopías a izquierda. Dado que, asi como ocurre con las homotopías a izquierda, en $\mathscr{C}_{fc}$ no distinguimos entre clases de secuencias de homotopías y clases de $q$-homotopías a derecha, entonces $\Ho$ es la misma 2-categoría para cualquiera de las dos versiones.

\bigskip
Adem\'as, de este \'ultimo resultado se deduce tambi\'en que las hom-categor\'ias de la 2-categor\'ia $\Ho$ son localmente pequeñas: 

\begin{corollary}
    Sean $X$, $Y$ fibrantes-cofibrantes, y sean $f, g: X \longrightarrow Y$ morfismos en $\mathscr{C}$. Entonces $\Ho[X,Y][f, g]$ es un conjunto.
\end{corollary}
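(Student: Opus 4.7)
La estrategia central es usar la proposici\'on inmediatamente anterior para representar cada clase de 2-celdas mediante un objeto sint\'actico controlable, parametrizado por un \'unico path-object fijo. En primer lugar, fijar\'ia un path-object $P=(V, \delta_0, \delta_1, \sigma)$ para $Y$, cuya existencia est\'a garantizada por el axioma M4 aplicado a la diagonal $\Delta_Y$.

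Por la proposici\'on \ref{categoria_Ho}, las 2-celdas en $\Ho[X,Y][f,g]$ son precisamente las clases de $q$-homotop\'ias a izquierda entre $f$ y $g$. Aplicando la proposici\'on reci\'en demostrada sobre la correspondencia entre $q$-homotop\'ias a izquierda y $q$-homotop\'ias a derecha (v\'alida porque $X$ es cofibrante), cada una de estas clases admite un representante que es una $q$-homotop\'ia a derecha $K$ con path-object precisamente $P$. Obtengo as\'i una funci\'on sobreyectiva desde el conjunto de $q$-homotop\'ias a derecha con path-object $P$ sobre $\Ho[X,Y][f,g]$.

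Por \'ultimo, una $q$-homotop\'ia a derecha $K$ con path-object fijo $P$ no es m\'as que una flecha $k: X \longrightarrow V$ en $\mathscr{C}$ satisfaciendo las ecuaciones $\delta_0 k = f$ y $\delta_1 k = g$. Luego el conjunto de tales representantes se identifica con un subconjunto del hom-set $\mathscr{C}(X, V)$, que es un conjunto bajo el supuesto habitual de que $\mathscr{C}$ es localmente pequeña. As\'i, $\Ho[X, Y][f, g]$ es la imagen por una funci\'on de un conjunto y, por lo tanto, tambi\'en es un conjunto.

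No anticipo obst\'aculos t\'ecnicos significativos en este argumento: la dificultad conceptual est\'a concentrada en la proposici\'on anterior, que es la que permite reducir toda clase de equivalencia a representantes parametrizados por un \'unico objeto $V$ de $\mathscr{C}$. Cabe destacar el inter\'es de este corolario: muestra que la 2-localizaci\'on $\Ho$ es localmente pequeña, resolviendo para este contexto el problema conjunt\'istico mencionado en el comentario de la subsecci\'on 1.1.
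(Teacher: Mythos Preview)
Tu argumento es esencialmente el mismo que el del art\'iculo: fijar un path-object $P=(V,\delta_0,\delta_1,\sigma)$ y parametrizar las 2-celdas por flechas $k \in \mathscr{C}(X, V)$. La diferencia es que el art\'iculo invoca expl\'icitamente la proposici\'on anterior \emph{junto con su versi\'on dual} (usando tambi\'en que $Y$ es fibrante) para obtener una biyecci\'on entre las clases de $q$-homotop\'ias a izquierda y las clases de $q$-homotop\'ias a derecha con path-object $P$, mientras que t\'u s\'olo usas una direcci\'on.

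Un detalle t\'ecnico menor: tu ``funci\'on sobreyectiva'' desde las $q$-homotop\'ias a derecha con path-object $P$ hacia $\Ho[X,Y][f,g]$ no est\'a bien definida como funci\'on \emph{total} sin el dual, pues asignar una 2-celda (que por definici\'on es una clase de homotop\'ias a izquierda) a una $K$ arbitraria requiere saber que $K$ se corresponde con alguna homotop\'ia a izquierda. Puedes o bien invocar el dual como hace el art\'iculo, o bien reformular tu argumento como una inyecci\'on de $\Ho[X,Y][f,g]$ en el cociente de dichas $q$-homotop\'ias a derecha por la relaci\'on $\sim$: esta inyecci\'on s\'olo necesita la direcci\'on que citas y es igualmente concluyente.
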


\begin{proof}
    Dado un path-object fijo, las clases de $q$-homotop\'ias a derecha de $f$ a $g$ correspondientes a este path-object forman un conjunto y, como $X$ e $Y$ son cofibrantes y fibrantes, por la proposici\'on anterior junto con su versi\'on dual dicho conjunto est\'a en biyecci\'on con las clases de $q$-homotop\'ias a izquierda de $f$ a $g$.
\end{proof}

\newpage

\end{document}